\newcommand{\Ab}{\mathbb{A}}
\newcommand{\Cb}{\mathbb{C}}
\newcommand{\Eb}{\mathbb{E}}
\newcommand{\Fb}{\mathbb{F}}
\newcommand{\Gb}{\mathbb{G}}
\newcommand{\Hb}{\mathbb{H}}
\newcommand{\Kb}{\mathbb{K}}
\newcommand{\Nb}{\mathbb{N}}
\newcommand{\Pb}{\mathbb{P}}
\newcommand{\Qb}{\mathbb{Q}}
\newcommand{\Tb}{\mathbb{T}}
\newcommand{\Vb}{\mathbb{V}}
\newcommand{\Zb}{\mathbb{Z}}
\newcommand{\Ac}{\mathcal{A}}
\newcommand{\Cc}{\mathcal{C}}
\newcommand{\Dc}{\mathcal{D}}
\newcommand{\Ec}{\mathcal{E}}
\newcommand{\Kc}{\mathcal{K}}
\newcommand{\Nc}{\mathcal{N}}
\newcommand{\Oc}{\mathcal{O}}
\newcommand{\Pc}{\mathcal{P}}
\newcommand{\Ls}{\mathscr{L}}
\newcommand{\Xfr}{\mathfrak{X}}
\newcommand{\pfr}{\mathfrak{p}}
\newcommand{\Lbf}{\mathbf{L}}
\newcommand{\PCob}{{\underline\Omega}}
\newcommand{\op}{\mathrm{op}}
\newcommand{\CH}{\mathrm{CH}}
\newcommand{\Spec}{\mathrm{Spec}}
\newcommand{\Gr}{\mathrm{Gr}}
\newcommand{\Td}{\mathrm{Td}}
\newcommand{\colim}{\mathrm{colim}}
\newcommand{\Fun}{\mathrm{Fun}}
\newcommand{\hook}{\hookrightarrow}
\newcommand{\Id}{\mathrm{Id}}
\newcommand{\Th}{\mathrm{Th}}
\newcommand{\red}{\mathrm{red}}
\newcommand{\xto}{\xrightarrow}
\newcommand{\dash}{{\text -}}
\newcommand{\dSch}{\mathrm{dSch}}
\newcommand{\MGL}{\mathrm{MGL}}
\newcommand{\tto}{\twoheadrightarrow}
\newcommand{\MS}{\mathrm{MS}}
\newcommand{\SH}{\mathrm{SH}}
\newcommand{\syn}{\mathrm{syn}}
\newcommand{\Mod}{\mathrm{Mod}}
\newcommand{\Sm}{\mathrm{Sm}}
\newcommand{\Ext}{\mathrm{ext}}
\newcommand{\Sh}{\mathrm{Sh}}
\newcommand{\Ani}{\mathrm{Ani}}
\newcommand{\Cat}{\mathrm{Cat}}
\newcommand{\maps}{\mathrm{maps}}
\newcommand{\cdh}{\mathrm{cdh}}
\newcommand{\afp}{\mathrm{afp}}
\newcommand{\fp}{\mathrm{fp}}
\newcommand{\mot}{\mathrm{mot}}
\newcommand{\dbe}{\mathrm{dbe}}
\newcommand{\sbe}{\mathrm{sbe}}
\newcommand{\Sch}{\mathrm{Sch}}
\newcommand{\Nis}{\mathrm{Nis}}
\newcommand{\Sp}{\mathrm{Sp}}
\newcommand{\Zar}{\mathrm{Zar}}
\newcommand{\Fin}{\mathrm{Fin}}
\newcommand{\SSeq}{\mathrm{SSeq}}
\newcommand{\lax}{\mathrm{lax}}
\newcommand{\CAlg}{\mathrm{CAlg}}
\newcommand{\pt}{\mathrm{pt}}
\newcommand{\procdh}{\mathrm{procdh}}
\newcommand{\st}{\mathrm{st}}
\newcommand{\Sq}{\mathrm{Sq}}
\newcommand{\Q}{\mathrm{Q}}
\newcommand{\ebe}{\mathrm{ebe}}
\newcommand{\gys}{\mathrm{gys}}
\newcommand{\tot}{\mathrm{tot}}
\newcommand{\h}{\mathrm{h}}
\newcommand{\dR}{\mathrm{dR}}
\newtheorem{theo}{Tplottin ubuntuheorem}
\theoremstyle{plain}
\newtheorem{thm}[theo]{Theorem}
\newtheorem{lem}[theo]{Lemma}
\newtheorem{prop}[theo]{Proposition}
\newtheorem{cor}[theo]{Corollary}
\newtheorem{con}[theo]{Conjecture}
\newtheorem{obs}[theo]{Observation}
\newtheorem*{thm*}{Theorem}
\newtheorem*{lem*}{Lemma}
\newtheorem*{prop*}{Proposition}
\newtheorem*{cor*}{Corollary}
\newtheorem*{var*}{Variant}
\newcommand{\comp}{{{\kern -.5pt}\wedge}}
\newcommand{\et}{\text{\'{e}t}}
\theoremstyle{definition}
\newtheorem{defn}[theo]{Definition}
\newtheorem{ex}[theo]{Example}
\newtheorem{cons}[theo]{Construction}
\newtheorem{rem}[theo]{Remark}
\newtheorem{var}[theo]{Variant}
\newtheorem{quest}[theo]{Question}
\renewcommand{\P}{\mathrm{P}}
\newcommand{\B}{\mathrm{B}}
\numberwithin{theo}{section}
\title{Motivic Steenrod operations at the characteristic via infinite ramification}
\author{Toni Annala}
\address{Department of Mathematics, University of Toronto, Toronto.}
\email{toni.annala@utoronto.ca}
\author{Elden Elmanto}
\address{Department of Mathematics, University of Toronto, Toronto.}
\email{elden.elmanto@utoronto.ca}
\date{\today}
\begin{document}

\maketitle

\begin{abstract}
We construct motivic power operations on the mod-$p$ motivic cohomology of $\Fb_p$-schemes using a motivic refinement of Nizio{\l}'s theorem. The key input is a purity theorem for motivic cohomology established by Levine. Our operations satisfy the expected properties  (naturality, Adem relations, and the Cartan formula) for all bidegrees, generalizing previous results of Primozic which were only know along the ``Chow diagonal.'' We offer geometric applications of our construction: 1) an example of non-(quasi-)smoothable algebraic cycle at the characteristic, 2) an answer to the motivic Steenrod problem at the characteristic, 3) a counterexample to the integral version of a crystalline Tate conjecture. 
\end{abstract}

\tableofcontents

\section{Operations? Again?} Algebraic topologists have made algebraic geometers (perhaps begrudgingly) contend with symbols like $\Sq^i, \P^j$ and the likes floating around their cohomology theories, at least whenever there is some torsion. One of the first instances of this was the work of Atiyah and Hirzebruch \cite{atiyah-hirzebruch} in 1964 where a counterexample to the integral Hodge conjecture was produced via the Steenrod operations. In a similar thread, Hartshorne, Rees and Thomas \cite{hartshorne-rees-thomas} proved non-smoothability results for integral algebraic cycles.

At this point, the algebraic geometer might be rightly annoyed --- it seems that all the topologist does is to spoil algebro-geometric dreams using these hieroglyphic-like symbols. Then comes Voevodsky. The motivic Steenrod operations on mod-$\ell$ motivic cohomology in characteristic zero \cite{voevodsky:2003a} lie at the heart of the proof of the Bloch-Kato conjectures \cite{voevodsky:2003b,voevodsky:2011}. In fact, the shape of these operations is at the heart of many issues of a good theory of motives and motivic homotopy. Another notable result is Feng's thesis \cite{feng-steenrod} where he used Steenrod operations on \'etale cohomology to settle a conjecture of Tate's on the Artin-Tate pairing on Brauer groups away from the residue characteristic.

\subsection{Operations and the statement of the main theorem} \label{sec:operations-main} Let us now explain the format of motivic cohomology operations. Let $p$ be a prime, $F$ a field. The formation of mod-$p$ motivic cohomology, assembles into a functor valued in bigraded commutative rings (with the cup product $\cup$ as the multiplicative structure):
\[
H^{\star}_{\mot}(-,\Fb_p(\star)) \colon \Sm_F^{\mathrm{op}} \to \mathrm{BiGrCAlg}.
\]
There is a canonical class
\[
c \in H_{\mot}^2(\mathbb{P}_F^1,\Fb_p(1)) (\cong \mathrm{Pic}(\mathbb{P}_F^1)/p),
\]
classifying the mod-$p$ reduction of the first chern class of $\mathcal{O}(1)$. A \emph{bistable cohomology operation} (of degree $(r,s)$) \cite[(2.13) \& Proposition 2.16]{voevodsky:2003a} is a collection of natural transformation 
\[
P_{ij}: H^{i}_{\mathrm{mot}}(-,\Fb_p(j)) \rightarrow H^{i+r}_{\mathrm{mot}}(-,\Fb_p(j+s)),
\] which commutes with multiplication by $c$:
\[
P_{ij}(x \cup c) = P_{i+2,j+1}(x) \cup c. 
\]

This definition is a direct analog of the definition of stable cohomology operations for singular cohomology. The main goal of this paper is to construct certain canonical, non-trivial bistable operations when $F$ is a field of characteristic $p > 0$. These operations are even constructed on the extension of mod-$p$ motivic cohomology to all noetherian $\Fb_p$-schemes, defined and studied by the second author and Morrow \cite{elmanto-morrow}, as well as Kelly and Saito \cite{kelly-saito}. The following is our main theorem. 

\begin{thm}\label{thm:main1} Let $p$ be a prime and $F$ be a prime field (in other words, either $\Qb$ or $\Fb_p$). There exists natural transformations of presheaves of motivic cohomology groups on Noetherian $\Fb_p$-schemes:
\[
\P^n_{ij}: H^i_{\mot}(-,\Fb_p(j)) \to H^{i+2n(p-1)}_{\mot}(-,\Fb_p(j+n(p-1)))
\]
and
\[
B_{ij}^n \colon H_{\mot}^i(-; \Fb_p(j)) \to H_{\mot}^{i + 2n(p-1) + 1}(-; \Fb_p(j + n(p-1)).
\]
Moreover, these operations satisfy the expected properties, namely:
\begin{enumerate}
\item $\P^0 = \Id$ and $\B^i = \beta \P^i$;
\item the motivic Adem relations hold;
\item the motivic Cartan formulas hold;
\item if $x \in H_{\mot}^{2i}(X; \Fb_p(i))$, then $\P^i(x) = x^p$;
\item if $y \in H^{j}_{\mot}(X; \Fb_p(k))$ is such that $j-k<i$ and $k \leq i$, then $\P^i(y) = 0$;
\item the Bockstein $\beta$ is a graded derivation with respect to the first grading.
\end{enumerate}

\end{thm}

We refer the reader to Theorem~\ref{thm:ModPSteenrodSpectrumLevel} for the motivic Adem and Cartain relations. 

\subsection{Operations and endomorphisms} We now explain our theorem in the context of motivic homotopy theory. Let $H\Fb_{p,F}$ be the motivic spectrum representing mod-$p$ motivic cohomology in Morel--Voevodsky's category $\SH_F$ of motivic spectra over $F$; this motivic spectrum could be built as Bloch's cycle complex \cite{bloch:1986} or via Voevodsky's theory of finite correspondences \cite{VSF}. As explained in the introduction to \cite{HKO}, there are three related, but \emph{a priori} different algebra of bigraded operations in motivic homotopy theory summarized in the following zig-zag:
\[
\mathcal{A}_F^{\star,\star} \hookrightarrow \mathcal{M}_F^{\star,\star} \twoheadleftarrow  H\Fb_{p,F}^{\star,\star}H\Fb_{p,F}. 
\]
Here:
\begin{enumerate}
\item $\mathcal{M}_F^{\star,\star}$ is the algebra of bistable operations in motivic cohomology as explained above in \S\ref{sec:operations-main}.
\item $H\Fb_{p,F}^{\star,\star}H\Fb_{p,F}$ is the bigraded ($\SH_F$-linear) endomorphism ring of $H\Fb_{p,F}$.
\item Finally, $\mathcal{A}_F^{\star,\star}$ is, prior to this paper, far \emph{only defined if $p$ is invertible in $F$}. It is the subalgebra of $\mathcal{M}_F^{\star,\star}$ generated by power operations as constructed by Voevodsky in \cite{voevodsky:2003a}\footnote{Of note are also Brosnan's Steenrod operations for Chow groups \cite{brosnan:2003} built in a similar manner to Voevodsky.} ; nowadays we know that these operations arise from the norm structure on motivic cohomology in the sense of \cite{bachmann-hoyois}; see \cite[Example 7.25]{bachmann-hoyois} for details. 
\end{enumerate}

The main result of \cite{HKO} proves that all three objects are isomorphic when $p$ is invertible in $F$; generalizing the case when $F$ is characteristic zero by Voevodsky in \cite{voevodsky:2003a}. The main result of this paper is that $\mathcal{A}^{\star,\star}$ also makes sense mod-$p$:

\begin{thm}\label{thm:main1} There is a canonical map of bigraded, associative $\Fb_p$-algebras
\begin{equation}\label{eq:map}
\mathcal{A}_{\mathbb{Q}}^{\star,\star} \rightarrow H\Fb_{p,\Fb_p}^{\star,\star}H\Fb_{p,\Fb_p}. 
\end{equation}
The image of the Bockstein $\beta$ and the power operations $\mathrm{P}^i$ remain linearly independent and its image has a $H^{\star}_{\mot}(\Fb_p;\Fb_p(\star))$-linear basis given by the admissible monomials in the Bocksteins and the power operations. 
\end{thm}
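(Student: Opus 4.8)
The plan is to transport Voevodsky's operations along an infinitely ramified base change and then to read off independence from the classical calculation. Fix a deeply ramified (indeed perfectoid) valuation ring $\mathcal O$ with residue field $\Fb_p$ and fraction field $C$ of characteristic zero in which $p$ is invertible; concretely one may take the $p$-completion of $\Zb_p[p^{1/p^\infty}]$, so that $C=\Qb_p(p^{1/p^\infty})^{\wedge}$, the ring $\mathcal O/p\cong\colim_n\Zb_p[p^{1/p^n}]/p$ has transition maps which are, up to units, the Frobenius, and the residue field of $\mathcal O$ is $\Fb_p$. Deep ramification makes $C^{\times}/p=0$ and $\mathrm{cd}_p(C)\le 1$ (and $\mu_p\not\subset C$ for $p$ odd), so by Beilinson--Lichtenbaum $H^{\star}_{\mot}(C;\Fb_p(\star))=\Fb_p$, concentrated in bidegree $(0,0)$. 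Consequently the Steenrod algebra $\mathcal A_{C}^{\star,\star}$, which exists since $p\in C^{\times}$ (\cite{voevodsky:2003a}, in the normed form of \cite{bachmann-hoyois}), is already the free $\Fb_p$-module on the admissible monomials in $\beta$ and the $\mathrm{P}^{i}$ modulo the Adem relations --- the exact shape demanded by the theorem --- and precomposing with base change yields $\mathcal A_{\Qb}^{\star,\star}\to\mathcal A_{C}^{\star,\star}\to H\Fb_{p,C}^{\star,\star}H\Fb_{p,C}$.

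The bridge to characteristic $p$ is the motivic refinement of Nizio{\l}'s theorem: for a smooth $\mathcal O$-scheme $\mathfrak X$ with generic fibre $\mathfrak X_{\eta}$ over $C$ and reduced special fibre $\mathfrak X_{\Fb_p}$ over $\Fb_p$, the evident comparison is an equivalence on mod-$p$ motivic cohomology, taken in the extension of Voevodsky's theory to all $\Fb_p$-schemes (e.g.\ via Bloch's cycle complex \cite{bloch:1986}):
\[
H_{\mot}(\mathfrak X_{\eta};\Fb_p)\ \isomto\ H_{\mot}(\mathfrak X_{\Fb_p};\Fb_p).
\]
This is proved in two steps, each an incarnation of infinite ramification. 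First, Levine's purity theorem applied to the effective Cartier divisor $\mathfrak X_{\mathcal O/p}\hook\mathfrak X$ identifies, via the attendant localization triangle, the cofibre of $H_{\mot}(\mathfrak X;\Fb_p)\to H_{\mot}(\mathfrak X_{\eta};\Fb_p)$ with a Tate twist of $H_{\mot}(\mathfrak X_{\mathcal O/p};\Fb_p)$, and the infinite $p$-divisibility of the different of $\mathcal O/\Zb_p$ forces the Gysin and connecting maps of the resulting triangle to degenerate so as to give $H_{\mot}(\mathfrak X_{\eta};\Fb_p)\simeq H_{\mot}(\mathfrak X_{\mathcal O/p};\Fb_p)$. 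Second, since $\mathcal O/p$ is the filtered colimit of truncated polynomial $\Fb_p$-algebras along Frobenius transition maps, the relative motivic cohomology over $\mathfrak X_{\Fb_p}$ of each stage is annihilated in the colimit, whence $H_{\mot}(\mathfrak X_{\mathcal O/p};\Fb_p)\simeq H_{\mot}(\mathfrak X_{\Fb_p};\Fb_p)$. Granting the comparison, the operations transport: a smooth affine $\Fb_p$-scheme $X$ lifts (the deformation obstructions vanishing since $X$ is smooth affine) to a smooth $\mathcal O$-scheme $\mathfrak X$ with $\mathfrak X_{\Fb_p}=X$, Voevodsky's operations act on $H_{\mot}(\mathfrak X_{\eta};\Fb_p)$, and the comparison carries them to operations on $H_{\mot}(X;\Fb_p)$; that the result is independent of the lift, functorial in $X$, and extends by Nisnevich descent to a bistable $\Fb_p$-linear natural transformation on all of $\Sm_{\Fb_p}$ --- i.e.\ the map \eqref{eq:map} --- is a descent argument (deferred to the discussion of obstacles below). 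Associativity and the bigraded-$\Fb_p$-algebra structure are inherited from $\mathcal A_{C}^{\star,\star}$, each arrow above being a map of bigraded rings.

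Linear independence and the basis statement follow by testing on standard spaces. As $H^{\star}_{\mot}(\Fb_p;\Fb_p(\star))=\Fb_p$ lives in bidegree $(0,0)$, it suffices to produce one smooth $\Fb_p$-ind-scheme (or quotient stack) on which the admissible monomials act $\Fb_p$-linearly independently; the powers $(\B\mu_p)^{\times n}$ serve, $\B\mu_p$ being an algebraic stack over $\Fb_p$ whose mod-$p$ motivic cohomology is the expected polynomial-times-exterior algebra (computed from the Gysin sequence for $\Gb_m\xto{p}\Gb_m$, or transported from $C$ through the comparison). On these the action of $\beta$ and the $\mathrm{P}^i$ is exactly Voevodsky's, and the argument of \cite{voevodsky:2003a} shows that no nonzero $\Fb_p$-combination of admissibles can annihilate all of them; the comparison chain identifies this characteristic-$p$ calculation with the characteristic-$0$ one, so the admissible monomials remain linearly independent in $H\Fb_{p,\Fb_p}^{\star,\star}H\Fb_{p,\Fb_p}$. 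They span the image because $\beta$ and the $\mathrm{P}^i$ generate it and the Adem relations --- valid there, being images of relations in $\mathcal A_{C}^{\star,\star}$ --- rewrite any monomial as an $\Fb_p$-linear combination of admissibles; here one also uses that the augmentation ideal of $H^{\star}_{\mot}(C;\Fb_p(\star))$ maps to zero, which it does since these coefficients restrict to zero in $H^{\star}_{\mot}(\Fb_p;\Fb_p(\star))$.

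The step I expect to be the main obstacle is the motivic Nizio{\l} comparison. It demands setting up Levine's purity and the requisite continuity of motivic cohomology over the non-Noetherian, non-regular ring $\mathcal O$, and --- crucially --- controlling the connecting map in the localization triangle precisely enough to see that the infinitely divisible different makes the comparison an equivalence; this is a genuinely unbounded statement, unlike the classical Beilinson--Lichtenbaum and syntomic comparisons it refines, where the analogous isomorphism only holds in a bounded range. A secondary, more bookkeeping-flavoured difficulty is upgrading the lift-and-transport recipe, which is manifestly functorial only for smooth affine $\Fb_p$-schemes equipped with a chosen lift, to an honest natural transformation on all of $\Sm_{\Fb_p}$ compatible with the multiplicative structure, so that the Adem relations and Cartan formula recorded elsewhere in the paper are preserved.
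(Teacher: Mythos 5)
Your overall strategy is the paper's: pass to an infinitely ramified mixed-characteristic base, use Levine's localization theorem to prove a motivic Nizio{\l}-type comparison, and transport Voevodsky's operations. But the execution has a genuine gap, and it is precisely the step you dismiss as secondary.

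The ``lift-and-transport'' recipe is not a construction. You propose to define an operation on $H^{\star}_{\mot}(X;\Fb_p)$ for smooth affine $X/\Fb_p$ by choosing a smooth lift $\mathfrak X/\Oc$ and carrying Voevodsky's operation from $H^{\star}_{\mot}(\mathfrak X_\eta;\Fb_p)$ through the comparison, then extending by descent. But smooth lifts are not functorial: a map $X\to X'$ of smooth affine $\Fb_p$-schemes need not lift to a map of chosen $\Oc$-lifts, and lifts are not unique; so the recipe does not manifestly produce a natural transformation of presheaves, let alone the endomorphism of $H\Fb_p^{\Ab^1}$ in $\SH_{\Fb_p}$ that the theorem actually asserts --- and which is what the Adem and Cartan formulas elsewhere in the paper require, since those are relations between composites of spectrum-level maps. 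Calling this ``bookkeeping-flavoured'' misjudges it: it is the point where the approach, as written, does not close. The paper avoids it entirely. Because $H\Fb_p^{\Ab^1}$ is stable under pullback, there is a map of graded $\Eb_1$-spectra $i^{*}\colon\mathrm{end}_{\SH_\Oc}(H\Fb_p^{\Ab^1})\to\mathrm{end}_{\SH_{\Fb_p}}(H\Fb_p^{\Ab^1})$ with no lifting or descent in sight; the content of the theorem is that $j^{*}\colon\mathrm{end}_{\SH_\Oc}(H\Fb_p^{\Ab^1})\to\mathrm{end}_{\SH_\Kb}(H\Fb_p^{\Ab^1})$ is an equivalence, so the span $\Kb\leftarrow\Oc\to\Fb_p$ defines the map \eqref{eq:map} on the nose.

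A related mis-statement: what the paper proves (Theorem~\ref{thm:Detroit}, Corollary~\ref{cor:main}) is that $H\Fb_p^{\Ab^1}\to j_{*}H\Fb_p^{\Ab^1}$ is an equivalence in $\SH_\Oc$, i.e.\ $H_{\mot}(\mathfrak X;\Fb_p)\simeq H_{\mot}(\mathfrak X_\eta;\Fb_p)$ for $\mathfrak X$ smooth over $\Oc$. You instead assert $H_{\mot}(\mathfrak X_\eta;\Fb_p)\simeq H_{\mot}(\mathfrak X_{\Fb_p};\Fb_p)$, a generic-equals-special comparison, which is a different and stronger-looking statement and is neither what Levine's purity yields nor what the argument needs. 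Your step one kills the Borel--Moore term in the localization triangle, identifying $\mathfrak X$ with $\mathfrak X_\eta$; your step two is redundant nilpotent-invariance. The passage from $\mathfrak X$ to $\mathfrak X_{\Fb_p}$ is handled in the paper by $i^{*}$ on endomorphism spectra, not by a cohomological comparison. Also note that Levine's Theorem~\ref{thm:DetroitPurity} holds over a DVR; $\Oc$ is not one, which is why Lemma~\ref{lem:root-canal} works at the finite levels $A_r=\Zb_p[\sqrt[p^r]{p}]$ and shows $p$-divisibility via an explicit degree-of-covering computation ($p_!(1)$ through the formal group law) before taking the colimit --- that is the precise form of the ``infinite divisibility'' you gesture at.

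A smaller inaccuracy: you work over the completion $C=\Qb_p(p^{1/p^\infty})^{\wedge}$ and claim $H^{\star}_{\mot}(C;\Fb_p(\star))=\Fb_p$ in bidegree $(0,0)$ from $\mathrm{cd}_p(C)\le 1$ and $C^{\times}/p=0$. The second input is not obvious for a non-algebraically-closed perfectoid field, and $H^0_{\et}(C;\mu_p^{\otimes j})$ is governed by the image of the mod-$p$ cyclotomic character of $G_C$, whose triviality in every weight requires an argument. Fortunately this is not needed: the paper works over the uncompleted $\Kb=\Qb_p[p^{1/p^\infty}]$, invokes only the Hoyois--Kelly--{\O}stv{\ae}r structure theorem valid over any characteristic-zero field, and trivializes the coefficient ring by pulling back to $\Fb_p$, where Geisser--Levine gives $H^{\star}_{\mot}(\Fb_p;\Fb_p(\star))=\Fb_p$ --- the one vanishing your argument actually uses, and does correctly invoke at the end.
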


We refer the reader to Corollary~\ref{cor:admiss} for a precise statement on the image of $\mathcal{A}_{\mathbb{Q}}^{\star,\star}$ in $H\Fb_{p,\Fb_p}^{\star,\star}H\Fb_{p,\Fb_p}$; we denote this image by 
\[
\mathcal{A}^{\star,\star}_{\Fb_p} \subset H\Fb_{p,\Fb_p}^{\star,\star}H\Fb_{p,\Fb_p}.
\]

\begin{rem}[Power operations and norms]\label{rem:power} Ultimately, the construction of power operations rely on the classifying stack $B\Sigma_n$ where $\Sigma_n$ is the symmetric group on $n$-letters. In effect, Voevodsky in \cite{voevodsky:2003a} constructs the $n$-th total power operation as the dotted arrow in the following diagram of spectra\footnote{We employ the following notation: for any scheme $S$ we have an adjunction $\sigma^{\infty}: \Sh_{\Nis, \Ab^1}(\Sm_S; \Sp)\rightleftarrows \SH_S:\omega^{\infty}$. The domain of $\sigma^{\infty}$ is the $\infty$-category of $\Ab^1$-invariant Nisnevich sheaves, also commonly referred to as ``$S^1$-spectra'' in the literature. We will also use the further adjunction
\[
\Sigma^{\infty}_{\Tb}: \Sh_{\Nis, \Ab^1}(\Sm_S; \mathrm{Ani})_{\star} \rightleftarrows \SH_S: \Omega^{\infty}_{\Tb},
\]
between pointed $\Ab^1$-invariant Nisnevich sheaves of anima and $\SH_S$.}
\[
\begin{tikzcd}
\omega^{\infty}H\Fb_p(X) \ar[dashed]{r}{\P_n}  \ar[swap]{dr}{x \mapsto x^n} & \omega^{\infty}H\Fb_p(X \times B\Sigma_n)\ar{d}\\
& \omega^{\infty}H\Fb_p(X).
\end{tikzcd}
\]
The problem is that $BC_p$ is $\Ab^1$-contractible in characteristic $p > 0$, hence one cannot proceed in this manner \cite[Proposition~5.5]{BEHKSY:2021}.\footnote{Unless otherwise specified, all quotient stacks are taken in the fppf-topology.} In principle, one needs to replace $BC_p$ by the stack $B\mu_p$ and one wants a ``total $\mu_p$-power operation'': $\omega^{\infty}H\Fb_p(X) \rightarrow \omega^{\infty}H\Fb_p(X \times B\mu_p)$ out of which one can extract elements of $\mathcal{A}^{\star,\star}$. One approach is to endow a ``flat norm'' structure on $H\Fb_p$, extending the \'etale norms of \cite{bachmann-hoyois}. We do not yet have access to this technology. 
\end{rem}

\begin{rem}[An element called $\tau$] We comment on the kernel of the map~\eqref{eq:map}. Let us recall that if $F$ is a field of characteristic not $p$ and furthermore admits a single $p$-th root of unity, then we have an element $\tau_p \in H_{\mot}^0(F; \Fb_p(1)) = \mu_p(F)$ classifying a primitive $p$-th root of unity. Noting the Nesterenko--Suslin isomorphism: $K_{j}^M(F) \cong H_{\mot}^{j}(F, \Zb(j))$, the resolution of the Bloch--Kato conjecture implies that we have an isomorphsim of bigraded rings:
\[
K_{\star}^M(F)/p[\tau_p] \cong H_{\mot}^{\star}(F, \Fb_p(\star)). 
\]
Combining this with the structure of the $\mathcal{A}^{\star,\star}_F$ (see, for example, \cite[Theorem 1.1(1)]{HKO}) we can write down a $\Fb_p$-linear basis for $\mathcal{A}^{\star,\star}_F$ which constitutes $\tau$-multiples of the Bockstein and power operations. For each prime $p$, there exists an integer $r > 0$ and an element $\widetilde{\tau}_p \in H_{\mot}^{0}(\mathbb{Q}, \Fb_p(r))$ which maps to the $r$-th power of $\tau_p$; see \cite[Section 6.1]{elso} for the exact numerology. The map~\eqref{eq:map} eliminates $\tilde{\tau}_p$. This is explained by the mod-$p$ counterpart of the Bloch--Kato conjectures which is a theorem of Geisser--Levine: for all fields of characteristic $p > 0$, we have that:
\[
K_{\star}^M(F)/p \cong H_{\mot}^{\star}(F, \Fb_p(\star)). 
\]
In particular, $H_{\mot}^{0}(\Fb_p, \Fb_p(\star)) = 0$ unless $\star = 0$. Part of the content of Theorem~\ref{thm:main1} is that even though $\widetilde{\tau}_p$-multiples of the Bockstein and power operations are killed by~\eqref{eq:map}, those without such multiples survive and retains their independence relations.
\end{rem}


\begin{rem}[Other fields of characteristic $p$] Let $\Fb_p \subset F$ be an extension of fields of characteristic $p > 0$. We define the bigraded Steenrod algebra over $F$ to be the base change:
\[
\mathcal{A}^{\star,\star}_{F} := \mathcal{A}^{\star,\star}_{\Fb_p} \otimes_{H^{\star}_{\mot}(\Fb_p;\Fb_p(\star))} H^{\star}_{\mot}(F;\Fb_p(\star))\]
By design, $\mathcal{A}^{\star,\star}_{F}$ is spanned by  $H^{\star}_{\mot}(F;\Fb_p(\star)) = K^M_{\star}(F)/p$-linear combinations of Bockstein and Power operations. These act naturally on the mod-$p$ motivic cohomology of smooth $F$-schemes, governed by the Adem and Cartan relations. We also refer the reader to the discussion in \S\ref{subsect:allPops} for an extension of the motivic Steenrod operations acting on the $\Ab^1$-invariant motivic cohomology of an arbitrary quasicompact, quasiseparated $\Fb_p$-schemes. 
\end{rem}

\subsection{Relations to \cite{primozic:2020} and \cite{SpitzweckFrankland}}

 The goal of this paper is to construct power operations on mod-$p$ motivic cohomology of $\Fb_p$-schemes, satisfying the analogs of the relations in topology and mod-$\ell$ motivic cohomology. Our approach is to directly construct a graded-$\mathbb{E}_1$-map
\[
\mathrm{end}_{\SH_{\Kb}}(H\Fb_p, \Tb^{\otimes \star} \otimes H\Fb_p) \rightarrow \mathrm{end}_{\SH_{\Fb_p}}(H\Fb_p, \Tb^{\otimes \star} \otimes H\Fb_p)
\]
between endomorphism spectra, where $\Kb = \Qb_p[p^{1/p^\infty}]$ is an extension of $\Qb_p$ that is infinitely ramified at $p$. Here $\Tb$ denotes the \emph{Tate motive}, also known as the motive of the reduced projective line in $\SH$. Throughout the article, we will denote by $\Oc = \Zb_p[p^{1/p^\infty}]$.

The work of  Primozic \cite{primozic:2020}, relying on the work of Frankland--Spitzweck \cite{SpitzweckFrankland} constructs power operations $\P^i: H_{\mot}^{m}(-;\Fb_p(n)) \rightarrow H_{\mot}^{m+2i(p-1)}(-;\Fb_p(n+i(p-1))$ which satisfy the Adem and Cartan formulas on the Chow line diagonal. Let us explain how this is done. 

Chronologically, the Steenrod algebra in topology came first before Milnor discovered the extremely deep fact that the dual Steenrod algebra is \emph{free}. In the mod-$p$ motivic story, the roles are reversed. Learning from the lessons of history, we can try to study the dual motivic Steenrod algebra by guessing a basis for the dual Steenrod algebra, and then dualize to get the power operations. Indeed, let $\mathcal{A}_{\star,\star} := \pi_{\star,\star}H\Fb_p \otimes H\Fb_p$ then there are elements in $\mathcal{A}_{\star,\star}$ with the following names and bigradings:
\[
\tau_i, |\tau_i| = (2p^i-1, p^i-1) \qquad \zeta_j, |\zeta_j|=(2p^j-2, p^j-1). 
\]
They are produced by the coaction on $B\mu_p$ as in \cite[Corollary 10.25]{spitzweck:2018}; this is supposed to be ``dual'' to the hypothetical norm alluded to in Remark~\ref{rem:power}. Suppose that we are given a sequence $\alpha:=(\epsilon_0, r_1, \epsilon_1, r_2, \cdots)$ where $\epsilon_i \in \{0, 1\}$ and $r_j \geq 0$. Then we have a monomial $\omega(\alpha) = \tau_0^{\epsilon_0}\zeta_1^{r_1}\tau_1^{\epsilon_1}\cdots$ in $\mathcal{A}_{\star,\star}$. These monomials then specify a map
\begin{equation}\label{eq:psi}
\Psi: \bigoplus_{\alpha} \Tb^{\otimes q_{\alpha}} \otimes H\Fb_p[p_{\alpha} - 2q_{\alpha}] \rightarrow H\Fb_p \otimes H\Fb_p. 
\end{equation}
Over fields where $p$ is invertible, this map is an equivalence by \cite{voevodsky:2003a,HKO}, but this remains unknown over characteristic $p > 0$ fields. Since we do not know how exactly the dual Steenrod algebra looks like, we cannot really define the power operations by (re)taking duals. Nonetheless, the main result of \cite{SpitzweckFrankland} shows that $\Psi$ is a \emph{split monomorphism} over any field. This suggests that there is some hope in producing power operations by taking duals.

This strategy was executed by Primozic. Let $i: \Spec(\Fb_p) \hookrightarrow \Spec(\Zb_p)$ and $j: \Spec(\Qb_p) \hookrightarrow  \Spec(\Zb_p)$. Then we have a splitting in $\SH(\Fb_p)$\footnote{Actually everything works over any discrete valuation ring of mixed characterisitc $(0, p)$.}: 
\begin{equation}\label{eq:splitting}
i^*j_*H\Fb_p \simeq H\Fb_p \oplus \Tb^{\otimes -1}H\Fb_p[1],
\end{equation}
which is also ultimately responsible for the splitting of $\Psi$ discussed above. This result is a consequence of absolute purity for the pair $(\Spec(\Zb_p), \Spec(\Fb_p))$; the latter is an ingredient we also use in our approach. Writing $\pi: i^*j_*H\Fb_p  \rightarrow H\Fb_p$ for the projection onto the first coordinate, Primozic sets up a map of graded abelian groups: $\Phi: H\Fb_{p,\Zb_p}^{\star,\star}H\Fb_{p,\Zb_p} \rightarrow H\Fb_{p,\Fb_p}^{\star,\star}H\Fb_{p,\Fb_p}$ by taking an element $f \in H\Fb_{p,\Zb_p}^{\star,\star}H\Fb_{p,\Zb_p}$ and sending it to the composite:
\begin{equation}\label{eq:phi(f)}
\begin{tikzcd}
\Phi(f): H\Fb_p \ar{r}{i^*\mathrm{unit}} &  i^*j_*H\Fb_p \ar{r}{i^*j_*(f)} & \Sigma^{a,b}i^*j_*H\Fb_p \ar{r}{\pi} & \Sigma^{a,b}H\Fb_p.
\end{tikzcd}
\end{equation}
So, for example, taking $f = \P^n$ constructs $\Phi(\P^n)$ which he declares to be the $n$-th power operation. He then verifies the usual Adem and Cartan relations, but can only do so \emph{up to error terms} which disappears on classes along the Chow line. These error terms has to do with the fact that $\Psi(f)$ involves projecting $i^*j_*H\Fb_p$ onto the summand without the shift via $\pi$. 

The goal of this paper and the punchline of our story is that: \emph{we can delete the orthogonal complement by going very deeply ramified}. The arithmetically-minded audience is surely unsurprised by this, by now, common technique in arithmetic geometry. 

\subsection{Applications} We now discuss some applications of our Steenrod operations. 

\subsubsection{Syntomic realizations}

Beyond verifying the expected Adem and Cartan relations outside of the Chow line, let us highlight one immediate virtue in our approach to motivic Steenrod operations: it also produces syntomic cohomology operations. An environment to do motivic homotopy theory over base scheme $S$ without $\Ab^1$-invariance was introduced by the first author and Iwasa \cite{annala-iwasa:MotSp} and later developed by the first author, Hoyois and Iwasa \cite{AHI}. In particular, we have a fully faithful embedding $\SH_S \subset \MS_S$. This environment is christened \emph{motivic spectra} and is denoted by $\MS_S$. One immediate advantage of $\MS_S$ is that in characteristic $p > 0$, its \'etale sheafification is not $p$-adically zero, this is in stark contrast to the $\Ab^1$-invariant situation as explained in \cite[Appendix A]{bachmann-hoyois-et} (see also \cite[Lemme 3.10]{ayoub-realisation} and \cite[Proposition A.3.1]{cisinski-deglise-etale}). Indeed, $\MS^{\text{\'{e}t}}_S$, the \'etale-sheafified version of motivic spectra, seems to provide a good theory of \'etale motivic spectra in all characteristics and provides a setting where Clausen's Selmer $K$-theory is representable \cite[Section 5.4]{annala-iwasa:MotSp}.

Étale sheafification in $\MS$ produces a map of graded $\Eb_1$-algebras
\[
\mathrm{end}_{\SH_{\Fb_p}}(H\Fb_p, \Tb^{\otimes \star} \otimes H\Fb_p) \simeq \mathrm{end}_{\MS_{\Fb_p}}(H\Fb_p, \Tb^{\otimes \star} \otimes H\Fb_p) \rightarrow \mathrm{end}_{\MS_{\Fb_p}}(H\Fb_p^{\syn}, \Tb^{\otimes \star} \otimes H\Fb_p^{\syn}),
\]
where $H\Fb_p^{\syn}$ denotes the motivic spectrum that represents the mod-$p$ syntomic cohomology (see Proposition~\ref{prop:etale}). From this, we obtain mod-$p$ power operations $\P^i$ acting on the mod-$p$ syntomic cohomology, and in particular an algebra $\Ac^{\star, \star}_\syn$ acting on $H\Fb_p^{\syn}$ that is generated by the $\P^i$ and the Bockstein $\beta$ modulo the Adem relations. This, \emph{syntomic Steenrod algebra} has been discovered independently in an upcoming work of Shachar Carmeli and Tony Feng \cite{carmeli-feng}, where they use the syntomic power operations to prove the last remaining case of the Tate conjectures on the Artin-Tate pairing on Brauer groups.

\subsubsection{Geometric applications in positive characteristics} A general type of problems that Steenrod operations have been used to address takes the following form:

\begin{quest}\label{quest:geometric} If $H^n(X)$ is a cohomology theory on $k$-variety, is there an ample supply of geometric classes that control $H^n(X)$?
\end{quest}

We outline answers to the above question in three different, but not unrelated settings.
\begin{enumerate}
\item If $X$ is a smooth $k$-scheme and $H^n(X) = \mathrm{CH}^n(X) \otimes \Lambda$ is the Chow group of codimension $n$-cycles with coefficients in a ring $\Lambda$, one asks if a cycle can be represented by a linear combination of cycles whose components are \emph{nonsingular}; this is an algebro-gemetric counterpart to the topological question of Borel--Haefliger \cite{borel-haefliger}. This was resolved in the negative by \cite{hartshorne-rees-thomas} over the complex numbers and with integral coefficients. The problem has received renewed attention in recent years when $\Lambda = \Qb$ due to the seminal work of Kollar and Voisin \cite{kollar-voisin} which proves a positive result for cycles below the middle dimension. 
\item The oldest of these is Steenrod's realization problem \cite{eilenberg}: here $X$ is possibly singular complex variety and $H^n = H^n_{\mathrm{sing}}(-;\Lambda)$ is singular cohomology with some coefficient ring $\Lambda$. A variant of the question asks if every class can be represented as the pushforward of the fundamental class of a manifold mapping into $X$. With mod-$p$ coefficients, this was settled in the negative in Thom's thesis \cite{thom-thesis}.
\item Let $X$ be a smooth projective variety over the complex numbers or a finitely generated field. Over the complex numbers we have the Hodge conjecture, in which case $H^{n} = H^{n,n}$ the group of Hodge cycles. Otherwise, over certain fields, we have the Tate conjecture in which case $H^n$ is the group of Tate cycles, defined as the subgroup of geometric $\ell$-adic cohomology which are fixed by the Galois group. These deep conjectures ask that these groups are spanned by algebraic cycles. There is an industry of counterexamples to the integral versions of these conjectures, beginning with Atiyah--Hirzebruch's counterexamples \cite{atiyah-hirzebruch}; we also note Schoen's formulation of the integral version of the Tate conjectures in \cite{schoen} and the $\ell$-adic analog of Atiyah--Hirzebruch's counterexamples by Colliot-Th\'{e}l\`ene--Szamuely \cite{ct-szamuely}.
\end{enumerate}

In \S\ref{sec:appl} we investigate mod-$p$ variants of all of the above questions. We provide integral counterexamples to optimistic versions of these problems based on the construction of the Steenrod operations. We remark that our counterexamples are \emph{somewhat different} from the classical/$\ell$-adic ones (namely, the ones constructed in \cite{hartshorne-rees-thomas, thom-thesis, atiyah-hirzebruch, ct-szamuely}), though they do build on the geometric insights of these works. In particular our counterexamples highlight some of the new phenomena that occurs when working with $p$-adic coefficients.

\subsubsection{Wu's formula} One of the key results about classical power operations is the \emph{Wu formula}: writing $\P := \sum \P^0 + \P^1 + \cdots + \P^n + \cdots$ for the total power operations on cohomology it measures the difference between $\P$ and the pushforward $f_!$ defined for an appropriate map of topological spaces. Indeed, the characteristic class known as \emph{Wu classes} measures the difference between $\P(f_!)$ and $f_!(\P)$. Primozic proved the Wu formula was for the Chow groups in \cite[Proposition 7.1]{primozic:2020}. Our approach upgrades this to all of motivic and syntomic cohomology.

\begin{thm}\label{thm:wu-intro} Let $X \rightarrow Y$ be projective, quasi-smooth morphism over a common base $\Fb_p$-scheme $S$. the formula
\[
\P(f_!(x)) = f_!(w(\Lbf_{X/Y}) \cdot \P(x)).
\]
holds in both mod-$p$ syntomic and motivic cohomology; here $w$ denotes the total \emph{total Wu class} of a virtual vector bundle/perfect complex.
\end{thm}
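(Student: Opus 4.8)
The plan is to reduce the identity to two universal cases --- quasi-smooth closed immersions and projections of projective bundles --- and verify each by hand; once the power operations of the body are in place, the Wu formula is a fairly formal consequence, exactly as in the topological and $p$-invertible motivic settings. Recall the ingredients. For projective quasi-smooth $f$, purity identifies $f^{!}\unit\simeq\Sigma^{\Lbf_{X/Y}}\unit$, and the trace of the resulting adjunction produces the Gysin map $f_!$, which concretely (after a factorization) is the composite of a Thom isomorphism with a collapse map; it is functorial in $f$ and satisfies the projection formula. The total power operation $\P=\sum_i\P^i$ is, by construction, induced by classes in $\bigoplus_\star\mathrm{end}(H\Fb_p,\Tb^{\otimes\star}\otimes H\Fb_p)$, so it commutes with pullbacks and with the collapse maps entering the definition of $f_!$, and it satisfies the Cartan formula $\P(xy)=\P(x)\P(y)$; all of this holds verbatim for $H\Fb_p^{\syn}$ via the étale-sheafification map of Proposition~\ref{prop:etale}.

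First I would check that both sides of the formula are compatible with composition of projective quasi-smooth maps $X\xrightarrow{g}Y\xrightarrow{h}Z$: functoriality gives $(hg)_!=h_!g_!$, the Cartan formula gives $\P(g^*(\alpha)\cdot x)=g^*\P(\alpha)\cdot\P(x)$, and the transitivity triangle $g^*\Lbf_{Y/Z}\to\Lbf_{X/Z}\to\Lbf_{X/Y}$ together with multiplicativity of the total Wu class in distinguished triangles gives $w(\Lbf_{X/Z})=g^*w(\Lbf_{Y/Z})\cdot w(\Lbf_{X/Y})$; the projection formula then glues the formulas for $g$ and $h$ into one for $hg$. Since any projective morphism factors as a quasi-smooth closed immersion $X\hookrightarrow\Pb^n_Y$ followed by the projection $\Pb^n_Y\to Y$ (the immersion is quasi-smooth by cancellation), it suffices to treat these two cases.

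For a quasi-smooth closed immersion $i\colon Z\hookrightarrow X$ with normal bundle $N$, so that $\Lbf_{Z/X}\simeq N^\vee[1]$, write $i_!(x)$ as the image of $U_N\cdot x$ under the collapse map $H\Fb_p^{\star,\star}(\Th(N))\to H\Fb_p^{\star,\star}(X)$, where $U_N$ is the Thom class. Since $\P$ commutes with the collapse map and with products, $\P(i_!(x))$ is the image of $\P(U_N)\cdot\P(x)$, reducing everything to the computation of $\P(U_N)$. By the splitting principle --- pull back to the flag bundle of $N$, over which $N$ is built from line bundles $L_a$ and the pullback on cohomology is split injective --- and multiplicativity of Thom classes, this reduces to a single line bundle $L$: pulling back along the zero section carries $U_L$ to its Euler class $c_1(L)\in H^{2,1}$, and the instability relation gives $\P(c_1(L))=c_1(L)+c_1(L)^p$, whence $\P(U_L)=U_L\cdot(1+c_1(L)^{p-1})$. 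Therefore $\P(U_N)=U_N\cdot\prod_a(1+c_1(L_a)^{p-1})$, and by the normalization of the total Wu class this product is $w(N^\vee[1])=w(\Lbf_{Z/X})$. Transporting back through the collapse map yields $\P(i_!(x))=i_!\bigl(w(\Lbf_{Z/X})\cdot\P(x)\bigr)$.

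For the projection $p\colon\Pb^n_Y\to Y$, the projective bundle formula gives $H\Fb_p^{\star,\star}(\Pb^n_Y)=\bigoplus_{k=0}^nH\Fb_p^{\star,\star}(Y)\cdot\xi^k$ with $\xi=c_1(\mathcal{O}(1))$, $p_!(\xi^k)=\delta_{k,n}$, and $p_!$ is $H\Fb_p^{\star,\star}(Y)$-linear; using the projection formula to strip off coefficients, the Cartan formula, and $\P(\xi)=\xi+\xi^p$, the Wu formula on $\xi^k$ becomes the identity $p_!\bigl(w(\Lbf_{\Pb^n/Y})\cdot\xi^k(1+\xi^{p-1})^k\bigr)=\delta_{k,n}$. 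The relative Euler sequence identifies $\Lbf_{\Pb^n/Y}$ with $\mathcal{O}(-1)^{\oplus(n+1)}-\mathcal{O}$ in $K$-theory, so multiplicativity gives $w(\Lbf_{\Pb^n/Y})=(1+\xi^{p-1})^{-(n+1)}$, and the identity reduces to $p_!\bigl(\xi^k(1+\xi^{p-1})^{k-n-1}\bigr)=\delta_{k,n}$, i.e.\ to the vanishing mod $p$ of $\binom{k-n-1}{j}=(-1)^j\binom{jp}{j}$ for $j=(n-k)/(p-1)\geq1$ --- an instance of Kummer's theorem. The case $\Pb(\mathcal{E})\to Y$ for a nontrivial bundle follows by the splitting principle (or the same computation with the relative Euler sequence for $\Pb(\mathcal{E})$). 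The hard part is not these computations but the bookkeeping of the normalization of $w$: because $\Lbf_f$ enters with a shift and the total Wu class is invariant under duality yet inverted under suspension, one must track carefully that it is $w(\Lbf_f)$ --- and not its inverse --- that appears, which is precisely what pins down the normalization $w(L)=(1+c_1(L)^{p-1})^{-1}$ used above; the remaining subtlety is assembling the purity and orientation package for $H\Fb_p^{\syn}$ in the non-$\Ab^1$-invariant setting $\MS_{\Fb_p}$, which is available via Proposition~\ref{prop:etale} and the known structure of syntomic cohomology.
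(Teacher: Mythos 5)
Your proposal is correct and follows essentially the same route as the paper. The paper's proof isolates a general Riemann--Roch theorem (Theorem~\ref{thm:RRInMS}) for any map $\varphi\colon E\to F$ of homotopy-commutative oriented ring spectra in $\MS^{?}_S$ --- reducing to closed immersions (via the Thom-class formula $\varphi(t(\Ec))=\Td^{-1}_\varphi(\Ec)\cdot t(\Ec)$ and the splitting principle, Lemmas~\ref{lem:ChernClassFormula}--\ref{lem:ThomClassFormula}) and to $\Pb^n_Y\to Y$ (outsourced to the argument of \cite{deglise-orientations}) --- and then applies it with $\varphi=\P\colon H\Fb_p\to H\Fb_p^{\mathrm{tot}}$, which is a ring map precisely because of the Cartan formula; you inline the same two reductions and the same splitting-principle and projective-bundle computations directly for $\P$, without first packaging them as a statement about arbitrary $\varphi$. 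The one place you are right to flag a subtlety is the Todd-vs-Wu normalization: the paper defines $w(\Ec)=t(\Ec)^{-1}\P(t(\Ec))=\Td_\P^{-1}(\Ec)$ and Remark~\ref{rem:todd-vs-wu} quietly uses $\Lbf_f\simeq\Nc_f[1]$ to convert $\Td_\P(\Lbf_f)$ into $w(\Lbf_f)$, so it really is the cotangent complex (with its shift) that goes into $w$, as you tracked at the end.
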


Theorem~\ref{thm:wu-intro} is proved as Corollary~\ref{cor:total-power-ops} after an appropriate definition of the Wu class in Remark~\ref{rem:todd-vs-wu}. In a sequel to this paper, this result will be key in investigating the difference between various notions of the coniveau filtration on crystalline cohomology.

\subsection{A comment on the Hopkins--Morel isomorphism}

We finish off this introduction with a few words about our original motivation in writing this paper (which we, unfortunately, did not manage to do).  Many specialists in motivic homotopy theory, one way or another, are trying to study the following conjecture:
\begin{con}\label{conj:steenrod-span} For all fields, $F$, the injection $\mathcal{A}^{\star,\star}_{F} \subset H\Fb_{p,F}^{\star,\star}H\Fb_{p,F}$ is in fact an equality. 
\end{con}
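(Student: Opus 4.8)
The plan we have in mind proceeds as follows. First, one reduces to the prime field. The algebra $\mathcal{A}^{\star,\star}_{F}$ is by construction the base change of $\mathcal{A}^{\star,\star}_{\Fb_p}$ along $H^{\star}_{\mot}(\Fb_p;\Fb_p(\star))\to H^{\star}_{\mot}(F;\Fb_p(\star))$, while the pullback $\SH_{\Fb_p}\to\SH_{F}$ is symmetric monoidal, carries $H\Fb_p$ to $H\Fb_p$, and (both spectra being cellular) is compatible with the relevant homotopy groups, so that $H\Fb_{p,F}^{\star,\star}H\Fb_{p,F}$ is likewise the base change of $H\Fb_{p,\Fb_p}^{\star,\star}H\Fb_{p,\Fb_p}$; here one uses that over $\Fb_p$ there is no $\tau$, so $\pi_{\star,\star}H\Fb_{p,\Fb_p}$ is $\Fb_p$ concentrated in bidegree $(0,0)$ and cohomology is the naive $\Fb_p$-linear dual of homology. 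Since $H^{\star}_{\mot}(F;\Fb_p(\star))=K^{M}_{\star}(F)/p$ is a nonzero, hence faithfully flat, $\Fb_p$-algebra, the inclusion is an equality over $F$ if and only if it is one over $\Fb_p$, so we may take $F=\Fb_p$.

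Over $\Fb_p$, by the universal-coefficient identification just used, Conjecture~\ref{conj:steenrod-span} becomes the assertion that the split monomorphism $\Psi$ of~\eqref{eq:psi} --- which Spitzweck--Frankland produce over every field --- is an equivalence; equivalently, that $H\Fb_p\otimes H\Fb_p$ is a wedge of Tate twists and shifts of $H\Fb_p$ with exactly the multiplicities recorded by the monomials $\omega(\alpha)$, or that the summand orthogonal to the image of $\Psi$ is zero. Over fields in which $p$ is invertible this is Voevodsky's computation of the dual motivic Steenrod algebra \cite{voevodsky:2003a,HKO}; in characteristic $p$ only the split monomorphism is available. The plan is to deduce the conjecture from the \emph{mod-$p$ Hopkins--Morel isomorphism over $\Fb_p$}, namely that the canonical map
\[
\MGL/(p,a_1,a_2,\dots)\longrightarrow H\Fb_p
\]
is an equivalence, the $a_i$ being polynomial generators of $\MGL_{2\star,\star}=\Lb$. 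Granting this, one runs Hoyois's derivation of the dual Steenrod algebra from the Hopkins--Morel isomorphism: $\MGL_{\star,\star}\MGL$ over $\Fb_p$ is the expected free $\MGL_{\star,\star}$-algebra on classes $b_i$ (the motivic analogue of $\MU_{\ast}\MU=\MU_{\ast}[b_i]$, valid over regular bases independently of Hopkins--Morel); quotienting both factors by $(p,a_i)$ and running the resulting $\mathrm{Tor}$-spectral sequence computes $\pi_{\star,\star}(H\Fb_p\otimes H\Fb_p)$ and exhibits precisely the $\omega(\alpha)$-basis, so $\Psi$ is an equivalence.

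For the mod-$p$ Hopkins--Morel isomorphism itself, the map $\MGL/(p,a_i)\to H\Fb_p$ is a map of effective motivic spectra over $\Fb_p$ which is an isomorphism ``on the diagonal,'' since $\Lb/(p,a_1,a_2,\dots)=\Fb_p$ forces $\pi_{2\star,\star}(\MGL/(p,a_i))$ to be $\Fb_p$ in weight $0$. The plan is to promote this to an isomorphism on every slice: Spitzweck's computation gives $s_q\MGL\simeq\Sigma^{2q,q}(H\Zb\otimes\Lb_q)$ over $\Fb_p$, with $\Lb_q$ the weight-$q$ part of the Lazard ring, and one would check that forming $/(p,a_i)$ is compatible with the slice filtration, so that $s_q(\MGL/(p,a_i))$ is $H\Fb_p$ for $q=0$ and vanishes for $q>0$ because $(\Lb/(a_i))_q=0$ there; a map of effective spectra that is an equivalence on all slices is an equivalence.

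The hard part --- and the reason this is still open --- is precisely the step at which characteristic $p$ diverges from characteristic $0$: controlling the interaction of the quotient $/(p,a_i)$ with the slice filtration, equivalently the convergence of the mod-$p$ slice spectral sequence, in characteristic $p$. This is the same obstruction flagged in Remark~\ref{rem:power}. Geometrically, Voevodsky's total power operation $H\Fb_p(X)\to H\Fb_p(X\times B\Sigma_p)$ cannot be formed because $B\Sigma_p$ is $\Ab^1$-contractible; what one wants is a genuine flat $\mu_p$-norm on $H\Fb_p$ yielding a total $\mu_p$-power operation $H\Fb_p(X)\to H\Fb_p(X\times B\mu_p)$, from which Voevodsky's original argument --- the cell structure of $B\mu_p$ together with its mod-$p$ motivic cohomology --- would show the power operations span, i.e.\ would give the conjecture directly. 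I expect the most realistic route is to construct this norm, plausibly by descending a $\mu_p$-norm from the syntomic/prismatic world inside the category $\MS_{\Fb_p}$ of motivic spectra of \cite{annala-iwasa:MotSp,AHI}, where mod-$p$ \'etale sheafification is not zero, and then transporting it along $\SH_{\Fb_p}\subset\MS_{\Fb_p}$; arranging the compatibility that makes such a construction yield an honest $\Ab^1$-invariant operation rather than merely the syntomic one of \cite{carmeli-feng} is, I believe, the crux. Finally, note that the injectivity half of the comparison is exactly the content of Theorem~\ref{thm:main1}, and the deeply-ramified argument used there does not by itself give surjectivity: a priori the characteristic-$p$ endomorphism ring could be strictly larger than the part descending from $\SH_{\Kb}$, and excluding this is essentially the conjecture.
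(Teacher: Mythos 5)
You have correctly identified that this statement is an open conjecture: the paper never proves it. Indeed, the authors explicitly write in \S 1.4 that they ``did not manage to'' settle Conjecture~\ref{conj:steenrod-span}, and Theorem~\ref{thm:main1} (injectivity, via Corollary~\ref{cor:admiss}) is the only half they establish. So there is no ``paper proof'' for your argument to be compared against; what can be compared is the two strategies for closing the remaining gap.

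Your route and the paper's route are genuinely different, and the direction of one implication is reversed. The paper's proposed (but unrealized) approach is: prove absolute purity for the absolute motivic spectrum $E = H\Fb_p\otimes H\Fb_p$ (or $\colim_n\MGL/(p,a_1,\dots,a_n)$) at the special fibre of a mixed-characteristic DVR; then the infinite-ramification theorem (Theorem~\ref{thm:Detroit}) gives $E/p\simeq j_*E_{\Kb}/p$, so the known characteristic-zero answer descends to $\Fb_p$. The paper records the Hoyois implication ``Conjecture~\ref{conj:steenrod-span}~$\Rightarrow$~Conjecture~\ref{conj:Hopkins--Morel}'' and states frankly that they have no strategy for the needed absolute purity. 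You instead propose to start from the mod-$p$ Hopkins--Morel equivalence, compute $\pi_{\star,\star}(H\Fb_p\otimes H\Fb_p)$ by a Tor spectral sequence over $\MGL_{\star,\star}$, and dualize; and to obtain mod-$p$ Hopkins--Morel itself by slice comparison. This reverses the arrow the paper cites, and it trades the paper's hard ingredient (absolute purity for $H\Fb_p\otimes H\Fb_p$) for a different hard ingredient (the compatibility of the $/(p,a_i)$ quotient with the slice filtration, i.e.\ convergence of the mod-$p$ slice spectral sequence in characteristic $p$). Your alternative route via a flat $\mu_p$-norm on $H\Fb_p$ is exactly the missing structure flagged in Remark~\ref{rem:power} and is not constructed in the paper either. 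Two minor technical cautions worth recording: your reduction to $F=\Fb_p$ implicitly asserts that the endomorphism ring $H\Fb_{p,F}^{\star,\star}H\Fb_{p,F}$ is literally the scalar extension of the one over $\Fb_p$, which requires a finiteness/freeness statement for the dual Steenrod algebra that is close to what you are trying to prove; and the claim that $\MGL_{\star,\star}\MGL$ has the expected Landweber form over $\Fb_p$ independently of Hopkins--Morel deserves a reference or argument. You are honest that none of this closes the gap, which is consistent with the paper: the deeply-ramified argument yields Theorem~\ref{thm:main1} (injectivity) but, as you say, it does not by itself exclude the endomorphism ring over $\Fb_p$ being strictly larger than the part descending from $\SH_{\Kb}$.
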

This is related to the following central question in motivic stable homotopy theory.

\begin{con}[The Hopkins--Morel isomorphism aka the motivic Quillen theorem]\label{conj:Hopkins--Morel} Let $F$ be a field. Then the map in $\SH_F$:
\[
\colim_n \MGL/(a_1, \cdots, a_n) \rightarrow H\Zb,
\]
is an equivalence. Here the symbols $a_i$ are generators of the Lazard ring. 
\end{con}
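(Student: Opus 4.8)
The plan is to run the Hopkins--Morel--Hoyois strategy, isolate the one step that fails in positive characteristic, and then attempt to repair it by descent along the infinitely ramified tower that drives this paper.

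\emph{Reduction.} The Lazard ring $\Lb=\pi_{2*,*}\MGL$ classifies formal group laws, with the additive law corresponding to the quotient $\Lb\to\Lb/(a_1,a_2,\dots)=\Zb$. Since $H\Zb$ is canonically oriented with additive formal group law, the orientation $\MGL\to H\Zb$ kills the $a_i$ and factors through a canonical map of ring spectra $\phi\colon M\Zb:=\colim_n\MGL/(a_1,\dots,a_n)\to H\Zb$. As $\MGL$ and $H\Zb$ are stable under essentially smooth base change and $\SH$ is continuous, and every finitely generated extension of the perfect field $\Fb_p$ is the function field of a smooth variety, $\phi_F$ is the base change of $\phi_{\Fb_p}$ for any field $F$ of characteristic $p$; the characteristic-$0$ case being Hoyois's theorem, it suffices to treat $F=\Fb_p$. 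Finally, $\phi$ is an equivalence after inverting $p$ (Hoyois), so by arithmetic fracture it is enough to show $\phi_{\Fb_p}$ is an equivalence after $p$-completion.

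\emph{Slices.} Over the perfect field $\Fb_p$ one has $s_0(\unit)\simeq H\Zb$ (Levine, Voevodsky), and hence $s_*(\MGL)\simeq H\Zb\otimes_{\Zb}\Lb$ with the generator $a_i$ placed in Tate weight $i$ (Spitzweck, Hoyois); this identification is integral. Since $(a_1,a_2,\dots)$ is a regular sequence in $\Lb=\Zb[a_1,a_2,\dots]$, the argument of Hoyois gives $s_*(M\Zb)\simeq H\Zb$ concentrated in weight $0$, with $\phi$ realizing this identification. Thus $s_*(\phi)$, and its $p$-completion, is an equivalence.

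\emph{Convergence --- the main obstacle.} Both $M\Zb$ and $H\Zb$ are effective, so their slice towers $\{f_q(-)\}$ are exhaustive and $H\Zb$ is slice-complete; an equivalence on all slices then upgrades to an equivalence of $p$-complete spectra as soon as $M\Zb$, equivalently $\MGL$, is $p$-adically slice-complete over $\Fb_p$, i.e.\ $\lim_q f_q(\MGL^{\wedge}_p)\simeq 0$. In characteristic $0$ this is exactly where Hoyois uses resolution of singularities, through Levine's slice-convergence theorem; the only available substitute in characteristic $p$ is Gabber's theorem on alterations, whose degrees are divisible by $p$ --- precisely why the known statements must invert $p$. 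I expect this $p$-complete slice convergence over $\Fb_p$ to be the genuine difficulty: it is essentially equivalent to the conjecture, and it is intertwined with the very pathology exploited elsewhere in this paper, namely that $H\Fb_p$ misbehaves $p$-adically (for instance, vanishes after \'etale sheafification).

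\emph{A route via infinite ramification.} A possible way around the previous step is to descend from characteristic $0$. Work over $\Oc=\Zb_p[p^{1/p^\infty}]=\colim_n\Oc_n$ with $\Oc_n=\Zb_p[p^{1/p^n}]$, a valuation ring with residue field $\Fb_p$ and fraction field $\Kb=\Qb_p[p^{1/p^\infty}]$ of characteristic $0$. At each discretely valued stage $\Oc_n$, absolute purity for $\MGL$ and for $H\Zb$ provides a localization splitting $i_n^{*}j_{n*}(-)_{\Kb_n}\simeq(-)_{\Fb_p}\oplus(\mathrm{error})$ parallel to~\eqref{eq:splitting}, and --- as in the body of this paper --- the transition maps along the ramified tower multiply the error summand by the ramification index, so that $p$-completely it dies in the colimit. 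Since $\phi_{\Kb}$ is an equivalence by Hoyois, transporting along $i^{*}j_{*}$ and projecting onto the surviving summand would identify the $p$-completions of $\phi_{\Fb_p}$ and $\phi_{\Kb}$, and hence prove the conjecture. The sticking point is that one needs $\MGL^{\wedge}_p$ and $(H\Zb)^{\wedge}_p$ to satisfy the requisite purity uniformly along the non-Noetherian limit $\Oc$, and $j_{*}$ to commute suitably with the colimit of base schemes; unwinding this leads back to a statement of essentially the depth of the convergence step, and the conjecture remains, to our knowledge, open.
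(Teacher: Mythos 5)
The statement you have been asked to ``prove'' is labelled a conjecture in the paper, and the paper provides no proof of it. The authors explicitly say: ``Unfortunately, the authors of the present paper have no strategy to prove absolute purity in either of these cases.'' Your proposal correctly recognizes this --- you conclude that ``the conjecture remains, to our knowledge, open'' --- and your discussion of where the Hoyois strategy breaks down (slice convergence in characteristic $p$, resolution of singularities vs.\ Gabber's $p$-alterations) together with the proposed descent along the infinitely ramified tower is essentially the paper's own discussion in \S1.4. So there is nothing to compare against: there is no proof.

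A few points in your sketch are imprecise relative to the paper's own discussion and are worth flagging. First, the paper does not ask for absolute purity of $\MGL$; it asks for absolute purity of the much smaller spectrum $E = \colim_n \MGL/(p, a_1, \dots, a_n)$ (or of $H\Fb_p \otimes H\Fb_p$), which is a genuinely weaker and more plausible condition --- indeed $E$ should eventually be $H\Fb_p$, for which the relevant purity is Levine's Theorem~\ref{thm:abs-pur}. Demanding purity for $\MGL$ itself is likely overkill and may well be false. Second, in the descent step you say one needs purity ``uniformly along the non-Noetherian limit $\Oc$''; in fact the mechanism of Theorem~\ref{thm:Detroit} only needs purity at each discretely valued stage $\Oc_n$ (where it makes sense), and the passage to the non-Noetherian $\Oc$ is handled by finitariness of absolute motivic spectra (\cite[Proposition~C.12(4)]{hoyois:2014}), not by a uniform purity statement. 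Third, the statement $i^*j_*(-) \simeq (-) \oplus (\text{error})$ you invoke is the Primozic-style splitting from a \emph{single} DVR (Eq.~\eqref{eq:splitting}), whereas the point of the paper's argument (Lemma~\ref{lem:root-canal}) is that along the ramified tower the error term becomes $p$-divisible, so that after $p$-completion one gets the cleaner equivalence $E/p \simeq j_*E_\Kb/p$ over $\Oc$ directly, with no complement to project away. These are corrections to the framing, not to the conclusion --- which, again, is that the conjecture is open.
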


Because $H\Zb$ is stable under base change, it suffices to prove Conjecture~\ref{conj:Hopkins--Morel} over prime fields. Conjecture~\ref{conj:Hopkins--Morel} has been resolved after inverting the exponential characteristic of $k$ thanks to work of Hoyois \cite{hoyois:2013}; in characteristic zero this was proved in unpublished work of Hopkins and Morel. Among other things, Conjecture~\ref{conj:Hopkins--Morel} builds motivic filtrations on algebraic cobordism and other related theories (those which are Landweber exact) \cite{spitzweck1,spitzweck2}; work of Levine \cite{levine-comparison} also determines the slices of the motivic sphere spectrum up to this conjecture. 

The method of proof in \cite{hoyois:2013} essentially shows that Conjecture~\ref{conj:steenrod-span} implies Conjecture~\ref{conj:Hopkins--Morel}. We explain how the methods of the present paper makes precise the relationship between these conjectures and absolute purity for the absolute motivic spectra:
\[
E = H \Fb_p \otimes H\Fb_p \qquad \text{or} \qquad \colim_n \MGL/(p, a_1, \cdots, a_n).
\]
We are very grateful to Jacob Lurie for clarifying our thoughts on these matters.

In the notation of \S\ref{sec:purity}, if absolute purity for either of these two theories can be established just for $i: X/\pi \hookrightarrow X$ where $X$ is smooth over $D$, a mixed characteristic $(0,p)$ discrete valuation ring with uniformizer $\pi$ then Theorem~\ref{thm:Detroit} to proves an equivalence
\[
E/p \xrightarrow{\simeq} j_*E_{\Kb}/p
\]
for $j \colon \Spec(\Zb_p[p^{1/p^\infty}, p^{-1}]= \Kb) \hook \Spec(\Zb_p[p^{1/p^\infty}] )$. Since $\Kb$ is a characteristic zero field, we know exactly how $E/p_{\Kb}$ looks either by \cite{voevodsky:2003a} (which proves that the map~\eqref{eq:psi} is an equivalence) or \cite{hoyois:2013} (which identifies the colimit with $H\Fb_p$). Since all the objects in sight are stable under base change, we can then establish Conjecture~\ref{conj:steenrod-span} or Conjecture~\ref{conj:Hopkins--Morel} over $\Fb_p$. Unfortunately, the authors of the present paper have no strategy to prove absolute purity in either of these cases.
%
%
%

\subsection{Notation} In the introduction, we have denoted by $H \Zb, H\Fb_p$, the $\Ab^1$-invariant motivic cohomology extracted from $\Ab^1$-invariant motivic stable homotopy theory. To avoid confusion with the non-$\Ab^1$-invariant theories which will appear in \S\ref{sec:appl} we will denote these $\Ab^1$-invariant motivic spectra by $H\Zb^{\Ab^1}, H\Fb_p^{\Ab^1}$ and so on, starting from \S\ref{sec:background}.

In $\SH_X$ we denote by $\Tb_X \in \SH_X$ the \emph{Tate motive}, i.e., the motive of reduced $\Pb^1$. We write out Tate twists and shifts in full, following \cite{bem}. In usual motivic homotopy theory notation:
\[
\Tb^{\otimes q}_X \otimes E[p-2q] = \Sigma^{p,q}E. 
\]
We often omit the subscript $X$ from the notation if $X$ is clear from the context. We make free use of the Thom spectrum construction
\[
\mathrm{Th}_X: K(X) \rightarrow \mathrm{Pic}(\SH_X) \subset \SH_X;
\]
noting that if $\Ec$ is a locally free sheaf of finite rank on $X$, then $\mathrm{Th}_X(\Ec)$ is the cofibre in $\SH(X)$ of the map induced by the hyperplane inclusion $\Pb_X(\Ec) \rightarrow \Pb_X(\Ec \oplus \Oc)$. 

In \S\ref{sec:appl} we use the theory of non-$\Ab^1$-invariant motivic spectra $\MS_X$ as developed in \cite{annala-iwasa:MotSp, AHI}. In this situation, for a qcqs equicharacteristic scheme $X$  we write $H \Zb, H\Fb_p \in \MS_X$ for the non-$\Ab^1$-invariant motivic cohomology constructed by the second author and Morrow over fields \cite{elmanto-morrow}. Independently, Kelly--Saito \cite{kelly-saito} developed a related theory for noetherian $\Fb_p$-schemes of finite Krull dimension, defined via the procdh-local left Kan extension of $\Ab^1$-invariant motivic cohomology from smooth schemes, and showed that the two theories agree in this generality. Both perspectives will play a role in what follows. Singular schemes will appear only starting in \S\ref{sec:againstmgl}. Note, however, that $X$ is furthermore assumed to be a regular noetherian scheme (e.g. $S$ is a field) then $H \Zb \in \SH_S \subset \MS_X$ and coincides with $H\Zb^{\Ab^1}$ by \cite[Theorem 6.1]{elmanto-morrow}. We also have the Thom spectrum construction in $\MS_X$:
\[
\mathrm{Th}_X: K(X) \rightarrow \mathrm{Pic}(\MS_X) \subset \MS_X,
\]
whose values on an locally free sheaf is the same one described in $\SH_X$. Furthermore, $\SH_X$ and $\MS_X$ are naturally enriched over spectra and we write $\mathrm{maps}(E, F)$ (resp. $\mathrm{end}(E)$) for the spectrum of maps (resp. endomorphisms) between two objects.

Lastly, all schemes in this paper are quasicompact and quasiseparated and we denote by $\Sch$ the category of qcqs schemes. 

\subsection*{Acknowledgments}

This project was initiated when both authors were in residence at the Institute for Advanced Study, Princeton. TA was supported by supported by the National Science Foundation Grant No. DMS-1926686. EE was supported by an Erik Ellentuck fellowship from the Institute of Advanced Study, and the NSERC Discovery grant RGPIN-2025-07114. 

We would like to express gratitude for the Institute for providing excellent working conditions. Special thanks go to the staff at Rubenstein Commons --- Mike and Brendan --- who provided a friendly atmosphere to work late into the Princeton evenings and a good supply of libations for optimism and inspiration. Ideas around a motivic Nizio{\l} theorem came to EE somewhere between a sandwich run in Tatte and a biology lab hosted at the ``EXP'' at Northeastern. EE thanks ER for facilitating the environment and whimsey for this insight to occur. 

Lastly, we would also like to thank Tony Feng for discussions about his work on Steenrod operation and Tate's conjectures on the Brauer group, Jacob Lurie for bringing to our attention the relevance of infinite/deep ramification, Matthew Morrow for his inspiring vision on motivic cohomology which led to the techniques of this paper, Alexander Petrov for discussions around Steenrod operations in algebraic geometry, the Tate conjectures and help with parts of Theorem~\ref{thm:counterexample}, and Burt Totaro for discussions around the integral Tate conjecture. We also thanks Ben Williams and Marco D'Addezio for useful comments on the previous version of the draft.

\section{Preliminaries} \label{sec:background}

\subsection{$\Ab^1$-invariant motivic spectra}\label{ssec:a1-inv}

An \emph{absolute $\Ab^1$-invariant motivic spectrum} is an $\Ab^1$-invariant motivic spectrum $E \in \SH_{\Zb}$; such an object is equivalent to the data of a cartesian section of the cartesian fibration over schemes classified by the functor $\SH: \Sch^{\op} \rightarrow \Cat_{\infty}$. Let $f: X \rightarrow \Spec(\Zb)$ be the structure morphism of a scheme $X$, then we set $E_X:=f^*E$. Sometimes, instead of $\SH_{\Zb}$, we work with $\SH_B$ for another Dedekind domain $B$; for example $\Zb[\tfrac{1}{p}]$ for a prime $p$. We will also call $E \in \SH_B$, somewhat abusively, an absolute $\Ab^1$-invariant motivic spectra. The context will make this clear.

If $X$ is a scheme, then the \emph{$E$-cohomology} of $X$ is given by 
\[
E(X):=\maps_{\SH_X}(1_X, E_X);
\] 
here $\maps$ refer to the spectrum of maps. We also consider its \emph{Thom twists}: for $v \in K_0(X)$ we have the \emph{twisted $E$-cohomology}
\[
E(X, v) := \maps_{\SH_X}(1_X, \mathrm{Th}(v) \otimes f^*E).
\]
We reserve the following ``Tate twist'' notation for the trivial twist (with an appropriate shift):
\[
E(n)(X):= \maps_{\SH_X}(1_X, \mathrm{Th}(\mathcal{O}^n) \otimes f^*E[-2n]) \simeq \maps_{\SH_X}(1_X, \Tb_X^{\otimes n} \otimes f^*E[-2n]).
\]
If $E$ is a homotopy commutative ring spectrum we will also consider the bigraded $E$-cohomology ring:
\begin{equation}\label{eq:e-cohomology}
E^{\star,\star}(X) := \bigoplus_{p,q} \pi_0\maps_{\SH_X}(1_X, \Tb_X^{q} \otimes f^*E[p-q]).
\end{equation}
If $E$ is furthermore oriented, then choosing an orientation in the sense of  \cite{deglise-orientations}, we obtain functorial identifications $E(X, v) \simeq E(X, \mathcal{O}^{\mathrm{rk}(v)})$, whence we are allowed to ignore the Thom twists.

The formation of $E$-cohomology defines a functor
\[
E(n): \Sch^{\op} \rightarrow \Sp \qquad \forall n \in \Zb.
\]
To proceed, we need to review some aspects of motivic homotopy theory around functoriality of $E(n)$; we refer to \cite[Section 2]{EHKSY2} for more details. 

\subsubsection{The purity transformation}\label{sec:purity} Recall that if $f: X \rightarrow Y$ is a morphism of schemes, then we say that $f$ is \emph{smoothable} if it factors as $X \xrightarrow{i} V \xrightarrow{\pi} Y$ where $i$ is a closed immersion and $\pi$ is a smooth morphism. The work of D\'eglise, Jin and Khan \cite{DJKFundamental} constructs the \emph{purity transformaton} for any smoothable, lci morphism $f: X \rightarrow Y$ 
\[
\mathfrak{p}_f: E_X \otimes \mathrm{Th}(T_f) \rightarrow f^!E_Y,
\]
where $T_f$ is the virtual tangent bundle of $f$. We say that $E$ is \emph{absolutely pure} if $\mathfrak{p}_f$ is invertible whenever both $X$ and $Y$ are regular; to verify that $E$ is absolute pure it is in fact sufficient to verify this for $f$ a closed immersion \cite[Remark 4.3.12(i)]{DJKFundamental}. We will also employ the following terminology: we say that $E$ is \emph{absolutely pure} at $f$ (or, at the pair $(X, Y)$ if the map is clear) if $\mathfrak{p}_f$ is invertible. If $E$ is absolutely pure for all smoothable morphisms between regular schemes, we say that it is absolutely pure. 

\begin{ex}[Algebraic and Hermitian $K$-theory] The absolute motivic spectrum $\mathrm{KGL}$ representing Weibel's homotopy $K$-theory, which in the context of purity is equivalent to Quillen's $K$-theory, is the first example of an absolutely pure motivic spectrum. As explained in \cite[Theorem 13.6.3]{cisinski:2019}, this follows from Quillen's Devissage theorem. 

Algebraic $K$-theory is quadratically enhanced by theory of hermitian $K$-theory; these come in various flavors \cite{GW-II}. There is a motivic spectrum, denoted by $\mathrm{KQ}$, which enjoys absolute purity by a theorem of Calmes-Harpaz-Nardin \cite[Theorem 8.4.2]{calmes-harpaz-nardin}. It represents the $\Ab^1$-invariant version of homotopy symmetric Grothendieck-Witt theory.
\end{ex}

\begin{ex}[$\ell$-adic cohomology] For the next example, we work with $\Zb[\tfrac{1}{\ell}]$-schemes. Let $H_{et}\Zb_{\ell}$ be the absolute motivic spectrum representing $\ell$-adic \'etale cohomology. Then, results of Thomason (up to denominators) \cite{thomason-purity} and Gabber (without restrictions) \cite{fujiwara-gabber} proves absolute purity for this motivic spectrum. 
\end{ex}


The next two examples are rational in nature. 

\begin{ex}[Beilinson motivic cohomology] Let $H\Qb$ be the absolute motivic spectrum representing Beilinson motivic cohomology \cite[Chapter 15]{cisinski:2019}, it is a model for the rational part of an $\mathbb{A}^1$-invariant theory of motivic cohomology and is constructed out of the Adams operations on $\mathrm{KGL}$ by Riou \cite{riou-rr}. Using the results from $K$-theory, Cisinski and D\'eglise has deduced absolute purity for $H\Qb$ in \cite[Theorem 14.4.1]{cisinski:2019}.
\end{ex}

\begin{ex}[Hermitian $K$-theory and the rational motivic sphere] One of the main results of \cite{dfjk} is that the rationalized motivic sphere spectrum is absolutely pure. This, in turn, relies on absolute purity results for the rationalized Grothendieck-Witt theory over $\Zb[\tfrac{1}{2}]$-schemes. 
\end{ex}

\subsubsection{Borel--Moore homology and compactly supported cohomology} If $X$ is a qcqs scheme, $U$ a quasicompact open with reduced complement $Z$:
\[
Z \xrightarrow{i} X \xleftarrow{j} U
\]
we have a fibre sequence of functors
\begin{equation}\label{eq:i-j}
i_*i^! \rightarrow \mathrm{id} \rightarrow j_*j^*.
\end{equation}
The localization sequence will be key to our results. Because the appearance of $i^!$ in the first term of the fibre sequence, we need to discuss Borel--Moore/compactly supported cohomology. 


The endofunctor $i_*i^!$ instantiates \emph{Borel--Moore homology}. It is important to note that this theory is a relative in nature. If $v \in K_0(X)$, $E \in \SH_S$ and $f: X \rightarrow S$ is a separated, finite type morphism then the \emph{Borel--Moore homology (twisted by $v$)} is given by
\[
E^{\mathrm{BM}}(X/S,v):= \maps_{\SH_S}(1_S, f_*(f^!E \otimes \mathrm{Th}(-v))).
\]
The formation of Borel-Moore homology enjoys some features that we will use:
\begin{enumerate}
\item First, it has covariant functoriality along proper maps in the following sense: if $f: X \rightarrow Y$ is a proper morphism of $S$-schemes, each of which is separated and finite type over $S$ then we have 
\[
f_*: E^{\mathrm{BM}}(X/S,f^*v) \rightarrow E^{\mathrm{BM}}(Y/S,v).
\]
\item Second, if we have a cartesian square which we will call $\square$:
\[
\begin{tikzcd}
Y \ar{r} \ar{d} &  T \ar{d} \\
 X \ar{r} & S,
\end{tikzcd}
\]
then we have the base-change map
\[
\square^*: E^{\mathrm{BM}}(X/S,v) \rightarrow E^{\mathrm{BM}}(Y/T,f^*v).
\]
\item Lastly, if $E$ is an absolute motivic spectrum then by applying~\eqref{eq:i-j}, we get a fibre sequence of spectra
\begin{equation}\label{eq:fiber}
E^{\mathrm{BM}}(Z/X, -\mathcal{O}^n)[-2n] \rightarrow  E(n)(X) \rightarrow E(n)(U).
\end{equation}
\end{enumerate}

Both $f_*$ and $\square^*$ can be expressed in terms of the six functor formalism; we refer to \cite{DJKFundamental} and \cite{EHKSY2} for more details and use them as needed in the present paper. 

\subsection{Motivic cohomology}\label{sec:hz}

The following simple definition of motivic cohomology appears in \cite{bem} and is based on Voevodsky's slice filtration \cite{Voevodsky2002, Voevodsky2002a}. We use the following notation: for each $j \in \Zb$ we write $\mathrm{Fil}^j_{\mathrm{slice}}: \SH_X \rightarrow \SH_X$ for the formation of the $j$-th slice cover and define the $j$-th slice to be
\[
s^j := \mathrm{cofib}(\mathrm{Fil}^{j+1}_{\mathrm{slice}} \rightarrow \mathrm{Fil}^j_{\mathrm{slice}}).
\]

\begin{defn}\label{def:hz} The absolute motivic spectrum representing \emph{$\mathbb{A}^1$-invariant motivic cohomology} is defined to be
\[
H\Zb^{\Ab^1} := s^0\mathrm{KGL} \in \SH_{\Zb}.
\]
In the notation of~\S\ref{ssec:a1-inv}, we obtain functors
\[
\maps_{\SH_{(-)}}(1_{(-)}, \Tb_X^{\otimes n} \otimes H\Zb^{\Ab^1}_{(-)})[-2n] = \Zb(n)^{\Ab^1}:\Sch^{\op} \rightarrow D(\Zb),
\]
after noting that the presheaves of spectra promote to one landing in $D(\Zb)$ by \cite[Theorem 3.43(3)]{bem}. If $X \in \Sch$ we write
\[
H^i_{\Ab^1}(X; \Zb(n)) := H^i(\Zb(n)^{\Ab^1}(X)) \qquad \forall i \in \Zb.
\]
\end{defn}

\begin{rem}[Coefficients]\label{def:coefficients} Let $R$ be a coefficient ring, we denote by $HR^{\Ab^1} := H\Zb^{\Ab^1} \otimes R$ where $R$ is regarded as an Eilenberg--Maclane spectrum and the tensor product is given by the action of spectra on $\SH_{\Zb}$ in presentable, stable $\infty$-categories. Concretely, we have equivalences, natural in $X$:
\[
R(n)^{\Ab^1}(X) \simeq \Zb(n)^{\Ab^1}(X) \otimes R,
\]
where the tensor product is taken in $D(\Zb)$. 

\end{rem}

\begin{rem}[Periodization]\label{def:periodization} As explained in \cite{bem}, we can assemble $H\Zb^{\Ab^1}$ to be a graded $\Eb_{\infty}$-ring in $\SH(\Zb)$. Briefly, the slice filtration $f^{\star}\mathrm{KGL}$ is multiplicative by formal reasons and thus its graded pieces (dictated by Bott periodicity for $\mathrm{KGL}$):
\[
\mathrm{gr}^{\star}\mathrm{KGL} \simeq \Tb^{\otimes \star} \otimes H\Zb^{\Ab^1},
\]
assemble into a graded $\Eb_{\infty}$-ring in motivic spectra. In particular, this gives $H^{\star}_{\Ab^1}(X; \Zb(\star))$ naturally the structure of a bigraded ring. 
\end{rem}

\begin{rem}[Comparison with $s^01$ and stability under pullbacks] One of the key results of \cite{bem} is that, for any qcqs scheme $X$, the unit map $1_X \rightarrow \mathrm{KGL}_X$ induces a diagram of equivalences
\[
\begin{tikzcd}
(s^01)_X \ar{r}{\simeq} \ar[swap]{d}{\simeq} & s^0(1_X) \ar{d}{\simeq}\\
(s^0\mathrm{KGL})_X \ar[swap]{r}{\simeq} & s^0(\mathrm{KGL}_X).
\end{tikzcd}
\]
This verifies that all possible slice-theoretic definitions of $\Ab^1$-invariant motivic cohomology agree. More precisely, the main theorem of \cite{bem} shows that $\Zb^{\Ab^1}(n)$ is calculated as the $\Ab^1$-localization of the cdh-local left Kan extension of the theory on smooth $\Zb$-algebras.
\end{rem}

\begin{rem}[Comparison with Spitzweck]\label{rem:spitzweck} In \cite{spitzweck:2018}, Spitzweck constructs an absolute motivic cohomology spectrum $H\Zb^{\mathrm{Spi}}$ which admits a canonical $\mathbb{E}_{\infty}$-ring structure. A key feature of his construction is that his motivic cohomology spectrum represents Bloch--Levine's higher Chow groups (see the remark below for more). The results of \cite{bem} establish an equivalence between $H\Zb^{\mathrm{Spi}}$ and $H\Zb^{\Ab^1}$, though the main results of that paper does not require Spitzweck's theory as input or even Bloch--Levine's cycle complex over Dedekind domains (though we do need it over fields). We use freely this idenitification in this paper. 
\end{rem}


\begin{rem}[The Bloch--Levine cycle complex] There is a more explicit model for motivic cohomology, which works nicely for smooth schemes over Dedekind domains and over fields. We will use this complex to verify absolute purity for motivic cohomology in what follows.


For simplicity, we will assume that $X$ is a finite type scheme over a Dedekind domain or a field, which is equidimensional. Otherwise one needs to replace codimension with dimension in definition of Bloch's cycle complex.
The \textit{algebraic $n$-simplex} is defined as 
\[
\Delta^n := \Spec\left(\Zb[t_0,\dots,t_n] / \left(1 - \sum_i t_i \right) \right).
\]
For $i_0 < i_1 < \cdots < i_r$, the equations $t_{i_0} = t_{i_1} = \cdots = t_{i_r} = 0$ define a codimension $r$ face of $\Delta^n$. We denote by $z^n(X,i)$ the free abelian group that is generated by those codimension $n$ irreducible integral subschemes of $\Delta^i \times X$ whose intersection with all the faces of $\Delta^i \times X$ has the expected codimension (i.e., the intersections are \textit{proper}). For example, $z^n(X,0)$ is the group of algebraic cycles in the usual sense. By construction, there are well-defined and functorial pullback maps
\[
z^n(X,j) \to z^n(X,i)
\]
along inclusions of faces $\Delta^i \times X \hook \Delta^j \times X$. The \emph{cycle complex} $z^n(X,*)$ to be the complex of abelian groups whose degree $i$ piece is $z^n(X,i)$, and whose differentials $\partial \colon z^n(X,i) \to z^n(X,i-1)$ are given by the alternating sum of pullbacks along the codimension one faces in the standard fashion. Bloch's higher Chow groups are defined as the Zariski hypercohomology groups
\begin{align*}
\CH^i(X ; j) &= \Hb_{\Zar}^i(X; z^j(-,*)) (H^i(\simeq L_{\Zar}z^j(-,*)(X)))
\end{align*}
of the presheaf that associates to an open subset $U \subset X$ the motivic complex $z^j(U,*) \in D(\Zb)$  (in other words, the Zariski sheafification of the presheaf of complexes on $X$). These hypercohomology groups coincide with the cohomology groups of the complex up to a shift (see e.g. \cite{geisser:2005}), whenever $X$ is smooth over a Dedekind domain or a field:
\[
L_{\Zar}z^j(-,*)(X)[-2j] \simeq \Zb(j)^{\mot}(X).
\]
\end{rem}

There are functorial pullback maps
\[
f^* \colon z^i(X,\star) \to z^i(X',\star)
\]
along flat maps $f \colon X' \to X$, which are defined on the cycle level similarly to Chow groups \cite{fulton:1998}. Similarly, there are functorial pushforward maps
\[
p_* \colon z^i(Z,\star) \to z^{i-r}(X,\star)
\]
along proper maps $p \colon Z \to X$ of relative dimension $r$. Furthermore, if
\[
\begin{tikzcd}
Z' \arrow[r]{}{p'} \arrow[d]{}{f'} & X' \arrow[d]{}{f} \\
Z \arrow[r]{}{p} & X
\end{tikzcd}
\]
is a Cartesian square with $f$ flat and $p$ proper of relative dimension $r$, then
\begin{equation}\label{eq:PushPull}
p'_* f'^* = f^* p_* \colon z^i(Z,\star) \to z^{i-r}(X',\star)
\end{equation}
as maps of complexes (this is a consequence of \cite[Proposition~1.7]{fulton:1998}). Levine has proven the following localization theorem for higher Chow groups  \cite[Theorem~1.7]{levine:2001}. 

\begin{thm}\label{thm:DetroitPurity}
Let $X$ be a finite-type scheme over a discrete valuation ring $A$, let $i \colon Z \hook X$ be an equicodimensional closed embedding of codimension $r$, and let $j \colon U \hook X$ be the open complement of $Z$. Then, the null-sequence of complexes
\begin{equation}\label{eq:loc}
z^{n-r}(Z,\star) \xto{i_*} z^{n}(X,\star) \xto{j^*} z^{n}(U,\star)
\end{equation}
represents a cofibre sequence in $D(\Zb)$.
\end{thm}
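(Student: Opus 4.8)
The plan is to prove the localization sequence \eqref{eq:loc} by reducing to the classical localization theorem of Bloch for higher Chow groups over a field, combined with a limit/specialization argument that handles the ``crossing the special fibre'' phenomenon over a DVR. The statement asserts that the evident null-sequence of cycle complexes is a cofibre sequence in $D(\Zb)$, which is equivalent to the assertion that $i_*$ is injective on the level of complexes (which is clear, since cycles supported on $Z$ inject into cycles on $X$) and that the quotient complex $z^n(X,\star)/z^{n-r}(Z,\star)$ is quasi-isomorphic to $z^n(U,\star)$ via $j^*$. Equivalently, writing $z^n_Z(X,\star)$ for the subcomplex of cycles supported (set-theoretically) on $Z$, one wants the natural inclusion $z^{n-r}(Z,\star) \hook z^n_Z(X,\star)$ to be a quasi-isomorphism and the sequence $z^n_Z(X,\star) \to z^n(X,\star) \to z^n(U,\star)$ to be exact up to quasi-isomorphism. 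The second part is the harder one: since $j^*$ is surjective only up to refining cycles (a cycle on $\Delta^i \times U$ meeting all faces properly need not extend to one on $\Delta^i \times X$ meeting all faces properly), one must invoke Bloch's moving lemma in the DVR setting.

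First I would set up the two subcomplexes and observe that the first quasi-isomorphism $z^{n-r}(Z,\star) \xrightarrow{\sim} z^n_Z(X,\star)$ follows from devissage for higher Chow groups: a cycle on $\Delta^i\times X$ set-theoretically supported on $\Delta^i\times Z$ decomposes along the associated primes of $Z$, and since $i$ is equicodimensional of codimension $r$, pushforward along $Z\hook X$ identifies the two complexes after passing to cohomology; here the equicodimensionality hypothesis is essential (it is exactly what rules out codimension jumps when $Z$ meets the special fibre). Second, I would address surjectivity of $j^*$ up to quasi-isomorphism: given a cycle $\alpha \in z^n(U,i)$ that is a cycle for the differential, I want to produce $\beta \in z^n(X,i)$ with $j^*\beta \equiv \alpha$ modulo boundaries and cycles supported on $Z$. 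The Zariski closure $\bar\alpha$ of $\alpha$ in $\Delta^i\times X$ has the right codimension but may fail to meet the faces properly near $Z$; one corrects this using Bloch's moving-by-translations technique adapted to the semilocal/DVR situation, which is exactly the content of Levine's argument. I would invoke \cite[Theorem~1.7]{levine:2001} for the key moving lemma input — the statement I am proving \emph{is} that theorem, so the ``proof'' here is really the citation plus the translation from the higher-Chow formulation into the $D(\Zb)$ cofibre-sequence formulation, using that Zariski hypercohomology preserves cofibre sequences of presheaves of complexes and that the displayed sequence is already exact on the level of Zariski sheaves by the local statement.

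Concretely, the cleanest route is: (1) sheafify everything for the Zariski topology on $X$ and reduce to checking the null-sequence is a cofibre sequence of \emph{Zariski sheaves} of complexes, i.e.\ a stalkwise statement; (2) at a point $x\in X$ lying over the closed point of $A$ (the only interesting case, since over the generic point we are over a field and this is Bloch's theorem \cite{bloch:1986}, while over $U$ it is trivial), pass to the semilocalization and apply Levine's localization theorem for higher Chow groups of semilocal rings of finite type over a DVR \cite[Theorem~1.7]{levine:2001}, whose proof is precisely a moving lemma allowing one to move cycles to meet faces and $Z$ properly; (3) conclude that the sequence of complexes \eqref{eq:loc} has trivial homology in the ``middle quotient,'' i.e.\ $z^n(U,\star)$ computes $\cofib(i_* )$.

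The main obstacle I expect is step (2): the moving lemma over a DVR is genuinely delicate because one cannot translate freely in the special fibre the way one does over an infinite field — Levine's proof handles this by a combination of a generic-translation argument over the generic fibre and a separate analysis over the special fibre, glued via a semicontinuity/specialization argument, and this is the technical heart of \cite{levine:2001}. Since the theorem as stated is literally Levine's, the honest ``proof'' in this paper is to cite it; but if one wanted to reprove it, the DVR moving lemma is where essentially all the work lives, and the equicodimensionality hypothesis on $Z$ is precisely what makes the devissage input (step (1)) clean.
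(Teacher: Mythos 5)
Your proposal correctly identifies the situation: the paper offers no independent proof of this theorem and simply attributes it to Levine as \cite[Theorem~1.7]{levine:2001}, which is exactly the citation you identify as the crux. Your sketch of the underlying moving-lemma argument is a fair outline of what Levine does (with one small simplification available: under the equicodimensionality hypothesis, $i_*$ identifies $z^{n-r}(Z,\star)$ with the subcomplex of cycles supported on $Z$ by a degreewise isomorphism of complexes, not merely a quasi-isomorphism, so all the work lies in the surjectivity of $j^*$ up to quasi-isomorphism), but since the paper contains no argument of its own, there is nothing further to compare.
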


In particular, we obtain a localization long exact sequence relating the motivic cohomology of a smooth $A$-scheme, the motivic cohomology of its special fibre, and the motivic cohomology of its generic fibre. This is a special case of the conjectural \textit{absolute purity sequence} for motivic cohomology, which would relate the motivic cohomologies of a regular scheme $X$, a regular closed subscheme $Z$, and its open complement $U$.  

Towards this end, we have following which is considered a folklore conjecture in the community.

\begin{con}[Motivic absolute purity] The motivic spectrum $H\Zb^{\Ab^1}$ is absolutely pure. 
\end{con}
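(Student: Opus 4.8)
The plan is to reduce absolute purity to a \emph{localization theorem} for Bloch's cycle complex and then to dispatch the cases that are within reach while isolating the one that is not. By \cite[Remark 4.3.12(i)]{DJKFundamental} it suffices to show that $\mathfrak p_i$ is invertible for a closed immersion $i\colon Z\hook X$ of regular schemes; such an $i$ is automatically a regular immersion, say of codimension $r$ with normal bundle $N_{Z/X}$ locally free of rank $r$. Let $j\colon U\hook X$ be the open complement. Evaluating the functor triangle $i_*i^!\to\mathrm{id}\to j_*j^*$ of~\eqref{eq:i-j} on $\Tb^{\otimes n}\otimes H\Zb^{\Ab^1}_X[-2n]$ and applying $\maps_{\SH_X}(1_X,-)$ produces~\eqref{eq:fiber}; since $H\Zb^{\Ab^1}$ is oriented the Thom twist $\mathrm{Th}(T_i)$ is trivializable, so $\mathfrak p_i$ is invertible \emph{if and only if} the leftmost term of~\eqref{eq:fiber} is identified, compatibly with the boundary map, with the motivic cohomology of $Z$ twisted down by $r$. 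When $X$ and $Z$ are in addition smooth over a Dedekind domain or a field, the Bloch--Levine comparison recalled in Definition~\ref{def:hz} converts this into the assertion that
\[
z^{n-r}(Z,\star)\xto{i_*}z^{n}(X,\star)\xto{j^*}z^{n}(U,\star)
\]
is a cofibre sequence in $D(\Zb)$.

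If $X$ is of equicharacteristic, then $X$ --- and likewise $Z$ --- is Zariski-locally the spectrum of a filtered colimit of smooth algebras over the prime field (Popescu), and the immersion $i$ together with $N_{Z/X}$ spreads out to regular immersions $i_\alpha\colon Z_\alpha\hook X_\alpha$ of smooth affine schemes over the prime field. Bloch's cycle complexes, and by \cite{bem} the functors $\Zb(n)^{\mot}$, commute with these filtered colimits, so it is enough to produce the cofibre sequence above for each $i_\alpha$; there it is the classical localization theorem for higher Chow groups over a field.

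In mixed characteristic, if $X$ is of finite type over a discrete valuation ring and $i$ is equicodimensional, then the cofibre sequence is exactly Levine's Theorem~\ref{thm:DetroitPurity}, and a continuity argument extends this to $X$ essentially of finite type over such a ring. Together with the equicharacteristic case this already yields absolute purity for a broad class of pairs --- in particular for $i\colon X/\pi\hook X$ with $X$ smooth over a mixed-characteristic discrete valuation ring with uniformizer $\pi$, which is the instance invoked in \S\ref{sec:purity} and, as explained there, the one relevant to the Hopkins--Morel isomorphism.

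The missing case, and the crux of the conjecture, is a regular scheme $X$ of mixed characteristic that is not even Zariski-locally essentially of finite type over a discrete valuation ring: a ramified regular local ring need not be ind-smooth over $\Zb$, and its completion, or a deeply ramified extension such as $\Oc=\Zb_p[p^{1/p^\infty}]$, is not Noetherian, so neither Popescu-type approximation by smooth schemes nor Levine's moving lemmas are available. The tempting shortcut --- deform $i$ to the zero section of $N_{Z/X}$, where purity would be a consequence of orientability --- is circular, since it presupposes that $i^!$ commutes with restriction to the special fibre of the deformation space, which is itself an instance of absolute purity. So the genuine content of the conjecture is a localization (moving) theorem for Bloch's cycle complex over non-Noetherian, deeply ramified valuation bases. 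The infinite-ramification philosophy of this paper --- passing to $\Oc$ to annihilate the orthogonal complement in $i^*j_*H\Fb_p$ --- is precisely the kind of device one would want to exploit here, but turning it into a proof of such a localization theorem, or alternatively invoking resolution of singularities in mixed characteristic, is beyond our present techniques; the conjecture therefore remains open.
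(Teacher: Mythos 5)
This statement is a \emph{conjecture} in the paper, and the paper does not prove it; it only remarks that Levine's localization theorem (Theorem~\ref{thm:DetroitPurity}) ``essentially establishes this result in cases that we need,'' and then proves the single special case recorded as Theorem~\ref{thm:abs-pur} (smooth $X$ over a mixed-characteristic DVR, $i$ the inclusion of the special fibre). You correctly recognize the statement as open rather than manufacturing a spurious proof, and your partial analysis is accurate and in fact goes somewhat beyond what the paper records: the reduction via \cite[Remark~4.3.12(i)]{DJKFundamental} to closed immersions, the use of an orientation to untwist the Thom class, and the reformulation of invertibility of $\mathfrak p_i$ as the localization cofibre sequence for Bloch's cycle complex (together with the identification of the boundary map with the DJK fundamental class, which is exactly what the paper's Appendix~\ref{sec:djk-higher} supplies) all match the paper's strategy for the special case. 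Your treatment of the equicharacteristic case via N\'eron--Popescu approximation and continuity of the cycle complex is a correct observation not made in the paper, as is your diagnosis that the genuine content is the mixed-characteristic case where $X$ is not essentially of finite type over a DVR (so that neither Popescu, whose hypotheses fail for ramified regular local rings over $\Zb$, nor Levine's moving techniques are available) and where the obvious deformation-to-the-normal-cone shortcut is circular. The only point that deserves a flag is the phrase ``spreads out'': producing a compatible filtered presentation of the pair $(Z\hook X)$ by regular immersions of smooth affines over the prime field, with the normal bundle and the regularity of $Z_\alpha$ persisting at a cofinal subsystem, and then invoking finitarity of both $\Zb(n)^{\mot}$ and Borel--Moore motivic homology, is a routine but genuinely multi-step spreading-out argument that should be carried out rather than asserted. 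Subject to that, your account of the conjecture's status is correct and well calibrated.
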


We now explain how the work of Levine in \cite{levine:2001}, namely Theorem~\ref{thm:DetroitPurity}, essentially establishes this result in cases that we need for this paper. The only point to address is to compare the pushforward map involved in~\eqref{eq:loc} versus the purity map. The key calculation is performed in Appendix~\ref{sec:djk-higher}, but also follows from \cite[Corollary~8.4]{SpitzweckFrankland}. We also remark that \cite[Theorem 4.43]{bem} provides a different perspective on the localization sequence of Levine via a prismatic approach.

\begin{thm}[Levine]\label{thm:abs-pur} Let $X$ be smooth scheme over a discrete valuation ring $A$ of mixed characteristics.  Let $i: X/\pi \hookrightarrow X$ be the inclusion of the characteristic $p > 0$ fibre into $X$. Then, the purity transformation
\[
\mathfrak{p}_i: H\Zb_{X/\pi}^{\Ab^1} \otimes \Tb^{-1} \rightarrow i^!H\Zb^{\Ab^1}_X,
\]
is an equivalence. 
\end{thm}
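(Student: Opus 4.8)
The plan is to deduce the theorem from Levine's localization theorem for higher Chow groups (Theorem~\ref{thm:DetroitPurity}), by comparing the localization cofibre sequence produced by the six functors with the cofibre sequence of Bloch--Levine cycle complexes of \eqref{eq:loc}. The two sequences will have the same outer terms and the same connecting map, so that identifying their ``fibre'' terms reduces to a single compatibility statement: that Levine's cycle-level pushforward $i_*$ agrees with the Gysin map extracted from the D\'eglise--Jin--Khan purity transformation. That compatibility is the substantive point, and it is exactly what is carried out in Appendix~\ref{sec:djk-higher}.

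\emph{Preliminary reductions.} Since $X$ is flat over $A$, the uniformizer $\pi$ pulls back to a nonzerodivisor cutting out $X/\pi$, so $i$ is a regular closed immersion of codimension one with conormal sheaf trivialized by $\pi$; hence $T_i \simeq -N_i$, $\mathrm{Th}(T_i)\simeq\Tb^{-1}$, and $\mathfrak{p}_i$ is genuinely a map $H\Zb^{\Ab^1}_{X/\pi}\otimes\Tb^{-1}\to i^!H\Zb^{\Ab^1}_X$ in $\SH_{X/\pi}$. Because $\SH_{X/\pi}$ is generated under colimits and suspensions by the objects $\Sigma^{\infty}_{X/\pi}W_+\otimes\Tb^{\otimes m}$ with $W\to X/\pi$ smooth affine and $m\in\Zb$, and $\mathfrak{p}_i$ is a map, it is enough to check that $\mathfrak{p}_i$ becomes an equivalence after applying $\maps_{\SH_{X/\pi}}(\Sigma^{\infty}_{X/\pi}W_+\otimes\Tb^{\otimes m},-)$. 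Working Zariski-locally on $W$ we may lift it to a smooth $X$-scheme $\wtil W$ with $\wtil W\times_X (X/\pi)=W$ (a smooth morphism is Zariski-locally standard smooth, hence lifts along $X/\pi\hook X$ by lifting the defining equations); then, by functoriality of the purity transformation and smooth base change for the six functors \cite{DJKFundamental}, together with the fact that $H\Zb^{\Ab^1}$ is absolute, the restriction of $\mathfrak{p}_i$ to $W$ is identified with $\mathfrak{p}_{\wtil i}$, where $\wtil i\colon W\hook\wtil W$. Thus we are reduced to proving: for every smooth affine $A$-scheme $\wtil W$ with special fibre $W$, the map $\mathfrak{p}_{\wtil i}$ induces isomorphisms on all Tate-twisted $H\Zb^{\Ab^1}$-cohomology groups.

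\emph{Comparing the two localization sequences.} Let $\wtil U=\wtil W\setminus W$ be the generic fibre, a smooth scheme over the fraction field of $A$. On the one hand, the localization triangle $\wtil i_*\wtil i^!\to\mathrm{id}\to\wtil j_*\wtil j^*$ of \eqref{eq:i-j} applied to $H\Zb^{\Ab^1}_{\wtil W}$, after the manipulations producing \eqref{eq:fiber}, gives for each $n$ a cofibre sequence whose outer two terms are $\Zb(n)^{\mot}(\wtil W)$ and $\Zb(n)^{\mot}(\wtil U)$ (up to a fixed shift), whose connecting map is restriction to $\wtil U$, and whose fibre is the Borel--Moore group $\maps_{\SH_{W}}(1,\wtil i^!H\Zb^{\Ab^1}_{\wtil W}\otimes\Tb^{\otimes n})$. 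On the other hand, applying the comparison $L_{\Zar}z^j(-,*)[-2j]\simeq\Zb(j)^{\mot}$ — valid here since $\wtil W$, $\wtil U$ and $W$ are all smooth over a Dedekind domain or a field — to the sequence \eqref{eq:loc} of Theorem~\ref{thm:DetroitPurity} with $r=1$ produces a cofibre sequence with the same outer two terms and the same connecting map (the comparison isomorphism is natural in flat pullbacks, hence compatible with restriction to opens), and with fibre $\Zb(n-1)^{\mot}(W)$ up to the same shift. Being the fibre of one and the same map, the two ``fibre'' terms are canonically identified; composing with $\mathfrak{p}_{\wtil i}$ then yields a comparison map $\Zb(n-1)^{\mot}(W)\to\maps_{\SH_W}(1,\wtil i^!H\Zb^{\Ab^1}_{\wtil W}\otimes\Tb^{\otimes n})$, and the theorem is equivalent to the assertion that this map is an equivalence for every $n$. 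Since both sides lie in the same cofibre sequence over the common terms, this in turn reduces to knowing that Levine's cycle-level pushforward $i_*$ in \eqref{eq:loc} coincides, under the comparison isomorphisms, with the first map of the six-functor sequence, i.e. with the Gysin map induced by $\mathfrak{p}_{\wtil i}$.

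\emph{The main obstacle} is precisely this last compatibility. The two maps are built by genuinely different mechanisms — a concrete proper pushforward of admissible cycles on the algebraic simplices $\Delta^{\bullet}\times\wtil W$ on one side, the deformation-to-the-normal-cone definition of the D\'eglise--Jin--Khan purity transformation $\mathfrak{p}_{\wtil i}$ on the other — and the identification must be natural enough to be compatible with the connecting maps of \emph{both} localization sequences. This is exactly what is established in Appendix~\ref{sec:djk-higher}. Granting it, the rest is a formal diagram chase with the six functors together with the comparison statements already recorded in \cite{bem} and \cite{geisser:2005}.
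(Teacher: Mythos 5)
Your proof is correct in substance and rests on the same two pillars as the paper's: Levine's localization theorem for the cycle complexes (Theorem~\ref{thm:DetroitPurity}) and the fundamental-class computation in Appendix~\ref{sec:djk-higher} (Proposition~\ref{prop:DJKFundamentalClassForHigherCH}). The difference is architectural, and worth noting. The paper's reduction is a single application of tor-independent base change \cite[Proposition 3.2.8]{DJKFundamental}: since $X\to B=\Spec(A)$ is flat, $\mathfrak p_i$ is the pullback of $\mathfrak p_\iota$ for $\iota\colon Z=\Spec(A/\pi)\hook B$, so everything collapses to the single scheme $B$. Levine's sequence (enhanced into a cofibre sequence in $\SH$, as the footnote explains) then yields an abstract equivalence $\iota^!H\Zb^{\Ab^1}_B\simeq H\Zb^{\Ab^1}_Z\otimes\Tb^{-1}$, turning $\mathfrak p_\iota$ into an $H\Zb_Z$-linear self-map; the appendix identifies the image of the unit with $[Z]$, so the self-map is the identity. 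You instead run the base-change reduction implicitly and generator-by-generator: you check $\mathfrak p_i$ after mapping in from $\Sigma^\infty W_+\otimes\Tb^{\otimes m}$, lift $W$ Zariski-locally to a smooth $A$-scheme $\wtil W$, identify the restricted map with $\mathfrak p_{\wtil i}$, and then compare the six-functor and Levine cofibre sequences over $\wtil W$, closing with a five-lemma argument once Levine's pushforward is matched with the DJK Gysin map. This is a valid route, and you correctly isolate the one non-formal point — the compatibility established in the appendix. What the paper's route buys is that you never need the smooth-lifting step (which, while true, requires the small argument you sketch: lift a standard-smooth presentation and shrink to the open where the Jacobian stays invertible) and you compare cofibre sequences over the single scheme $B$ rather than over a family of $\wtil W$'s. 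What your route makes explicit — the five-lemma-on-cofibre-sequences — is in the paper folded into the assertion that Levine's theorem supplies the abstract identification of $\iota^!H\Zb_B$, so in practice the two are closer than the write-ups suggest. If you wanted to tighten your version, replace the generators-and-lifting reduction by the tor-independent base change shortcut; the rest goes through unchanged.
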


\begin{proof} 


To proceed, we make some reductions. Let $B = \Spec(A)$ and $Z = \Spec(A/\pi)$ and let $\iota: Z \hookrightarrow B$ be the closed immersion and $j: X \setminus Z \hookrightarrow X$ be the complementary open immersion. By tor-independent base change for the purity transformation \cite[Proposition 3.2.8]{DJKFundamental}, it suffices to prove the result for $X = B$ itself. Therefore we are reduced to checking that $\mathfrak{p}_{\iota}$ is an equivalence. Now, Levine's Theorem~\ref{thm:DetroitPurity} implies\footnote{Strictly speaking, Theorem~\ref{thm:DetroitPurity} is a statement about graded sheaves of spectra, not about motivic spectra. However, as the sequence of Eq.~\eqref{eq:loc} is linear over the motivic cohomology of $X$, it can be enhanced into a cofibre sequence in $\SH_X$ using e.g. the orientation of the motivic cohomology of $X$. See \cite[Construction~6.5]{AHI:atiyah} for an argument outlining how graded sheaves with orientations can be enhanced into motivic spectra.} the existence of an equivalence $\iota^!H\Zb_B^{\Ab^1} \simeq H\Zb^{\Ab^1}_{Z} \otimes \Tb^{\otimes -1}$; for example combine \cite[Theorem 7.4 \& 7.18]{spitzweck:2018} and Remark~\ref{rem:spitzweck}. Therefore, the purity transformation is then a self-map: $\mathfrak{p}_{\iota}: H\Zb^{\Ab^1}_{Z} \otimes \Tb^{\otimes -1} \rightarrow H\Zb^{\Ab^1}_{Z}  \otimes \Tb^{\otimes -1}$. But by the calculation in Proposition~\ref{prop:DJKFundamentalClassForHigherCH} in the appendix this map classifies the fundamental class of $Z$, and therefore by linearity $\pfr_\iota$  is given by the identity on the level of motivic spectra.
\end{proof}

%
%
%
%

\subsection{Motivic Cartan formula}\label{ssect:Cartan}

Classically, the Cartan formula describes how the power operations interact with the cup product in mod-$p$ cohomology. Here, we describe the Cartan formula in motivic cohomology as an equivalence between maps from $H \Fb^{\Ab^1}_\ell \otimes H\Fb^{\Ab^1}_\ell$ to $H\Fb^{\Ab^1}_\ell$, and interpret it in terms of the spectrum-level coproduct of the motivic Steenrod algebra. The advantage of this spectrum-level description is that it makes it apparent that the motivic Cartan formula is stable under pullbacks. Throughout this subsection, we work with field $k$ and $\ell$ a prime invertible in $k$.

We denote by $\Ac$ the internal mapping object $\underline{\maps}(H\Fb^{\Ab^1}_\ell, H\Fb^{\Ab^1}_\ell)$ in $\SH_k$ whose bigraded homotopy groups form the motivic Steenrod algebra \cite[Theorem~1.1(2)]{HKO}. We consider it as a left $H\Fb^{\Ab^1}_\ell$-module with the module structure given by the target $H\Fb^{\Ab^1}_\ell$. Composing tensor product with the multiplication $\mu \colon H\Fb^{\Ab^1}_\ell \otimes H\Fb^{\Ab^1}_\ell \to H\Fb^{\Ab^1}_\ell$ defines the \textit{external product}
\begin{equation}\label{eq:externalProdEquiv}
\times \colon \Ac \otimes_{H\Fb^{\Ab^1}_\ell} \Ac \xto{\sim} \underline{\maps}(H\Fb^{\Ab^1}_\ell \otimes H\Fb^{\Ab^1}_\ell, H\Fb^{\Ab^1}_\ell)
\end{equation}
which is an equivalence by the results of \cite[\S 5.1]{HKO}. 

\begin{rem}\label{rem:sum} In fact, more is true. Let $\Psi$ be the map~\eqref{eq:psi} from the introduction, classifying admissible monomials in the dual mod-$\ell$ motivic Steenrod algebra
\[
\Psi: \bigoplus_{\alpha} \Tb^{\otimes q_{\alpha}} \otimes H\Fb^{\Ab^1}_{\ell}[p_{\alpha}-q_{\alpha}] \xrightarrow{\simeq} H\Fb^{\Ab^1}_{\ell} \otimes H\Fb^{\Ab^1}_{\ell};
\]
as stated there, this map is an equivalence when $\ell$ is invertible in $k$. Thus, the motivic spectrum $\Ac \otimes_{H\Fb^{\Ab^1}_\ell} \Ac$, and therefore the motivic spectrum $\underline{\maps}(H\Fb^{\Ab^1}_\ell \otimes H\Fb^{\Ab^1}_\ell, H\Fb^{\Ab^1}_\ell)$, can be written as a sum of shifts and twists of $H\Fb^{\Ab^1}_\ell$.
\end{rem}

\begin{rem}[Characterization of the external product]\label{rem:external}
If $x,y \in \Ac^{\star, \star}$ are $H\Fb^{\Ab^1}_\ell$-cohomology operations in $\SH_k$ (in other words, maps of the form $H\Fb_{\ell}^{\Ab^1} \rightarrow \Tb^{\otimes j} \otimes H\Fb_{\ell}^{\Ab^1}[i]$), then $x \times y$ induces the cohomology operation that is uniquely characterized by the property that the composition
\[
\left(H\Fb^{\Ab^1}_\ell \right)^{\star,\star}(X) \otimes \left(H\Fb^{\Ab^1}_\ell \right)^{\star,\star}(Y) \to
\left( H\Fb^{\Ab^1}_\ell \otimes H\Fb^{\Ab^1}_\ell \right)^{\star, \star}(X \times Y) \xto{x \times y} \left(H\Fb^{\Ab^1}_\ell \right)^{\star,\star}(X \times Y)
\]
sends $\alpha \otimes \beta$ to $x(\alpha) \times y(\beta)$ where $X, Y $ are smooth $k$-schemes. Above, the first map is the structure map given by the Day convolution of presheaves, on which the symmetric monoidal structure of $\SH_k$ is based upon. 
\end{rem}

Using this identification, we next enhance the coproduct $\psi^* \colon \Ac^{\star,\star} \to \Ac^{\star,\star} \otimes_{(H\Fb^{\Ab^1}_\ell)^{\star,\star}} \Ac^{\star,\star}$ that was defined in \cite[Lemma~11.6]{voevodsky:2003a} to a spectrum-level coproduct map. Subsequently, we will interpret the Cartan formula in terms of this map.

\begin{lem}\label{lem:coprod-psi}
The coproduct map $\psi^*$ is the map induced on the bigraded homotopy groups by precomposition by the multiplication map $\mu$:
\[
\mu^*\colon \underline{\maps}(H\Fb^{\Ab^1}_\ell, H\Fb^{\Ab^1}_\ell) \to \underline{\maps}(H\Fb^{\Ab^1}_\ell \otimes H\Fb^{\Ab^1}_\ell, H\Fb^{\Ab^1}_\ell) \xto{\times^{-1}, \simeq}  \Ac \otimes_{H\Fb^{\Ab^1}_\ell} \Ac.
\]
\end{lem}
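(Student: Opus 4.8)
The plan is to recognize both $\psi^*$ and $\mu^*$ as ``the dual of the multiplication'' and then to show they agree by invoking the property that characterizes Voevodsky's coproduct. Recall from \cite[Lemma~11.6]{voevodsky:2003a} that for a bistable operation $\theta$, the element $\psi^*(\theta)=\sum_i\theta'_i\otimes\theta''_i\in\Ac^{\star,\star}\otimes_{(H\Fb^{\Ab^1}_\ell)^{\star,\star}}\Ac^{\star,\star}$ is the unique one for which
\[
\theta(\alpha\times\beta)=\sum_i\theta'_i(\alpha)\times\theta''_i(\beta)
\]
holds for all smooth $k$-schemes $X,Y$ and all $\alpha\in(H\Fb^{\Ab^1}_\ell)^{\star,\star}(X)$, $\beta\in(H\Fb^{\Ab^1}_\ell)^{\star,\star}(Y)$; uniqueness rests on the fact that the external product on the bigraded cohomology of products of copies of $B\mu_\ell$ (equivalently $\Pb^\infty$) detects elements of the algebraic tensor product.

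First I would set up the Künneth identification, which is needed merely to make the comparison of $\psi^*$ and $\mu^*$ well posed. By Remark~\ref{rem:sum} the motivic spectrum $\Ac\otimes_{H\Fb^{\Ab^1}_\ell}\Ac$ is a (possibly infinite) sum of Tate twists and shifts of $H\Fb^{\Ab^1}_\ell$, and $\Ac^{\star,\star}$ is free over $(H\Fb^{\Ab^1}_\ell)^{\star,\star}$ on the admissible monomials by \cite[Theorem~1.1]{HKO}; hence the canonical map
\[
\Ac^{\star,\star}\otimes_{(H\Fb^{\Ab^1}_\ell)^{\star,\star}}\Ac^{\star,\star}\;\xrightarrow{\ \sim\ }\;\pi_{\star,\star}\!\bigl(\Ac\otimes_{H\Fb^{\Ab^1}_\ell}\Ac\bigr)
\]
is an isomorphism. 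Under this isomorphism the target of the homotopy-group map induced by $\mu^*$ (as composed in the statement) coincides with the target of Voevodsky's $\psi^*$, so it is legitimate to compare the two maps out of $\Ac^{\star,\star}$.

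Next I would unwind $\mu^*$ and run the diagram chase. Given $\theta\colon H\Fb^{\Ab^1}_\ell\to\Tb^{\otimes s}\otimes H\Fb^{\Ab^1}_\ell[r]$, write $\times^{-1}(\theta\circ\mu)=\sum_i x_i\otimes y_i$ in $\Ac\otimes_{H\Fb^{\Ab^1}_\ell}\Ac$; by definition of the equivalence~\eqref{eq:externalProdEquiv} this means exactly that $\theta\circ\mu=\sum_i x_i\times y_i$ as maps $H\Fb^{\Ab^1}_\ell\otimes H\Fb^{\Ab^1}_\ell\to\Tb^{\otimes s}\otimes H\Fb^{\Ab^1}_\ell[r]$. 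Now evaluate on the cohomology of a product $X\times Y$ of smooth $k$-schemes: since $\mu$ induces precisely the external cup product, the class $\alpha\times\beta$ is obtained from $\alpha\otimes\beta$ by applying the Day-convolution structure map into $(H\Fb^{\Ab^1}_\ell\otimes H\Fb^{\Ab^1}_\ell)^{\star,\star}(X\times Y)$ and then $\mu$. Applying $\theta$, substituting $\theta\circ\mu=\sum_i x_i\times y_i$, and invoking the characterization of the external product in Remark~\ref{rem:external} gives $\theta(\alpha\times\beta)=\sum_i x_i(\alpha)\times y_i(\beta)$. By the uniqueness recalled above this forces $\sum_i x_i\otimes y_i=\psi^*(\theta)$, which is the assertion of the lemma.

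The step I expect to be the main obstacle is the careful bookkeeping underlying the uniqueness clause: one must know that the external product map~\eqref{eq:externalProdEquiv} is a monomorphism on bigraded homotopy groups — so that an element of the algebraic tensor product is indeed detected by the natural transformation $\alpha\otimes\beta\mapsto\sum_i\theta'_i(\alpha)\times\theta''_i(\beta)$ on products — and that Voevodsky's coproduct of \cite[Lemma~11.6]{voevodsky:2003a} is normalized by precisely the displayed formula, with the Tate twists, cohomological shifts, and the Koszul signs of the motivic external product all matching. The first point is part of the content of \cite[\S5.1]{HKO}; granting it and the normalization of $\psi^*$, the remaining content is the purely formal identification of ``precompose with $\mu$'' with ``act on external products,'' which is what the chase above carries out. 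I would close by observing that, because this description of $\psi^*$ is now at the level of motivic spectra (namely $\mu^*$ followed by $\times^{-1}$), its stability under base change along field extensions — used later for the Cartan formula — is immediate.
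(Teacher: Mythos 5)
Your argument is correct and is essentially the same as the paper's: both identify $\mu^*$ followed by $\times^{-1}$ with $\psi^*$ by observing that, via Remark~\ref{rem:external}, the element $\sum_i x_i \otimes y_i := \times^{-1}(\theta\circ\mu)$ satisfies the defining formula of \cite[Lemma~11.6]{voevodsky:2003a} and hence equals $\psi^*(\theta)$ by uniqueness. The paper states this in two sentences; you have simply filled in the intermediate bookkeeping (the K\"unneth identification of $\pi_{\star,\star}(\Ac\otimes_{H\Fb_\ell^{\Ab^1}}\Ac)$ with the algebraic tensor product, the identification of $\mu$ with the external cup product on products of smooth schemes, and the detection/monomorphism point underlying Voevodsky's uniqueness), all of which are correct and implicit in the original.
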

\begin{proof}
Indeed, if $\mu^*(x) = \sum_i x'_i \times x''_i$, then $\sum_i x'_i \times x''_i \in \Ac^{\star\star} \otimes_{H\Fb_\ell^{\star\star}} \Ac^{\star\star}$ is the unique element satisfying the condition of \cite[Lemma~11.6]{voevodsky:2003a}. This then defines the spectrum-level map after the characterization of the external product in Remark~\ref{rem:external}.
\end{proof}

From now on, we will denote the coproduct on $\Ac$ by $\mu^*$ to highlight its relationship with the product structure on $H\Fb^{\Ab^1}_\ell$. Combining Lemma~\ref{lem:coprod-psi} with \cite[Lemma~11.6]{voevodsky:2003a}, motivic Cartan formula \cite[Proposition~9.7]{voevodsky:2003a} may be understood as the computation of the action of $\mu^*$ on the motivic power operations (natural on a motivic space $\Xfr$ over $k$) 
\[
\P^i \colon H^{\star}_{\Ab^1}(\Xfr; \Fb_{\ell}(\star)) \to H^{\star + 2i(p-1)}_{\Ab^1}(\Xfr; \Fb_{\ell}(\star + i(p-1)))
\]
and
\[
\B^i \colon H^{\star}_{\Ab^1}(\Xfr; \Fb_{\ell}(\star)) \to H^{\star + 2i(p-1) + 1}_{\Ab^1}(\Xfr; \Fb_{\ell}(\star + i(p-1));
\]
here  $\B^i = \beta \P^i$ and we refer the reader to \cite[Page 33]{voevodsky:2003a} or \cite[\S 2.4]{HKO} for details on the construction of these operations. We state the motivic Cartan formula here for the convenience of the reader.

\begin{thm}[Motivic Cartan formula]\label{thm:MotivicCartanSH}
Let $k$ be a field of characteristic $p \not = \ell$. Then, the following homotopies exist between maps to $\underline{\maps}(H\Fb^{\Ab^1}_\ell \otimes H\Fb^{\Ab^1}_\ell, H\Fb^{\Ab^1}_\ell) \in \SH_k$:
\begin{enumerate}
\item if $\ell \not = 2$, then
\begin{align*}
\mu^*(\P^i) \simeq& \sum_{r = 0}^i \P^r \times \P^{i-r} \\
\mu^*(\B^i) \simeq& \sum_{r = 0}^i \B^r \times \P^{i-r} + \P^r \times \B^{i-r} 
\end{align*}

\item if $\ell = 2$, then
\begin{align*}
\mu^*(\Sq^{2i}) \simeq& \sum_{r = 0}^i \Sq^{2r} \times \Sq^{2i-2r} + \tau \sum_{s = 0}^{i-1} \Sq^{2s + 1} \times \Sq^{2i-2s - 1} \\
\mu^*(\Sq^{2i  + 1}) \simeq& \sum_{r = 0}^i (\Sq^{2r + 1} \times \Sq^{2i-2r} + \Sq^{2r} \times \Sq^{2i-2r+1}) \\
& + \rho \sum_{s=0}^{i-1} \Sq^{2s+1} \times \Sq^{2i-2s-1},
\end{align*}
where $\tau$ is the non-zero element of $H_{\Ab^1}^0(k; \Fb_2(1)) = \mu_2(k)$, and $\rho$ is the image of $-1$ in $H_{\Ab^1}^{1}(k; \Fb_2(1)) = k^\times / (k^\times)^2$.
\end{enumerate}
\end{thm}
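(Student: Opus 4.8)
The plan is to reduce the statement to Voevodsky's Cartan formula \cite[Proposition~9.7]{voevodsky:2003a} by transporting it, via the external-product equivalence~\eqref{eq:externalProdEquiv} and Lemma~\ref{lem:coprod-psi}, from an identity of cohomology operations to a homotopy between maps of motivic spectra. The key point is that a homotopy between the two sides of each asserted relation amounts to an equality of the corresponding elements of the bigraded homotopy groups $\pi_{\star,\star}\underline{\maps}(H\Fb^{\Ab^1}_\ell \otimes H\Fb^{\Ab^1}_\ell, H\Fb^{\Ab^1}_\ell)$: indeed both $\mu^*(\P^i)$ and $\sum_r \P^r \times \P^{i-r}$ are, by construction, $H\Fb^{\Ab^1}_\ell$-module maps out of a shift and twist of $H\Fb^{\Ab^1}_\ell$. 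Since $\ell$ is invertible in $k$, the map $\Psi$ of~\eqref{eq:psi} is an equivalence (Remark~\ref{rem:sum}), $\Ac$ is a free $(H\Fb^{\Ab^1}_\ell)^{\star,\star}$-module, and $\times$ identifies this bigraded homotopy group with $\pi_{\star,\star}(\Ac \otimes_{H\Fb^{\Ab^1}_\ell} \Ac) = \Ac^{\star,\star}\otimes_{(H\Fb^{\Ab^1}_\ell)^{\star,\star}}\Ac^{\star,\star}$. Under this identification, Lemma~\ref{lem:coprod-psi} says precisely that $\mu^*(\P^i)$ is Voevodsky's coproduct $\psi^*(\P^i)$, and similarly for $\B^i$ (resp.\ $\Sq^j$ when $\ell = 2$).

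It then remains to quote the computation of $\psi^*$ on the generators. First I would recall from \cite[Lemma~11.6]{voevodsky:2003a} that $\psi^*(\phi)$ is characterized as the unique element $\sum_i \phi'_i \otimes \phi''_i$ with $\phi(u \times v) = \sum_i \phi'_i(u) \times \phi''_i(v)$ for all smooth $X, Y$ over $k$ and all classes $u, v$, and then invoke \cite[Proposition~9.7]{voevodsky:2003a}, which computes this element: it is $\sum_{r=0}^{i}\P^r \otimes \P^{i-r}$ for $\phi = \P^i$, and $\sum_{r=0}^{i}(\B^r \otimes \P^{i-r} + \P^r \otimes \B^{i-r})$ for $\phi = \B^i$; for $\ell = 2$ one obtains the displayed formulas for $\Sq^{2i}$ and $\Sq^{2i+1}$, with the $\tau$- and $\rho$-correction terms exactly as in Voevodsky's formulation (using $\Sq^{2i} = \P^i$ and $\Sq^{2i+1} = \beta\P^i$). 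Finally, $\sum_r \P^r \times \P^{i-r}$ is by definition (Remark~\ref{rem:external}) the image under $\times$ of $\sum_r \P^r \otimes \P^{i-r}$, so combining this with Lemma~\ref{lem:coprod-psi} and the equivalence~\eqref{eq:externalProdEquiv} yields the asserted homotopies $\mu^*(\P^i) \simeq \sum_r \P^r \times \P^{i-r}$, $\mu^*(\B^i) \simeq \sum_r \B^r \times \P^{i-r} + \P^r \times \B^{i-r}$, and their $\ell = 2$ analogues.

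I do not expect a serious obstacle: the mathematical content is entirely contained in \cite{voevodsky:2003a} and \cite{HKO}, and the task is essentially one of translation --- upgrading an identity of cohomology operations to a homotopy between maps of motivic spectra. The one point that genuinely requires care is the comparison between Voevodsky's \emph{algebraic} coproduct, valued in $\Ac^{\star,\star}\otimes_{(H\Fb^{\Ab^1}_\ell)^{\star,\star}}\Ac^{\star,\star}$, and the spectrum-level map $\mu^*$, valued in $\pi_{\star,\star}$ of the \emph{derived} tensor product $\Ac \otimes_{H\Fb^{\Ab^1}_\ell}\Ac$; this is where the hypothesis that $\ell$ is invertible in $k$ enters, ensuring (via freeness of the motivic Steenrod algebra) that the two tensor products agree. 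This comparison is already carried out in Lemma~\ref{lem:coprod-psi}, so in the write-up it reduces to a citation; the only remaining task is the explicit bookkeeping of the $\tau$- and $\rho$-terms at $\ell = 2$, which is routine but should be displayed in full.
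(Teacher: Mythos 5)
Your proposal is correct and is in substance the same reasoning the paper relies on: the paper presents the theorem as a restatement "for the convenience of the reader" immediately after Lemma~\ref{lem:coprod-psi} and the discussion in Remark~\ref{rem:sum}, so its (implicit) proof is exactly the reduction you describe --- identify the bigraded homotopy group of $\underline{\maps}(H\Fb^{\Ab^1}_\ell \otimes H\Fb^{\Ab^1}_\ell, H\Fb^{\Ab^1}_\ell)$ with $\Ac^{\star,\star}\otimes_{(H\Fb^{\Ab^1}_\ell)^{\star,\star}}\Ac^{\star,\star}$ via the external-product equivalence and freeness of $\Ac$, observe that a homotopy between the two spectrum-level maps is nothing more than an equality in that bigraded group, identify $\mu^*$ with Voevodsky's $\psi^*$ via Lemma~\ref{lem:coprod-psi}, and then quote \cite[Proposition~9.7]{voevodsky:2003a} for the explicit formulas. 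Your flagging of the derived-vs.-algebraic tensor product as the one point requiring care (resolved by freeness/Remark~\ref{rem:sum}) is exactly the right concern to raise, and the bookkeeping of the $\tau$- and $\rho$-terms at $\ell=2$ is, as you say, routine once conventions are fixed.
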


\begin{rem}[Spectrum-level Cartan formula]\label{rem:spectrumCartan} We further clarify the formulas of Theorem~\ref{thm:MotivicCartanSH}. Let $x: H\Fb_{\ell}^{\Ab^1} \rightarrow \Tb^{\otimes j} \otimes H\Fb_{\ell}^{\Ab^1}[i]$ be one of the power operations $\P^i, B^i$ or $\Sq^i$. Then the theorem asserts that we have the following commutative diagram in $\SH_k$:
\begin{equation}\label{eq:spectrumCartan}
\begin{tikzcd}
 H\Fb_{\ell}^{\Ab^1} \ar{r}{x} & \Tb^{\otimes n} \otimes H\Fb_{\ell}^{\Ab^1}[m]\\
  H\Fb_{\ell}^{\Ab^1} \otimes  H\Fb_{\ell}^{\Ab^1} \ar{u}{\mu} \ar{r}{\Psi^{-1},\simeq} & \bigoplus_{\alpha} \Tb^{\otimes q_{\alpha}} \otimes H\Fb^{\Ab^1}_{\ell}[p_{\alpha}-q_{\alpha}] \ar[swap]{u}{\sum x_n \otimes x_m},\\
\end{tikzcd}
\end{equation}
where the sum of the right vertical map classifies the expression on right hand side of the formulas in Theorem~\ref{thm:MotivicCartanSH}. In other words, these are lifts of the motivic Cartan formulas to a spectrum level-statement. That we can do this ultimately rests on the very deep fact that $\Psi$ is an equivalence. 
\end{rem}

\section{Mod-$p$ motivic power operations of $\Fb_p$-schemes}\label{sect:Pops}

We now construct the motivic power operations acting on the mod-$p$ motivic cohomology of $\Fb_p$-schemes. We begin by constructing the operations on smooth $\Fb_p$-varieties in Section~\ref{subsect:smoothPops}.


\subsection{A motivic Nizio{\l} theorem}\label{subsect:smoothPops}

To construct the motivic mod-$p$ power operations on smooth $\Fb_p$-varieties, we study motivic cohomology over $\Oc = \Zb_p[p^{1/p^\infty}]$, the infinitely ramified extension of $\Zb_p$ obtained by adjoining all the $p$th power roots of $p$. The idea is to use Theorem~\ref{thm:DetroitPurity} to show that the motivic cohomology of smooth $\Oc$-schemes is completely determined by the generic fibre. Our argument is similar to the one that used in \cite[Section~3]{AMM:2022} in order to obtain a similar result for algebraic $K$-theory. In turn, \cite{AMM:2022} was inspired by a phenomenon first observed by Nizio{\l}  in her proof of the crystalline comparison theorems \cite{niziol-crys}.

We denote by 
\[
j \colon \Spec(\Kb) \hook \Spec(\Oc)
\]
the open embedding obtained by inverting $p$. In particular $\Kb = \Oc[p^{-1}]$. The precise result we are going to prove is the following.

\begin{thm}\label{thm:Detroit}
Let $E$ be an absolute oriented motivic spectrum with absolute purity for pairs $(X, X_p)$, where $X$ is smooth over a discrete valuation ring $A$ of mixed characteristic $(0,p)$, and $X_p \hook X$ is the special fibre. Then, pulling back along the natural transformation $j_! j^* \to \Id$ induces an equivalence for all $m \geq 1$:
\[
\maps_{\SH_{\Oc}}(\Xfr, E/p^m) \rightarrow \maps_{\SH_{\Oc}}(j_! j^*(\Xfr), E/p^m).
\]
for all $\Xfr \in \SH_{\Oc}$. In particular, the unit map $E \to j_* E$ induces equivalences
\[
E/p^m \xrightarrow{\simeq} j_* E/p^m \qquad \forall m \geq 1,
\]
and
\[
(E)^\comp_p \xrightarrow{\simeq} ( j_* E)^\comp_p.
\]
\end{thm}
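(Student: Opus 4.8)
The plan is to reduce the global statement to a purely ``fibrewise'' computation over $\Spec(\Oc)$, exploiting that $\Oc$ is a filtered colimit of discrete valuation rings of mixed characteristic $(0,p)$, for which the hypothesis on $E$ applies. First I would record the localization fibre sequence attached to the closed/open decomposition $\Spec(\Fb_p) \xhookrightarrow{i} \Spec(\Oc) \xhookleftarrow{j} \Spec(\Kb)$: writing $\Oc_s$ for the residue field $\Fb_p$, we have $i_* i^! E \to E \to j_* j^* E$, and since $\Oc$ is a valuation ring the localization sequence degenerates to a single closed point with ``codimension one'' behaviour. The twisted $E$-cohomology of the special fibre, $i^! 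E$, is controlled by absolute purity: the hypothesis gives an equivalence $i^! E \simeq E_{\Fb_p} \otimes \Tb^{-1}$ (after the orientation is chosen, so Thom twists disappear), exactly as in the proof of Theorem~\ref{thm:abs-pur}. The point of infinite ramification is that this purity datum, while genuine, becomes \emph{$p$-adically trivial after the transition maps}: the uniformizer acquires $p$-power roots, so the boundary/residue map $i^! E \to E_{\Fb_p}\otimes\Tb^{-1}$ picks up a factor of $v_p(p^{1/p^n}) = 1/p^n \to 0$, hence the colimit of the $i_*i^! E$ terms vanishes mod $p^m$.

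Concretely, first I would prove the ``in particular'' clause: that $i^! E / p^m \simeq 0$ after passing to the colimit along $\Oc = \colim_n \Zb_p[p^{1/p^n}]$. At finite level $A_n = \Zb_p[p^{1/p^n}]$ with uniformizer $\pi_n = p^{1/p^n}$, purity identifies the localization boundary map with (a shift/twist of) multiplication by the class of $\pi_n$, equivalently the Gysin map for the regular closed immersion of the special fibre; under the transition $A_n \to A_{n+1}$ we have $\pi_n = \pi_{n+1}^p$, so the induced map on the special-fibre terms is multiplication by $p$ (on the relevant cohomology group, via the projection formula for the degree-$p$ ramified extension of residue-trivial type). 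Therefore the colimit of these Gysin terms is $p$-divisible, hence zero mod $p^m$, hence $i_* i^! E / p^m \xrightarrow{\ \simeq\ } 0$ over $\Spec(\Oc)$. Feeding this into the localization sequence gives $E/p^m \xrightarrow{\simeq} j_* j^* E / p^m$, and then $p$-completion is the derived limit of the $E/p^m$, giving the final $p$-complete equivalence $(E)^\comp_p \xrightarrow{\simeq} (j_* E)^\comp_p$.

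For the stronger first assertion — the equivalence $\maps_{\SH_\Oc}(\Xfr, E/p^m) \to \maps_{\SH_\Oc}(j_! j^* \Xfr, E/p^m)$ for all $\Xfr \in \SH_\Oc$ — I would dualize: the cofibre of $j_! j^* \Xfr \to \Xfr$ is $i_* i^* \Xfr$, so it suffices to show $\maps_{\SH_\Oc}(i_* i^* \Xfr, E/p^m) \simeq 0$. By adjunction ($i^*\dashv i_*$ and $i_*$ fully faithful on the recollement, or rather using $i_*i^* \simeq i_* \Fib(i^* j_* j^* \to \cdots)$) this maps-out spectrum is computed by $i^! (E/p^m) = (i^! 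E)/p^m$ on $\Spec(\Fb_p)$ — but the subtlety is that over a single $A_n$ this is \emph{not} zero; it is only zero in the colimit over $\Oc$. So the argument has to be run relative to the whole tower: $\SH_\Oc \simeq \colim_n \SH_{A_n}$ (continuity of $\SH$, using that the transition maps are pullbacks along affine flat morphisms, cf.~the continuity results cited via \cite{EHKSY2}), every $\Xfr$ and $E$ descends to some finite stage, and the mapping spectrum is the colimit of the mapping spectra at finite stages; at stage $n$ the obstruction class is the residue datum above, which maps to $p \cdot(\text{residue datum})$ at stage $n+1$, so the colimit of obstructions vanishes. This is the step I expect to be the main obstacle: making rigorous that the ``boundary map becomes multiplication by $p$'' along the transition $A_n \to A_{n+1}$ — i.e., carefully identifying, via the $\DJKF$ purity/Gysin formalism (Appendix~\ref{sec:djk-higher}, Proposition~\ref{prop:DJKFundamentalClassForHigherCH}) and the projection formula, the effect on $i^! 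E$ of the degree-$p$ totally ramified base change, including keeping track of orientations/Thom twists — and then commuting this identification past the filtered colimit defining $\SH_\Oc$ and past the internal-hom $\maps_{\SH_\Oc}(\Xfr,-)$ for an arbitrary (not necessarily compact) $\Xfr$.
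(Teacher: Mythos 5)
Your overall strategy is the same as the paper's: run the localization sequence for the closed/open decomposition of $\Spec(\Oc)$, reduce to the tower of finite stages $A_r = \Zb_p[p^{1/p^r}]$, use purity at those stages, and kill the Borel--Moore term mod $p^m$ by showing the transition maps are increasingly $p$-divisible. Two points deserve comment.

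First, your opening claim that the hypothesis ``gives an equivalence $i^! E \simeq E_{\Fb_p} \otimes \Tb^{-1}$'' for $i \colon \Spec(\Fb_p) \hookrightarrow \Spec(\Oc)$ is false, and in fact its failure is exactly the content of the theorem. The ring $\Oc$ is not a discrete valuation ring, so the closed point is not a Cartier divisor and the hypothesis on $E$ does not apply. Indeed if purity held over $\Oc$ one would have $i_* i^! E / p^m \simeq i_*(E_{\Fb_p} \otimes \Tb^{-1})/p^m \neq 0$, contradicting the conclusion $E/p^m \simeq j_* E /p^m$. What one has is purity at each finite stage $A_r$, and the theorem says precisely that this purity ``evaporates'' $p$-adically in the colimit. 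Your later passages do work at finite stages and take a colimit, so you are implicitly correcting for this, but the framing is misleading.

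Second, you correctly isolate the crux --- that the transition maps on Borel--Moore homology become multiplication by $p$ (more precisely, $p^d$ after $d$ steps) --- but you defer the rigorous proof. This is exactly the paper's Lemma~\ref{lem:root-canal}. For your reference, the paper proves it by factoring the finite flat map $X_{r+d} \to X_r$ as a closed immersion $X_{r+d} \hookrightarrow \Pb^1_{X_r}$ (a divisor cut out by the degree-$p^d$ equation $t^{p^d} - \pi_r$) followed by the projection, and then using the projection formula together with $c_1(\Oc(1))^2 = 0$ on $\Pb^1$ to compute that the push-pull composite is multiplication by the degree. The fact that the adjoint ``wrong-way'' map $\square_!$ is an isomorphism (so that divisibility of the composite implies divisibility of $\square^*$) is the part that really uses absolute purity at both stages and needs a diagram chase of purity transformations, which you flag but do not carry out. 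You are also right to flag the point about arbitrary, not-necessarily-compact $\Xfr$; the paper handles this by noting $j_!$ and $j^*$ preserve colimits and $\maps(-, E/p^m)$ sends colimits to limits, so it suffices to check on Tate twists of affine smooth $\Oc$-schemes, which descend to finite stages, and the conclusion passes to $\Oc$ by finitariness of absolute motivic spectra.
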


We will postpone this proof until after the key Lemma~\ref{lem:root-canal}. The following Corollary is the key point of our construction of motivic power operations. 

\begin{cor}\label{cor:main}
The pullback along the natural transformation $j_! j^* \to \Id$ induces an equivalence 
\[
\maps_{\SH_{\Oc}}(\Xfr, H\Fb_p^{\Ab^1}) \rightarrow \maps_{\SH_{\Oc}}(j_! j^*(\Xfr), H\Fb_p^{\Ab^1}).
\]
In particular, the unit map $H\Fb^{\Ab^1}_p \to j_* H\Fb^{\Ab^1}_p$ is an equivalence.
\end{cor}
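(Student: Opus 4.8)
The plan is to deduce Corollary~\ref{cor:main} directly from Theorem~\ref{thm:Detroit} by checking that $H\Fb_p^{\Ab^1}$ fits into the hypotheses of that theorem and then specializing to the case $m=1$. First I would observe that $H\Fb_p^{\Ab^1} = H\Zb^{\Ab^1} \otimes \Fb_p \simeq H\Zb^{\Ab^1}/p$, so that taking $E = H\Zb^{\Ab^1}$ and $m = 1$ in Theorem~\ref{thm:Detroit} yields exactly the asserted equivalences, \emph{provided} we know that $E = H\Zb^{\Ab^1}$ is an absolute oriented motivic spectrum satisfying absolute purity for pairs $(X, X_p)$ with $X$ smooth over a mixed-characteristic DVR $A$. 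Orientedness of $H\Zb^{\Ab^1}$ is standard (it is the zeroth slice of $\mathrm{KGL}$, and the orientation is recorded in \cite{bem}); absoluteness is built into Definition~\ref{def:hz}; and the required case of absolute purity is precisely Levine's Theorem~\ref{thm:abs-pur} proved above. So the bulk of the work has already been done, and this corollary is essentially a matter of bookkeeping.

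Concretely, I would spell out the reduction as follows. Apply Theorem~\ref{thm:Detroit} to $E = H\Zb^{\Ab^1}$, which is legitimate by the three observations above, and with $m = 1$. This gives, for every $\Xfr \in \SH_{\Oc}$, that pullback along $j_! j^* \to \Id$ induces an equivalence
\[
\maps_{\SH_{\Oc}}(\Xfr, H\Zb^{\Ab^1}/p) \xrightarrow{\;\simeq\;} \maps_{\SH_{\Oc}}(j_! j^*(\Xfr), H\Zb^{\Ab^1}/p).
\]
Since $H\Zb^{\Ab^1}/p \simeq H\Fb_p^{\Ab^1}$ by Remark~\ref{def:coefficients}, this is the first displayed equivalence of the corollary. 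The second assertion, that the unit $H\Fb_p^{\Ab^1} \to j_* H\Fb_p^{\Ab^1}$ is an equivalence, is then the special case $m=1$ of the ``in particular'' clause of Theorem~\ref{thm:Detroit}: indeed $j_*(H\Fb_p^{\Ab^1}) \simeq j_*(H\Zb^{\Ab^1}/p) \simeq (j_* H\Zb^{\Ab^1})/p$ since $j_*$ is exact, and Theorem~\ref{thm:Detroit} gives $H\Zb^{\Ab^1}/p \xrightarrow{\simeq} (j_* H\Zb^{\Ab^1})/p$. (Alternatively one reads it straight off the first equivalence by taking $\Xfr = \unit_{\Oc}$ and using $j^* \unit_{\Oc} = \unit_{\Kb}$, $j_! j^* \unit_{\Oc} \to \unit_{\Oc}$, together with the adjunction $\maps_{\SH_{\Oc}}(j_!(-), -) \simeq \maps_{\SH_{\Kb}}(-, j^*(-))$ identifying the right-hand mapping spectrum with $(j_* H\Fb_p^{\Ab^1})(\Spec \Oc)$.)

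The only genuine content to double-check is the verification that $H\Zb^{\Ab^1}$ meets the standing hypothesis of Theorem~\ref{thm:Detroit}, namely absolute purity for pairs $(X,X_p)$ with $X$ smooth over a mixed characteristic $(0,p)$ DVR. This is not a formal matter --- it is the folklore ``motivic absolute purity'' conjecture --- but precisely this case has been established above as Theorem~\ref{thm:abs-pur}, via Levine's localization theorem~\ref{thm:DetroitPurity} together with the fundamental-class computation of Appendix~\ref{sec:djk-higher}. Thus the expected main obstacle has, in this instance, already been cleared by the preceding subsection, and what remains for this proof is only to invoke Theorem~\ref{thm:Detroit} with the correct choice of $E$ and $m$, and to unwind the identification $H\Fb_p^{\Ab^1} \simeq H\Zb^{\Ab^1}/p$.
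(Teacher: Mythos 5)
Your proof is correct and follows the same route as the paper: invoke Theorem~\ref{thm:Detroit} with $E = H\Zb^{\Ab^1}$ and $m=1$, with the sole non-formal hypothesis (absolute purity at $(X,X_p)$) supplied by Levine's Theorem~\ref{thm:abs-pur}. One small caveat on your parenthetical alternative for the ``in particular'' clause: taking $\Xfr=\unit_{\Oc}$ alone only yields an equivalence of global-sections spectra $H\Fb_p^{\Ab^1}(\Spec\Oc)\simeq (j_*H\Fb_p^{\Ab^1})(\Spec\Oc)$, not an equivalence in $\SH_{\Oc}$; to conclude the unit map is an equivalence of objects you must use the equivalence of mapping spectra for \emph{all} $\Xfr$ (the Yoneda argument the paper itself gestures at), or stick with your primary argument via exactness of $j_*$.
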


\begin{proof} Motivic cohomology has the desired absolute purity by Theorem~\ref{thm:abs-pur}. The ``in particular'' part is a consequence of the first claim because it implies that mapping to the unit map induces an equivalence for all $\Xfr \in \SH_{\Oc}$.
\end{proof}

We now prove key Lemma~\ref{lem:root-canal}. To set it up, we need some notation:
\begin{enumerate}
\item $A$ is a discrete valuation ring of mixed characteristic $(0, p)$ with uniformizer $\pi$;
\item we let $A':=A[\sqrt[p^d]{\pi}] \cong A[t]/(t^{p^d} - \pi)$ for some integer $d \geq 1$;
\item we let $X$ be a smooth $A$-scheme and set $X' := X_{A'}$ to be the base change and $Y$ to be its special fibre;

\item we will also contemplate the cartesian square $\square$:
\begin{equation}\label{eq:extend}
\begin{tikzcd}
Y' \ar{d}{q} \ar{r}{k} & X' \ar{d}{p}\\
Y \ar{r}{i} & X.
\end{tikzcd}
\end{equation}
Note that the reduction $Y'_\red$ is isomorphic to $Y$. 

\end{enumerate}
%

\begin{lem}[Extracting roots]\label{lem:root-canal} In the notation of the previous paragraph, assume that $E$ is an oriented ring spectrum that is absolutely pure for the pair $(X, Y)$ and $(X', Y')$. Then the image of the base change map
\[
\square^*:E^{\mathrm{BM}}(Y/X, \mathcal{O}^n) \rightarrow E^{\mathrm{BM}}(Y'/X',\mathcal{O}^n)
\]
is $d$-divisible for all $n \in \Zb$.
\end{lem}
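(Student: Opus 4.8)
The statement to prove is Lemma~\ref{lem:root-canal}: that the image of the base-change map $\square^*\colon E^{\mathrm{BM}}(Y/X,\Oc^n)\to E^{\mathrm{BM}}(Y'/X',\Oc^n)$ is $d$-divisible. The natural approach is to produce, after multiplication by $p^d=$ (a unit times) $\pi$, a factorization of the relevant map through the Borel--Moore homology of the non-reduced special fibre $Y'$ (which, recall, has $Y'_{\red}\cong Y$), and to exploit the fact that $Y'$ has multiplicity $p^d$ along $Y$. Concretely, first I would use the two purity hypotheses to rewrite both sides: absolute purity for $(X,Y)$ identifies $E^{\mathrm{BM}}(Y/X,\Oc^n)$ with (a Thom-twisted shift of) $E(n-1)(Y)$, and likewise for $(X',Y')$, so that the base-change map $\square^*$ becomes a concrete map between motivic cohomology groups. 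The key geometric input is that the closed immersion $Y'\hookrightarrow X'$ is \emph{not} the reduced one; rather, pulling back the uniformizer, the ideal sheaf of $Y'$ is cut out by $t^{p^d}=\pi$, while $Y=Y'_{\red}$ is cut out by $t$. So $Y'$ is a thickening of $Y$ of length $p^d$ inside the one-dimensional regular ambient $\Spec A'$.

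The heart of the argument is a comparison of fundamental classes. Using the orientation on $E$, the purity equivalence carries the \emph{fundamental class} of the regular closed immersion to a unit, but the composite $\square^*$ followed by the purity identification on the $X'$-side records not the fundamental class of $Y'\hookrightarrow X'$ but rather that of $Y\hookrightarrow X$ pulled back. The point is then that the cycle-theoretic pushforward along the finite map $Y'\to Y$ (or equivalently the self-intersection / excess-intersection computation for the square $\square$) introduces the multiplicity: $k_*[Y'_{\red}\hookrightarrow Y'] = p^d\cdot[\mathrm{pt}]$ in the appropriate Borel--Moore group, because $Y'=\Spec(A'/\pi)=\Spec(A'/t^{p^d})$ has length $p^d$ over $A'/t = $ the residue field (I would run this at the generic points of $Y$, so it reduces to the length computation for $\Spec(\kappa[t]/t^{p^d})$). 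Combining this with the proper pushforward functoriality of Borel--Moore homology (item (1) in the list in \S\ref{sec:purity}) and the compatibility of pushforward with the base-change map $\square^*$ (item (2), together with the push--pull formula~\eqref{eq:PushPull}), one gets that $q_*\circ\square^* = p^d\cdot\mathrm{id}$ (up to the purity identifications), where $q\colon Y'\to Y$ is proper. Hence every element in the image of $\square^*$ is $p^d$-divisible — indeed $\square^*(x) = \square^*(q_* \square^* \text{(preimage)})/\cdots$ — more precisely, $x = q_* \square^*(x)$ times a unit forces $\square^*$ to land in the $p^d$-torsion... wait, rather: since $q_* \square^* = p^d$, one instead shows $\square^*(x)$ lies in the image of a transfer that is divisible, so it is $d$-divisible (here $d$-divisible should be read as $p^d$-divisible, or the statement uses additive notation $d\cdot$; I would match the paper's convention in \eqref{eq:loc}).

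The step I expect to be the main obstacle is making the fundamental-class / multiplicity computation rigorous at the level of motivic spectra rather than on cycle complexes. The naive cycle-level statement $k_*[\text{cycle}] = p^d[\text{cycle}]$ is elementary, but transporting it through the purity transformation $\mathfrak p$ of \cite{DJKFundamental}, and checking it is compatible with the base-change map $\square^*$ as defined via the six-functor formalism, requires care — this is exactly the kind of computation carried out in Appendix~\ref{sec:djk-higher} (cf. Proposition~\ref{prop:DJKFundamentalClassForHigherCH}) for the absolute purity theorem, and I would reuse that machinery. A secondary subtlety is that $Y'$ is non-reduced, so "Borel--Moore homology of $Y'$" should be interpreted via $k_*k^!E$ on the possibly-singular $X'$; the purity hypothesis for the pair $(X',Y')$ is precisely what lets me replace this with honest motivic cohomology of $Y=Y'_{\red}$, after which the length-$p^d$ multiplicity appears as the discrepancy in the two fundamental classes. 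Once these identifications are in place, $d$-divisibility of $\mathrm{im}(\square^*)$ is formal.
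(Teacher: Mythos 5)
Your high-level strategy is the same as the paper's: reduce the claim to the statement that the composite of $\square^*$ with a transfer $\square_!$ in the other direction is multiplication by the degree/multiplicity $p^d$, and use purity to identify everything with the $E$-cohomology of $Y = Y'_{\red}$. Where you diverge is in \emph{how} you extract the multiplicity: you propose to compute the length of the thickening $Y' \supset Y$ on the special fibre (via $\kappa[t]/t^{p^d}$), whereas the paper works instead on the ambient scheme, computing $p_!(1_{X'}) = p^d$ directly from the projection formula: $X'$ sits in $\Pb^1_X$ as a divisor for $\Oc(p^d)$, and since $c_1(\Oc(1))^2 = 0$ the formal group law gives $\iota_!(1_{X'}) = p^d\, c_1(\Oc(1))$, whence $p_!(1_{X'}) = p^d$ after $\pi_!$. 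This sidesteps the fundamental-class comparison you flag as "the main obstacle," which is indeed the harder route.

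The genuine gap in your argument is that $q_*\circ\square^* = p^d\cdot\mathrm{id}$ does \emph{not}, by itself, force the image of $\square^*$ to be $p^d$-divisible. For example, $\square^* = \mathrm{id}\colon \Zb \to \Zb$ and $q_* = p^d\cdot\mathrm{id}$ satisfy the equation with the image of $\square^*$ being all of $\Zb$. You need, in addition, that $q_*$ (equivalently $\square_!$) is an \emph{isomorphism}; only then does $\square^* = \square_!^{-1}\circ (p^d\cdot\mathrm{id})$ land in $p^d$ times the target. The paper devotes its final diagram to precisely this: $\square_!$ is shown to be an equivalence by chasing the purity transformations $\mathfrak p_p$, $\mathfrak p_k$, $\mathfrak p_{i\circ q}$ through the six-functor formalism, using purity for both pairs $(X,Y)$ and $(X',Y')$ plus the fact that $q\colon Y'\to Y$ is a nilpotent closed immersion (so $q_*q^! \to \mathrm{id}$ is an equivalence). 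Your write-up gestures at needing "these identifications" and signals hesitation ("wait, rather: ..."), but never isolates this invertibility as the thing to prove; without it, the divisibility conclusion does not follow.
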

\begin{proof} 
Denote the open complements of $i:Y \rightarrow X$ and $k:Y' \rightarrow X'$ respectively by
\[
j:U \rightarrow X \qquad \ell: U' \rightarrow X',
\]
so that we also have the induced, finite morphism $r: U' \rightarrow U$.
Consider the commutative diagram
\begin{equation}\label{eq:extend}
\begin{tikzcd}
E(X)(n) \arrow[d]{}{p^*} \arrow[r]{}{j^*} & E(U,n) \arrow[d]{}{r^*} \\
E(X')(n) \arrow[d]{}{p_!} \arrow[r]{}{j'^*} & E(U')(n) \arrow[d]{}{r_{!}} \\
E(X)(n) \arrow[r]{}{j^*} & E(U)(n).
\end{tikzcd}
\end{equation}
The above diagram is considered as a diagram of $E^X$-modules in $\SH_X$, where $r$ is the restriction of $p$ over $U$. Projection formula implies that the map $p_! p^*$ is given by multiplication by $p_!(1_{X'}) \in E^X$.

Taking the horizontal fibres in \eqref{eq:extend}, and considering the induced mapping spectra whose sources are powers of $\Tb$, we obtain sequences of maps
\[
E^{\mathrm{BM}}(Y/X, -\mathcal{O}^n)[-2n] \xto{\square^*} E^{\mathrm{BM}}(Y'/X',-\mathcal{O}^n)[-2n] \xto{\square_!} E^{\mathrm{BM}}(Y/X,-\mathcal{O}^n)[-2n].
\] 
To prove the claim, it suffices to show  $p_!(1_{X'}) \simeq d$, which would imply that the above composition is multiplication by $d$, and that $\square_!$ is an isomorphism. We will do this in the next two paragraphs. 

We now show that $p_!(1_{X'}) \simeq d$. By assumption, $p$ factors as $X' \stackrel \iota \hook \Pb^1_X \xto{\pi} X$, where $\iota$ is a divisor for the line bundle $\Oc(d)$, and $\pi$ is the projection. As the second power of $c_1(\Oc(1))$ vanishes in $E$-cohomology, we can use the formal group law of $E$ to compute that $\iota_!(1_{X'}) = d c_1(\Oc(1))$. As $\pi_!(c_1(\Oc(1))) = 1_X$, it follows that $p_!(1_{X'}) = d$ in $E$-cohomology, as desired.

Next, we show that $\square_!$ is an isomorphism. Consider the following diagram
\[
\begin{tikzcd}
p_*p^* \ar{d}{\mathfrak{p}_p}& p_*k_*k^!p^* \ar{l}{\eta}  \ar{d}{\mathfrak{p}_p} & \Sigma^{-2,-1}p_*k_*k^*p^* \ar{l}{\mathfrak{p}_k, \simeq} \ar{dl}{\mathfrak{p}_{p \circ k}} \ar{dd}{\simeq}\\
p_*p^! \ar{dd}{\eta}& p_*k_*k^!p^! \ar{l}{\eta} \ar{d}{\simeq} & \\
 & i_*q_*q^!i^! \ar{d}{\eta,\simeq} &\ar{l}{\mathfrak{p}_{i \circ q}} i_*q_*q^*i^*  \\
\mathrm{id} & i_*i^!  \ar{ur}{\simeq} \ar{l}{\eta} & 
\end{tikzcd}
\]
which commutes by the naturality of the maps involved. The leftmost vertical composite calculates $p_!$ from Eq.~\eqref{eq:extend}, and the middle vertical composite calculates $\square_!$. The fact that $\square_!$ is an equivalence follows from the commutativity of the above diagram, and the labeled equivalences.
\end{proof}

\begin{proof}[Proof of Theorem~\ref{thm:Detroit}]
As both $j^*$ and $j_!$ commute with colimits, it suffices to prove the claim for $\Xfr$ that is representable by Tate twists of an affine smooth $\Oc$-scheme $X$. Hence it suffices to prove that the map:
\[
E(n)(X)/p^m \rightarrow E(n)(X[\tfrac{1}{p}])/p^m \qquad n \in \Zb, r \geq 1
\]
is an equivalence. 

Define a sequence or rings $A_r := \Zb_p[\sqrt[p^r]{p}]$. It converges to $\Oc$ in the sense that $A_\infty = \colim_r A_r = \Oc$. Since smooth $\Oc$ schemes are finitely presented over $\Oc$ by definition, $X = X_\infty$ is base changed from some $X_r$ defined over $A_r$; we set $U_r := X_r[\tfrac{1}{p}]$ and $Y_r$ the base change of $X_r$ to the closed point of $\Spec(A_r)$. Thanks to~\eqref{eq:fiber} we have a fibre sequence, compatible in $r$:
\[
E(Y_r/X_r,-\mathcal{O}^n)/p^m[-2n] \rightarrow E(n)(X_r) / p^m \rightarrow E(n)(U_r) / p^m 
\]
Applying Lemma~\ref{lem:root-canal} we conclude that
\[
\colim_{i\geq r} E(n)(X_i) / p^m \to \colim_{i \geq r} E(n)(U_i)/p^m
\]
is an equivalence as the fibre is contractible. As any absolute motivic spectrum is finitary \cite[Proposition~C.12(4)]{hoyois:2014}, this implies that the pullback map $E(n)(X_\infty)/p^r \to E(n)(U_\infty)/p^r$ is a quasi-isomorphism, as desired.
\end{proof}

\begin{var}\label{var:cyc} Consider the ring $\Zb_p[\zeta_{p^{\infty}}]$ where we have adjointed to $\Zb_p$ all its $p$-power roots of unity; its $p$-completion is the perfectoid ring that appears in many places in $p$-adic Hodge theory and is usually denoted by $\Zb_p^{\mathrm{cyc}}$. Consider
\[
j: \Spec((\Zb_p[\zeta_{p^{\infty}}])[\tfrac{1}{p}] =: \Qb_p[\zeta_{p^{\infty}}]) \hookrightarrow \Spec(\Zb_p^{\mathrm{cyc}}).
\]
We sketch how, the the same argument as in Theorem~\ref{thm:Detroit}, verifies that for any $E$ as in hypothesis of that result satisfies \[
(E)^\comp_p \xrightarrow{\simeq} (j_*E)^\comp_p. 
\]
In particular, we get the equivalence:
\[
H\Fb^{\Ab^1}_p \xrightarrow{\simeq} j_*H\Fb^{\Ab^1}_p.
\]
The key point is to modify Lemma~\ref{lem:root-canal} slightly. Noting that $\Zb_p[\zeta_{p^{\infty}}] \simeq \colim \Zb_p[\zeta_{p^{m}}]$ we set $R_m := \Zb_p[\zeta_{p^{m}}]$. Then for any smooth $R_m$-scheme $X_m$ and for any $m' > m$ we set $X_{mm'}$ to be the base change from $R_m$ to $R_{m'}$. We consider the square $\square_{mm'}$:
\[
\begin{tikzcd}
Y'_{mm'} \ar{d}{q} \ar{r}{k} & X_{mm'} \ar{d}{p}\\
Y_m \ar{r}{i} & X_m.
\end{tikzcd}
\]
In this case, we also note that $(Y'_{mm'})_{\mathrm{red}} \cong Y_m$ is a nilpotent thickening. Then the same arguments as in Lemma~\ref{lem:root-canal} shows that
\[
\square^*:E^{\mathrm{BM}}(Y_{m}/X_m, -\mathcal{O}^n)[-2n] \rightarrow E^{\mathrm{BM}}(Y_{mm'}/X_{mm'},-\mathcal{O}^n)[-2n]
\]
is divisible by some power of $p$, which is the crucial step for the proof of Theorem~\ref{thm:Detroit}.

\end{var}

\begin{rem}[Arbitrary perfectoid rings and the work of Bouis--Kundu] Variant~\ref{var:cyc} and Corollary~\ref{cor:main} are motivic refinements of \cite[Corollary 3.3]{AMM:2022} and our methods are inspired by theirs. Unfortunately since absolute purity is not known more generally for motivic cohomology, we are unable to use the same method to prove a more general statement. Nevertheless, the work of Bouis--Kundu \cite{bouis-kundu}, however, has established an isomorphism:
\[
H^n_{\Ab^1}(X;\Zb/p^m(n)) \xrightarrow{\cong} H^n_{\Ab^1}(X[\tfrac{1}{p}];\Zb/p^m(n)) \qquad \forall n, m \in \Zb
\]
whenever $X$ is smooth over a perfectoid valuation ring $V$ of mixed characteristic $(0, p)$ via a completely different method. 
\end{rem}

%
%

%
%
%
%
%

\subsection{Construction of power operations} \label{sec:construction}
We now reap the benefits of our work. First, consider the maps of $\Eb_1$-algebra ($\Eb_\infty$-coalgebras) spectra of endomorphisms (induced by the multiplication as in Lemma~\ref{lem:coprod-psi}); the (co)multiplicativity is evident because all the functors are strongly monoidal (and because the coalgebra structure is given by pullback along multiplication on $H\Fb^{\Ab^1}_p$):
\[
\mathrm{map}_{\SH_{\Kb}}(H\Fb_p^{\Ab^1}, H\Fb_p^{\Ab^1}) \xleftarrow{j^*,\simeq}  \mathrm{map}_{\SH_{\Oc}}(H\Fb_p^{\Ab^1}, H\Fb_p^{\Ab^1}) \xrightarrow{i^*} \mathrm{map}_{\SH_{\Fb_p}}(H\Fb_p^{\Ab^1},H\Fb_p^{\Ab^1}),
\]
where, the equivalence is Corollary~\ref{cor:main}. In fact, we can do better: we have morphisms and equivalences of graded $\Eb_1$-rings:
\[
\mathrm{map}_{\SH_{\Kb}}(H\Fb_p^{\Ab^1}, \Tb^{\otimes \star} \otimes H\Fb_p^{\Ab^1})\xleftarrow{j^*,\simeq} \mathrm{map}_{\SH_{\Oc}}(H\Fb_p^{\Ab^1}, \Tb^{\otimes \star} \otimes H\Fb_p^{\Ab^1}) \xrightarrow{i^*} \mathrm{map}_{\SH_{\Fb_p}}(H\Fb_p^{\Ab^1}, \Tb^{\otimes \star}\otimes H\Fb_p^{\Ab^1}).
\]
Passing to homotopy groups, Corollary~\ref{cor:main} implies that we have constructed a map of bigraded rings
\begin{align*}
\Ext_{\SH_{\Oc}}^{\star,\star} (H\Fb_p^{\Ab^1}, H\Fb_p^{\Ab^1}) &\stackrel{\sim}{\to} \Ext_{\SH_{\Oc}}^{\star,\star} (j_! j^* H\Fb^{\Ab^1}_p, H\Fb^{\Ab^1}_p) \\
&= \Ext_{\SH_{\Kb}}^{\star,\star} ( H\Fb^{\Ab^1}_p, H\Fb^{\Ab^1}_p).
\end{align*}
Above we have used the fact that motivic cohomology is stable under pullbacks \cite{spitzweck:2018}. As $\Kb$ is a field of characteristic 0, the structure of the final algebra in the above chain of equalities is completely understood by work of Voevodsky \cite{voevodsky:2010} and Hoyois--Kelly--Ostvaer \cite[Theorem~1.1(2)]{HKO}.
 
Thus, we have proven the following result.
\begin{thm}\label{thm:MotivicSteenrodAlgebraOverZpcyc}
The bigraded endomorphism algebra $\Ext_{\SH_{\Oc}}^{\star,\star} (H \Fb^{\Ab^1}_p, H\Fb_p^{\Ab^1})$ has the expected form. In particular:
\begin{enumerate}
\item The \emph{admissible monomials}
\[
\{\beta^{\epsilon_r} \P^{i_r} \cdots \beta^{\epsilon_1} \P^{i_1} \beta^{\epsilon_0} \vert r \geq 0, i_j > 0, \epsilon_i \in \{0,1\}, i_{j+1} > p i_j + \epsilon_j \}
\]
form a basis for $\Ext_{\SH_{\Oc}}^{\star,\star}$ as a left $H_{\Ab^1}^{\star}(\Oc; \Fb_p(\star))$-module;
\item the \emph{motivic power operations} $\P^i$ satisfy the motivic Adem relations;
\item the motivic Cartan formula holds for motivic power operations acting on products. 
\end{enumerate}
\end{thm}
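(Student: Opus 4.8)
The plan is to transport the known structure of the mod-$p$ motivic Steenrod algebra over the characteristic-zero field $\Kb$ back to $\SH_{\Fb_p}$ via the two maps of graded $\Eb_1$-rings displayed just before the theorem. The left-hand map $j^*$ is an equivalence by Corollary~\ref{cor:main} (this is exactly the content of the motivic Nizio\l{} theorem, Theorem~\ref{thm:Detroit}, applied to $E = H\Fb^{\Ab^1}_p$, whose hypotheses are met by Levine's absolute purity Theorem~\ref{thm:abs-pur}), so on bigraded homotopy groups we obtain a canonical isomorphism $\Ext^{\star,\star}_{\SH_{\Oc}}(H\Fb_p^{\Ab^1},H\Fb_p^{\Ab^1}) \cong \Ext^{\star,\star}_{\SH_{\Kb}}(H\Fb_p^{\Ab^1},H\Fb_p^{\Ab^1})$. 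Now $\Kb = \Zb_p[p^{1/p^\infty},p^{-1}]$ is a field of characteristic zero in which $p$ is invertible, so by Voevodsky \cite{voevodsky:2010} together with Hoyois--Kelly--\O stv\ae r \cite[Theorem~1.1(2)]{HKO} the algebra on the right is the full bigraded motivic Steenrod algebra over $\Kb$: it has the admissible monomials in $\beta$ and the $\P^i$ as a basis over $H^\star_{\Ab^1}(\Kb;\Fb_p(\star))$, the $\P^i$ satisfy the motivic Adem relations, and the Cartan formula holds in the spectrum-level form of Remark~\ref{rem:spectrumCartan}.

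The remaining work is bookkeeping of coefficients. For (1), I would invoke the fact that motivic cohomology is stable under pullback \cite{spitzweck:2018}: the base-change map along $\Spec(\Kb)\to\Spec(\Oc)$ identifies $H^\star_{\Ab^1}(\Kb;\Fb_p(\star))$ as the target of a ring map from $H^\star_{\Ab^1}(\Oc;\Fb_p(\star))$, and the isomorphism of $\Ext$-groups above is linear over this map; since both sides are free on the admissible monomials over their respective coefficient rings, freeness over $H^\star_{\Ab^1}(\Oc;\Fb_p(\star))$ follows formally once one checks that the admissible monomials, viewed already in $\Ext^{\star,\star}_{\SH_{\Oc}}$ via the inverse of $j^*$, are $H^\star_{\Ab^1}(\Oc;\Fb_p(\star))$-linearly independent and spanning. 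Linear independence is inherited from $\Kb$; spanning uses that the map on coefficient rings is surjective onto the relevant bidegrees, or more simply that $j^*$ being an equivalence of graded $\Eb_1$-rings already identifies the two module structures. For (2), the Adem relations are \emph{relations among the operations themselves} — i.e.\ identities in $\Ext^{\star,\star}_{\SH_{\Oc}}$ — and since $j^*$ is a ring isomorphism, any polynomial identity satisfied by $\beta,\P^i$ over $\Kb$ is satisfied by their preimages over $\Oc$. For (3), one pulls back the spectrum-level Cartan diagram \eqref{eq:spectrumCartan} over $\Kb$ along $j$; because the external product equivalence \eqref{eq:externalProdEquiv} and the coproduct $\mu^*$ of Lemma~\ref{lem:coprod-psi} are defined by strongly monoidal functors (composition, tensor, and pullback along $\mu$), they are compatible with $j^*$, and the homotopy commuting \eqref{eq:spectrumCartan} descends to $\SH_{\Oc}$ verbatim.

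Concretely the steps, in order: (i) record that $j^*$ is an equivalence of graded $\Eb_1$-rings from Corollary~\ref{cor:main}; (ii) cite \cite{voevodsky:2010,HKO} for the structure of $\Ext^{\star,\star}_{\SH_{\Kb}}$, including the admissible-monomial basis, the Adem relations, and the spectrum-level Cartan formula; (iii) transport the basis statement (1) through $j^*$, using \cite{spitzweck:2018} to match coefficient rings; (iv) transport the Adem relations (2), which are automatic since $j^*$ is a ring map; (v) transport the Cartan formula (3) by pulling back diagram \eqref{eq:spectrumCartan} and invoking the monoidality of the functors defining the external product and $\mu^*$.

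I expect the only genuine subtlety — the ``main obstacle'' — to be step (iii): making precise that ``has the expected form'' really does mean freeness over $H^\star_{\Ab^1}(\Oc;\Fb_p(\star))$ and not merely over $H^\star_{\Ab^1}(\Kb;\Fb_p(\star))$. The cleanest way to do this is to observe that the graded $\Eb_1$-ring $\mathrm{map}_{\SH_{\Oc}}(H\Fb_p^{\Ab^1},\Tb^{\otimes\star}\otimes H\Fb_p^{\Ab^1})$ is an algebra over $\mathrm{map}_{\SH_{\Oc}}(H\Fb_p^{\Ab^1},\Tb^{\otimes\star}\otimes H\Fb_p^{\Ab^1})$ acting through its $\pi_0$, which is $H^\star_{\Ab^1}(\Oc;\Fb_p(\star))$; since Remark~\ref{rem:sum} (base-changed to $\Oc$, which is legitimate because $\Psi$ over $\Kb$ is an equivalence and $j^*$ identifies the two) exhibits $\mathrm{map}_{\SH_{\Oc}}(H\Fb_p^{\Ab^1}\otimes H\Fb_p^{\Ab^1},H\Fb_p^{\Ab^1})$, hence also $\Ac\otimes_{H\Fb_p^{\Ab^1}}\Ac$, as a sum of shifts and twists of $H\Fb_p^{\Ab^1}$, the relevant $\Ext$-groups are free $H^\star_{\Ab^1}(\Oc;\Fb_p(\star))$-modules on classes pinned down by the admissible monomials. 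Everything else is a formal consequence of $j^*$ being an isomorphism of bigraded rings.
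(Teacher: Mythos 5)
Your proposal is essentially the paper's proof: identify $\Ext^{\star,\star}_{\SH_{\Oc}}$ with $\Ext^{\star,\star}_{\SH_{\Kb}}$ via $j^*$ (Corollary~\ref{cor:main}), import the known structure over the characteristic-zero field $\Kb$ from \cite{voevodsky:2010,HKO}, and transport the spectrum-level Cartan diagram by pullback-stability. The ``main obstacle'' you flag in step (iii) is in fact a non-issue: taking $\Xfr=1_{\Oc}$ in Corollary~\ref{cor:main} already shows $H^{\star}_{\Ab^1}(\Oc;\Fb_p(\star))\xrightarrow{\;\simeq\;}H^{\star}_{\Ab^1}(\Kb;\Fb_p(\star))$, so the two coefficient rings and module structures are identified on the nose and no separate check of spanning or independence is needed.
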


\begin{proof} The motivic Cartan formula deserves some comment. As explained in Remark~\ref{rem:spectrumCartan} the Cartan formula is witnessed by the commutativity of the diagram~\eqref{eq:spectrumCartan} which holds over $\Kb$ since it is a field of characteristic zero (and uses the colgebra structure on the spectrum of endomorphsims). All the terms in the diagram are stable under pullbacks and therefore the corresponding diagram also commutes over $\Oc$, proving the Cartan formula. 
\end{proof}

Next, we construct the motivic mod-$p$ power operations of smooth $\Fb_p$-varieties. Let 
\[
i \colon \Spec(\Fb_p) \hook \Spec(\Oc)
\]
be the closed embedding obtained by killing all the roots of $p$ in $\Oc$. As the pullback functor $i^* \colon \SH_{\Oc} \to \SH_{\Fb_p}$ takes $H\Fb^{\Ab^1}_p$ to $H\Fb^{\Ab^1}_p$ (see \cite[Chapter 8]{spitzweck:2018} or \cite[Theorem 5.2]{bem} for a different approach), we obtain the following result on the level of motivic spectra.

\begin{thm}\label{thm:ModPSteenrodSpectrumLevel}
There exist reduced mod-$p$ power operation endomorphisms
\[
\P^i \colon H\Fb^{\Ab^1}_p \to  \Tb^{\otimes i(p-1)} \otimes H\Fb^{\Ab^1}_p 
\]
and
\[
\B^i \colon H\Fb^{\Ab^1}_p \to  \Tb^{\otimes i(p-1)} \otimes H\Fb^{\Ab^1}_p[1]
\]
in $\SH_{\Fb_p}$ that are pullbacks of similarly named endomorphisms in $\SH_{\Oc}$, and where
\begin{align*}
\B^i &= \beta \P^i \\
\P^0 &= \Id.
\end{align*}
Moreover, these operations satisfy the following properties
\begin{enumerate}
\item the motivic Adem relations hold: if $0 < a < pb$, then
\[
\P^a \P^b = \sum_{t = 0}^{\lfloor a / p \rfloor} (-1)^{a+t} {(p-1)(b - t) - 1 \choose a - pt} \P^{a + b - t} \P^t,
\]
and if $0 < a \leq pb$, then
\begin{align*}
\P^a \B^b &= \sum_{t=0}^{\lfloor a / p \rfloor} (-1)^{a + t} {(p-1)(b-t) \choose a - pt} \B^{a+p-t}\P^t \\
&+ \sum_{t=0}^{\lfloor (a-1)/p \rfloor} (-1)^{a + t - 1} {(p-1)(b-t) - 1 \choose a - pt - 1} \P^{a + b - t} \B^t;
\end{align*}

\item the motivic Cartan formula holds\footnote{Note that unlike in Section~\ref{ssect:Cartan}, we do not know that the external product induces an equivalence as in Eq.~\eqref{eq:externalProdEquiv}. Nonetheless, as all the maps appearing in these formulas are stable under pullbacks, the equations here are consequence of the space-level equations of Theorem~\ref{thm:MotivicCartanSH}.} 
\[
\mu^*(\P^i) = \sum_{r = 0}^i \P^r \times \P^{i-r}
\]
and
\[
\mu^*(\B^i) = \sum_{r = 0}^i (\B^r \times \P^{i-r} + \P^r \times \B^{i-r})
\]
where $\mu^*$ is the pullback on motivic cohomology induced by the multiplication map $\mu \colon H\Fb^{\Ab^1}_p \otimes H\Fb^{\Ab^1}_p \to H\Fb^{\Ab^1}_p$;

\item the induced map
\begin{equation}\label{eq:PowerVsPOp}
\Omega^{\infty - i}_{\Tb} \P^i \colon \Omega_{\Tb}^{\infty-i} H\Fb^{\Ab^1}_p \to \Omega_{\Tb}^{\infty-pi} H\Fb^{\Ab^1}_p
\end{equation}
in $\Sh_{\Nis, \Ab^1 } (\Sm_{\Fb_p})$ coincides with the $p$th power map.

\end{enumerate}
\end{thm}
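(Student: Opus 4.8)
The plan is to establish the three properties one at a time, in each case reducing the claim over $\Fb_p$ to the corresponding already-known statement over $\Kb$ (a field of characteristic zero) by exploiting stability under the pullbacks $i^*$ and $j^*$, together with the equivalence of Corollary~\ref{cor:main}. The underlying mechanism is always the same: the relevant identity among operations is witnessed by the commutativity of a diagram of maps between sums of Tate-twisted shifts of $H\Fb^{\Ab^1}_p$ in $\SH_{\Oc}$, obtained by pulling back the diagram over $\Kb$ along $j^*$ (which is an equivalence on endomorphism algebras by Corollary~\ref{cor:main} and its graded refinement in Theorem~\ref{thm:MotivicSteenrodAlgebraOverZpcyc}); then one applies $i^*$, which is strongly monoidal, sends $H\Fb^{\Ab^1}_p$ to $H\Fb^{\Ab^1}_p$ (Spitzweck \cite{spitzweck:2018}, or \cite[Theorem 5.2]{bem}), and therefore carries the commuting diagram over $\Oc$ to a commuting diagram over $\Fb_p$.

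Concretely: for the Adem relations, note that in $\SH_{\Kb}$ the identity $\P^a\P^b = \sum_t (-1)^{a+t}\binom{(p-1)(b-t)-1}{a-pt}\P^{a+b-t}\P^t$ holds as an equality in the bigraded endomorphism ring, by Voevodsky \cite{voevodsky:2010} and \cite[Theorem~1.1(2)]{HKO}; but $\Ext^{\star,\star}_{\SH_{\Oc}}(H\Fb^{\Ab^1}_p, H\Fb^{\Ab^1}_p) \xrightarrow{j^*,\simeq} \Ext^{\star,\star}_{\SH_{\Kb}}(H\Fb^{\Ab^1}_p,H\Fb^{\Ab^1}_p)$ is an isomorphism of bigraded rings, so the same identity holds over $\Oc$, and then applying the ring map $i^*$ on $\Ext$-groups transports it to $\SH_{\Fb_p}$; the mixed $\P^a\B^b$ case is identical. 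For the Cartan formula, I would use the spectrum-level form of Remark~\ref{rem:spectrumCartan}: the diagram \eqref{eq:spectrumCartan} commutes over $\Kb$ (characteristic zero, so $\Psi$ is an equivalence and Theorem~\ref{thm:MotivicCartanSH} applies), hence its $j^*$-preimage commutes over $\Oc$ --- here one uses that $\mu$, the coalgebra structure $\mu^*$, and the external products $\P^r\times\P^{i-r}$ are all natural under base change --- and finally $i^*$ produces the commuting diagram over $\Fb_p$; reading off $\pi_0$ gives the stated formulas (note that over $\Fb_p$ we do not know $\Psi$ is an equivalence, so we cannot phrase it via \eqref{eq:externalProdEquiv}, but the equality of maps to $H\Fb^{\Ab^1}_p$ is all that is asserted). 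The normalizations $\B^i = \beta\P^i$ and $\P^0 = \Id$ are part of the definition pulled back from $\SH_{\Oc}$; that $\P^0 = \Id$ and $\B^i = \beta\P^i$ persist is immediate since $i^*$ is a functor.

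For part (3) --- that $\Omega^{\infty-i}_{\Tb}\P^i$ is the $p$-th power map on $\Omega^{\infty-i}_{\Tb}H\Fb^{\Ab^1}_p$ in $\Sh_{\Nis,\Ab^1}(\Sm_{\Fb_p})$ --- I expect this to be the main obstacle, because it is not a bare diagram-chase: it requires identifying the specific operation $\P^i$, defined abstractly as a pullback of an endomorphism over $\Oc$, with a geometrically meaningful self-map. The strategy I would pursue: over $\Kb$, by Voevodsky's construction \cite{voevodsky:2003a} (and \cite[\S2.4]{HKO}), the operation $\P^i$ has the property that $\Omega^{\infty-i}_{\Tb}\P^i$ restricted to the appropriate homotopy sheaf is the $p$-th power map --- this is exactly how the power operations are built, from the total power operation through $B\mu_p$ / $B\Sigma_p$. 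Then I would argue that the $p$-th power map is itself stable under the base-change functors $i^*$ and $j^*$ (it is defined using only the $\Eb_\infty$-multiplication on $H\Fb^{\Ab^1}_p$, which is preserved by these strongly monoidal functors), and that the identification ``$\Omega^{\infty-i}_{\Tb}\P^i = (-)^p$'' is an equality of maps of sheaves on $\Sm$ that can be detected after pulling back along $\Spec(\Kb)\to\Spec(\Oc)$ and $\Spec(\Fb_p)\to\Spec(\Oc)$ --- or, better, checked directly over $\Oc$, where by Corollary~\ref{cor:main} the operation $\P^i$ is determined by its restriction to $\Kb$. The delicate point is matching the truncation-level/adjunction bookkeeping: one must verify that $\Omega^{\infty-i}_{\Tb}$ interacts with $i^*$ so that the $p$-th power statement over $\Oc$ specializes correctly; I would handle this by restricting to the relevant motivic Eilenberg--MacLane space $K(\Fb_p(i),2i) = \Omega^{\infty-i}_{\Tb}H\Fb^{\Ab^1}_p$ and using that $i^*$ commutes with finite products and hence with the squaring/$p$-th-power map on it, so that the equation over $\Oc$ (pulled back from the known one over $\Kb$) restricts to the desired equation over $\Fb_p$.
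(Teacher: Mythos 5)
Your treatment of parts (1) and (2) matches the paper's, which likewise reduces to Theorem~\ref{thm:MotivicSteenrodAlgebraOverZpcyc} and pushes forward along $i^*$; the one thing you omit (and the paper mentions) is that the mod-$p$ Adem and Cartan formulas over $\Kb$ at $\ell = p = 2$ a priori contain $\tau$- and $\rho$-terms, and the point of working over $\Fb_p$ is that these vanish there, so that the $\tau$-free and $\rho$-free formulas you state are what survives.

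The genuine gap is in (3), and it is exactly where you flagged delicacy. You correctly observe that the statement to prove is an identity of maps of $\Ab^1$-invariant Nisnevich sheaves of anima over $\Fb_p$, and that the $p$-th power map transports along $i^*$ because it is built from finite products and the multiplication. But this does not settle the problem, because you also need to know that the object on which the two maps are to be compared transports correctly: i.e., that applying $i^*$ at the sheaf level to the $\Oc$-level space $\Omega^{\infty-i}_{\Tb}H\Fb^{\Ab^1}_p$ recovers the $\Fb_p$-level space $\Omega^{\infty-i}_{\Tb}H\Fb^{\Ab^1}_p$. In general $i^*$ on $\SH$ need not commute with the right adjoint $\Omega^{\infty}_{\Tb}$, and ``restricting to the Eilenberg--MacLane space and using finite products'' only takes care of the map, not of the space; you never establish the needed equivalence $i^*\Omega^{\infty-i}_{\Tb}H\Fb^{\Ab^1}_p \simeq \Omega^{\infty-i}_{\Tb}i^*H\Fb^{\Ab^1}_p$. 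This is precisely what the paper's proof supplies: it invokes \cite[Theorem~8.1]{bem}, which shows that the complexes $\Zb(j)^{\Ab^1}$, together with their graded multiplicative structure, are stable under pullback of $\Ab^1$-invariant cdh-sheaves of anima. Consequently the pullback of $H\Fb^{\Ab^1}_p$ as a $\Tb$-spectrum may be computed as a lax $\Tb$-spectrum by applying the sheaf-level pullback termwise, and the identification $\Omega^{\infty-i}_{\Tb}\P^i = (-)^p$ (which over $\Oc$ does follow from \cite[Lemma~5.12]{voevodsky:2003a} via Corollary~\ref{cor:main}, as you say) can then be transported to $\Sh_{\Nis,\Ab^1}(\Sm_{\Fb_p})$. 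Without that input your argument does not close, so you should add a step along these lines or supply an alternative reason that $\Omega^{\infty}_{\Tb}$ commutes with $i^*$ on these particular spectra.
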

\begin{proof}
Claims 1 and 2 follow immediately from the corresponding claims in characteristic 0 and Theorem~\ref{thm:MotivicSteenrodAlgebraOverZpcyc} after realizing that $\rho$ and $\tau$ vanish in characterstic 2.

We prove claim 3 next. Note that for the mod-$p$ motivic cohomology of smooth $\Oc$-schemes the desired identification follows from \cite[Lemma~5.12]{voevodsky:2003a} because, by Corollary~\ref{cor:main}, the relevant mapping anima can be computed after restricting to the generic fibre. Moreover, in \cite[Theorem 8.1]{bem}, the authors show that the $\Zb(j)^{\Ab^1}$, together with the graded multiplicative structure, are stable under pullbacks of $\Ab^1$-invariant cdh-sheaves of anima\footnote{Left Kan extension from smooth schemes commutes with the forgetful functor $\mathrm{Forg} \colon D(\Zb) \to \Ani$ on the target category because, point wise, it is computed by sifted colimits. This is because for a (derived) scheme $X$, the category $(\Sm_{\Zb}^\op)_{/X}$, the opposite category of smooth $\Zb$-schemes under $X$, admits finite coproducts, and any such $\infty$-category is sifted.}. In particular, the pullback of $H\Fb_p^{\Ab^1}$ from $\Sh_{\cdh, \Ab^1}(\Sch_{\Oc})$ as a $\Tb$-spectrum to $\Sh_{\cdh, \Ab^1}(\Sch_{\Fb_p})$ may be computed as a lax $\Tb$-spectrum\footnote{See \cite[\S 1]{annala-iwasa:MotSp} for the definition of a lax spectrum object.} simply by applying the pullback of $\Ab^1$-invariant cdh-sheaves. Thus, the identification of maps in Eq.~\eqref{eq:PowerVsPOp} can be pulled from $\Sh_{\Nis, \Ab^1}(\Sm_{\Oc})$ to $\Sh_{\Nis, \Ab^1}(\Sm_{\Fb_p})$, finishing the proof.
%
%
%
%
%
%
%
\end{proof}

The above spectrum-level result immediately implies the following result for cohomology of motivic spaces.

\begin{cor}\label{cor:ModPSteenrod}
Let $S$ be a smooth $\Fb_p$-scheme. Then the endomorphisms $\P^i$ and $B^i$ of Theorem~\ref{thm:ModPSteenrodSpectrumLevel} induce mod-$p$ motivic power operations 
\[
\P^i \colon H^{\star}_{\Ab^1}(\Xfr; \Fb_p(\star)) \to H^{\star + 2i(p-1)}_{\Ab^1}(\Xfr; \Fb_p(\star + i(p-1)))
\]
and
\[
B^i \colon H^{\star}_{\Ab^1}(\Xfr; \Fb_p(\star)) \to H^{\star + 2i(p-1) + 1}_{\Ab^1}(\Xfr; \Fb_p(\star + i(p-1))
\]
acting on the $\Ab^1$-local motivic cohomology of motivic spaces $\Xfr \in \Pc_{\Nis,\Ab^1}(\Sm_S)$. We have that $\P^0 = \Id$ and $\B^i = \beta \P^i$. Moreover, these operations satisfy the expected properties, namely:
\begin{enumerate}
\item the operations $\P^i$ and  $B^i$ commute with pullbacks;
\item the motivic Adem relations hold;
\item the motivic Cartan formulas hold;
\item if $x \in H_{\Ab^1}^{2i}(\Xfr; \Fb_p(i))$, then $\P^i(x) = x^p$;
\item if $y \in H^{j}_{\Ab^1}(\Xfr; \Fb_p(k))$ is such that $j-k<i$ and $k \leq i$, then $\P^i(y) = 0$;
\item the Bockstein $\beta$ is a graded derivation with respect to the first grading.
\end{enumerate}
\end{cor}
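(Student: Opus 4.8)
The plan is to promote the spectrum-level endomorphisms of Theorem~\ref{thm:ModPSteenrodSpectrumLevel} to operations on cohomology of motivic spaces by post-composition, verify the formal properties directly, and then deduce the two genuinely \emph{unstable} assertions~(4) and~(5) from Voevodsky's characteristic-zero computations by transporting them across Corollary~\ref{cor:main}.

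First I would record the construction. For $\Xfr \in \Pc_{\Nis,\Ab^1}(\Sm_S)$ a class $\alpha \in H^j_{\Ab^1}(\Xfr;\Fb_p(k))$ is a map $\Sigma^\infty_\Tb (\Xfr)_+ \to \Tb^{\otimes k}\otimes H\Fb^{\Ab^1}_p[j-2k]$ in $\SH_S$, where the operations over a general $S$ are obtained by pulling back those over $\Fb_p$ along $S \to \Spec\Fb_p$; one then sets $\P^i(\alpha)$, resp.\ $\B^i(\alpha)$, to be the composite of $\alpha$ with the appropriate shift and Tate twist of $\P^i$, resp.\ $\B^i$. Since this is post-composition with a fixed map of motivic spectra, property~(1) (naturality in $\Xfr$, hence also compatibility with base change) is automatic, and the identities $\P^0=\Id$, $\B^i=\beta\P^i$ hold already at the spectrum level. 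Properties~(2) and~(3) then follow at once from parts~(1) and~(2) of Theorem~\ref{thm:ModPSteenrodSpectrumLevel}: Adem relations among the underlying maps of spectra persist after evaluation on any $\Xfr$, while $\mu^*(\P^i) = \sum_r \P^r\times\P^{i-r}$ (and its Bockstein variant) turns into $\P^i(\alpha\beta)=\sum_r \P^r(\alpha)\P^{i-r}(\beta)$ by writing $\alpha\beta = \Delta^*(\alpha\times\beta)$ for the diagonal $\Delta$, using naturality and the characterization of the external product in Remark~\ref{rem:external}.

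Next I would handle~(4) and~(6). For~(4), a class $x \in H^{2i}_{\Ab^1}(\Xfr;\Fb_p(i))$ is a map $\Xfr \to \Omega^{\infty-i}_\Tb H\Fb^{\Ab^1}_p$ and $\P^i(x)$ is its composite with $\Omega^{\infty-i}_\Tb\P^i$, which by Theorem~\ref{thm:ModPSteenrodSpectrumLevel}(3) is the $p$-th cup-power map; hence $\P^i(x)=x^p$. For~(6), the statement that $\beta$ is a graded derivation is the identity $\beta\circ\mu \simeq \mu\circ(\beta\otimes\Id + \Id\otimes\beta)$ of maps $H\Fb^{\Ab^1}_p\otimes H\Fb^{\Ab^1}_p \to H\Fb^{\Ab^1}_p[1]$, which holds over $\Kb$ (it expresses that the Bockstein of $\Fb_p\to\Zb/p^2\to\Fb_p$ is a derivation) and is stable under pullback to $\Fb_p$.

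The hard part will be the instability relation~(5): it is an unstable statement, invisible on the endomorphism spectrum, and classically it is extracted from the computation of the cohomology of motivic Eilenberg--MacLane spaces, which is not available to us directly over $\Fb_p$. I would argue by reduction to the universal class. Let $\mathbf{K}^{j,k}$ be the motivic space representing $H^j_{\Ab^1}(-;\Fb_p(k))$ over $\Fb_p$, with tautological class $\iota_{j,k}$; by naturality it suffices to show $\P^i(\iota_{j,k})=0$ when $j-k<i$ and $k\le i$. Because $H\Fb^{\Ab^1}_p$ together with its graded-multiplicative structure is stable under the pullbacks $\SH_{\Oc}\to\SH_{\Fb_p}$ and $\SH_{\Oc}\to\SH_{\Kb}$ at the level of $\Ab^1$-invariant cdh-sheaves of anima --- this is \cite[Theorem~8.1]{bem}, used exactly as in the footnote to the proof of Theorem~\ref{thm:ModPSteenrodSpectrumLevel} --- the spaces $\mathbf{K}^{j,k}$ and their tautological classes are compatible under these pullbacks, and the operations $\P^i$ over $\Fb_p$, $\Oc$ and $\Kb$ correspond to one another by construction. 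Thus $\P^i(\iota_{j,k})$ over $\Fb_p$ is the pullback along $i^*$ of $\P^i(\iota_{j,k})$ over $\Oc$, while Corollary~\ref{cor:main} applied to $\Xfr = \Sigma^\infty_\Tb (\mathbf{K}^{j,k}_\Oc)_+$ and its Tate twists identifies $H^{\star,\star}_{\Ab^1}(\mathbf{K}^{j,k}_\Oc)$ with $H^{\star,\star}_{\Ab^1}(\mathbf{K}^{j,k}_\Kb)$ compatibly with $\P^i$ and $\iota_{j,k}$. Since $\Kb$ has characteristic zero, Voevodsky's instability relations \cite{voevodsky:2003a} give $\P^i(\iota_{j,k})=0$ over $\Kb$ in the stated range; transporting this back to $\Oc$ and then to $\Fb_p$ finishes the proof. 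The technical point one must get right is precisely the compatibility of $\Omega^\infty_\Tb$ --- hence of the motivic Eilenberg--MacLane spaces and their tautological classes --- with the pullbacks $i^*$ and $j^*$, which is why the cdh-sheaf / lax-spectrum bookkeeping of \cite{bem, annala-iwasa:MotSp} invoked in the proof of Theorem~\ref{thm:ModPSteenrodSpectrumLevel} is needed here as well.
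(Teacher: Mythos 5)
Your handling of claims (1)--(4) and (6), and of the basic identities $\P^0 = \Id$ and $\B^i = \beta\P^i$, matches the paper exactly: everything is post-composition with fixed spectrum maps, so Adem, Cartan, naturality, and the power-operation identity descend immediately from Theorem~\ref{thm:ModPSteenrodSpectrumLevel}, and (6) is formal.

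Where you diverge is claim (5). The paper does not argue via motivic Eilenberg--MacLane spaces and transport from $\Kb$ at all; it simply observes that the instability relation (5) is a \emph{formal consequence} of (4) (together with the multiplicativity and the stability of the operations), citing the argument of \cite[Lemma~9.9]{voevodsky:2003a}. That argument is the motivic version of the standard suspension trick: take enough $\Tb$- and $S^1$-suspensions of $y$ to land in bidegree $(2i,i)$, use (4) to identify $\P^i$ of the suspended class with its $p$-th cup power, observe that cup products vanish on a reduced suspension, and desuspend. This is a purely internal deduction over $\Fb_p$ and does not require any further comparison with $\Oc$ or $\Kb$. Your route --- reducing to the tautological class on $\mathbf{K}^{j,k}$, lifting the whole picture to $\Oc$, and applying Corollary~\ref{cor:main} to identify the cohomology of $\mathbf{K}^{j,k}_{\Oc}$ with that of $\mathbf{K}^{j,k}_{\Kb}$ --- is logically sound and would also give the result, but it buys the conclusion at the cost of the nontrivial compatibility of $\Omega^\infty_\Tb$ of $H\Fb_p^{\Ab^1}$ with the pullbacks $i^*$ and $j^*$, which you correctly flag as the technical crux. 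This compatibility does happen to be available (it is the same \cite[Theorem~8.1]{bem} input used in the proof of Theorem~\ref{thm:ModPSteenrodSpectrumLevel}(3)), so your argument goes through, but the paper's formal derivation of (5) from (4) is substantially shorter and avoids re-verifying base-change compatibility of the motivic EM-spaces for the open immersion $j$ in addition to the closed immersion $i$. In short: your (5) is correct but more elaborate than necessary; the observation that (5) is a consequence of (4) and the Cartan formula would have saved you the transport argument.
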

\begin{proof}
The first four claims are immediate from Theorem~\ref{thm:ModPSteenrodSpectrumLevel}. Claim 5 follows from claim 4 (see \cite[Lemma~9.9]{voevodsky:2003a}). Claim 6 is formal.
\end{proof}

Furthermore, the following result is proven similarly to \cite[Proposition~11.4]{voevodsky:2003a} which only uses the action of motivic power operations on the mod-$p$ motivic cohomology of products of $\rm{B}\mu_p$, which in turn may be computed using Corollary~\ref{cor:ModPSteenrod}.

\begin{cor}\label{cor:admiss}
Admissible monomials are $H_{\Ab^1}^{\star}(\Fb_p; \Fb_p(\star))$-linearly independent in 
\[
\Ext_{\SH_{\Fb_p}}^{\star,\star}(H\Fb^{\Ab^1}_p,H\Fb^{\Ab^1}_p).
\] In other words, there is an inclusion 
\[
\Ac_{\Fb_p}^{\star,\star} \hook \Ext_{\SH_{\Fb_p}}^{\star,\star}(H\Fb^{\Ab^1}_p,H\Fb^{\Ab^1}_p),
\]
where $\Ac_{\Fb_p}^{\star,\star}$ is the $H\Fb_p^{\star,\star}$-algebra generated by the Bockstein $\beta$ and the motivic power operations $\P^i$, modulo the motivic Adem relations.
\end{cor}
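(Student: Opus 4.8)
The strategy is to reduce the $H^\star_{\Ab^1}(\Fb_p;\Fb_p(\star))$-linear independence of admissible monomials to a concrete computation in the motivic cohomology of a product of copies of $\B\mu_p$, exactly as Voevodsky does in \cite[Proposition~11.4]{voevodsky:2003a}. First I would recall that in $\SH_{\Fb_p}$ the motive $\B\mu_p$ (or rather the relevant Nisnevich sheaf) has mod-$p$ motivic cohomology whose bigraded ring structure is known: it is a free module over $H^{\star}_{\Ab^1}(\Fb_p;\Fb_p(\star))$ on an explicit basis built from the Euler class and first Chern class of the tautological character, and this description is stable under base change from a characteristic-zero field where it is classical. (Here one uses that $\B\mu_p$ is \emph{not} $\Ab^1$-contractible — it is $\mu_p$, not $\Sigma_p$ — so the computation survives in characteristic $p$.) Then, given a putative $H^{\star}_{\Ab^1}(\Fb_p;\Fb_p(\star))$-linear relation $\sum_I c_I \,\theta_I = 0$ among admissible monomials $\theta_I$ in $\beta$ and the $\P^i$, I would evaluate both sides on the fundamental classes in $H^\star_{\Ab^1}((\B\mu_p)^{\times N};\Fb_p(\star))$ for $N$ large.

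The key step is that the action of each admissible monomial $\theta_I$ on $(\B\mu_p)^{\times N}$ can be computed purely from Corollary~\ref{cor:ModPSteenrod}: we know $\P^i$ on a degree-$2i$ weight-$i$ class is the $p$th power, $\P^i$ vanishes on low-degree classes, the Bockstein is a derivation, and the Cartan formula controls products. This is precisely the input Voevodsky uses, and the combinatorics of which monomials in the generators of $H^\star((\B\mu_p)^{\times N})$ appear — organized by the "excess" of the admissible sequence — shows that distinct admissible monomials produce $H^{\star}_{\Ab^1}(\Fb_p;\Fb_p(\star))$-linearly independent outputs on a suitable class. Since all the ingredients (the Cartan formula, the Adem relations, the unstable vanishing, and the power formula) hold over $\Fb_p$ by Theorem~\ref{thm:ModPSteenrodSpectrumLevel} and Corollary~\ref{cor:ModPSteenrod}, and since the relevant cohomology of $(\B\mu_p)^{\times N}$ is the base change of the classical answer, Voevodsky's argument transports verbatim. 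This yields that all $c_I$ vanish, hence the admissible monomials are linearly independent, which is exactly the asserted inclusion $\Ac^{\star,\star}_{\Fb_p}\hook \Ext^{\star,\star}_{\SH_{\Fb_p}}(H\Fb^{\Ab^1}_p,H\Fb^{\Ab^1}_p)$.

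The main obstacle — really the only nontrivial point beyond citing \cite[Proposition~11.4]{voevodsky:2003a} — is confirming that the computation of $H^{\star}_{\Ab^1}((\B\mu_p)^{\times N};\Fb_p(\star))$ as a module over $H^{\star}_{\Ab^1}(\Fb_p;\Fb_p(\star))$, together with the formulas for how the $\P^i$ and $\beta$ act on its generators, is genuinely available in characteristic $p$ and not secretly using invertibility of $p$. This is where one invokes the stability of $\Ab^1$-invariant motivic cohomology (with its graded-ring structure) under pullback, as in \cite{bem}, to base change the classical characteristic-zero computation of $\B\mu_p$ down through $\Oc$ to $\Fb_p$; combined with the pullback-compatibility of the operations established in Theorem~\ref{thm:ModPSteenrodSpectrumLevel}(3) and Corollary~\ref{cor:ModPSteenrod}(1), every term in Voevodsky's separating computation is defined over $\Fb_p$ and agrees with its characteristic-zero avatar under base change. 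Once this compatibility is in hand the proof is a reference to \cite[Proposition~11.4]{voevodsky:2003a}.
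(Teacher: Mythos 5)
Your proposal is correct and follows essentially the same route as the paper: the paper's proof is precisely the remark that the argument of Voevodsky \cite[Proposition~11.4]{voevodsky:2003a} goes through verbatim, since it only uses the action of the operations on the mod-$p$ motivic cohomology of products of $\B\mu_p$, which is available in characteristic $p$ via Corollary~\ref{cor:ModPSteenrod} (and the calculation of $H^\star_{\Ab^1}(\B\mu_p;\Fb_p(\star))$ recorded in Example~\ref{ex:CohOfBmup}, itself base-changed from characteristic zero). The only slight elaboration you add beyond the paper — checking that the $\B\mu_p$ calculation and the operations are compatible under base change from $\Oc$ to $\Fb_p$ — is exactly what Theorem~\ref{thm:ModPSteenrodSpectrumLevel} and \cite{bem} supply, so your reasoning is a faithful unpacking of the paper's one-line justification.
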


\subsection{Power operations on $\Fb_p$-schemes}\label{subsect:allPops} We now construct power operations on the $\Ab^1$-invariant motivic cohomology of (quasicompact, quasiseprated) $\Fb_p$-schemes. Given $X \in \Sch_{\Fb_p}$ with structure map $\pi: X \rightarrow \Spec(\Fb_p)$, we have that $H\Zb^{\Ab^1}_X \simeq \pi^*H\Zb^{\Ab^1}_{\Fb_p}$ (as $\Eb_{\infty}$-algebras in $\SH_{\Fb_p}$) thanks to \cite[Theorem 8.19]{bem}.  Therefore we have a maps of $\Eb_1$-algebras (and $\Eb_{\infty}$-coalgebras):
\[
\mathrm{end}_{\SH_{\Fb_p}}(H\Fb_p^{\Ab^1}) \rightarrow \mathrm{end}_{\SH_{X}}(H\Fb_p^{\Ab^1}).
\]
On the level of homotopy groups, we then obtain a map (in the notation of Corollary~\ref{cor:admiss}
\[
\Ac_{\Fb_p}^{\star,\star} \rightarrow \Ext_{\SH_{\Fb_p}}^{\star,\star}(H\Fb^{\Ab^1}_p,H\Fb^{\Ab^1}_p) \rightarrow \Ext_{\SH_{X}}^{\star,\star}(H\Fb^{\Ab^1}_p,H\Fb^{\Ab^1}_p);
\]
which we base change along the $\Ab^1$-invariant motivic cohomology ring of $X$ to get a map
\[
\Ac_{X}^{\star,\star}:=\Ac_{\Fb_p}^{\star,\star} \otimes_{H_{\Ab^1}^{\star}(\Fb_p; \Fb_p(\star))} H_{\Ab^1}^{\star}(X; \Fb_p(\star)) \rightarrow \Ext_{\SH_{X}}^{\star,\star}(H\Fb^{\Ab^1}_p,H\Fb^{\Ab^1}_p).
\]
In this way, $\Ac_{X}^{\star,\star}$ acts naturally on the $\Ab^1$-invariant motivic cohomology of smooth $X$-schemes (more generally, on motivic spaces over $X$).

\begin{cor}\label{cor:extension-a1}
Let $X \in \Sch_{\Fb_p}$. There exist reduced mod-$p$ power operation endomorphisms acting on $H\Fb^{\Ab^1}_p \in \SH_X$ that satisfy all the properties stated in Theorem~\ref{thm:ModPSteenrodSpectrumLevel}.
\end{cor}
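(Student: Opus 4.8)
The plan is to reduce Corollary~\ref{cor:extension-a1} to Theorem~\ref{thm:ModPSteenrodSpectrumLevel} by base change along the structure map $\pi\colon X \to \Spec(\Fb_p)$, exactly as sketched in the paragraph preceding the statement. First I would invoke \cite[Theorem 8.19]{bem} to obtain the equivalence $H\Zb^{\Ab^1}_X \simeq \pi^\ast H\Zb^{\Ab^1}_{\Fb_p}$ of $\Eb_\infty$-algebras in $\SH_X$, and hence $H\Fb^{\Ab^1}_{p,X} \simeq \pi^\ast H\Fb^{\Ab^1}_{p,\Fb_p}$. Applying the strong monoidal, colimit-preserving functor $\pi^\ast\colon \SH_{\Fb_p} \to \SH_X$ to the endomorphism operations $\P^i, \B^i$ of Theorem~\ref{thm:ModPSteenrodSpectrumLevel} produces endomorphisms $\pi^\ast\P^i, \pi^\ast\B^i$ of $H\Fb^{\Ab^1}_{p,X}$ (after transporting along the above equivalence), landing in $\Tb^{\otimes i(p-1)}\otimes H\Fb^{\Ab^1}_{p,X}[\ast]$ with the correct bidegrees, since $\pi^\ast$ preserves $\Tb$ and shifts.

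Next I would check that the defining structural relations are preserved. Since $\pi^\ast$ is a functor, it carries any commuting diagram of maps of motivic spectra to a commuting diagram; in particular $\B^i = \beta\P^i$ and $\P^0 = \Id$ are immediate, and the Adem relations --- being identities among composites of the $\P^i$ and $\beta$ in $\mathrm{end}_{\SH_{\Fb_p}}(H\Fb^{\Ab^1}_p)$ --- pull back verbatim to $\mathrm{end}_{\SH_X}(H\Fb^{\Ab^1}_p)$. For the Cartan formula, one uses that $\pi^\ast$ is symmetric monoidal, so it takes the multiplication $\mu$ and the coproduct-witnessing diagram of Remark~\ref{rem:spectrumCartan} over $\Fb_p$ to the analogous diagram over $X$; all the twists, shifts, and sums involved are preserved, so the Cartan formula holds over $X$. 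The $p$th-power property for the induced maps on $\Ab^1$-invariant Nisnevich sheaves follows the same way as in the proof of Theorem~\ref{thm:ModPSteenrodSpectrumLevel}, item (3): the relevant statement is a statement about $\Ab^1$-invariant cdh-sheaves of anima and their graded multiplicative structure, which by \cite[Theorem 8.1]{bem} are stable under arbitrary pullback, so one transports the identification of $\Omega^{\infty-i}_{\Tb}\P^i$ with the $p$th power map from $\Sm_{\Fb_p}$ to $\Sm_X$ along $\pi^\ast$.

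I expect no genuine obstacle here: the only subtlety is bookkeeping, namely making sure that the equivalence $H\Fb^{\Ab^1}_{p,X} \simeq \pi^\ast H\Fb^{\Ab^1}_{p,\Fb_p}$ is compatible with the graded $\Eb_1$-algebra (resp.\ $\Eb_\infty$-coalgebra) structures so that the power operations, Adem relations, and Cartan coproduct transport coherently --- but this is precisely what \cite[Theorems 8.1 and 8.19]{bem} provide, since they identify $H\Zb^{\Ab^1}$ together with its graded multiplicative structure as stable under pullback. The remaining properties analogous to Corollary~\ref{cor:ModPSteenrod}(5),(6) are then formal consequences as before. Thus the proof is essentially a one-line invocation of functoriality of $\pi^\ast$ applied to Theorem~\ref{thm:ModPSteenrodSpectrumLevel}, together with the base-change stability of motivic cohomology.

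\begin{proof}
By \cite[Theorem 8.19]{bem}, the structure map $\pi\colon X \to \Spec(\Fb_p)$ induces an equivalence $H\Zb^{\Ab^1}_X \simeq \pi^\ast H\Zb^{\Ab^1}_{\Fb_p}$ of $\Eb_\infty$-algebras in $\SH_X$, hence $H\Fb^{\Ab^1}_{p} \simeq \pi^\ast H\Fb^{\Ab^1}_{p}$ in $\SH_X$. Since $\pi^\ast \colon \SH_{\Fb_p} \to \SH_X$ is symmetric monoidal and preserves colimits, $\Tb$, and shifts, applying it to the operations $\P^i, \B^i$ of Theorem~\ref{thm:ModPSteenrodSpectrumLevel} and transporting along this equivalence yields endomorphisms
\[
\P^i \colon H\Fb^{\Ab^1}_p \to \Tb^{\otimes i(p-1)} \otimes H\Fb^{\Ab^1}_p, \qquad \B^i \colon H\Fb^{\Ab^1}_p \to \Tb^{\otimes i(p-1)} \otimes H\Fb^{\Ab^1}_p[1]
\]
in $\SH_X$, with $\B^i = \beta \P^i$ and $\P^0 = \Id$. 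The motivic Adem relations are identities among composites in $\mathrm{end}_{\SH_{\Fb_p}}(H\Fb^{\Ab^1}_p)$ and are carried to the corresponding identities in $\mathrm{end}_{\SH_X}(H\Fb^{\Ab^1}_p)$ by functoriality of $\pi^\ast$. The motivic Cartan formula is the commutativity of a diagram built from $\mu$, $\Psi^{-1}$, and sums of twists and shifts (Remark~\ref{rem:spectrumCartan}); as $\pi^\ast$ is symmetric monoidal it preserves this diagram, so the Cartan formula holds over $X$. Finally, the identification of the induced maps on $\Ab^1$-invariant Nisnevich sheaves with the $p$th power map follows, exactly as in the proof of Theorem~\ref{thm:ModPSteenrodSpectrumLevel}(3), from the stability of $\Zb(j)^{\Ab^1}$ together with its graded multiplicative structure under pullback of $\Ab^1$-invariant cdh-sheaves \cite[Theorem 8.1]{bem}, now applied along $\pi$.
\end{proof}
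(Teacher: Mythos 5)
Your proof matches the paper's argument: both reduce to Theorem~\ref{thm:ModPSteenrodSpectrumLevel} by applying $\pi^\ast$, using functoriality for the Adem relations, symmetric monoidality for the Cartan formula, and the pullback stability of $\Zb(j)^{\Ab^1}$ from \cite[Theorem 8.1]{bem} for the $p$th-power property. You simply spell out the bookkeeping (the equivalence $H\Fb_p^{\Ab^1}\simeq \pi^\ast H\Fb_p^{\Ab^1}$ from \cite[Theorem 8.19]{bem}, preservation of $\Tb$ and shifts) that the paper leaves implicit.
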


\begin{proof}
Indeed, the first properties follow from functoriality and symmetric monoidality of $\pi^*$, respectively. The third property follows from Theorem~\ref{thm:ModPSteenrodSpectrumLevel}(3) using the fact $\Zb(j)^{\Ab^1}$ on $\Sm_X$ is calculated via pullback of $\Ab^1$-invariant cdh-sheaves using  \cite[Theorem 8.1]{bem} again. 
\end{proof}

In Remark~\ref{rem:nona1-inv} we discuss the extension of the mod-$p$ power operations to non-$\Ab^1$-invariant motivic cohomology. 

%

\section{Applications}\label{sec:appl}

We now come to some geometric applications of our power operations. Throughout the section, we denote by $k$ a field of characteristic $p > 0$, unless stated otherwise. 

\subsection{Non smoothable cycles at the characteristic}\label{ssec:nonsmoothable}

In this section, we give examples of algebraic cycles that cannot be smoothed modulo $p$, where $p$ is the characteristic of the base field $k$. The case of mod-$l$ coefficients was treated in \cite{hartshorne-rees-thomas} using étale/singular cohomology. 

Let $R$ be ring of coefficients. The problem addressed in \emph{op. cit.} was posed by Borel--Haefliger (for singular cohomology) \cite{borel-haefliger}: when can write one cycle $\alpha \in R \otimes \CH^n(X)$ as a  $R$-linear combination of nonsingular subvarieties of $X$? If $R  = \Zb$ then \cite[Theorem 1]{hartshorne-rees-thomas} gives a counterexample.

Our goal in this section is to give a counterexample to an even weaker version of the question in characteristic $p$ (so it has a better chance of being true): instead of asking that the subvariety is nonsingular we can ask that it is regularly immersed in $X$\footnote{Note that any smooth subvariety is regularly embedded.}. In fact, the language of derived algebraic geometry\footnote{We use very little of this theory; the formalism used in  \cite{khan-rydh} more than suffices.} provides a reasonably large class of immersions which we could hope to generate the group $R \otimes \CH^n(X)$.

\begin{defn}\cite[2.3.6, Proposition 2.3.8]{khan-rydh}\label{def:qsmooth} A closed immersion\footnote{Which means that it is a closed immersion on underlying classical schemes.} $i: Z \hookrightarrow X$ of derived schemes is \emph{quasi-smooth} if the cotangent complex $\Lbf_{Z/X}[-1]$ is a locally free $\mathcal{O}_Z$-module of finite rank. 
\end{defn}

\begin{ex}\label{exam:regular} Assume, as we do in this section, that $X$ is a smooth $k$-scheme and $Z$ is classical. Then $i$ is quasi-smooth if and only if $i$ is a regular immersion in the sense of being Koszul regular (see \cite[2.1.1, 2.3.6]{khan-rydh} for a review). In particular, if $Z$ is also a smooth $k$-scheme then $i$ is quasi-smooth (see \cite[Tag 0E9J]{stacks} and note that smooth schemes over a field are regular).
\end{ex}

From the rest of this subsection, we fix $X$ to be a smooth $k$-scheme. If $Z \subset X$ is a closed subvariety such that $Z$ is smooth then we can associate to $Z$, its fundamental class $[Z] \in \CH^{\mathrm{codim}(Z)}(X)$. The work of Khan extends this to quasi-smooth closed immersions. In what follows, we use Khan's extension of $\SH$ to derived schemes \cite{khan-mv,KhanThesis}; note that by \cite[Corollary 3.2.7]{khan-mv} we have that $\SH_X \simeq \SH_{X^{\mathrm{cl}}}$ where $X^{\mathrm{cl}} \hookrightarrow X$ is the classical reduction of $X$. On $\SH_X$, the motivic spectrum $H\Zb^{\Ab^1}$ is defined by pullback from $\Spec(\Zb)$. 

\begin{cons}[Khan] Suppose that $i \colon Z \hook X$ is a quasi-smooth closed immersion with normal sheaf $\Nc_i \simeq (\Lbf_{Z/X}[-1])^{\vee}$, then the trace associated to the purity transformation \cite[2.5.3]{DJKFundamental} \cite[Remark~3.8, Variant~3.11]{khan:virtualI},
\[
\mathrm{tr}_i \colon i_* \Sigma^{-\Nc_{i}} i^* \to \Id
\]
induces for all absolute $\Ab^1$-invariant motivic spectra $E$ the map
\begin{align}\label{eq:ThomGysin}
\mathrm{tr}_{i*} \colon& E(\Th_Z(\Nc_i),-n) =  \maps_{\SH_X}(\Sigma^n_{\Tb} 1_X, i_* \Sigma^{-\Nc_{i}}i^*E) \\
\to& E(X,-n) =  \maps_{\SH_X}(\Sigma^n_{\Tb} 1_X, E) \nonumber
\end{align}
which is an $\Ab^1$-homotopy theoretic analogue to the pullback along the Thom collapse map in topology. In Eq.~\eqref{eq:ThomGysin}, we have used the assumption that $E$ is absolute in the first identification. 

If $c$ is the virtual codimension of $Z$ in $X$ \cite[2.3.11]{khan-rydh}, then we have the Thom class\footnote{Whenever $E \in \SH_Z$ is oriented, the Thom class can be defined in terms of Chern classes, see e.g. Lemma~\ref{lem:ChernClassFormula}. Multiplying with $t(\Nc_{i})$ is induces the \textit{Thom isomorphism} $t(\Nc_i) \cdot \dash \colon E(\Sigma^{c}_\Tb 1_X) \xto{\sim} E(\Th_Z(\Nc_i))$.} in
\[
t(\Nc_{i}) \in H_{\Ab^1}^{2c}(\Th_Z(\Nc_i); \Zb(c)) = \pi_0\maps_{\SH_X}(\Sigma^{-c}_{\Tb} 1_X, i_* \Sigma^{-\Nc_{i}} H\Zb^{\Ab^1}) 
\]
If we assume now that $X$ is smooth over $k$, then $H^{2c}_{\Ab^1}(X; \Zb(c)) \cong \mathrm{CH}^c(X)$ and the \emph{virtual fundamental class} of $Z$ is defined as the image
\[
[Z] := \mathrm{tr}_{i*}(t(\Nc_{i})) \in \mathrm{CH}^c(X). 
\]
For any coefficient ring $R$, the image of $[Z]$ under the map $\mathrm{CH}^c(X) \rightarrow R \otimes \mathrm{CH}^c(X)$ will still be denoted as $[Z]$. 
\end{cons}

\begin{rem} By \cite[\S 3.3]{khan:virtualI}, the classes $[Z] \in \Qb \otimes \CH^\star(X)$ coincide with the pushforwards of the virtual fundamental classes defined by Behrend--Fantechi \cite{behrend-fantechi}. Therefore, one can think of the construction in the above paragraph as an integral refinement of virtual fundamental classes. Moreover, $[Z]$ coincides with the usual fundamental class of $Z$ whenever $Z$ is regularly immersed
\end{rem}

\begin{rem}[Compatibility with operations] \label{rem:compatible} We remark on the compatibility of $\mathrm{tr}_{i*}$ and power operations. A motivic cohomology operation on $H\Fb_p^{\Ab^1}$ is a morphism in $\SH_X$ given by $H\Fb_p^{\Ab^1} \xrightarrow{o} \Sigma^n_{\Tb}H\Fb_p^{\Ab^1}[m]$. Since the formation of $\mathrm{tr}_i$ upgrades to a transformation
\[
\SH_X \rightarrow \SH_X^{\Delta^1} \qquad E \mapsto \mathrm{tr}_i \colon i_* \Sigma^{-\Nc_{i}} i^* E \to E;
\]
we obtain a commutative diagram
\[
\begin{tikzcd}
i_* \Sigma^{-\Nc_{i}} i^*H\Fb_p^{\Ab^1} \ar{r}{o} \ar{d}{ \mathrm{tr}_i} &  i_* \Sigma^{-\Nc_{i}} i^*\Sigma^n_{\Tb}H\Fb_p^{\Ab^1}[m] \ar{d}{ \mathrm{tr}_i}\\
H\Fb_p^{\Ab^1} \ar{r}{o} & \Sigma^n_{\Tb}H\Fb_p^{\Ab^1}[m] \\
\end{tikzcd}
\]
This is the sense in which operations commute with the $\mathrm{tr}_{i*}$. However, we remark that the power operations \emph{do not} commute with the Thom isomorphism; the Riemann-Roch fomula discussed in~\S\ref{sec:riemann-roch} measures the extent to which this is true.
\end{rem}

We are now ready to prove the main result of this subsection.

\begin{thm}\label{thm:non-qs}
For every prime $p>0$, there exists $m,n \in \Nb$ such that the second Chern class of  the canonical sub bundle $\Ec$,
\[
c_2(\Ec) \in \CH^2(\Gr_k(m,n)),
\]
is not quasi-smoothable with $\Fb_p$-coefficients, where $k$ is any field of characteristic $p$.
\end{thm}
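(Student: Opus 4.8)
The plan is to follow the classical Hartshorne--Rees--Thomas strategy, but replacing the $\ell$-adic Steenrod operation with our mod-$p$ motivic power operation $\P^1$ acting on $\CH^\star(X)/p = H^{2\star}_{\Ab^1}(X;\Fb_p(\star))$. The key obstruction to quasi-smoothability that I would use is the following: if a class $\alpha \in \CH^c(X)/p$ is represented by the virtual fundamental class of a quasi-smooth closed immersion $i\colon Z \hook X$ of virtual codimension $c$, then by Remark~\ref{rem:compatible} the power operations commute with $\mathrm{tr}_{i*}$, so that $\P^j(\alpha)$ is computed from $\P^j$ applied to the Thom class $t(\Nc_i) \in H^{2c}_{\Ab^1}(\Th_Z(\Nc_i);\Fb_p(c))$ pushed forward along $\mathrm{tr}_{i*}$. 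Since the power operations do \emph{not} commute with the Thom isomorphism, the discrepancy is measured by a Wu-class/Riemann--Roch correction (the forthcoming \S\ref{sec:riemann-roch} and Theorem~\ref{thm:wu-intro}); the upshot, exactly as in topology, is that $\P^1(\alpha)$ must lie in the image of $\mathrm{tr}_{i*}$ applied to the subgroup of $H^{\star}_{\Ab^1}(\Th_Z(\Nc_i);\Fb_p(\star))$ generated by $t(\Nc_i)$ and its Wu-class multiples — in particular $\P^1(\alpha) - (\text{Wu correction})\cdot\alpha$ is divisible, in a suitable sense, or equivalently $\P^1(\alpha)$ is a sum $\sum_k w_k \alpha_k$ where each $\alpha_k = \mathrm{tr}_{i*}(\text{Thom class}\cdot c_{\bullet}(\Nc_i))$ is again algebraic of appropriate codimension. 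So a class that is \emph{not} in the ideal generated by its Wu translates under $\P^1$ cannot be quasi-smoothable.

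First I would set up the target: take $X = \Gr_k(m,n)$, the Grassmannian of $m$-planes in $n$-space, with canonical sub bundle $\Ec$ of rank $m$ and quotient $\Qc$ of rank $n-m$. The Chow ring $\CH^\star(X)/p$ is the standard quotient $\Fb_p[c_1(\Ec),\dots,c_m(\Ec)]/(\text{relations from }c(\Ec)c(\Qc)=1)$, and everything is defined over $\Spec(\Zb)$, hence base-changes from $\Fb_p$ to any field of characteristic $p$ without changing the combinatorics; this handles the ``$k$ arbitrary'' clause at the end. Next I would compute $\P^1$ on the Chern classes $c_j(\Ec)$ using the Cartan formula (Corollary~\ref{cor:ModPSteenrod}(3)), the fact that $\P^1$ is the $p$-th power on $H^2_{\Ab^1}(-;\Fb_p(1))$ (item (4)) together with the splitting principle, and the vanishing item (5). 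This gives a Wu-type formula $\P^1(c_j) = $ a universal polynomial in the $c_i$'s with coefficients binomials in $j$ — precisely the mod-$p$ total Wu class $w(\Ec)$ of the sub bundle, computed exactly as Primozic does along the Chow line (his \cite[Proposition~7.1]{primozic:2020}) but now legitimately, since our operations satisfy Cartan/Adem unconditionally.

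Then the combinatorial heart: I would choose $p$, $m$, $n$ so that $\alpha = c_2(\Ec) \in \CH^2(X)/p$ satisfies $\P^1(\alpha) \neq w\cdot\alpha'$ for any allowable ``Wu-twisted algebraic'' decomposition — concretely, one wants $\P^1(c_2(\Ec))$, which lands in $\CH^{2+(p-1)}(X)/p = \CH^{p+1}(X)/p$, to be nonzero and to fail to be a multiple (in the relevant quotient) of classes obtainable as $\mathrm{tr}_{i*}$-pushforwards; the cleanest version is to arrange that $\P^1(c_2(\Ec))$ is not in the ideal generated by $c_1(\Ec), c_2(\Ec)$ in low degrees, mimicking the HRT numerology. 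Following HRT and Primozic, the smallest case should be $p$ arbitrary and the Grassmannian large enough that $\CH^\star$ surjects onto the relevant piece of $\CH^\star$ of a product of projective spaces $(\Pb^{N})^{m}$ where one already sees $\P^1(c_2) = c_2^p + \dots \neq 0$ with no Wu decomposition; explicitly one can push $c_2(\Ec)$ to a product of two $\Pb^N$'s via a map classifying a sum of two line bundles and reduce to the known topological obstruction there. The main obstacle I anticipate is precisely this last bookkeeping step: making the Wu-class correction terms completely explicit (so one can verify $\P^1(\alpha)$ genuinely escapes the ideal of twisted algebraic classes rather than merely the ideal of algebraic classes) requires the Riemann--Roch formula of \S\ref{sec:riemann-roch} in a form sharp enough to constrain \emph{all} quasi-smooth representatives simultaneously, and then a concrete choice of $(p,m,n)$ — here I would simply cite or re-derive the numerics from \cite[\S 7]{primozic:2020} and \cite{hartshorne-rees-thomas}, since the obstruction class is identical once the operation is available, observing that quasi-smooth is weaker than smooth so the counterexample is \emph{a fortiori} stronger. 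A final remark would note that by Example~\ref{exam:regular} a quasi-smooth closed immersion of a classical $Z$ into the smooth $X$ is the same as a Koszul-regular immersion, so the theorem indeed rules out regularly immersed representatives, not merely smooth ones.
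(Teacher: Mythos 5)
Your high-level strategy (apply $\P^1$ to the Thom class, use $\mathrm{tr}_{i*}$, and derive a constraint on $\P^1([Z])$) is the right starting point and matches the paper's opening moves, but there is a genuine gap in how you turn this into a usable obstruction. The Wu/Riemann--Roch correction you invoke depends on the normal sheaf $\Nc_i$ of the hypothetical quasi-smooth immersion $i\colon Z\hook X$, and that normal sheaf is completely unconstrained: it is an arbitrary rank-$2$ locally free sheaf on an arbitrary derived scheme mapping to $X$. So ``the ideal generated by $\alpha$ and its Wu translates under $\P^1$'' is not a well-defined subset of $\CH^{\star}(X)/p$ -- it quietly quantifies over data you do not have. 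The Wu formula (Corollary~\ref{cor:total-power-ops}) constrains a \emph{single} $f$; it does not give a condition you can check on $\alpha$ alone without already knowing $Z$ and $\Nc_i$.

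The paper's fix is a squaring trick you do not have. Since the Thom class $t$ generates $H_{\Ab^1}^{\star}(\Th_Z(\Nc_i);\Fb_p(\star))$ as a module over the cohomology of $Z$, one knows only that $\P^1(t)=c\cdot t$ for \emph{some} unknown $c$. The key observation is that $\mathrm{tr}_{i*}$ is a map of non-unital rings (Remark~\ref{rem:gysin-thom}), so
\[
\P^1([Z])^2 \;=\; \bigl(\mathrm{tr}_{i*}(c\,t)\bigr)^2 \;=\; \mathrm{tr}_{i*}(c^2\,t\cdot t) \;=\; \mathrm{tr}_{i*}(c^2\,t)\cdot[Z],
\]
whence $[Z]$ divides $\P^1([Z])^2$ in $\CH^{\star}(X)/p$ -- a condition that is entirely internal to the Chow ring of the Grassmannian and independent of the unknown $c$, $Z$, and $\Nc_i$. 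This divisibility (not ``$\P^1(\alpha)$ escaping an ideal of twisted algebraic classes'') is the actual obstruction. The rest of the paper's proof is the explicit polynomial verification that $\alpha c_1(\Ec)^2+\beta c_2(\Ec)$ with $\beta\neq 0$ fails this divisibility: for $p=2$ one takes $m=3$, $n\geq 9$, computes $\Sq^2(t)=c_1(\Nc_i)t$ directly from the Chern-class presentation of $t$ (using $c_3(\Nc_i)=0$ since $\Nc_i$ has rank $2$), and uses the fact that the first algebraic relation in $\CH^{\star}(\Gr(3,9))/2$ has degree $7$; for $p>2$ one chooses $m\geq 4$ even with $-(m-2)/2$ a non-square in $\Fb_p$, $n\geq m+2p+2$, and reduces the non-divisibility of $\P^1(\alpha c_1^2+c_2)^2$ by $\alpha c_1^2+c_2$ to an elementary check in a polynomial ring via splitting-principle substitutions. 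Citing the HRT or Primozic numerology would not close your gap, both because neither result proves a divisibility of the square form and because the paper's numerics for $p>2$ are new and do not appear in those sources (see Remark~\ref{rem:HRTcounterexample} for the mismatch even at $p=2$). Your closing observation about Koszul-regular immersions (Example~\ref{exam:regular}) is correct and matches the paper.
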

\begin{proof} 
For conciseness, we will denote $\Gr_k(m,n)$ by $X$. We will give different but related arguments for the cases $p=2$ and $p>2$. 
\begin{enumerate}
\item \textit{Case $p=2$}: We take $m = 3$. Let us denote the Chern classes $c_1(\Ec), c_2(\Ec)$, and $c_3(\Ec)$, by $x,y$, and $z$, respectively. A general degree-two element in the cohomology of $X$ is of form $\alpha x^2 +  \beta y$, where $\alpha,\beta \in \Fb_2$. We will show that for a class of form $[Z]$, $\beta = 0$, and therefore $y$ is not quasi-smoothable with $\Fb_2$-coefficients.

Let $i \colon Z \hook X$ be a derived regular embedding of codimension 2, and let $t \in H\Fb_2^{\Ab^1}(\Th_Z(\Nc_i), 2)$ be the Thom class. The equality $\Th_Z(\Nc_i) = \Pb_Z(\Nc_i \oplus \Oc) / \Pb_Z(\Nc_i)$ identifies the $H\Fb^{\Ab^1}_2$-cohomology of $\Th_Z(\Nc_i)$ with the ideal generated by 
\[
t = c_2(\Nc_i) + c_1(\Nc_i) c_1(\Oc(1)) + c_1(\Oc(1))^2 \in H\Fb_2^{\Ab^1}(\Pb_Z(\Nc_i \oplus \Oc), 2).
\]
The action of the motivic Steenrod squares on Chern classes can be computed in the usual way using splitting principle and the Cartan formula. In particular, $\Sq^2(c_1) = c_1^2$ and $\Sq^2(c_2) = c_1 c_2 + c_3$. Using these identities, together with the Cartan formula and the fact that $c_3(\Nc_i) = 0$, we compute that 
\begin{equation}
\Sq^2(t) = c_1(\Nc_i) t.
\end{equation}
As the map $\mathrm{tr}_{i*}$ of Eq.~\eqref{eq:ThomGysin} induces a map of non-unital rings on cohomology, we see that
\begin{align*}
\Sq^2([Z])^2 
&= \mathrm{tr}_{i*} (c_1(\Nc_i)^2 t \cdot t) \\
&= \mathrm{tr}_{i*} (c_1(\Nc_i)^2 t) \cdot [Z],
\end{align*}
that is, $[Z]$ divides $\Sq^2([Z])^2$. We will show that this is impossible unless $\beta = 0$.

Using splitting principle, we compute that $\Sq^2(x^2) = 0$ and $\Sq^2(y) = xy+z$. Thus,
\[
\Sq^2(\alpha x^2 + \beta y)^2 = \beta^2 (x^2 y^2 + z^2). 
\]
In conclusion, if $\beta \not = 0$, then $x^2 y^2 + z^2$ should be divisible by $\alpha x^2 + \beta y$, which is impossible if $n \geq 9$, because the lowest-degree algebraic relation between $x,y$, and $z$ in the $H\Fb_2^{\Ab^1}$-cohomology of $\Gr_k(3,9)$ has degree 7 \cite[\S 2]{hartshorne-rees-thomas}. 

\item \textit{Case $p>2$}: Again, a general degree-two element in the cohomology of $X$ is of form $\alpha c_1^2 + \beta c_2$, where $\alpha, \beta \in \Fb_p$, and $c_i$ are the Chern classes of the canonical sub bundle $\Ec$. We will show that a class of form $[Z]$ will have $\beta = 0$, and therefore $c_2$ is not quasi-smoothable.

As the Thom class $t$ of $\Th_Z(\Nc_i)$ generates the cohomology of the Thom space as a module over the cohomology of $Z$, 
\begin{equation}
\P^1(t) =  ct
\end{equation}
for some cohomology class $c$. Thus, if $[Z] = \mathrm{tr}_{i*}(t)$, then $\P^1([Z])^2$ is divisible by $[Z]$. We show that this is impossible unless $\beta = 0$ under certain assumptions on $m$ and $n$ that we are going to explain next.

Let $m \geq 4$ be an even integer such that $-(m-2)/2$ is a non-square unit in $\Fb_p$, and let $n \geq m + 2p + 2$. Under the assumption on $n$, the cohomology ring of $X$ coincides with the polynomial ring generated by $c_1, \dots, c_m$ in degrees less than equal to $2p + 2$ \cite[\S 2]{hartshorne-rees-thomas}. Thus, it suffices to show that $\alpha c_1^2 + \beta c_2$ does not divide $P^1(\alpha c_1^2 + \beta c_2)^2$ in $\Fb_p[c_1, \dots , c_m]$. Furthermore, we may use the splitting principle to check this indivisibility in the ring $\Fb_p[a_1,\dots, a_m]$, where $a_i$ are the Chern roots of $\Ec$. Lastly, we may assume that $\beta = 1$.

As
\begin{equation*}
\P^1(c_1^2) = 2(a_1 + \cdots + a_m)^{p+1} \in \Fb_p[a_1,\dots, a_m] 
\end{equation*}
and
\begin{equation*}
\P^1(c_2) = \sum_{i < j} (a_i^p a_j + a_i a_j^p) \in \Fb_p[a_1,\dots, a_m]
\end{equation*}
we want to show that
\begin{equation*}
f := \alpha (a_1 + \cdot + a_m)^2 + \sum_{i < j} a_i a_j 
\end{equation*}
does not divide the square of 
\begin{equation*}
g =  2 \alpha (a_1 + \cdots + a_m)^{p+1} + \sum_{i < j} (a_i^p a_j + a_i a_j^p).
\end{equation*}
It suffices to check this indivisibility after assigning values to the variables. Next, we will use carefully chosen assignments to turn both $f$ and $g$ into univariate polynomials for which the indivisibility is easy to check. 

First, to eliminate the dependence on $\alpha$, we set $a_m = -(a_1 + \cdots + a_{m-1})$. This turns the polynomials $f$ and $g$ into
\[
f_2 = - \sum_{i \leq j} a_i a_j \in \Fb_p[a_1, \dots, a_{m-1}]
\]
and 
\[
g_2 = - (a_1^{p+1} + \cdots + a_{m-1}^{p+1}) - (a_1 + \cdots + a_{m-1})^{p+1} \in \Fb_p[a_1, \dots, a_{m-1}],
\]
respectively. Finally, we assign $a_1 = a$ and $a_i = (-1)^i$ for $i \in \{ 2, \dots, m-1 \}$. The polynomials $f_2$ and $g_2$ turn into 
\[
f_3 = -(a^2 + \tfrac{m-2}{2})  
\]
and 
\[
g_3 = -2(a^{p+1} + \tfrac{m-2}{2}).
\]
To show that $f_3$ does not divide the square of $g_3$, we check that neither of the roots of $f_3$ is a root of $g_3$. The roots of $f_3$ are 
\[
\xi_\pm  = \pm \sqrt{-\tfrac{m-2}{2}}.
\]
By the assumption that $-(m-2)/2$ is not a square in $\Fb_p$, we have that $\xi_\pm^{p-1} = -1$, and therefore $\xi^{p+1}_\pm = \tfrac{m-2}{2}$. In particular $\xi_\pm$ are not roots of $g_3$, and thus $f_3$ does not divide the square of $g_3$, finishing the proof that a cohomology class of form $\alpha c_1 + c_2$ is not quasi-smoothable. \qedhere
\end{enumerate}
\end{proof}

\begin{rem}\label{rem:HRTcounterexample}
Hartshorne--Rees--Thomas show that $c_2(\Ec) \in \CH^2(\Gr_\Cb(3,6))$ is not smoothable  using action of the mod-2 power operations. We suspect that their argument can be used to improve $m$ in the proof of Theorem~\ref{thm:non-qs} when $p=2$. However,  we have chosen to use a simplified proof, and the price we pay is the non-optimality of $m$.
\end{rem}

\begin{rem}\label{rem:primozic} We remark that the operations in \cite{primozic:2020} could have been used to prove this result since it involves only the study of power operations along the Chow diagonal. For the next applications, however, it is crucial that we work with our version of the power operations. 
\end{rem}

\subsection{Obstructions against lifting to $\MGL$ and motivic Steenrod problem at the characteristic}\label{sec:againstmgl}

This subsection is a mod-$p$ counterpart to the work of the first author and Shin \cite{annala-shin}. Unlike the previous application, we do use behavior of the motivic Steenrod operations away from the Chow diagonal. It also involves non-$\Ab^1$-invariant motivic cohomology as we are primarily concerned with singular schemes over a field $k$. 

Motivated by the isomorphism of graded rings in the case of smooth $k$-schemes: $H^{2\star}_{\mot}(X; \Zb(\star)) \cong \mathrm{CH}^{\star}(X)$ (see, for example, \cite{voevodsky-chow}), one defines 
\[
X \mapsto H^{2\star}_{\mot}(X; \Zb(\star)) =: H^{2\star}(\Zb(\star)^{\mot}(X))
\] for any qcqs $k$-scheme $X$ as an extension of the theory of Chow rings to singular schemes. Here $\Zb(\star)^{\mot}$ are the motivic complexes of \cite{elmanto-morrow}; we will also consider its $p$-localization $\Zb_{(p)}(\star)^{\mot}(X):= (\Zb(\star)^{\mot}(X))_{(p)}$ and mod-$p$ reduction $\Fb_p(\star)^{\mot}(X):= \Zb(\star)^{\mot}(X)/p$. This perspective is explored further in upcoming work of the second author and Morrow where this definition verifies the expectations of Srinivas \cite{srinivas} for an intersection theory on singular varieties.

Based on this premise, the first author and Shin addressed a version of Steenrod's problem in algebraic geometry using $p$-localized motivic cohomology: suppose that $X$ is a $k$-variety, and $n \geq 0$, is the abelian group $H^{2n}_{\mot}(X; \Zb_{(p)}(n))$ generated by the projective pushforward of fundamental classes of quasi-smooth, derived $X$-schemes? This question was inspired by the independent suggestions by the first author and Adeel Khan that it might be possible to construct a good theory of Chow rings of singular schemes as a quotient of the ring of equivalence classes of quasi-smooth derived $X$-schemes \cite{annala-cob, annala-thesis, khan:2020}. See \cite[Question 1.1]{annala-shin} for a precise formulation of the question. If $p$ is invertible in $k$, and $X$ is smooth, then the answer is yes; if resolution of singularites holds in characteristic $p > 0$ then the result holds when $p$ is not invertible as well (but $X$ is still smooth).

The main theorem of \emph{op. cit.} is a negative answer to the question for a singular $X$ when $p$ is invertible in $k$. We now address this problem when $p$ is the characteristic of $k$. To begin, we prove the following result which is of independent interest in the $\Ab^1$-invariant world and is new in characteristic $p > 0$. Define the \emph{motivic Milnor operations} (also called \emph{motivic Milnor primitives}) acting the mod-$p$ motivic cohomology as
\begin{equation}\label{eq:milnor}
\Q_n= q_n \beta - \beta q_n
\end{equation}
where $\beta$ is the Bockstein and $q_n = \P^{p^{n-1}} \cdots \P^p \P^1$ (see e.g. \cite[Bottom of p.199]{hoyois:2013}).

\begin{thm}\label{thm:MGLObstructions}
Let $k$ be a field of characteristic $p>0$ and $\eta \colon \MGL \to H\Fb_p$ be the map in $\SH_k$ that classifies the additive formal group law with coefficients in $\Fb_p$. Then 
\[
\Q_n \circ \eta \simeq 0 \in \SH_k
\]
for all $n$. In particular, if a class in motivic cohomology has non-vanishing of motivic Milnor operations, then it is not liftable to an $\MGL$-cohomology class.
\end{thm}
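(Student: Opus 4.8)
The plan is to prove the (formally stronger) statement that the abelian group in which $\Q_n\circ\eta$ lives is already zero. Recall from \eqref{eq:milnor} that $\Q_n=q_n\beta-\beta q_n$ with $q_n=\P^{p^{n-1}}\cdots\P^1$, so $\Q_n$ has bidegree $(2p^n-1,p^n-1)$; hence $\Q_n\circ\eta$ is an element of $H\Fb_p^{2p^n-1,p^n-1}(\MGL):=\pi_0\,\mathrm{map}_{\SH_k}(\MGL,\Sigma^{2p^n-1,p^n-1}H\Fb_p)$, and the claim is that this group vanishes. The mechanism is the ``evenness'' of $\MGL$: writing $\MGL=\colim_\alpha\MGL_\alpha$ as a filtered colimit of finite motivic spectra, each $\MGL_\alpha$ may be built from cells of the form $\Sigma^{2j,j}\unit$ with $j\geq0$ — this reflects the Schubert-cell (affine space) stratifications of the Grassmannians $\Gr(n,N)$ together with the Thom isomorphism, and equivalently is the statement that the mod-$p$ motivic homology of $\MGL$ is a polynomial algebra on classes of bidegree $(2i,i)$ over the mod-$p$ motivic cohomology of $k$, a computation that holds over any field.

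First I would record the one-cell bookkeeping: $\pi_0\,\mathrm{map}_{\SH_k}(\Sigma^{2j,j}\unit,\Sigma^{2p^n-1,p^n-1}H\Fb_p)=H\Fb_p^{\,2p^n-1-2j,\,p^n-1-j}(k)$. By the Geisser--Levine theorem recalled in the introduction, $H\Fb_p^{a,b}(k)$ vanishes unless $a=b\geq0$; the condition $a=b$ here forces $2p^n-1-2j=p^n-1-j$, i.e.\ $j=p^n$, and then $b=p^n-1-j=-1<0$, so the group vanishes for every $j\geq0$. Inducting over cells gives $H\Fb_p^{2p^n-1,p^n-1}(\MGL_\alpha)=0$ for all $\alpha$. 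The same count shows that $H\Fb_p^{2p^n-2,p^n-1}(\MGL_\alpha)$ receives contributions only from the cells $\Sigma^{2(p^n-1),p^n-1}\unit$, each contributing a copy of $H\Fb_p^{0,0}(k)=\Fb_p$; each $\MGL_\alpha$ has only finitely many of these and they are all present for $\alpha$ large, so the tower $\{H\Fb_p^{2p^n-2,p^n-1}(\MGL_\alpha)\}_\alpha$ is Mittag--Leffler.

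Next I would pass to the colimit via the Milnor sequence, whose outer terms ${\lim_\alpha}^1 H\Fb_p^{2p^n-2,p^n-1}(\MGL_\alpha)$ and $\lim_\alpha H\Fb_p^{2p^n-1,p^n-1}(\MGL_\alpha)$ flank $H\Fb_p^{2p^n-1,p^n-1}(\MGL)$: the latter is a limit of zero groups, and the former vanishes by the Mittag--Leffler property just observed, so $H\Fb_p^{2p^n-1,p^n-1}(\MGL)=0$ and therefore $\Q_n\circ\eta\simeq0$ in $\SH_k$. The closing assertion of the theorem is then formal: if $x=\eta(\tilde x)$ for some $\tilde x\in\MGL^{\star,\star}(X)$, then $\Q_n(x)=(\Q_n\circ\eta)(\tilde x)=0$.

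The proof is at bottom bookkeeping, so I do not expect a serious obstacle; the point to handle with care is the input that $\MGL$ is built from even Tate cells over an \emph{arbitrary} field $k$ of characteristic $p$ — this is precisely what guarantees that the (possibly large) groups $H\Fb_p^{\star,\star}(k)$ lie on the diagonal $a=b$ and can never contribute a class in the bidegree $(2p^n-1,p^n-1)$ — together with the routine $\lim^1$-vanishing in the colimit step. A conceptually cleaner alternative, closer to the classical topological argument, would use that $\Q_n$ is a graded derivation: it is primitive, which in characteristic $p$ follows from the mod-$p$ Cartan formula of Corollary~\ref{cor:ModPSteenrod} via the standard inductive argument for the Milnor primitives; then, since $\eta$ is multiplicative, $\Q_n\circ\eta$ is determined by $\Q_n$ applied to the Thom class of the universal bundle over $\Gr(n,N)$, which lies in $H\Fb_p^{2p^n-1,p^n-1}(\Gr(n,N))=0$ for the same degree reason. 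I would present the direct vanishing as the proof and mention the derivation viewpoint as the reason it holds.
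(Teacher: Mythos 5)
Your proof is correct, but it takes a genuinely different route from the paper's. The paper first reduces to $k=\Fb_p$ by base change, then invokes Hoyois' vanishing result \cite[Lemma~6.13]{hoyois:2013} over the characteristic-zero field $\Kb$, and finally transfers this to $\Fb_p$ via the motivic Nizio{\l} theorem (Theorem~\ref{thm:Detroit}) — i.e., it showcases the infinite-ramification machinery that is the engine of the paper. You instead prove the strictly stronger statement that the ambient group $H\Fb_p^{2p^n-1,p^n-1}(\MGL)$ is already zero, by a direct degree count: cellularity of $\MGL$ by Tate cells $\Sigma^{2j,j}\unit$ with $j\geq 0$ (from the Schubert stratification of Grassmannians plus Thom isomorphism), Geisser--Levine vanishing of $H\Fb_p^{a,b}(k)$ off the diagonal $a=b\geq 0$, and a Milnor $\lim^1$ argument (where the $\lim^1$ term vanishes because each finite subcomplex contributes a finite group). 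Your argument is more elementary and self-contained — it needs no input from characteristic zero and works uniformly over any field of characteristic $p$ without the base-change reduction — while the paper's proof is shorter but leans on the heavy lifting done elsewhere in the paper. Both are valid; one small presentational point is that you phrase the decomposition of $\MGL$ as a ``filtered colimit'' but then apply the Milnor sequence, which requires a \emph{sequential} colimit — this is easily arranged (e.g.\ $\MGL = \colim_n \Sigma^{-2n,-n}\Th(\gamma_{n,N_n})$ for a cofinal sequence $N_n$), but should be said. Your alternate sketch via primitivity of $\Q_n$ and multiplicativity of $\eta$ is also sound and closer in spirit to the classical topological argument; the paper's Lemma~\ref{lem:milnor} supplies exactly the derivation property needed there.
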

\begin{proof}
It suffices to prove this in the case $k = \Fb_p$. Over $\Kb$, this follows from \cite[Lemma~6.13]{hoyois:2013}. By Theorem~\ref{thm:Detroit}, $\underline{\maps}_{\SH_{\Oc}}(\MGL, H\Fb_p) \simeq \underline{\maps}_{\SH_{\Kb}}(\MGL, H\Fb_p)$, so we have that $\Q_n \circ \eta \simeq 0 \in \SH_{\Oc}$. Finally, as the motivic power operations on $\Fb_p$ are pulled back from $\Oc$, $\Q_n \circ \eta \simeq 0 \in \SH_{\Fb_p}$, as desired. 
\end{proof}

\begin{rem}[Total obstruction to lifting against $\eta$] Even if all the $\Q_n$-operations vanish for $\alpha \in H^{i}_{\mot}(X; \Fb_p(j))$, it does not mean that the $\alpha$ lifts to a class in $\MGL$. 

An important case is when $i = 2j$ and $X$ is smooth. For degree reasons, the $\Q_n$'s vanish and we expect that the map $\MGL^{2n,n}(X) \rightarrow H_{\mot}^{2n}(X; \Fb_p(n)) \cong \CH^n(X)/p$ is surjective. Indeed, this is the case if $k$ admits resolution of singularities. This story is related to a characteristic $p$ version of Totaro's seminal work on obstruction of algebraizability of cohomological classes via complex bordism \cite{totaro-bordism} (see also Quick's work for the story away from the characteristic \cite{quick-etale}).

\end{rem}


\begin{ex}[Non liftable cycles on $\mathrm{B} \mu_p$] \label{ex:CohOfBmup}
Let $k$ be a characteristic $p > 0$ field and let $\mathrm{B} \mu_p$ be the classifying stack of $\mu_p$-torsors in the fppf-topology on $\Sm_{k}$, and $\Xfr \in \SH_k$. Recall that \cite[Theorem 6.10]{voevodsky:2003a} records a calculation mod-$p$ motivic cohomology of $\Xfr \times \mathrm{B}\mu_p$ at any characteristic. In our case the answer is as follows:
\[
H^{\star}_{\mot}(\Xfr \times \mathrm{B} \mu_p; \Fb_p(\star)) \cong H_{\mot}^{\star}(\Xfr; \Fb_p(\star))[[u, v]]/(u^2 = 0, \beta(u)= v) \qquad |u| = (1,1), |v| = (2,1).
\]
Here, $\beta$ is the Bockstein operator. We remark that $v$ is the pullback of the first Chern class of the universal line bundle along the map $\mathrm{B}\mu_p \to \mathrm{B}\Gb_m$ and $u$ is the unique class such that $\beta(u) = v$.

Let $\Xfr = \mathrm{B} \mu_p$ and let $u_i$ and $v_i$ be element in its motivic cohomology given by the pullbacks of $u$ and $v$ from the respective factors. By the basic properties of motivic cohomology operations Theorem~\ref{cor:ModPSteenrod} (in particular, Lemmas~\ref{lem:milnor} and~\ref{lem:Q_n-bmup}), we see that
\begin{align*}
\Q_1(u_1 \cdot u_2) &= (\P^1 \beta - \beta \P^1)(u_1 \cdot u_2) \\
&= v_1^p \cdot u_2 - u_1 \cdot v_2^p \\
&\not = 0.
\end{align*}
Thus, $u_1 \cdot u_2$ does not lift to an $\MGL$-cohomology class.

We note that the calculation \cite[Proposition~6.4]{AHI} shows that the answer for syntomic cohomology (an oriented theory with elementary blowup excision, the $\Pb^1$-bundle formula and the additive formal group law) is similar: for any scheme/stack $\Xfr$ we have
\begin{equation}\label{eq:syn}
H^{\star}_{\syn}(\Xfr \times \mathrm{B} \mu_p; \Fb_p(\star)) \cong H_{\syn}^{\star}(\Xfr; \Fb_p(\star))[[u, v]]/(u^2 = 0, \beta(u)= v) \qquad |u| = (1,1), |v| = (2,1).
\end{equation}
We will use this calculation in what follows.
%
\end{ex}

Following \cite{annala-shin}, we solve the motivic Steenrod problem at the characteristic for singular varieties.

\begin{thm}
There exists a singular $\Fb_p$-variety $X$ such that its $p$-local motivic cohomology does not have the Steenrod property. In other words, there exists $n \in \Nb$ such that the group $H^{2n}_{\mathrm{mot}}(X; \mathbb{Z}_{(p)}(n))$ is not generated, as a $\Zb_{(p)}$-module, by pushforwards of fundamental classes of quasi-smooth projective derived $X$-schemes.
\end{thm}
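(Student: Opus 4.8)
The plan is to transplant the argument of \cite{annala-shin} to the characteristic, with the motivic Milnor operations $\Q_n$ constructed above taking the place of Voevodsky's operations over fields in which $p$ is invertible. The first observation is that the Steenrod property forces a surjectivity statement for the cycle class map out of algebraic cobordism. Working in the non-$\Ab^1$-invariant framework $\MS$ of \cite{AHI, annala-iwasa:MotSp}, the motivic spectrum $\MGL$ carries Gysin pushforwards along quasi-smooth projective morphisms (via the six-functor formalism of \cite{AHI}), so for any quasi-smooth projective derived $X$-scheme $f \colon Y \to X$ of virtual codimension $n$ the class $f_!(1_Y)$ lives in $\MGL^{2n,n}(X)$, and its image under the canonical map of oriented motivic spectra $\MGL \to H\Zb$ is precisely the virtual fundamental class of $Y$ in $H^{2n}_{\mot}(X;\Zb(n))$ — the $\MGL$- and $H\Zb$-Gysin maps agree, both being dictated by the orientation. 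Consequently, if the classes $f_!(1_Y)$ generate $H^{2n}_{\mot}(X;\Zb_{(p)}(n))$ as a $\Zb_{(p)}$-module, then the map $\MGL^{2n,n}(X)_{(p)} \to H^{2n}_{\mot}(X;\Zb_{(p)}(n))$ is surjective.

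I would then feed in the obstruction of Theorem~\ref{thm:MGLObstructions}. Since $\Q_1 \circ \eta \simeq 0$ in $\SH_{\Fb_p}$, and since $\MGL$, $H\Fb_p$, the map $\eta$, and the operations $\Q_n$ are all pulled back from $\Spec(\Fb_p)$ — so that $\Q_1 \circ \eta \simeq 0$ persists in $\MS_X$ for every $X \in \Sch_{\Fb_p}$ — every class in the image of $\MGL^{2n,n}(X) \to H^{2n}_{\mot}(X;\Zb_{(p)}(n))$ has $\Q_1$-trivial mod-$p$ reduction; here the reduction $H^{2n}_{\mot}(X;\Zb_{(p)}(n)) \to H^{2n}_{\mot}(X;\Fb_p(n))$ and the operation $\Q_1$ make sense on the non-$\Ab^1$-invariant motivic cohomology of singular schemes by the extension discussed in Remark~\ref{rem:nona1-inv}. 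Combining with the previous paragraph: it suffices to exhibit a singular $\Fb_p$-variety $X$, an integer $n$, and a class $\alpha \in H^{2n}_{\mot}(X;\Zb_{(p)}(n))$ whose mod-$p$ reduction $\bar\alpha$ satisfies $\Q_1(\bar\alpha) \neq 0$, since such an $\alpha$ cannot be a $\Zb_{(p)}$-linear combination of $\MGL$-liftable classes.

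It remains to construct $X$, which is the technical heart and where I would follow \cite{annala-shin} closely. The prototype obstruction class is $u_1 u_2 \in H^{2}_{\mot}(\B\mu_p \times \B\mu_p;\Fb_p(2))$ from Example~\ref{ex:CohOfBmup}, which satisfies $\Q_1(u_1 u_2) = v_1^p u_2 - u_1 v_2^p \neq 0$; but it, and every $\Q_1$-nontrivial class on a product of copies of $\B\mu_p$, lies off the Chow diagonal, so no such product is directly a counterexample in the required bidegrees. One needs $X$ singular at all (for smooth $X$ the Steenrod property is known to hold whenever resolution of singularities is available), and the remedy is to replace such a product by a smooth projective Godeaux--Serre-type approximation $Y$ — whose relevant motivic cohomology is computed from Example~\ref{ex:CohOfBmup} — and then to manufacture a \emph{singular} projective $\Fb_p$-variety $X$ by a suitable construction (an abstract blow-up, a quotient by $\mu_p$, a cone over $Y$, or the like), arranged so that cdh/abstract-blow-up descent for the Elmanto--Morrow complexes $\Zb(n)^{\mot}$ produces, in a Chow-diagonal bidegree $(2n,n)$ of $X$, a genuinely new class $\alpha$ whose image under the descent sequence in the motivic cohomology of the exceptional/boundary stratum is a $\Q_1$-nontrivial class (of the shape $u_i v_j$, now in the admissible bidegree $(2n-1,n)$ of the stratum). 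Because $\Q_1$ is a \emph{stable} cohomology operation it commutes with the connecting, pullback, and pushforward maps of the descent sequence, so nonvanishing of $\Q_1$ on the stratum forces $\Q_1(\bar\alpha) \neq 0$. The hard part, as in \cite{annala-shin}, is exactly this geometric construction together with computing enough of the non-$\Ab^1$-invariant motivic cohomology of the singular $X$ — which rests on the descent properties of the complexes $\Zb(n)^{\mot}$ of \cite{elmanto-morrow, kelly-saito} and on the compatibility of the power operations with cdh-descent (Corollary~\ref{cor:extension-a1} and Remark~\ref{rem:nona1-inv}) — rather than on any operation-theoretic input, which is already available.
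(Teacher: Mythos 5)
The strategy you outline is the paper's: reduce, via Theorem~\ref{thm:MGLObstructions}, to exhibiting a singular $X$ and a class $\alpha \in H^{2n}_{\mot}(X;\Zb_{(p)}(n))$ whose mod-$p$ reduction has $\Q_1 \neq 0$. Your first two paragraphs carry out this reduction correctly in spirit, though the paper routes it through the universal precobordism ring $\PCob^n(X)_{(p)}$ and the $\Ab^1$-invariant $\MGL$ using the commutative square of \cite[Theorem~2.4]{annala-shin}, rather than through a non-$\Ab^1$-invariant $\MGL \in \MS_X$ equipped with Gysin pushforwards and a canonical map to $H\Zb$; the latter is heavier machinery than necessary, and not all of it is established in the references the paper relies on.

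The genuine gap is in your last paragraph, where the construction is left as a gesture and where the real content of the proof lives. Two things are missing. First, the paper does not cone off or blow up an approximation of $B\mu_p^{\times 2}$; it takes the singular variety $X'$ produced by \cite[Construction~3.2]{annala-shin}, whose $\Ab^1$-homotopy type is the reduced suspension $\Sigma(X'')$ of a smooth variety $X''$ carrying a class $u$ with $\Q_1\beta u \neq 0$, and then suspends the integral Bockstein of $u$ to obtain a $p$-torsion class $u' \in H^4_{\Ab^1}(X';\Zb(2))$ that fails to lift to $\MGL$. Second --- and your plan does not anticipate this step at all --- for singular $X'$ the comparison map $H^{2n}_{\mot}(X';\Zb(n)) \to H^{2n}_{\Ab^1}(X';\Zb(n))$ need not be surjective, so $u'$ a priori lives in the wrong theory: it is a class in $\Ab^1$-invariant motivic cohomology, not in the non-$\Ab^1$-invariant theory where the Steenrod property is posed. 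The paper's Step~3 resolves this by taking the external product with a generator $y \in H^4_{\mot}(Y;\Zb(2))$ on a smooth \emph{affine} Jouanolou device $Y$ for $\Pb^2$: affineness together with the higher Artin--Schreier sequence gives $H^4_{\syn}(Y;\Zb_p(2)) = 0$, and then the cartesian square of \cite[Theorem~1.5]{elmanto-morrow} lifts $u' \times y$ from $H^8_{\Ab^1}(X' \times Y;\Zb(4))$ to $H^8_{\mot}(X' \times Y;\Zb(4))$; the Cartan formula and the fact that both $\P^1$ and $\beta$ kill the reduction of $y$ then show that $\Q_1$ is still nonzero on the mod-$p$ reduction of $u' \times y$. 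Your remark that one should ``arrange the descent sequence to produce $\alpha$'' names the problem but does not supply this mechanism, which is the nontrivial idea that makes the theorem go through.
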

\begin{proof} We prove this result in steps, following \cite{annala-shin} with appropriate modifications.

\begin{enumerate}
\item[(Step 1)] Naturally on any $\Fb_p$-scheme $X$ we have a commutative diagram for all $n \geq 0$:
\[
\begin{tikzcd}
\underline{\Omega}^n(X)_{(p)} \ar{d} \ar{r} & \pi_{-2n}\MGL(n)(X)_{(p)} \ar{d}\\
H^{2n}_{\mot}(X; \Zb_{(p)}(n)) \ar{r} & H^{2n}_{\Ab^1}(X; \Zb_{(p)}(n)), \\
\end{tikzcd}
\]
as explained in \cite[\S 4.2]{annala-shin}. Above, $\PCob^*(X)$ is the universal precobordism ring introduced in \cite{annala-yokura, annala-chern}. By \cite[Theorem 2.4]{annala-shin}. The claim is then equivalent to constructing a class $\alpha \in H^{2n}_{\mot}(X; \Zb_{(p)}(n))$ which does not lift along the left vertical map. To do so, we will construct a class $\alpha_{\Ab^1} \in H^{2n}_{\Ab^1}(X; \Zb_{(p)}(n))$ which \emph{does not} lift along the right vertical map but \emph{does} lift along the lower horizontal map. 

\item[(Step 2)] The calculation of \cite[\S 4.2]{annala-shin} produces a singular $k$-variety $X'$ and a $p$-torsion class $\beta_{\Ab^1} \in H^{4}_{\Ab^1}(X'; \Zb(2))$ which fails to lift to $\pi_{-2n}\MGL(n)(X)$. The same construction there works here once we know Theorem~\ref{thm:MGLObstructions}. We sketch this: one constructs a smooth $k$-variety $X''$ with a rational point and a class $u \in H^{2}_{\Ab^1}(X''; \Zb/p(2))$ such that $Q_1\beta u \not=0$ \cite[Proposition 4.7]{annala-shin} hence does not lift to $\MGL$ by Theorem~\ref{thm:MGLObstructions}. The construction of \cite[Construction 3.2]{annala-shin} then produces variety $X'$ such that it has the $\Ab^1$-homotopy type of the reduced suspension $\Sigma(X'')$. Taking the suspension  of the integral Bockstein of $u$ then produces a class $u' \in H^{4}_{\Ab^1}(X'; \Zb(2))$ which is $p$-torsion and does not lift to $\pi_{-4}\MGL(2)(X)$.

\item[(Step 3)] Next, we need to modify $X'$ and $u'$ so that it lifts to motivic cohomology. Observe that if $y$ is a motivic cohomology class in $H^{2n}_{\mot}(Y; \Zb(n)) = H^{2n}_{\Ab^1}(Y; \Zb(n))$ on a smooth variety $Y$ such that the image of $y$ vanishes in $H^{2n}_{\syn}(Y; \Zb_p(n))$, then the class of the external product $ u' \times y$ lifts to a class in $H^{2n}_{\mot}(X' \times Y; \Zb(n))$. Indeed: under the map $H^{2n}_{\Ab^1}(X' \times Y; \Zb(n))\rightarrow H^{2n}(L_{\cdh}\Zb_p^{\syn}(n)(X' \times Y))$, the image of the class $u' \times y$ maps to zero since it is given by a product with image of $y$ under cdh-sheafification, which is assumed to be zero. Hence it lifts to $H^{2n}_{\mot}(X' \times Y; \Zb(n))$ thanks to the long exact sequence induced by the cartesian square in \cite[Theorem~1.5]{elmanto-morrow}.

Let $Y$ be a smooth affine variety that is $\Ab^1$-homotopy equivalent to $\Pb^2$ (e.g. a Jouanolou device), and let $y$ be the generator of $H^{4}_{\mot}(Y;\Zb(2)) = \Zb\{ y \}$. As $Y$ is affine, $H^{4}_{\syn}(Y; \Zb_p(2)) = H^{2}_{\et}(Y; W\Omega^2_{\mathrm{log}})$ by \cite[Corollary 8.21]{BMS2} and the latter is zero since \'etale cohomology of the logarithmic Hodge-Witt sheaves are concentrated in degrees zero and one by the (higher) Artin-Schreier sequence \cite[Th\'eor\`eme I.5.7.2]{illusie-drw}. In the final step we explain that $X = X' \times Y$ and the class 
\[
u' \times y \in H^{8}_{\Ab^1}(X' \times Y; \Zb(4))
\]
lifts to 
\[
\alpha \in H^{8}_{\mot}(X' \times Y; \Zb_{(p)}(4)),
\]
and presents the desired solution. 

\item[(Step 4)] As stated in step 1, it is sufficient that $u' \times y$ is a $p$-torsion class, that does not lift to $\pi_{-8}\MGL(4)(X)$ but does lift to $H^{8}_{\mot}(X' \times Y; \Zb_{(p)}(4))$. The last requirement is assured by the discussion of Step 3. That it does not lift to $\pi_{-8}\MGL(4)(X)$ follows because $\Q_1$ acts nontrivially on its mod-$p$ reduction by the Cartan formula and the fact that both $\P^1$ and $\beta$ kill the mod-$p$ reduction of $y$, first for degree reasons and the second because $y$ lifts to an integral class. Finally, it is clearly $p$-torsion and hence defines a class in $p$-local motivic cohomology. \qedhere
\end{enumerate}

\end{proof}

\subsection{The integral Tate conjecture at the characteristic}

In this section, we formulate optimistic versions of the crystalline Tate conjecture. Actually, we will immediately supply a counterexample to this conjecture so we should actually call it the integral Tate \emph{condition}: the veracity of the statement is then an interesting, geometric condition on the variety in question. This version of the Tate conjecture does not seem to be as popular as its $\ell$-adic counterpart. 

\begin{rem}
To the authors' knowledge, the first paper that discusses the rational version extensively is Morrow's variational take on it \cite{morrow-variational}. An integral variant for divisors, using flat cohomology for the target, was first studied by D'Addezio \cite{daddezio} who proved it for abelian varieties over finite fields. \end{rem}

To formulate this conjecture, we recast the cycle class map via \'etale realization in the environment of non-$\Ab^1$-invariant motivic spectra $\MS_S$. For the rest of the paper, $(\Sm^?_X, \MS^?_X)$ denotes either
\begin{enumerate}
\item $\Sm_X$ and $\MS_X$ as defined in \cite{AHI:atiyah}, or
\item the category of almost-finite-presentation derived $S$-schemes $\dSch^\afp_X$ and the category $\MS^\dbe_X$ of motivic spectra satisfying derived blowup excision, as defined in \cite[Appendix~B]{annala-shin}.
\end{enumerate}

For a scheme $X$, we denote by
\[
M_S(-): \Sm^?_X \rightarrow \MS^?_X
\]
the motivic spectrum associated to a smooth $S$-scheme $X$; in the notation  of \cite{AHI}, this would be denoted by $(-)_+$. We have the endofunctor
\[
L_{\et}: \MS_X \rightarrow \MS_X,
\]
that is the localization functor onto $\MS^{\et}_X \subset \MS_X$ the full subcategory of \'etale-local objects. If $E \in \MS^?_X$, we define $E^{\star,\star}(X)$ by the same convention as in the $\Ab^1$-invariant setting defined in~\eqref{eq:e-cohomology}. We will denote by $H \Zb^\syn_p \in \MS_X$ the pullback from $\Spec(\Zb)$ of the motivic spectrum $(H \Zb_p^\syn)_{\Spec(\Zb)}$ representing syntomic cohomology of schemes in the sense of \cite{bhatt-lurie:apc}; see also \cite[Section 4]{bem}. For our purposes, the reader should note that for a scheme $X$ which is smooth over a perfect field $k$ of characteristic $p > 0$, we have an isomorphisms:
\[
H \Zb^\syn_p(n)(X) \simeq \lim_r R\Gamma_{\et}(X; W_r\Omega^n_{\log})[-n]. 
\]

\begin{prop}\label{prop:etale}
Let $S$ be a spectrum of a Dedekind domain or a field. Then there is an equivalence of $\mathbb E_{\infty}$-algebras in $\MS_S$:
\[
(L_\mathrm{et} H\Zb^{\Ab^1})^\comp_p \xrightarrow{\simeq} H \Zb^\syn_p \in \MS_S.
\]
\end{prop}

\begin{proof} This result is essentially due to Geisser \cite{geisser-dedekind}; we explain this using the language developed in \cite{bem}. On smooth $S$-schemes, \cite[Theorem 5.8]{bem} constructs a multiplicative equivalence of graded presheaves (compatible with mod $p$-reduction as $m$ varies) $\Zb^{\Ab^1}(\star)/p^m \rightarrow L_{\Nis}\tau^{\leq \star}\Zb_p^{\syn}(\star)/p^m$, compatible with the first chern class. Therefore, by post-composing along the map $L_{\Nis}\tau^{\leq \star}\Zb_p^{\syn}(\star)/p^m \rightarrow \Zb_p^{\syn}(\star)/p^m$ and noting that the target has \'etale descent we obtain a multiplicative map of graded presheaves (again compatible with mod $p$-reduction as $m$ varies) which are compatible with the first chern class:
\[
L_{\et}\Zb^{\Ab^1}(\star)/p^m \rightarrow \Zb_p^{\syn}(\star)/p^m.
\]
We now observe that this map is 1) is an equivalence (essentially \cite[Theorem 1.2(2)]{geisser-dedekind}), 2) the first chern class (induced from the one coming from $H\Zb^{\Ab^1}$) induces a $\Pb^1$-bundle formula on mod-$p^r$ syntomic cohomology  and 3) mod-$p^r$ syntomic cohomology enjoys elementary blowup excision as verified in, say, \cite[Section 4]{bem}. A combination of these three facts proves that we have a multiplicative equivalence in $\MS_S$:
\[
L_\mathrm{et} H\Zb^{\Ab^1}/p^m \simeq H \Zb^\syn_p/p^m;
\]
and thus taking inverse limit as $m \rightarrow \infty$ gives us the desired result.
\end{proof}

\begin{rem}\label{rem:hz} For an equicharacteristic scheme $X$, the motivic cohomology of \cite{elmanto-morrow} defines a motivic spectrum $H\Zb \in \MS_X$ since the multiplicative graded collection of presheaves $\{\Zb(\star)^{\mot} \}$ has Nisnevich descent, has the $\Pb^1$-bundle formula (after a construction of a $c_1$) and enjoys elementary blowup excision \cite[Theorem 1.1]{elmanto-morrow}. If $X$ is a field or, more generally, a regular noetherian equicharacteristic scheme then $H\Zb \simeq H\Zb^{\Ab^1}$ by \cite[Theorem 6.1]{elmanto-morrow}. In this case, Proposition~\ref{prop:etale} reads: 
\begin{equation}\label{eq:let-syn}
(L_\mathrm{et} H\Zb)^\comp_p = H \Zb^\syn_p.
\end{equation} For a more general scheme $S$, Bouis \cite{bouis-mixed} has constructed $H\Zb \in \MS_X$ which extends the construction of \cite{elmanto-morrow}. In upcoming work of Bouis and Kundu, they have also established $H\Zb \simeq H\Zb^{\Ab^1} \in \MS_X$ for any Dedekind domain $S$. We expect (but cannot yet prove) that $H\Zb \simeq H\Zb^{\Ab^1}$ for a regular noetherian scheme $S$.
\end{rem}

\begin{rem}\label{rem:nona1-inv}
Having defined the environment $\MS_X$ and the motivic spectrum $H\Zb$ we discuss an extension of the power operations to non-$\Ab^1$-invariant motivic cohomology. One can extend the motivic sheaves $\Zb(j)^{\Ab^1} \colon \Sm_{\Fb_p}^\op \to D(\Zb)$ to sheaves on all $\Fb_p$-schemes by applying the procdh local left Kan extension defined by Kelly--Saito \cite{kelly-saito}. As $L_\procdh$ is symmetric monoidal localization\footnote{By \cite[Proposition~A.5]{nikolaus-scholze}, it suffices to check that the symmetric monoidal strucutre on presheaves preserves procdh-equivalences separately in both variables. This is evident by \cite[Definition~2.1]{kelly-saito}, because proabstract blowup squares are stable under pullbacks.}, it induces a functor on lax $\Tb$-spectra, and one obtains the action of the power operations on the lax $\Tb$-spectrum\footnote{See \cite[\S 1]{annala-iwasa:MotSp} for the definition of a lax spectrum.}
\[
	H\Zb^{\procdh} := L_\procdh H\Zb^{\Ab^1} \in \Sp^\lax_\Tb(\Pc_\Nis(\Sch_{\Fb_p}; \Sp)) 
\] 
satisfying the Adem relations, the Cartan formula, and the instability relation. 

If $X$ is a noetherian $\Fb_p$-scheme of finite Krull dimension, then by \cite[Corollary~1.11]{kelly-saito}, $H\Zb^{\procdh}$ coincides with the motivic cohomology defined by Elmanto--Morrow on smooth $X$-schemes. As this theory is known to satisfy smooth blowup excision and projective bundle formula, we observe that the action of the $\P^i$ and $\B^i$ on the lax $\Tb$-spectrum $H\Zb^{\procdh}$ restricts to an action on the $\Tb$-spectrum
\[
H\Zb \in \MS_X = \Sp_\Tb(\Pc_{\Nis,\sbe}(\Sm_X; \Sp))
\]
representing the non-$\Ab^1$-invariant motivic cohomology. In particular, we get power operation endomorphisms for the non-$\Ab^1$-invariant motivic cohomology. We record this in the following result. 
\end{rem}

\begin{cor}\label{cor:ms-extension}
Let $X$ be a noetherian $\Fb_p$-scheme of finite Krull dimension. There exist reduced mod-$p$ power operation endomorphisms acting on $H\Fb_p \in \MS_X$ that satisfy all the properties stated in Theorem~\ref{thm:ModPSteenrodSpectrumLevel}. \qed
\end{cor}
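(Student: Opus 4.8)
The plan is to bootstrap from the $\Ab^1$-invariant operations on $H\Fb_p^{\Ab^1}$ over $\Fb_p$, constructed in Theorem~\ref{thm:ModPSteenrodSpectrumLevel} and extended in Corollary~\ref{cor:extension-a1}, to the non-$\Ab^1$-invariant theory $H\Fb_p \in \MS_X$ via the procdh-local left Kan extension of Kelly--Saito, exactly as sketched in Remark~\ref{rem:nona1-inv}. The starting observation is that the operations $\P^i$, $\B^i$, the Bockstein $\beta$, the multiplication $\mu$, and its adjoint coproduct $\mu^*$ can all be realized as natural transformations between the graded sheaves of spectra $\{\Zb(\star)^{\Ab^1}/p\}$ on $\Sm_{\Fb_p}$ --- not merely as endomorphisms of a motivic spectrum over a base. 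This is the content of (the proof of) Theorem~\ref{thm:ModPSteenrodSpectrumLevel}(3), which shows that $\Zb(\star)^{\Ab^1}$ together with its graded multiplicative structure is computed via pullback of $\Ab^1$-invariant cdh-sheaves, so that the operations descend to this presheaf level.

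First I would left Kan extend $\{\Zb(\star)^{\Ab^1}/p\}$, together with all of this structure, from $\Sm_{\Fb_p}$ to all qcqs $\Fb_p$-schemes, and then apply the procdh-localization $L_\procdh$, obtaining $H\Fb_p^{\procdh} := L_\procdh H\Fb_p^{\Ab^1} \in \Sp^\lax_\Tb(\Pc_\Nis(\Sch_{\Fb_p};\Sp))$. The key formal input, recorded in the footnote to Remark~\ref{rem:nona1-inv}, is that $L_\procdh$ is a symmetric monoidal localization; hence it preserves the graded $\Eb_\infty$-structure (thus $\mu$ and $\mu^*$) and, being functorial, it carries the power operations along as natural endomorphisms of the lax $\Tb$-spectrum. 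The Adem relations, the Cartan formula, and the instability relation $\P^i = (\dash)^p$ in bidegree $(2i,i)$ are identities and homotopies between such natural transformations which already hold on $\Sm_{\Fb_p}$ by Theorem~\ref{thm:ModPSteenrodSpectrumLevel} and Corollary~\ref{cor:ModPSteenrod}; since left Kan extension followed by localization is a functor, these relations persist verbatim for $H\Fb_p^{\procdh}$.

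Next I would specialize to $X$ noetherian of finite Krull dimension and invoke Kelly--Saito's comparison \cite[Corollary~1.11]{kelly-saito}, which identifies $H\Zb^{\procdh}$ with the Elmanto--Morrow motivic cohomology on $\Sm_X$. Since the latter theory has Nisnevich descent, smooth blowup excision, and the $\Pb^1$-bundle formula \cite[Theorem~1.1]{elmanto-morrow}, the restriction of the lax $\Tb$-spectrum $H\Fb_p^{\procdh}$ to $\Sm_X$ is an honest $\Tb$-spectrum, i.e. an object of $\MS_X = \Sp_\Tb(\Pc_{\Nis,\sbe}(\Sm_X;\Sp))$, identified with $H\Fb_p \in \MS_X$. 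The operations and all their relations restrict along this identification, giving the asserted power operation endomorphisms of $H\Fb_p \in \MS_X$ and completing the proof.

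The main obstacle is bookkeeping rather than a genuine mathematical difficulty, and it is twofold. First, one must be sure the $\Ab^1$-invariant power operations are genuinely available as natural transformations of presheaves on $\Sm_{\Fb_p}$, so that ``left Kan extend the operations'' is meaningful; this is precisely where the argument of Theorem~\ref{thm:ModPSteenrodSpectrumLevel}(3) about pulling back $\Ab^1$-invariant cdh-sheaves of anima is used, and it must be applied compatibly to $\mu$, $\mu^*$, $\beta$, and the $\P^i$ simultaneously. Second, one must check that passing from a lax $\Tb$-spectrum to an honest $\Tb$-spectrum --- which is what produces an object of $\MS_X$ --- is compatible with all this structure; this follows because on the subcategory of presheaves satisfying the $\Pb^1$-bundle formula the inclusion $\MS_X \hookrightarrow \Sp^\lax_\Tb(\cdots)$ is fully faithful, so every piece of structure transported to $H\Fb_p^{\procdh}$ automatically lands in $\MS_X$ with no further verification.
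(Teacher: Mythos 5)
Your proposal reconstructs exactly the argument the paper gives in Remark~\ref{rem:nona1-inv} (which is why the corollary carries a terminal \qed): procdh-local left Kan extension from $\Sm_{\Fb_p}$, the fact that $L_\procdh$ is a symmetric monoidal localization so it transports $\mu$, $\mu^*$, $\beta$, $\P^i$, $\B^i$ and the Adem/Cartan/instability relations at the level of lax $\Tb$-spectra, and then Kelly--Saito's comparison \cite[Corollary~1.11]{kelly-saito} plus the Elmanto--Morrow descent and $\Pb^1$-bundle-formula properties to identify the restriction to $\Sm_X$ with the honest $\Tb$-spectrum $H\Fb_p \in \MS_X$. Same route and same key inputs, just slightly more explicit about the full faithfulness of $\Sp_\Tb \hookrightarrow \Sp^\lax_\Tb$.
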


The analogue of Corollary~\ref{cor:ModPSteenrod} for these more general operations is also true, and is proven similarly to the discussion of Section~\ref{subsect:allPops}.  

The \'etale sheafification functor lets us construct various cycle class maps. 

\begin{defn}\label{def:mod-p} Let $S$ be a Dedekind domain or a field and $X$ a smooth $S$-scheme. The \emph{syntomic cycle class map in degree $n$} is the map
\begin{equation}\label{eq:mot-syn-n}
\mathrm{cyc}_{\syn}^n: H^{2n}_{\mot}(X; \Zb(n)) \cong \mathrm{CH}^n(X) \rightarrow H^{2n}_{\syn}(X; \Zb_p(n)),
\end{equation}
induced by the map in $\MS_S$
\[
H\Zb \rightarrow (L_{\et}H\Zb)^\comp_p \simeq H \Zb^\syn_p,
\]
where the equivalence is~\eqref{eq:let-syn}.

\end{defn}

We will be mostly interested in the case of $S = \Spec(k)$, a perfect field of characteristic $p > 0$. 

\begin{rem}[Comparison with cycle class maps in the literature]\label{compare-gros} Let $X$ be a smooth scheme over a perfect field $k$ of characteristic $p> 0$. Without the language of motivic spectra, we can define the cycle class map by considering the ``\'etale-sheaficiation'' map:
\[
R\Gamma_{\Zar}(X; W_r\Omega^n_{\log,X}) \rightarrow R\Gamma_{\et}(X; W_r\Omega^n_{\log,X}).
\] 
By the isomorphism of Gros-Suwa \cite[Th\'eor\`eme 4.13]{gros-suwa}, we then get a map
\[
\CH^n(X)/p^r \xrightarrow{\cong} H^{n}_{\Zar}(X; W_r\Omega^n_{\log,X}) \rightarrow H^{n}_{\et}(X; W_r\Omega^n_{\log,X}) \cong H^{2n}_{\syn}(X; \Zb/p^r(n)). 
\]
Thanks to the Geisser-Levine theorem which identifies mod-$p^r$-motivic cohomology of $X$ with the complex $R\Gamma_{\Zar}(X; W_r\Omega^n_{\log,X})[-j]$ \cite{geisser-levine}, the above map evidently agrees with the mod-$p^r$ reduction of Definition~\ref{def:mod-p}. 

Furthermore, we note that the Gros-Suwa theorem gives a quasi-isomorphism of complexes:
\[
\CH^n(X)/p^r \cong [\bigoplus_{x \in X^{(n-1)}} H^{n-1}_x(W_r\Omega^{n}_{\log, x}) \rightarrow \bigoplus_{x \in X^{(n)}} H^n_x(W_r\Omega^{n}_{\log, x}) \rightarrow 0]
\]
But, by the purity isomorphism of \cite[1.2.3]{gros-deux}, the complex on the right-hand-side is equivalent to 
\[
[\bigoplus_{x \in X^{(n-1)}} W_r\Omega^{1}_{\log, \kappa(x)} \rightarrow \bigoplus_{x \in X^{(n)}} W_r\Omega^{0}_{\log, \kappa(x)} \rightarrow 0]
\]
Unwinding the definition of \cite[(1.2.8)]{gros-deux}, we see that our construction agrees with Gros' construction of his cycle class map. 
\end{rem}

\begin{rem}[The crystalline cycle class map]  Let $k$ be a perfect field of characteristic $p > 0$ and $n \geq 0$. The \emph{crystalline cycle class map in degree $n$} is the composite
\[
\mathrm{cyc}_{\mathrm{crys}}^n:\CH^n(X) \rightarrow H^{2n}_{\syn}(X; \Zb_p(n)) \rightarrow H^{2n}_{\mathrm{crys}}(X/W(k));
\]
where the second map is defined by noting that syntomic cohomology can be defined via the limit diagram 
\[
 \Zb_p(n) \rightarrow \mathrm{Fil}^n_{\mathrm{Nyg}}R\Gamma_{\mathrm{crys}}(X/W(k)) \rightrightarrows R\Gamma_{\mathrm{crys}}(X/W(k));
\]
here the $\mathrm{Fil}^n_{\mathrm{Nyg}}R\Gamma_{\mathrm{crys}}(X/W(k))$ is the Nygaard filtration and the two maps are respectively the divided Frobenius $\phi_n$ and $\mathrm{can}$ the canonical ``inclusion'' of the Nygaard filtration; see \cite[\S 8]{BMS2} for details. 
\end{rem}

\begin{rem}[The rigid cycle class map]
In the language of motivic spectra, the crystalline cycle class map is induced by the map in $\MS_k$:
\[
H\Zb \rightarrow H\Zb_p^{\syn} \rightarrow HW(k)^{\mathrm{crys}},
\]
where $H\Zb_p^{\syn}$ (resp. $HW(k)^{\mathrm{crys}}$) is the motivic spectrum representing syntomic (resp. crystalline cohomology). From this view point, the cycle class map admits a refinement, namely the above composite factors as
\begin{equation}\label{eq:hz-dagger}
H\Zb \rightarrow (HW(k)^{\mathrm{crys}})^{\dagger},
\end{equation}
where $(-)^{\dagger}$ is the $\Ab^1$-colocalization functor of \cite[Definition 6.3]{AHI:atiyah}.

The motivic spectrum $(HW(k)^{\mathrm{crys}})^{\dagger}$ represents an integral refinement of Berthelot's rigid cohomology \cite{berthelot} in the sense that for any smooth $k$-scheme $X$, we have that \cite[Theorem 6.27(i)]{AHI:atiyah}:
\[
(HW(k)^{\mathrm{crys}})^{\dagger}(X)[\tfrac{1}{p}] \simeq R\Gamma_{\mathrm{rig}}(X)[\tfrac{1}{p}]. 
\]
On the other hand, if $U$ is a smooth $k$-scheme which admits a compactification $X$ whose boundary divisor $\partial X$ is a strict normal crossing divisor then \cite[Theorem 6.27(ii)]{AHI:atiyah} shows that we have an equivalence with Kato's logarithmic crystalline cohomology \cite{kato-logarithmic}:
\[
R\Gamma_{\mathrm{logcrys}}((X, \partial X)/W(k)) \simeq (HW(k)^{\mathrm{crys}})^{\dagger}(U).
\]
Let us define \emph{integral rigid cohomology} as
\[
R\Gamma_{\mathrm{crys}}(X/W(k))^{\dagger} := (HW(k)^{\mathrm{crys}})^{\dagger}(X) \qquad H^i_{\mathrm{crys}}(X/W(k))^{\dagger} := H^i(R\Gamma_{\mathrm{crys}}(X)^{\dagger}).
\]
The projective bundle formula\footnote{Let us be more precise, the first chern class on cyrstalline cohomology induces an equivalence....} in this case then shows that
\[
((HW(k)^{\mathrm{crys}})^{\dagger})^{2n,n}(X) \simeq H^{2n}_{\mathrm{crys}}(X/W(k))^{\dagger} \qquad \forall n \geq 0;
\]
therefore for $n \geq 0$, we have \emph{rigid cycle class map} as the map induced by~\eqref{eq:hz-dagger}:
\[
\mathrm{cyc}_{\mathrm{logcrys}}^n:  \CH^n(X) \rightarrow H^{2n}_{\mathrm{crys}}(X/W(k))^{\dagger}.
\]
Integral rigid cohomology will appear again when we consider the two (in fact, three) coniveau filtrations in the sequel. 
\end{rem}

\begin{defn}[Tate cycles]\label{def:tate-cycles} Let $k$ be a perfect field of characteristic $p > 0$ and $X$ a smooth, projective $k$-variety. For an extension $k'/k$ we write $G_{k'/k} := \mathrm{Gal}(\overline{k}/k)$; if $k'$ is an algebraic closure of $k$ then write $G_k:=G_{k'/k}$
\begin{enumerate}
\item The abelian group of \emph{integral $p$-adic Tate cycles of codimension $n$} on $X$ is given by 
\[
\mathrm{Tate}^n_p(X):= \mathrm{Im}(H^{2n}_{\mathrm{syn}}(X, \Zb_p(n)) \rightarrow H^{2n}_{\mathrm{crys}}(X/W(k))))
\]
\item Let $k'/k$ be an extension of $k$ (possibly infinite, like an algebraic closure). We define:
\[
\mathrm{Tate}^n_p(X_{k'})^{G_{k'/k}} =: \mathrm{Im}(H^{2n}_{\mathrm{syn}}(X_{k'}, \Zb_p(n))^{G_{k'/k}} \rightarrow H^{2n}_{\mathrm{crys}}(X_{k'}/W(k))^{G_{k'/k}})
\]
\end{enumerate}
\end{defn}

\begin{rem}[The case of finite fields] Let $k$ be a finite field and $\overline{k}$ be an algebraic closure.  Then the Hochschild--Serre spectral sequence for $\widehat{\Zb} \xrightarrow{\cong} G_{k}$:
\[
H^r(\widehat{\Zb}, H_{\mathrm{syn}}^s(X_{\overline{k}}, \Zb_p(j))) \Rightarrow H^{r+s}_{\mathrm{syn}}(X, \Zb_p(j)) \qquad \forall j \geq 0,
\]
degenerates (since $\widehat{\Zb}$ has $p$-cohomological dimension one) to produce a short exact sequence
\[
0 \rightarrow H^1(\widehat{\Zb}, H^{i-1}_{\mathrm{syn}}(X_{\overline{k}}, \Zb_p(j))) \rightarrow H_{\mathrm{syn}}^i(X, \Zb_p(j)) \rightarrow H_{\mathrm{syn}}^i(X_{\overline{k}}, \Zb_p(j))^{\widehat{\Zb}} \rightarrow 0
\]
In particular, unwinding Definition~\ref{def:tate-cycles}, we get a surjection
\begin{equation}\label{eq:surject}
\mathrm{Tate}^j_p(X) \twoheadrightarrow   \mathrm{Tate}^j_p(X_{\overline{k}})^{G_{k}}
\end{equation}
\end{rem}


%

\begin{rem}[Relationship with rationalized Tate cycles]\label{rem:tatecycles} $k$ is either a finite or algebraically closed field, then we have isomorphisms

\begin{equation}\label{eq:rational-tate}
H^{2n}_{\syn}(X; \Zb_p(n))\otimes{\Qb} \xrightarrow{\cong} \mathrm{Tate}^n_p(X)\otimes{\Qb} \cong (H^{2n}_{\mathrm{crys}}(X/W(k))[\tfrac{1}{p}])^{\phi = p^n};
\end{equation}
by \cite[Proposition 3.3(ii)]{morrow-variational}. This is the sense in which we are justified in calling these cycles to be an integral version of Tate cycles since the group $(H^{2n}_{\mathrm{crys}}(X/W(k))[\tfrac{1}{p}])^{\phi = p^n}$ is what is usually called the crystalline version of Tate cycles in the literature. 
\end{rem}

The crystalline cycle class map allows us to formulate the integral crystalline Tate conjecture (more rightly called, condition). Just as discussed by Schoen in \cite{schoen} there are a couple of different variants of this statement that one must be somewhat careful about; we stick to formulating these statements over finite fields in this paper. The integral crystalline Tate conjecture should be thought of as the $p$-adic analog of the integral $\ell$-adic Tate conjecture. 

For the rest of the section, by the cycle class map we mean the composition of the first two maps in the sequence
\begin{equation}
\CH^n(X) \otimes \Zb_p \to H^{2n}_\syn(X,\Zb_p(n)) \tto \mathrm{Tate}^n_p(X) \subset H^{2n}_\mathrm{crys}(X/W(k)),
\end{equation}
and it is denoted by $\mathrm{cyc}^n_X$.

\begin{defn}\label{def:conditions-tate} Let $k$ be a finite field and let $X$ be a smooth, projective, geometrically connected $k$-variety. For any $j \geq 0$, we define the following conditions: 

\begin{enumerate}
\item[$(A^n_{X_k})$] The cycle class map
\[
\mathrm{cyc}_{X}^n: \mathrm{CH}^n(X) \otimes \Zb_p \rightarrow \mathrm{Tate}^n_p(X)
\]
is surjective.
\item[$(B^n_{X_k})$] The cycle class map composed with~\eqref{eq:surject}
\[
\overline{\mathrm{cyc}}_{X}^n: \mathrm{CH}^n(X) \otimes \Zb_p \rightarrow \mathrm{Tate}_p^n(X_{\overline{k}})^{G_k}
\] 
is surjective.
\item[$(C^n_{X_k})$] The colimit of the system of maps
\[
\overline{\mathrm{cyc}}_{X_{k'}}^n: \mathrm{CH}^n(X_{k'})\otimes \Zb_p \rightarrow \mathrm{Tate}^n_p(X_{\overline{k}})^{G_{k'}} \qquad \text{$k'/k$ finite extension},
\]
which defines the \emph{continuous cycle class map}
\[
\mathrm{cyc}_{\mathrm{cts},X}^n: \mathrm{CH}^n(X_{\overline{k}}) \otimes \Zb_p \rightarrow \mathrm{Im}(\colim_{k \subset k' \subset \overline{k}} \mathrm{Tate}_p^n(X)^{G_{k'}} \rightarrow H^{2n}_\mathrm{crys}(X_{\overline{k}}/W(k)))
\]
is surjective.
\end{enumerate}

\end{defn}

%
%

\begin{rem}[Relationship with the crystalline Tate conjecture] First we note that, after tensoring with $\Qb_p$, the statements $(A^n_X), (B^n_X)$ and $(C^n_X)$ are equivalent by a standard transfer argument. These are in turn, thanks to the isomorphisms of Eq.~\eqref{eq:rational-tate}, equivalent to the \emph{crystalline Tate conjecture}, which is the statement that:
\begin{enumerate}
\item[($\dagger^n_X$)] The cycle class map $\CH^n(X) \otimes \Qb_p \rightarrow H^{2n}_{\mathrm{crys}}(X/W(k))[\tfrac{1}{p}])^{\phi = p^n}$ is surjective. 
\end{enumerate}

The reader should also note the following relationship between the crystalline Tate conjecture and the usual $\ell$-adic Tate conjecture: a theorem of Morrow (also proved by de Jong) \cite[Proposition 4.1, Theorem 4.3]{morrow-variational} shows that $\dagger^1_X$ is equivalent to the usual Tate conjecture in $\ell$-adic cohomology for divisors, and furthermore that the veracity of $\dagger^1_X$ for surfaces implies it for all other smooth projective, geometrically connected varieties. 
\end{rem}

\begin{rem}[$(A^n_X) \Rightarrow (B^n_X)$] Thanks to the surjection~\eqref{eq:surject}, evidently $(A^n_X) \Rightarrow (B_n^X)$. However, we do not expect the converse to hold as in the case of the $\ell$-adic integral Tate conjecture thanks to the examples of Scavia--Suzuki \cite[Theorem 1.4]{scavia-suzuki-non-alg}.  
\end{rem}

\begin{rem}[$(B^n_{X_{k'}}) \Rightarrow (C^n_X), k' \gg k$]  We note that if $B^n_{X_{k'}}$ is true for all finite extensions $k'$ of $k$ , then $C^n_{X}$ is true. In fact, we only need the veracity of $B^n_{X_{k'}}$ for all $k'$ with enough elements. The statement $C^n_{X}$ is the most optimistic one. This story is similar to the $\ell$-adic, integral counterpart of the statement as discussed by Schoen in \cite{schoen}; the target in $C^n_X$ is the $p$-adic analog of the group:
\[
\bigcup_{U \leq G, \text{open}} H^{2n}(X_{\overline{k}}, \Zb_{\ell}(n))^{U},
\]
which is a target of the cycle class map studied in op. cit.. We also make a remark that the formation of syntomic cohomology does not preserve filtered colimits of schemes even along affine transition maps. Therefore, there it is not true that the map $\colim_{k'} \mathrm{Tate}_p^n(X_{k'})  \rightarrow \mathrm{Tate}_p^n(X_{\overline{k}})$ is an isomorphism. Nonetheless by Remark~\ref{rem:tatecycles}, the map $\colim_{k'} \mathrm{Tate}_p^n(X_{k'}) \rightarrow (H^{2i}_{\mathrm{crys}}(X_{\overline{k}}/W(k))[\tfrac{1}{p}])^{\phi = p^i}$ is rationally surjective. 
\end{rem}

\begin{rem}[Variant with the syntomic cycle class map]\label{rem:syn-version} We can formulate the following version of $A^n_X$:
\begin{enumerate}
\item[($A^n_{X,\mathrm{syn}}$)] The syntomic cycle class map $\mathrm{CH}^n(X) \xrightarrow{\mathrm{cyc}^n_{\mathrm{syn}}} H^{2n}_{\mathrm{syn}}(X, \Zb_p(n))$ is surjective.
\end{enumerate}
Evidently, $A^n_{X,\mathrm{syn}} \Rightarrow A^n_{X}$. However, $A^n_{X}$ can still hold without $A^n_{X,\mathrm{syn}}$; non-algebraizable classes in syntomic cohomology could potentially vanish in crystalline cohomology. While we do not have an example of this phenomenon, we still propose $A^n_X$ as a better variant of the integral Tate conjecture which has a better chance of being true (and therefore harder to give counterexamples to) than $A^n_{X,\mathrm{syn}}$, though we anticipate that whenever $A^n_X$ holds it is only because one can prove $A^n_{X,\mathrm{syn}}$. Similar, ``syntomic versions'' of $B^n_X$ and $C^n_X$ can also be formulated and left to the reader. 
\end{rem}

\begin{ex}[$A^1_X$ for smooth projective curves] \label{exam:sm-proj} The first interesting case of the integral Tate conjecture is for smooth projective, geometrically connected curves over a finite field, $(A^1_X)$. In this case the conjecture asks that the map
\[
\mathrm{Pic}(X) \otimes \Zb_p \rightarrow \mathrm{Tate}_p^1(X)
\]
is surjective. We claim a stronger statement, namely that
\[
\mathrm{Pic}(X) \otimes \Zb_p \simeq H^2_{\syn}(X; \Zb_p(1))
\]
which verifies $A^1_X$. Indeed (for all qcqs scheme $X$) weight one syntomic cohomology is calculated as
\[
R\Gamma_{\et}(X; \Gb_m)^\comp_p[-1] \xrightarrow{c_1^{\syn},\simeq} \Zb_p(1)(X); 
\]
Therefore, we get a diagram:
\[
\begin{tikzcd}
& \mathrm{Pic}(X)\ar{d}  \ar{r}{\mathrm{cyc}^1_{\syn}} &  H^2_{\syn}(X; \Zb_p(1)) \ar{d} & \\
0 \ar{r} &  \mathrm{Pic}(X)/p^r \ar{r} \ar{d} & H^2_{\syn}(X; \Zb/p^r(1)) \ar{r} \ar{d}  & H^2_{\et}(X; \Gb_m)[p^r] \ar{r} & 0\\
& 0 & 0 &.
\end{tikzcd}
\]
As explained in \cite[Example 4.5.7]{bhatt-gauges}, $H^2_{\et}(X; \Gb_m)[p^r] = 0$. On the other hand, $\mathrm{Pic}(X)$ of a finitely generated group over a finite field is finitely generated therefore $\mathrm{Pic}(X) \otimes \Zb_p \cong \mathrm{Pic}(X)^\comp_p$. Hence, to prove the claim, it suffices to prove that $ H^2_{\syn}(X; \Zb_p(1)) \cong \lim_r H^2_{\syn}(X; \Zb/p^r(1))$, in other words that $\lim^1 H^1_{\syn}(X; \Zb/p^r(1))$ is zero. In fact, we note that $H^i_{\syn}(X; \Zb/p^r(j))$ are finite groups for all smooth projective scheme $X$: indeed via Illusie's exact sequence \cite[Th\'{e}orem\`{e} I.5.7.2]{illusie-drw} on $X_{\et}$:
\[
0 \rightarrow \Zb/p^r(j)[-j] \rightarrow W_r\Omega^j_X \xrightarrow{1-F} W_r\Omega^j_X \rightarrow 0,
\]
the long exact sequence in cohomology sandwiches syntomic cohomology in between the coherent cohomology of $W_r\Omega^j_X$. Since $X$ is proper, the latter groups are finitely generated $W_r(k)$-modules, hence finite. 
\end{ex}

\begin{ex}[$A_X^{\dim(X)}$] Let $X$ be a smooth projective, geometrically connected variety over a finite field of dimension $d$; we sketch the veracity of $A^d_X$. This result can be deduced using Kato--Saito's unramified class field theory for smooth projective varieties over a finite field \cite{kato-saito}; we also note that Gros \cite[Th\'eor\`eme 3.2.0]{gros-deux} has a direct proof of this result using cohomological methods which are closed to this paper. This result also generalizes Example~\ref{exam:sm-proj}. To begin with, we have Milne's duality \cite{milne-values}:
\[
\Zb_p(j)^{\syn}(X) \simeq \Zb_p(j-d)^{\syn}(X)^{\vee}[-2d-1];
\]
whence we have isomorphisms:
\[
H^{2d}_{\syn}(X;\Zb/p^r(d)) \cong H^1_{\et}(X; \Zb/p^r)^{\vee} \cong \pi_1^{\mathrm{ab}}(X)/p^r.
\]
The cycle class map fits into a diagram 
\[
\begin{tikzcd}
\mathrm{CH}_0(X) \otimes \Zb_p \ar{d}  \ar{r}{\mathrm{cyc}^d_{\syn}} &  H^{2d}_{\syn}(X; \Zb_p(d)) \ar{d} \\ 
\lim \mathrm{CH}_0(X)/p^r \ar{r} & \lim \pi_1^{\mathrm{ab}}(X)/p^r  \\
\end{tikzcd}
\]
Since the $\lim^1$ of $H^{2d-1}_{\syn}(X; \Zb/p^r(d))$ vanishes by the argument in Example~\ref{exam:sm-proj} the right vertical map is an isomorphism. On the other hand \cite[\S 10]{kato-saito} shows that $\mathrm{CH}_0(X)$ is finitely generated, hence the left vertical map is an isomorphism. Finally, we note that $\mathrm{CH}_0(X)/p^r \rightarrow  \pi_1^{\mathrm{ab}}(X)/p^r $ agrees with the reciprocity map from class field theory; this was verified by Gros for his cycle class map in \cite[Proof of Th\'eor\`eme 2.2.2]{gros-deux} and we note that it agrees with ours in Remark~\ref{compare-gros}. Hence we can appeal to  \cite[Theorem 1]{kato-saito} which verifies that we have an isomorphism mod-$p^r$ for all $r$ between $\mathrm{CH}_0(X)/p^r \xrightarrow{\cong}  \pi_1^{\mathrm{ab}}(X)/p^r $, and hence the bottom horizontal map is also an isomorphism. 
\end{ex}

Having verified some general cases of $A^n_X$, we now turn to counterexamples. The mechanism by which this happens is via Steenrod operations as made precise by the following.

\begin{lem}\label{lem:CycleObs}
Any syntomic cohomology class $H_\syn^{2n}(X ; \Fb_p(n))$ that is in the image of the mod-$p$ cycle class map vanishes under the action of $Q_i$ for all $i \geq 1$.
\end{lem}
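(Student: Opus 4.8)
The plan is to exploit the defining formula $\Q_i = q_i\beta - \beta q_i$, with $q_i = \P^{p^{i-1}}\cdots\P^{p}\P^{1}$ (Eq.~\eqref{eq:milnor}), together with the fact that the mod-$p$ cycle class map is the reduction of an \emph{integral} one. The point will be that $\beta$ annihilates algebraic classes and that $q_i$ sends algebraic classes to algebraic classes, so both terms of $\Q_i$ kill the class in question.

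First I would set up the bookkeeping. The mod-$p$ cycle class map $\mathrm{cyc}^m_\syn\colon H^{2m}_{\mot}(X;\Fb_p(m)) \to H^{2m}_\syn(X;\Fb_p(m))$ is, by Proposition~\ref{prop:etale} and Definition~\ref{def:mod-p}, the map on cohomology induced by the \'etale-sheafification unit $H\Fb_p \to H\Fb_p^\syn$ in $\MS_X$. Since the syntomic power operations are, by construction (Remark~\ref{rem:nona1-inv} and Corollary~\ref{cor:ms-extension}), the \'etale sheafifications of the motivic ones, and the sheafification unit is a natural transformation, every $\P^j$ and $\beta$ --- hence $q_i$ and $\Q_i$ --- commutes with $\mathrm{cyc}^\bullet_\syn$ in the relevant bidegrees. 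I would also record that $\P^j$ carries the Chow line $H^{2m}_{\mot}(-;\Fb_p(m))$ into $H^{2(m+j(p-1))}_{\mot}(-;\Fb_p(m+j(p-1)))$, so that $q_i$ carries $H^{2m}_{\mot}(-;\Fb_p(m))$ into $H^{2m'}_{\mot}(-;\Fb_p(m'))$ with $m' = m + p^i - 1$.

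Next I would observe that $\beta$ annihilates the image of the cycle class map. For $X$ smooth over $k$, Geisser--Levine gives $H^{2m}_{\mot}(X;\Fb_p(m)) = \CH^m(X)/p$, so every such class lifts to $\CH^m(X) = H^{2m}_{\mot}(X;\Zb_{(p)}(m))$ (the reduction being surjective); pushing a lift through the integral cycle class map $\CH^m(X) \to H^{2m}_\syn(X;\Zb_p(m))$ of Definition~\ref{def:mod-p} and reducing mod $p$ shows that every element of the image of $\mathrm{cyc}^m_\syn$ lifts along $H^{2m}_\syn(X;\Zb_p(m)) \to H^{2m}_\syn(X;\Fb_p(m))$, hence is killed by the syntomic Bockstein. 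The argument then assembles: for $\alpha = \mathrm{cyc}^n_\syn(\bar x)$ we get $\beta\alpha = 0$, whence $\Q_i(\alpha) = q_i\beta\alpha - \beta q_i\alpha = -\beta(q_i\alpha)$; and $q_i\alpha = \mathrm{cyc}^{\,n+p^i-1}_\syn(q_i\bar x)$ again lies on the Chow line and in the image of the cycle class map, so $\beta(q_i\alpha) = 0$ by the same reasoning, giving $\Q_i(\alpha) = 0$ for all $i \geq 1$. I expect the only delicate point to be the naturality claim in the first step --- that the syntomic $\P^j$ and $\beta$ really are pulled back along $H\Fb_p \to H\Fb_p^\syn$, so that they intertwine with $\mathrm{cyc}^\bullet_\syn$ in every needed bidegree; granting that, the rest is Geisser--Levine together with the mere existence of the integral cycle class map.
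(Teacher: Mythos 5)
Your proof is correct, but it takes a genuinely different route from the paper's. The paper's proof is one line: the operations commute with the cycle class map (since the syntomic operations are, by construction, the \'etale sheafifications of the motivic ones), and $\Q_i$ applied to a motivic class on the Chow diagonal $H^{2n}_{\mot}(X;\Fb_p(n))$ lands in $H^{2n+2p^i-1}_{\mot}(X;\Fb_p(n+p^i-1))$, which vanishes because $2n+2p^i-1 > 2(n+p^i-1)$ and mod-$p$ motivic cohomology of a smooth scheme is zero in cohomological degree strictly greater than twice the weight (higher Chow groups in negative simplicial degree vanish). Your argument instead unwinds the commutator $\Q_i = q_i\beta - \beta q_i$ and kills each term using two observations: algebraic classes lift integrally and so are annihilated by the Bockstein $\beta$, and $q_i$ preserves algebraicity because each $\P^j$ moves along the Chow diagonal and the reduction $\CH^{n'}(X)\to\CH^{n'}(X)/p$ is surjective. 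Both arguments rest on the naturality point you flag, which is exactly how the syntomic operations are defined. Your route is somewhat heavier --- it invokes the factorization of the mod-$p$ cycle class map through the integral one and the identification $H^{2m}_{\mot}(X;\Fb_p(m))\cong\CH^m(X)/p$, where the paper needs only the elementary vanishing $H^a_{\mot}(X;\Fb_p(b))=0$ for $a>2b$. On the other hand, your argument foregrounds the geometric content (that $\beta$ and the $q_i$ respect algebraicity) and does not require one to recall the precise bidegree of $\Q_i$, so it is a worthwhile alternative even if less economical.
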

\begin{proof}
The cohomology operations on mod-$p$ motivic and syntomic cohomologies are compatible, and the vanishing is true for motivic cohomology for degree reasons.
\end{proof}

The first systematic study of this conjecture and its counterexamples was first produced carefully in \cite{ct-szamuely}. Other counterexamples to the $\ell$-adic integral Tate conjecture have been studied by various authors including (but not limited to): \cite{pirutka-yagita,antieau-tate,quick-etale,benoist}. Our counterexample is in codimension two but is a little different from the ones produced in \cite{atiyah-hirzebruch, ct-szamuely} and it involves the product of an approximation to classifying space with an ordinary elliptic curve. To this end, we review the calculation of mod-$p$ syntomic cohomology of elliptic curves over $\overline \Fb_p$.

\begin{ex}[Mod-$p$ syntomic cohomology of elliptic curves over $\overline \Fb_p$]\label{ex:HsynOfE}
Let $E$ be an elliptic curve over $\overline \Fb_p$. We will compute the syntomic cohomology groups
\[
H_\syn^i(E; \Fb_p(j)) = \colim_{n \to \infty} H_\syn^i(E_n; \Fb_p(j))
\]
where $E_n$ is a model of $E$ over $\Fb_{p^n}$, which exists for all $n \gg 0$, and where the equality follows from the fact that mod-$p$ syntomic cohomology sends cofiltered limits of qcqs schemes to filtered colimits \cite[Corollary~8.4.11]{bhatt-lurie:apc}.

We begin by calculating the syntomic cohomology of the elliptic curves $E_n$. The syntomic cohomology is concentrated in weights $j=0,1$, and the syntomic cohomology can be determined from the exact sequences
\begin{equation}\label{eq:Syn0ArtinSchreier}
\begin{tikzcd}
0 \arrow[r] & H^0_\syn(E_n; \Fb_p(0)) \arrow[r] & H^0(E_n; \Oc_{E_n}) \arrow[r]{}{1-F} & H^0(E_n; \Oc_{E_n}) \arrow[lld]{}{\delta_1} \\
&  H^1_\syn(E_n; \Fb_p(0)) \arrow[r] & H^1(E_n; \Oc_{E_n}) \arrow[r]{}{1-F}  & H^1(E_n; \Oc_{E_n}) \arrow[lld]{}{\delta_2} \\
&  H^2_\syn(E_n; \Fb_p(0)) \arrow[r] &0,&
\end{tikzcd}
\end{equation}
and
\begin{equation}\label{eq:Syn1ArtinSchreier}
\begin{tikzcd}
0 \arrow[r] & H^1_\syn(E_n; \Fb_p(1)) \arrow[r] & H^0(E_n; \Omega^1_{E_n}) \arrow[r]{}{1-C} & H^0(E_n; \Omega^1_{E_n}) \arrow[lld]{}{\delta_3} \\
&  H^2_\syn(E_n; \Fb_p(1)) \arrow[r] & H^1(E_n; \Omega^1_{E_n}) \arrow[r]{}{1-C}  & H^1(E_n; \Omega^1_{E_n}) \arrow[lld]{}{\delta_4} \\
&  H^3_\syn(E_n; \Fb_p(1)) \arrow[r] &0.&
\end{tikzcd}
\end{equation} 
As $H^0_\syn(E_n; \Fb_p(0)) \cong \Fb_p$, the image of $\delta_1$ is one-dimensional. Let us denote the generator of the image by $\epsilon_n$. Similarly, by Milne's duality for syntomic cohomology over finite fields \cite[Theorem~1.9]{milne:1976}, $H^3_\syn(E_n; \Fb_p(1)) \cong \Fb_p\{\tau_n\}$, and therefore there exists a class $y \in H^2_\syn(E_n, \Fb_p(1))$ with a non-zero image in $H^1(E_n;\Omega^1_{E_n})$. We do not decorate $y$ with a subscript $n$ because if $m > n$, then $y$ pulls back to a class in $H^2_\syn(E_m, \Fb_p(1))$ that also has a non-zero image in $H^1(E_m;\Omega^1_{E_m})$. Thus, it suffices to find $y$ for one $n$, and from there it can be pulled back to $E_m$ and $E$. The same convention applies to the classes $x$ and $z$ considered below.
 
To analyze the rest of the cohomology, we need to consider two cases:
\begin{enumerate}
\item If $E$ is \textit{supersingular}, i.e., the action of $F$ on $H^1(E; \Oc_E)$ is zero, then $1-F$ acts as the identity on $H^1(E_n; \Oc_{E_n})$. Combining this observation with Milne's duality result, we observe that the syntomic cohomology of $E_n$ is generated by the classes $1, \epsilon_n, y,$ and $\tau_n$.
\item If $E$ is \textit{ordinary}, i.e., the action of $F$ on $H^1(E; \Oc_E)$ is non-trivial, then for a generator $t$ of $H^1(E_n;\Oc_{E_n}) \cong \Fb_{p^n}$, there exists $\lambda \in \Fb_{p^n}^\times$ such that, for all $a \in \Fb_{p^n}$ 
\begin{equation}
F(at) = a^p \lambda t.
\end{equation}
For $n \gg 0$ there exists $a$ such that $a^p \lambda = a$, and therefore we obtain a class $x \in H^1(E_n; \Fb_p(0))$ with a non-zero image in $H^1(E_n; \Oc_{E_n})$. Moreover, the image of $\delta_2$ is generated by a class $\eta_n \in H^2(E_n; \Fb_p(0))$. 

By Milne's duality result, we observe that the operator $1-C$ on $H^0(E_n;\Omega^1_{E_n})$ has a one-dimensional kernel and cokernel. Let $z \in H^1(E_n; \Fb_p(1))$ be the generator of the kernel, and let $\rho_n$ be the generator of the image of $\delta_3$.

We have observed that the syntomic cohomology of the ordinary elliptic curve $E_n$ has the four classes $x, \eta_n, z$, and $\rho_n$ in addition to the classes that exist also in the supersingular case.  
\end{enumerate}
To finish our analysis, we observe that the connecting maps $\delta_i$ vanish in the colimit because the operators $1-F$ and $1-C$ induce surjective maps on cohomology in the colimit. This can be seen by a computation using the Frobenius linearity of $F$ and $C$ on the respective coherent cohomology groups, and the fact that these groups are one-dimensional over $\overline \Fb_p$. Thus, the generators of the syntomic cohomology of $E$, i.e., the classes that survive the colimit, are
\begin{equation}\label{eq:HsynOfE}
H^\star_\syn(E;\Fb_p(\star)) = 
\begin{cases}
\Fb_p\{1,y\} & \text{if $E$ is supersingular;} \\
\Fb_p\{1,x,y,z\} & \text{if $E$ is ordinary.} 
\end{cases}
\end{equation}
In either case, the kernel of the map from the syntomic cohomology to de Rham cohomology is trivial because the above classes map non-trivially to Hodge cohomology and the Hodge-de Rham spectral sequence collapses for curves. We will use this observation in what follows.
\end{ex}

\begin{thm}\label{thm:counterexample}
There exists a smooth, projective $\Fb_p$-variety $X$ such that $(C^2_X)$ does not hold.
\end{thm}

\begin{proof} 
Let $E_1$ be an ordinary elliptic curve over $\Fb_p$. Using the notation of Example~\ref{ex:HsynOfE}, our counterexample is the image $\alpha'$ of the class
\[
\alpha = \tilde\beta(u_1 u_2 x) \in H^4_\syn(X_n;\Zb_p(2))
\]
in $\colim_k H^4_{\mathrm{crys}}(X_k/\Zb_p)$, where $\tilde \beta$ is the integral Bockstein, and $X_n$ is an approximation of $X'_n:={\rm{B}} \mu_p \times {\rm{B}} \mu_p \times E_n$ where $n$ is large enough so that the class $x \in H^1_{\syn}(E_k; \Fb_p(0))$ exists. 

To check that $\alpha'$ is not in the image of the cycle class map, it suffices to check this after reducing modulo $p$. Furthermore, in Example~\ref{ex:HsynOfE} we saw that the map from the mod-$p$ syntomic cohomology to the de Rham cohomology of $E_n$ is injective in the colimit. The same is true for $X'_n$, because both $H^\star_\syn(X'_n; \Fb_p(\star))$ and $H^\star_\dR(X'_n)$ are free modules over the corresponding cohomology of $E_n$ with basis given by $u_1^a u_2^b v_1^c v_2^d$, where $a,b \in \{0,1\}$ and $c,d \in \Nb$, and where $u_i$ and $v_i$ are the images of the similarly named classes in the motivic cohomology of the two copies of ${\rm B}\mu_p$ \cite[Proposition~6.4]{AHI}. Thus, it suffices to check that $\overline{\alpha} \in H^4_\syn(X';\Fb_p(2))$ is not in the image of the cycle class map.

By Lemma~\ref{lem:CycleObs}, we are reduced to showing that $\Q_1(\overline \alpha) \not = 0$. As the mod-$p$ reduction of the integral Bockstein is the usual Bockstein $\Q_0$, we compute that
\[
\overline \alpha = v_1 u_2 x - u_1 v_2 x + u_1 u_2 \Q_0(x)\footnote{Though we do not need it, we note that $\beta(x)$ is also nonzero because $H^1_{\syn}(X; \Zb_p)$ must be zero for any smooth projective variety over a finite field by the Weil conjectures. We thank Alexander Petrov for pointing this out.}.
\]
Thus,
\begin{align*}
\Q_1(\overline \alpha) &= \Q_1( v_1u_2x) - \Q_1(u_1v_2x) + \Q_1(u_1u_2\Q_0(x)) \\
&=  v_1v_2^px - v_1^pv_2x + v_1^pu_2\Q_0(x) - u_1v_2^p\Q_0(x) \\
&\not = 0
\end{align*}
Here we have used Lemmas~\ref{lem:milnor} (that $\Q_1$ is a derivation) and \ref{lem:Q_n-bmup} (the action of $\Q_1$ on $u_i$ and $v_i$). We also note that we have used that $\Q_1$ of $x$ and $\Q_0(x)$ is zero for degree reasons, see Eq.~\eqref{eq:HsynOfE}.

Finally, we can use Theorem~\ref{thm:approximate} to produce a smooth projective variety $Y$ over $\Fb_p$ of  dimension $2p$ equipped with a map $Y \rightarrow B\mu_p$ such that the map on syntomic cohomology is injective up to cohomological degrees $\leq 2p$ at all weights. The numerology ensures that the map is injective in the bidegree of the classes $u$ and $Q_1(u)$ in syntomic cohomology. So letting $X_n := Y \times Y \times E_n$ does the job.
\end{proof}

\begin{rem}[Relation to the counterexample from \cite{atiyah-hirzebruch} and \cite{ct-szamuely}] Let us discuss the relationship between the counterexample from the previous theorem and the one constructed by Atiyah and Hirzebruch. The analog of the Atiyah-Hirzebruch class in syntomic cohomology is the element
\[
\tilde \beta(u_1u_2u_3) \in H_{\syn}^4(\rm{B} \mu_p \times \rm{B} \mu_p \times \rm{B} \mu_p; \Zb_p(3)).
\]
By the calculation in Lemma~\ref{prop:qi} while this class is nonzero, it is not a crystalline Tate class since it is of degree $(4,3)$ and has no chance to be even in the image of the cycle class map (in the language of this paper, it is not a crystalline Tate class). 

The situation here is quite different from the class in singular and $\ell$-adic \'etale cohomology. In the former: we know that any torsion class must be Hodge. In the latter: one is allowed to implicitly considers a $\tau$-twist of the class to get it in the right degree: $\beta(\tau^{-1}u_1u_2u_3) \in H_{\syn}^4((\rm{B} \mu_p \times \rm{B} \mu_p \times \rm{B} \mu_p)_{\overline{k}}; \Zb_{\ell}(2))$ and then one shows that it is actual fixed by the Galois group; this shows that $\beta(\tau^{-1}u_1u_2u_3)$ is an $\ell$-adic Tate class and could be in the image of the cycle class map. It was proved in \cite{ct-szamuely} that this is not the case using the Milnor operations on \'etale cohomology. We note that one of the salient differences between the $\ell$-adic and $p$-adic situation is that $\tau$ is zero in the latter.

Another class which we do know has a nontrivial action of $Q_1$ is the class
\[
\beta(\epsilon u_1u_2) \in H_{\syn}^4(\rm{B} \mu_p \times \rm{B} \mu_p \times \rm{B} \mu_p; \Zb_p(2)),
\]
where $\epsilon \in H^1_{\syn}(\Spec(\Fb_p);\Fb_p(0))$. This follows immediately from Proposition~\ref{prop:qi}; hence we conclude that this class cannot come from a cycle. While its image in crystalline cohomology is in fact a Tate class, it is zero on crystalline cohomology since $\epsilon$ is zero on de Rham cohomology because there is no $H^1_{\mathrm{dR}}$ of the base field. 
\end{rem}

\begin{rem} At the prime $p = 2$, the counterexample of Atiyah--Hirzebruch \cite{atiyah-hirzebruch} is of dimension $7$. Ours is slightly of lower dimension since it is of dimension $2(2) +1=5$. More generally, for any prime $p$ if it is of dimension $2p+1$. We expect something like \cite[Theorem 1]{soule-voisin} to also hold in characteristic $p > 0$ where the minimal dimension counterexample detected via Steenrod operations is at least $p$. It would be interesting to find counterexamples of lower dimension. 

Another interesting direction would be to find non-torsion counterexamples, following the work of Pirutka-Yagita \cite{pirutka-yagita} for the $\ell$-adic integral Tate conjecture.
\end{rem}

\subsection{The Wu formula for motivic and syntomic cohomology}\label{sec:riemann-roch}

In the final section, we will establish a Wu formula for the power operations in both the syntomic and motivic contexts. Our proof is quite robust and follows immediately from a general Riemann--Roch type statements in $\MS$, which is of independent interest and is established in Theorem~\ref{thm:RRInMS}. Crucial to this method of proof is the fact that our operations are defined on the level of motivic spectra. We will first prove a general Riemann--Roch formulas for motivic spectra, showing that a map of homotopy commutative orientable ring spectra commutes with Gysin pushforwards up to twist by a Todd class. This follows the outline of \cite{deglise-orientations} in $\Ab^1$-homotopy theory. 

We begin with a review of Gysin maps in the non-$\Ab^1$-invariant context; we have already seen this in the work of Khan in the $\Ab^1$-invariant context that we used in \S\ref{ssec:nonsmoothable}. Our discussion is based on upcoming work by L. Tang \cite{tang-gysin}; though the reader interested in only statements for $\Ab^1$-invariant motivic cohomology is free to use Khan's formalism. The end-product of this work gives us Gysin maps along quasi-smooth closed immersions for those cohomology theories that are defined on derived schemes and satisfy derived blowup excision; often we only need it in the more restricted setting where the cohomology theory is defined on all schemes that are smooth over a base.

 If $E \in \MS^?_S$ and is oriented, then we have the \emph{Thom isomorphism}
\begin{equation}\label{eq:thom-iso}
t(\Ec): E(\Tb^{\otimes r} \otimes X) \xrightarrow{\simeq} E(\Th_X(\Ec));
\end{equation}
where $\Ec$ is a locally free rank $r$ sheaf on $X \in \Sm^?_S$. If $E \in \CAlg(\h\MS^?_S)$, then $t(\Ec)$ is furthermore implemented by multiplication with the \emph{Thom class} $t(\Ec) \in E^{2r, r}(\Th_X(\Ec))$, which is the image of $1 \in E^{0,0}(X)$ under the isomorphism~\eqref{eq:thom-iso}.

For $i \colon Z \hook X$ a quasi-smooth closed embedding of virtual codimension $r$ in $\Sm^?_S$, L. Tang has constructed the \emph{Gysin map}\footnote{In the $\Ab^1$-invariant setting and for $Z \hook X$ smooth over $S$, this is implemented as the map in motivic spaces given by $M_S(X) \rightarrow \tfrac{M_S(X)}{M_S(X \setminus Z)} \xleftarrow{\simeq} \Th_Z(\Nc_{Z/X})$ where the equivalence is the relative purity isomorphism of Morel-Voevodsky \cite{morel:1999}.}
\begin{equation}\label{eq:GysDef}
\gys_i \colon M_S(X) \to \Th_Z(\Nc_{Z/X})
\end{equation}
in $\MS^?_S$. If $E \in \CAlg(\h\MS^?_S)$ is oriented, then we can compose this with the Thom isomorphism which takes the form
\[
t(\Nc_E) \cdot \dash \colon E(\Tb^{\otimes n} \otimes M(Z)) \xto{\sim} E(\Th_Z(\Nc_{Z/X})),
\]
to define the \emph{Gysin pushforward}
\[
i_! \colon E(\Tb^{\otimes n} \otimes M(Z)) \simeq E(-n)[-2n](Z) \to E(X).
\]
Concretely, this incudes pushforward maps $i_! \colon E^{\star, \star}(Z) \to E^{\star + 2r, \star + r}(X)$ on bigraded $E$-cohomology groups, where $r$ is the codimension. 

\begin{rem}[Pullback along Gysin vs Gysin pushforward] \label{rem:gysin-thom} Let $E$ be a homotopy commutative ring spectrum in $\MS_X$. The value of $E$ on a Thom space is given as the fibre of the map of a multiplicative map $E(\Pb_X(\Ec \oplus \Oc)) \rightarrow E(\Pb_X(\Ec))$; hence the object $E(\Th_Z(\Nc_{Z/X}))$ is naturally a homotopy commutative, nonunital ring in spectra. The pullback along $\gys_i$ thus induces a map of non-unital rings:
\[
E^{\star,\star}(\Th_Z(\Nc_{Z/X})) \rightarrow E^{\star,\star}(X)
\] that is independent of the choice of orientation. On the other hand the Gysin pushforward map $i_!$ is orientation-dependent and is only $E^{\star,\star}(X)$-linear. This multiplicative property of the former was already used in the proof of Theorem~\ref{thm:non-qs}. 
\end{rem}

\begin{rem}[Pullback along Gysin vs trace of the purity transformation]
Any absolute $\Ab^1$-invariant motivic spectrum $E$ enhances into an object of $\MS^\dbe_S$ for any derived scheme $S$ \cite[Appendix~B]{annala-shin}. For such a $E$, we expect $\gys^*_i$ to coincide with the map $\mathrm{tr}_{i*}$ from Eq.~\eqref{eq:ThomGysin}. We do not use this assertion for what follows. 
\end{rem}

Next, assume that we have a projective quasi-smooth morphism $f: X \rightarrow Y$; by definition we may choose a factorization 
\[
X \xrightarrow{i} \Pb^n_Y \xrightarrow{p} Y
\]
where $i$ is a quasi-smooth closed immersion and $p$ is the projection map. The first author and Shin has defined, for $E \in \CAlg(\h\MS^?_S)$ a pushforward map
\[
p_!: E(\Pb^n_X) \rightarrow E(\Tb^{\otimes n} \otimes M_S(X)) \simeq E(X)(-n)[-2n]
\]
given by the formula in \cite[Definition 2.7]{annala-shin}. Setting 
\[
f_!:=p_! \circ i_!,
\]
they also proved in \cite[Lemma 2.10]{annala-shin} that $f_!$ does not depend on the choice of $i$ and $p$. The following are the properties of the Gysin pushforwards established in \cite{tang-gysin} and \cite[Theorem 2.14]{annala-shin}  that will be needed in this paper.
\begin{thm}
Let $E \in \CAlg(\h\MS^?_S)$ be oriented. Then, the Gysin pushforwards satisfy the following properties:
\begin{enumerate}
    \item \emph{Functoriality:} $\Id_! = \Id$ and if $f \colon X \hook Y$ and $g \colon Y \hook Z$ are projective quasi-smooth morphisms of constant  virtual codimension, then $(g \circ f)_! = g_! \circ f_!$.
    \item \emph{Base change:} If
    \[
    \begin{tikzcd}
        Y' \arrow[d]{}{p'} \arrow[r]{}{f'} & X' \arrow[d]{}{p} \\
        Y \arrow[r]{}{f} & X
    \end{tikzcd}
    \]
    is a commutative square in $\Sm_S^?$ that is also a cartesian square of derived schemes, and if $f$ is projective quasi-smooth, then $p^* \circ f_! = f'_! \circ p'^*$.
    
    \item \emph{Projection formula:} The formula $f_!(f^*(\alpha) \cdot \beta) = \alpha \cdot f_!(\beta)$ holds for all $\alpha \in E^{\star,\star}(X)$ and $\beta \in E^{\star,\star}(Z)$.
    
    \item \emph{Naturality:} If $\varphi: E \to F$ is an orientation-preserving map of oriented theories in $\CAlg(\h\MS^?_S)$, then the squares
    \[
        \begin{tikzcd}
             E^{a,b}(Z) \arrow[r]{}{f_!} \arrow[d]{}{\varphi} & E^{a+2r,b+r}(X) \arrow[d]{}{\varphi} \\
             F^{a,b}(Z) \arrow[r]{}{f_!} & F^{a+2r,b+r}(X)
        \end{tikzcd}
    \]
    commute.
\end{enumerate}
\end{thm}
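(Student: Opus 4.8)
The plan is to deduce all four properties by reducing to the two building blocks out of which $f_!$ is assembled: the Gysin map $\gys_i \colon M_S(X) \to \Th_Z(\Nc_{Z/X})$ along a quasi-smooth closed immersion, whose construction and formal properties are the content of \cite{tang-gysin}, and the projective-bundle pushforward $p_! \colon E(\Pb^n_X) \to E(\Tb^{\otimes n} \otimes M_S(X))$ of \cite[Definition~2.7]{annala-shin}. Since for any factorization $X \xrightarrow{i} \Pb^n_Y \xrightarrow{p} Y$ of a projective quasi-smooth $f$ one has $f_! = p_! \circ i_!$, and this composite is independent of the factorization by \cite[Lemma~2.10]{annala-shin}, each clause of the theorem becomes a matter of checking that the relevant property is compatible with composition along such a factorization.

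First I would recall the inputs. From \cite{tang-gysin}: the Gysin map is functorial for composites of quasi-smooth closed immersions, is compatible with (derived, hence automatically tor-independent) base change, and after applying an oriented $E \in \CAlg(\h\MS^?_S)$ together with the Thom isomorphism $t(\Nc_E)\cdot\dash$ yields pushforwards $i_!$ satisfying the projection formula; naturality in an orientation-preserving map $\varphi \colon E \to F$ is immediate because $\gys_i$ is a map of motivic spectra and $\varphi$ intertwines the Thom isomorphisms it induces on source and target. From \cite[Theorem~2.14]{annala-shin}: the projective pushforward $p_!$ satisfies $\Id_! = \Id$, is compatible with iterated projections $\Pb^m_{\Pb^n_Y} \to \Pb^n_Y \to Y$, is compatible with base change along cartesian squares of derived schemes, obeys the projection formula, and is natural for orientation-preserving maps.

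With these in hand the four assertions are bookkeeping. For functoriality, given composable projective quasi-smooth $f \colon X \to Y$ and $g \colon Y \to Z$, one picks a factorization of $f$ through $\Pb^n_Y$ and of $g$ through $\Pb^m_Z$, assembles a factorization of $g \circ f$ through a projective bundle over $Z$ (using that the composite of a closed immersion with the base change of another is again a quasi-smooth closed immersion), and then applies independence of \cite[Lemma~2.10]{annala-shin} together with functoriality of $i_!$ and of $p_!$ separately. For base change one pulls back a chosen factorization along the cartesian square — noting the pullback of a quasi-smooth closed immersion is again one because intersections are taken derived — and applies base change for $i_!$ and for $p_!$. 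The projection formula follows since both $i_!$ and $p_!$ are $E^{\star,\star}(X)$-linear in the appropriate variable while $f^*$ is a ring map. Naturality is inherited step by step. I do not anticipate a genuine obstacle here: the entire substance lives in \cite{tang-gysin} and \cite{annala-shin}, and the only mild subtlety is choosing the factorizations used in the closed-immersion step and the projection step compatibly, which is precisely what the independence statement \cite[Lemma~2.10]{annala-shin} is designed to handle.
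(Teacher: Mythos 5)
The paper supplies no proof of this theorem: the sentence immediately preceding it reads that these are ``the properties of the Gysin pushforwards established in \cite{tang-gysin} and \cite[Theorem 2.14]{annala-shin}'', and no proof environment follows. Your sketch is therefore not competing with any argument in the paper; it is a plausible reconstruction of what the cited sources establish, and it correctly identifies the decomposition $f_! = p_! \circ i_!$ together with the independence statement \cite[Lemma~2.10]{annala-shin} as the pivot on which each of the four clauses turns. One spot where you are gesturing rather than arguing is functoriality: to assemble a factorization of $g \circ f$ from a chosen $X \hook \Pb^n_Y \to Y$ and $Y \hook \Pb^m_Z \to Z$, you need to base-change the first factorization along $Y \hook \Pb^m_Z$ and then use a Segre embedding $\Pb^n_Z \times_Z \Pb^m_Z \hook \Pb^{(n+1)(m+1)-1}_Z$ to land in a single projective bundle over $Z$; the phrase ``the composite of a closed immersion with the base change of another'' points in this direction but does not spell out the Segre step, and you must also invoke compatibility of $p_!$ with such re-embeddings (which is exactly what the independence lemma supplies). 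This is cosmetic, not a genuine gap, and the rest of your sketch — base change via derived pullbacks of quasi-smooth immersions, projection formula via $E^{\star,\star}$-linearity of both $i_!$ and $p_!$, naturality via the Thom-class compatibility of orientation-preserving maps — matches what the references provide.
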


With now proceed to prove the Riemann-Roch formula. Let 
\begin{equation}\label{eq:phiMap}
\varphi \colon E \to F
\end{equation}
be a map in $\CAlg(\h\MS^?_S)$, and let $e$ and $f$ be orientations of $E$, and $F$ respectively. Evaluating $\varphi$ on $\Pb^\infty_{S,+}$ induces a map of bigraded rings
\[
E^{\star,\star}(S)[[e]] \to F^{\star,\star}(S)[[f]].
\]
We denote by $\Psi(f)$ the image of $e$ under the above map. As $\Psi(f)$ is an orientation of $F$, it has no constant coefficient, and therefore $\Psi(f) = \Td^{-1}_\varphi(f) \cdot f$ for a homogeneous power series $\Td_\varphi^{-1}$ of degree 0. This allows us to define the \emph{inverse Todd class} of virtual vector bundles.

\begin{lem}
For every $\xi \in K_0(X)$, there exists a unique class
\[
\Td^{-1}_\varphi(\xi) \in F^{0,0}(X)
\]
that is invertible, natural in pullbacks, and satisfies
\begin{enumerate}
\item if $\Ls$ is a line bundle, $\Td^{-1}_\varphi([\Ls]) = \Td_\varphi^{-1}(c_1(\Ls))$ as defined above;
\item $\Td^{-1}_\varphi(\xi + \eta) = \Td^{-1}_\varphi(\xi) \Td^{-1}_\varphi(\eta)$.
\end{enumerate}
\end{lem}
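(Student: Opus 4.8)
The plan is to construct $\Td^{-1}_\varphi$ by the splitting principle and to pin it down by naturality together with the injectivity of $F$-cohomology along flag bundles; this is the $\MS^?_S$-version of the bookkeeping in \cite{deglise-orientations}, and the only geometric inputs beyond formal nonsense are the projective bundle formula and the Whitney sum formula, both furnished by the orientations $e$ and $f$.

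First I would set $g(t) := \Td^{-1}_\varphi(t) = \Psi(f)/f$; since $\Psi(f)$ is an orientation of $F$ it has the form $f + \sum_{i\geq 2} a_i f^i$, so $g(t) = 1 + \sum_{i\geq 1} a_{i+1} t^i$ has invertible constant term and is therefore an invertible power series. On a line bundle $\Ls$ over $X \in \Sm^?_S$, the element $\Td^{-1}_\varphi(c_1(\Ls)) \in F^{0,0}(X)$ is, by definition, the image of $g(f)$ under the ring map $F^{\star,\star}(\Pb^\infty) \to F^{\star,\star}(X)$ classifying $\Ls$; its inverse is the image of $g(f)^{-1}$, so it is a unit. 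For a rank-$r$ bundle $\Ec$ let $\pi\colon \Fl(\Ec)\to X$ be the full flag bundle; iterating the projective bundle formula makes $\pi^*$ split injective on $F^{\star,\star}$ and exhibits $\pi^*\Ec$ as a successive extension of line bundles $\Ls_1,\dots,\Ls_r$ whose first Chern classes are the Chern roots $a_j$. I set $\Td^{-1}_\varphi(\Ec) := \prod_j g(a_j)$; this is a symmetric (and invertible) power series in the $a_j$, hence a power series in the elementary symmetric functions $c_i(\Ec)$, so it descends to a unique unit of $F^{0,0}(X)$, natural in pullbacks.

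Next I would verify additivity and descent to $K_0$: the Chern roots of $\Ec\oplus\Ec'$ are the union of those of $\Ec$ and $\Ec'$, whence $\Td^{-1}_\varphi(\Ec\oplus\Ec') = \Td^{-1}_\varphi(\Ec)\,\Td^{-1}_\varphi(\Ec')$; and because $\Td^{-1}_\varphi(\Ec)$ is a power series in the $c_i(\Ec)$ while the total Chern class is multiplicative in short exact sequences, $\Td^{-1}_\varphi$ is insensitive to passing to the associated graded. Hence $\Ec \mapsto \Td^{-1}_\varphi(\Ec)$ descends to a group homomorphism from the Grothendieck group of vector bundles on $X$ to $F^{0,0}(X)^{\times}$, and for $\xi = [\Ec] - [\Ec']$ the only possible value is $\Td^{-1}_\varphi(\xi) := \Td^{-1}_\varphi(\Ec)\,\Td^{-1}_\varphi(\Ec')^{-1}$, which is well defined by the preceding. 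Properties (1) and (2) then hold by construction, and $\Td^{-1}_\varphi(\xi)$ is invertible as a product of units.

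For uniqueness, suppose $\tau(\xi) \in F^{0,0}(X)^{\times}$ is natural, satisfies (2), and agrees with the prescription on line bundles via (1). Pulling a bundle $\Ec$ back to $\Fl(\Ec)$ gives $[\pi^*\Ec] = \sum_j [\Ls_j]$ in $K_0$, so $\pi^*\tau(\Ec) = \prod_j \tau(\Ls_j) = \prod_j \Td^{-1}_\varphi(c_1(\Ls_j)) = \pi^*\Td^{-1}_\varphi(\Ec)$; injectivity of $\pi^*$ forces $\tau(\Ec) = \Td^{-1}_\varphi(\Ec)$, and (2) extends this to all differences of bundles. I expect the only genuinely delicate point to be the passage from vector bundles to an arbitrary $\xi \in K_0(X)$ when $X$ lacks the resolution property: there one cannot literally write $\xi$ as a difference of bundles and must instead invoke the extension of Chern classes of perfect complexes to oriented theories on $\MS^?_S$ (equivalently the $\lambda$-structure on $K_0$ together with the splitting principle) to see that naturality and (2) still determine $\Td^{-1}_\varphi$ uniquely; everything else is routine splitting-principle manipulation.
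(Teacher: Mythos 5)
Your proposal is correct and follows the same splitting-principle route the paper uses; the paper's entire proof is the one-line remark ``Setting $\Td^{-1}_\varphi(\xi)$ to be as in (1) for $\xi = \Ls$ a line bundle, part (2) is a standard consequence of splitting principle,'' and your argument is simply a careful expansion of that sentence (define the class on Chern roots via the flag bundle, descend by symmetry, check multiplicativity and uniqueness using split-injectivity of $\pi^*$). Your closing caveat about $K_0$ classes not representable as differences of bundles is a legitimate technical flag, but the paper glosses over it as well, so it is not a discrepancy between your argument and theirs.
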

\begin{proof}
Setting $\Td^{-1}_\varphi(\xi)$ to be as in (1) for a $\xi = \Ls$ a line bundle, part (2) is a standard consequence of splitting principle.
\end{proof}

In particular, if $\Ec$ is a locally free sheaf of rank $r$, then
\begin{equation}\label{eq:TwistedTopChern}
\varphi(c_r(\Ec)) = \Td_\varphi^{-1}(\Ec) \cdot c_r(\Ec).
\end{equation}
To take advantage of this, we record the following useful formula for the Thom class.

\begin{lem}\label{lem:ChernClassFormula}
Let $\Ec$ be a locally free sheaf of rank $r$ on $X$ and let $E \in \MS^?_S$ be an oriented homotopy-commutative ring spectrum. Then, the pullback along $\Pb_X(\Ec \oplus \Oc) \to \Th_X(\Ec)$ sends 
\[
t(\Ec) \mapsto  c_r(\Ec(-1)) \in E^{2r,r}(\Pb_X(\Ec \oplus \Oc)).
\]
\end{lem}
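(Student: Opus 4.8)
The plan is to verify the formula by reducing to the universal case and then using the splitting principle, exactly as one does classically for the Thom class of a vector bundle in an oriented cohomology theory. First I would set up the basic geometry: write $P := \Pb_X(\Ec \oplus \Oc)$ with its tautological line bundle $\Oc_P(-1) \hook (\Ec \oplus \Oc)$, and recall that the zero section $X \hook P$ (given by the summand $\Oc \hook \Ec \oplus \Oc$) has open complement $\Pb_X(\Ec)$, so that by definition $\Th_X(\Ec) = \cofib(\Pb_X(\Ec) \to P)$. The map in the statement is the quotient map $P \to \Th_X(\Ec)$, and $E^{\star,\star}(\Th_X(\Ec))$ is identified with the kernel of $E^{\star,\star}(P) \to E^{\star,\star}(\Pb_X(\Ec))$; the Thom class $t(\Ec)$ is the unique such class restricting to $1 \in E^{0,0}(X)$ under the Thom isomorphism~\eqref{eq:thom-iso}, equivalently the unique class in that kernel whose further pullback along the zero section $X \hook P$ is $1$.

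The key computation is then: the class $c_r(\Ec(-1)) = c_r(\Ec \otimes \Oc_P(-1)) \in E^{2r,r}(P)$ (a) lies in the kernel of restriction to $\Pb_X(\Ec)$, and (b) pulls back to $1$ along the zero section. For (a): on $\Pb_X(\Ec)$ the tautological inclusion $\Oc(-1) \hook \Ec$ shows that $\Ec \otimes \Oc(-1)$ has a nowhere-vanishing section (the composite $\Oc \to \Ec \otimes \Oc(-1)$ coming from $\Oc(-1) \hook \Ec$), hence a sub-line-bundle with trivial quotient is not quite right — rather, $\Ec\otimes\Oc(-1)$ contains $\Oc$ as a direct summand after restricting, so its top Chern class vanishes; I would phrase this cleanly via the projective bundle formula and the splitting principle, splitting $\Ec$ as a sum of line bundles $\Ls_1, \dots, \Ls_r$, so that $c_r(\Ec(-1)) = \prod_i (c_1(\Ls_i) +_F c_1(\Oc(-1)))$ in the formal group law $F$ of $E$, and on $\Pb_X(\Ec)$ one of the roots $c_1(\Ls_i) = c_1(\Oc(1))$ kills the product. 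For (b): along the zero section $\Oc(-1)$ restricts to the trivial bundle on the summand $\Oc$, so $\Ec(-1)$ restricts to $\Ec$ on $X$ with its chosen splitting, giving $c_r(\Ec)\cdot(\text{unit stuff})$... wait — actually along the zero section $s\colon X \to P$ corresponding to $\Oc \hook \Ec\oplus\Oc$ we have $s^*\Oc_P(1) \cong \Oc_X$, so $s^*(\Ec(-1)) \cong \Ec$, and thus $s^* c_r(\Ec(-1)) = c_r(\Ec)$, which is \emph{not} $1$. So the correct normalization must be checked against the Thom isomorphism rather than the zero section: $t(\Ec)$ is characterized as the unique element of $\ker\big(E^{\star,\star}(P)\to E^{\star,\star}(\Pb_X(\Ec))\big)$ which is an $E^{\star,\star}(X)$-module generator in the appropriate degree, equivalently the one whose image under $E^{2r,r}(P)\xrightarrow{\cong}\bigoplus_{j}E^{\star,\star}(X)\cdot c_1(\Oc_P(1))^j$ has leading coefficient $1$ — and this is exactly what the splitting-principle formula $c_r(\Ec(-1)) = c_1(\Oc_P(1))^r + (\text{lower order in } c_1(\Oc_P(1)))$ delivers.

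So the steps in order are: (1) recall the projective bundle formula $E^{\star,\star}(\Pb_X(\Ec\oplus\Oc)) = \bigoplus_{j=0}^{r} E^{\star,\star}(X)\cdot \zeta^j$ with $\zeta = c_1(\Oc_P(1))$, and similarly for $\Pb_X(\Ec)$ with $\bar\zeta$, so that the kernel of the restriction map is the free $E^{\star,\star}(X)$-module on the single relation expressing $\zeta^r$ — i.e. on $c_r(\Ec(-1))$ after computing it below; (2) apply the splitting principle to reduce to $\Ec = \bigoplus \Ls_i$ and compute $c_r(\Ec(-1)) = \prod_{i=1}^r\big(\bar c_1(\Ls_i) +_F \zeta'\big)$ where I write $\zeta' = c_1(\Oc_P(-1))$, expanding in powers of $\zeta$; (3) check the leading coefficient in $\zeta^r$ equals $1$ (clear from the formal group law having leading term $x + y$) and that restriction to $\Pb_X(\Ec)$ annihilates it (one Chern root becomes $c_1(\Oc(1))$, the defining relation there); (4) conclude by uniqueness of the Thom class. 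The main obstacle — and it is really a bookkeeping point rather than a genuine difficulty — is pinning down precisely which normalization characterizes $t(\Ec)$ and matching it to the $\zeta^r$-leading-coefficient condition, so that the splitting-principle computation of $c_r(\Ec(-1))$ lands on the nose; the formal-group-law manipulation itself is routine once the target is clear, and it is essentially the argument of \cite{deglise-orientations} transported to $\MS$.
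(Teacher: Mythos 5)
Your overall strategy — reduce to line bundles via the splitting principle and compare Chern roots using the formal group law — matches the paper's core computation, which performs the same reduction via multiplicativity in $K^0(X)$. The organizational difference is that the paper sidesteps the need for any abstract characterization of $t(\Ec)$ by citing \cite[Eq.~(6.1)]{AHI} directly for the image of the Thom class under the quotient pullback, namely $\sum_{i=0}^r(-1)^{r-i}c_i(\Ec)\,x^{r-i}$ with $x=c_1(\Oc(1))$, and then showing this polynomial equals $c_r(\Ec(-1))$ via the formal group law and the relation $(c-x)\cdot x=0$ in the $\Pb^1$-bundle. You instead try to pin down $t(\Ec)$ as the unique kernel element with leading coefficient $1$ in $\zeta^r$ where $\zeta=c_1(\Oc_P(1))$.

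The step you flag as ``a bookkeeping point'' is, in fact, the load-bearing one and is not resolved in the proposal. A generator of the kernel as an $E^{\star,\star}(X)$-module is only determined up to $E^{0,0}(X)^\times$, so ``generator in the right degree'' does not single out $t(\Ec)$; asserting monicity requires precisely the information about the Thom isomorphism that \cite[Eq.~(6.1)]{AHI} supplies. There is also a sign discrepancy in the normalization as written: since $c_1(\Oc_P(-1))=[-1]_F(\zeta)=-\zeta+O(\zeta^2)$, the top-degree term of $c_r(\Ec(-1))=\prod_i\bigl(c_1(\Ls_i)+_Fc_1(\Oc_P(-1))\bigr)$ in powers of $\zeta=c_1(\Oc_P(1))$ is $(-1)^r\zeta^r$, not $\zeta^r$; the leading coefficient is $1$ only in powers of $c_1(\Oc_P(-1))$, which is the paper's preferred hyperplane generator. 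Once you resolve the normalization — by re-deriving or citing the explicit AHI formula — the rest of your argument collapses onto the paper's.
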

\begin{proof}
By \cite[Eq.~(6.1)]{AHI}\footnote{We have changed the sign convention in the definition of the Thom class to get rid of the sign in front of the Euler class that is present in e.g. \cite[Eq.~(6.2)]{AHI}.}, 
\[
t(\Ec) \mapsto \sum_{i=0}^r (-1)^{r-i} c_i(\Ec) \cdot x^{r-i},
\]
where $x = c_1(\Oc(1))$. We want to identify this class with $c_r(\Ec(-1))$. As both classes are multiplicative with respect to summation in $K^0(X)$, it suffices to treat the case of a line bundle.

Using the formal group law of $E$ to compute $c_1(\Ls(-1))$ in terms of $c = c_1(\Ls)$ and $x$, we observe that
\[
c_1(\Ls(-1)) = (c-x) (1 + Ax),
\]
where $A = G(c,x)$ for some formal power series $G(a,b) \in E^{\star, \star}[[a,b]]$. As $c-x$ annihilates $x$, we see that $c_1(\Ls(-1)) = c-x$, which is exactly the desired formula for a line bundle.
\end{proof}

\begin{rem}
The convention of Chern classes used here is manifestly dual to that traditionally used in intersection theory \cite{fulton:1998}. Indeed, the embedding $\Pb(\Oc) \hook \Pb(\Ec \oplus \Oc)$ is the vanishing locus of the induced map $\Ec(-1) \to \Oc$, so the top Chern class of $\Ec$ is the Gysin pushforward along the zero locus of a cosection, not a section. For example, $c_1(\Oc(-1))$ is the hyperplane class in the cohomology of $\Pb^n$.

This reflects the convention of \cite{AHI} according to which $\Vb(\Ec)$ and $\Pb(\Ec)$ classify cosections and quotient line bundles of $\Ec$ and $\Ec$, in contrast to \cite{fulton:1998} where $\Vb(\Ec)$ and $\Pb(\Ec)$ classify of sections of $\Ec$ and sub line bundles sheaves of $\Ec$, respectively.
\end{rem}

\begin{lem}\label{lem:ThomClassFormula}
Let $\Ec$ be a vector bundle on $X$. Then
\[
\varphi(t(\Ec)) = \Td_\varphi^{-1}(\Ec) \cdot t(\Ec).
\]
\end{lem}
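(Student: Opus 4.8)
The plan is to pull the asserted identity back along the collapse map $q \colon \Pb_X(\Ec \oplus \Oc) \to \Th_X(\Ec)$, reduce everything to Lemma~\ref{lem:ChernClassFormula} and to formula~\eqref{eq:TwistedTopChern}, and then resolve the resulting mismatch of twists using the projective bundle relation. First I would record that $q^* \colon F^{\star,\star}(\Th_X(\Ec)) \to F^{\star,\star}(\Pb_X(\Ec \oplus \Oc))$ is injective: the cofibre sequence $\Pb_X(\Ec) \to \Pb_X(\Ec \oplus \Oc) \to \Th_X(\Ec)$ induces a fibre sequence on $F$-cohomology, and the restriction along the hyperplane inclusion $\Pb_X(\Ec) \hook \Pb_X(\Ec \oplus \Oc)$ is (split) surjective by the projective bundle formula, since both groups are free over $F^{\star,\star}(X)$ on powers of $x := c_1(\Oc(1))$ and $\Oc(1)$ restricts to $\Oc(1)$; the same holds with $E$ in place of $F$. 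Hence it suffices to verify the identity after applying $q^*$.

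Next I would compute both sides after $q^*$. Using naturality of $\varphi$ with respect to pullback and Lemma~\ref{lem:ChernClassFormula} for $E$, the left-hand side becomes $q^* \varphi(t(\Ec)) = \varphi(q^* t(\Ec)) = \varphi(c_r(\Ec(-1)))$, the Chern class being taken in $E$-cohomology and $\Ec(-1) = \pi^*\Ec \otimes \Oc(-1)$ on $\Pb_X(\Ec \oplus \Oc)$, with $\pi$ the bundle projection. Applying formula~\eqref{eq:TwistedTopChern} to the rank-$r$ bundle $\Ec(-1)$, this equals $\Td_\varphi^{-1}(\Ec(-1)) \cdot c_r(\Ec(-1))$, now in $F$-cohomology. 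On the other hand, Lemma~\ref{lem:ChernClassFormula} for $F$ gives $q^*\bigl(\Td_\varphi^{-1}(\Ec) \cdot t(\Ec)\bigr) = \pi^* \Td_\varphi^{-1}(\Ec) \cdot c_r(\Ec(-1))$. So the whole lemma reduces to the identity $\bigl(\Td_\varphi^{-1}(\Ec(-1)) - \pi^*\Td_\varphi^{-1}(\Ec)\bigr)\cdot c_r(\Ec(-1)) = 0$ in $F^{\star,\star}(\Pb_X(\Ec \oplus \Oc))$.

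The crux is this last identity. The key point is that $\Td_\varphi^{-1}(\Ec(-1)) - \pi^*\Td_\varphi^{-1}(\Ec)$ lies in the ideal generated by $x$. To see this, consider the section $s \colon X = \Pb_X(\Oc) \hook \Pb_X(\Ec \oplus \Oc)$ of $\pi$ along which the tautological quotient $\Oc(1)$ becomes trivial, so that $s^* x = 0$; by the projective bundle formula an element of $F^{\star,\star}(\Pb_X(\Ec \oplus \Oc))$ lies in $(x)$ exactly when its restriction along $s$ vanishes. Since $s^*(\Ec(-1)) = \Ec \otimes s^*\Oc(-1) = \Ec$ and $s^* \pi^* = \mathrm{id}$, naturality of $\Td_\varphi^{-1}$ gives $s^*\bigl(\Td_\varphi^{-1}(\Ec(-1)) - \pi^*\Td_\varphi^{-1}(\Ec)\bigr) = \Td_\varphi^{-1}(\Ec) - \Td_\varphi^{-1}(\Ec) = 0$. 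Writing the difference as $x \cdot g$ and using the projective bundle relation $\sum_i (-1)^i c_i(\Ec \oplus \Oc)\, x^{r+1-i} = 0$, which by Lemma~\ref{lem:ChernClassFormula} amounts (up to sign) to $x \cdot c_r(\Ec(-1)) = 0$, the product in question vanishes. Combined with the injectivity of $q^*$ from the first step, this proves the lemma.

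The main obstacle is precisely this mismatch of twists: the Thom class naturally produces the twisted bundle $\Ec(-1)$ on the projective completion rather than $\Ec$ itself, and one must verify that the spurious $\Oc(-1)$-twist is annihilated by the Euler class $c_r(\Ec(-1))$ — which is what the section argument together with the projective bundle relation accomplishes. Everything else is formal manipulation with the already-established Lemma~\ref{lem:ChernClassFormula} and formula~\eqref{eq:TwistedTopChern}, together with the naturality of $\varphi$ (as in~\eqref{eq:phiMap}) and of the inverse Todd class.
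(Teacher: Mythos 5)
Your proof is correct and takes essentially the same route as the paper's: both combine Lemma~\ref{lem:ChernClassFormula} with Eq.~\eqref{eq:TwistedTopChern} to produce the factor $\Td^{-1}_\varphi(\Ec(-1))$, and then remove the spurious $\Oc(-1)$-twist by observing that the discrepancy lies in the annihilator of the Thom class (the ideal generated by the hyperplane class, which is killed by $t(\Ec)$). The paper disposes of this last step tersely via the splitting principle and the formal group law, whereas you make the ideal membership explicit through the zero-section argument and a formal injectivity-of-$q^*$ reduction; these are two implementations of the same key step, and your ``up to sign'' in passing from $c_1(\Oc(-1))$ to $x=c_1(\Oc(1))$ should really be ``up to a unit multiple in the ideal $(x)$'' for a general formal group law, though the conclusion is unaffected since the two classes generate the same ideal.
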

\begin{proof}
According to Lemma~\ref{lem:ChernClassFormula}, $t(\Ec)$ is a top Chern class. Thus, by Eq.~\eqref{eq:TwistedTopChern},
\[
\varphi(t(\Ec)) = \Td_\varphi^{-1}(\Ec(-1)) \cdot t(\Ec).
\]
Moreover, as $t(\Ec)$ annihilates $c_1(\Oc(-1))$, the claim follows from the splitting principle by computing the Chern classes of tensor products using the formal group law.
\end{proof}

\begin{lem}\label{lem:RRForEmbedding}
Let $i \colon Z \hook X$ be a quasi-smooth closed immersion in $\Sm^?_S$. Then, for all $z \in E^{\star, \star}(Z)$, 
\[
\varphi(i_!(z)) = i_!(\Td^{-1}_\varphi(\Nc_{Z/X}) \cdot \varphi(z)).
\]
\end{lem}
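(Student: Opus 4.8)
The plan is to reduce to the already-established formula for the Thom class, Lemma~\ref{lem:ThomClassFormula}, by unwinding the definition of the Gysin pushforward $i_!$ in terms of the Gysin map $\gys_i \colon M_S(X) \to \Th_Z(\Nc_{Z/X})$ and the Thom isomorphism. Recall that by construction $i_!(z)$ is the composite: we view $z \in E^{\star,\star}(Z)$, multiply by the Thom class $t(\Nc_{Z/X})$ to land in $E^{\star,\star}(\Th_Z(\Nc_{Z/X}))$ via the Thom isomorphism for $E$, and then pull back along $\gys_i$ to obtain a class in $E^{\star,\star}(X)$. The key point, already recorded in Remark~\ref{rem:gysin-thom}, is that pullback along $\gys_i$ is a map of (non-unital) rings and is \emph{independent of the orientation}; it is the Thom isomorphism that carries the orientation dependence.

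First I would set up the notation: write $\mathrm{th}^E \colon E^{\star,\star}(\Tb^{\otimes r}\otimes M(Z)) \xto{\sim} E^{\star,\star}(\Th_Z(\Nc_{Z/X}))$ and similarly $\mathrm{th}^F$ for the two Thom isomorphisms, and $\gamma \colon E^{\star,\star}(\Th_Z(\Nc_{Z/X})) \to E^{\star,\star}(X)$, resp.\ $\gamma' \colon F^{\star,\star}(\Th_Z(\Nc_{Z/X})) \to F^{\star,\star}(X)$, for pullback along $\gys_i$. Then $i_!^E = \gamma \circ \mathrm{th}^E$ and $i_!^F = \gamma' \circ \mathrm{th}^F$. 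Since $\varphi$ is a map of ring spectra it is compatible with pullback along $\gys_i$ (naturality of $\gys_i$ with respect to maps of spectra), so $\varphi \circ \gamma = \gamma' \circ \varphi$ on cohomology. Hence
\[
\varphi(i_!^E(z)) = \varphi(\gamma(\mathrm{th}^E(z))) = \gamma'(\varphi(\mathrm{th}^E(z))).
\]
Now I would compute $\varphi(\mathrm{th}^E(z))$. Writing $\mathrm{th}^E(z) = t^E(\Nc_{Z/X}) \cdot p^*z$ where $p \colon \Th_Z(\Nc_{Z/X}) \to M(Z)$ is the projection (zero-section collapse) and $t^E$ is the Thom class of $E$, multiplicativity of $\varphi$ gives $\varphi(t^E(\Nc_{Z/X}) \cdot p^*z) = \varphi(t^E(\Nc_{Z/X})) \cdot p^*\varphi(z)$. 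By Lemma~\ref{lem:ThomClassFormula}, $\varphi(t^E(\Nc_{Z/X})) = \Td_\varphi^{-1}(\Nc_{Z/X}) \cdot t^F(\Nc_{Z/X})$, where $\Td_\varphi^{-1}(\Nc_{Z/X})$ is pulled back from $Z$ (it is natural in pullbacks, so it equals $p^*$ of the corresponding class on $Z$). Therefore
\[
\varphi(\mathrm{th}^E(z)) = p^*\big(\Td_\varphi^{-1}(\Nc_{Z/X}) \cdot \varphi(z)\big) \cdot t^F(\Nc_{Z/X}) = \mathrm{th}^F\big(\Td^{-1}_\varphi(\Nc_{Z/X}) \cdot \varphi(z)\big).
\]
Applying $\gamma'$ to both sides yields $\varphi(i_!^E(z)) = \gamma'(\mathrm{th}^F(\Td^{-1}_\varphi(\Nc_{Z/X})\cdot\varphi(z))) = i_!^F(\Td^{-1}_\varphi(\Nc_{Z/X})\cdot\varphi(z))$, which is the claim.

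The main obstacle, as far as bookkeeping goes, is making sure the compatibility $\varphi\circ\gamma = \gamma'\circ\varphi$ is genuinely unambiguous: it rests on the fact that $\gys_i$ is a morphism in $\MS^?_S$ that does not itself depend on any orientation, so that applying the natural transformation $\varphi$ is literally precomposition on both sides with the \emph{same} map of spectra. I would cite Remark~\ref{rem:gysin-thom} (pullback along $\gys_i$ is independent of orientation and is a ring map) and the construction of $\gys_i$ in \cite{tang-gysin} for this. A secondary point worth a sentence is that $\Td^{-1}_\varphi(\Nc_{Z/X})$ appearing inside $i_!^F$ really is the pullback to $Z$ of a class on $Z$ — this is exactly the naturality clause in the definition of the inverse Todd class — so that it can be pushed through $p^*$ and interpreted as an element of $F^{0,0}(Z)$ before applying $i_!^F$, as stated. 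Everything else is formal manipulation with the Thom isomorphism.
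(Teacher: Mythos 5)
Your proposal is correct and follows the paper's argument: unwind $i_!$ as $\gys_i^*$ applied to the Thom class times the pulled-back class, use that $\varphi$ commutes with $\gys_i^*$, and then invoke Lemma~\ref{lem:ThomClassFormula} for $\varphi(t(\Nc_{Z/X}))$. The only nit is that there is no actual map $\Th_Z(\Nc_{Z/X}) \to M(Z)$ as your ``$p$'' suggests; the module action comes from pulling back along $\pi \colon \Pb_Z(\Nc_{Z/X}\oplus\Oc) \to Z$ (as in the paper's proof), but this does not affect the substance.
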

\begin{proof}
Let $\gys_i \colon X \to \Th_Z(\Nc_{Z/X})$ be the Gysin map in $\MS^?_S$ \cite{tang-gysin}. By definition,
\[
i_!(z) = \gys^*_i(t(\Nc_{Z/X}) \cdot \pi^*(z)),
\]
where $\pi \colon \Pb(\Ec \oplus \Oc) \to X$ is the structure map. As $\varphi$ commutes with pullbacks, the claim follows from the formula for $\varphi(t(\Nc_{Z/X}))$, which is Lemma~\ref{lem:ThomClassFormula}. 
\end{proof}
%

We can now state and prove the main result of this section, which is a general Riemann--Roch formula in $\MS^?_S$.

\begin{thm}[Riemann--Roch for motivic spectra]\label{thm:RRInMS}
Let $\varphi \colon E \to F$ be a possibly non-orientation-preserving map in $\CAlg(\rm{h}\MS^?_S)$. Let $f \colon X \to Y$ be a projective\footnote{Here, a \textit{projective} morphism is a map that factors through a closed embedding $i \colon X \hook \Pb^n_Y$ for some $n\geq 0$.} quasi-smooth morphism in $\Sm^?_S$, Then, for all $x \in E^{\star,\star}(X)$, 
\[
\varphi(f_!(x)) = f_!(\Td_\varphi(\Lbf_{X/Y}) \cdot \varphi(x)).
\]
\end{thm}
\begin{proof}
If $f$ is a closed embedding, then $\Lbf_{X/Y} \simeq \Nc_{X/Y}[1]$, so in this case the result is a reformulation of Lemma~\ref{lem:RRForEmbedding} in terms of the cotangent complex. As the formula is stable under compositions, it suffices to prove the formula in the case where $f$ is the structure map $\pi \colon \Pb^n_Y \to Y$. This can be done using the argument of \cite[Theorem~4.3.2]{deglise-orientations}, which only uses projective bundle formula and the basic properites of Gysin maps along derived regular embeddings.
\end{proof}

%

We now apply Theorem~\ref{thm:RRInMS} to the total power operations. Let $S = \Fb_p$ and consider the following object in $\MS_{\Fb_p}$
\[
H\Fb_p^{\mathrm{tot}} := \prod_{n \in \Zb} (\Tb^{\otimes n(p-1)} \otimes H\Fb_p) \oplus (\Tb^{\otimes n(p-1)} \otimes H\Fb_p[1]).
\]
This product is in fact a direct sum by \cite[Lemma~5.2]{HKO}. Thus, we can use the direct sum decomposition on the source to define a symmetric bilinear map $\mu^\tot \colon H\Fb_p^{\mathrm{tot}} \otimes H\Fb_p^{\mathrm{tot}} \to H\Fb_p^{\mathrm{tot}}$ degree-wisely by positing that
\[
(\Tb^{\otimes n(p-1)} \otimes H\Fb_p) \otimes (\Tb^{\otimes m(p-1)} \otimes H\Fb_p) \to \Tb^{\otimes (m+n)(p-1)} \otimes H\Fb_p
\]
and
\[
(\Tb^{\otimes n(p-1)} \otimes H\Fb_p) \otimes (\Tb^{\otimes m(p-1)} \otimes H\Fb_p [1]) \to \Tb^{\otimes (m+n)(p-1)} \otimes H\Fb_p[1]
\]
coincide with the usual multiplication on $H\Fb_p$, and that $\mu^\tot$ is null-homotopic on summands of form
\[
(\Tb^{\otimes n(p-1)} \otimes H\Fb_p[1]) \otimes (\Tb^{\otimes m(p-1)} \otimes H\Fb_p [1]). 
\]
The bilinear pairing $\mu^\tot$ equips $H\Fb_p^{\mathrm{tot}}$ with structure of a commutative algebra in ${\h}\SH_S$.

\begin{cons} The \textit{total power operation} is the map in $\SH_{\Fb_p}$
\begin{equation}\label{eq:total-p}
\P: H\Fb_p \rightarrow H\Fb_p^{\mathrm{tot}}
\end{equation}
given degree-wise by
\[
\P^n: H\Fb_p \rightarrow \Tb^{\otimes n(p-1)} \otimes H\Fb_p
\]
and
\[
\B^n: H\Fb_p \rightarrow \Tb^{\otimes n(p-1)} \otimes H\Fb_p[1].
\]
By the Cartan formula (Theorem~\ref{cor:ModPSteenrod}(2)), $P$ is a map in $\CAlg(\h\SH_{\Fb_p})$.

Applying \'etale sheafification on Eq.~\eqref{eq:total-p} in $\MS_{\Fb_p}$ yields a map
\begin{equation}\label{eq:total-p-syn}
\P: H\Fb_p^{\syn} \rightarrow (H\Fb^{\syn}_p)^{\mathrm{tot}}
\end{equation}
of commutative algebras in $\h\MS_{\Fb_p}$. We call this the \emph{total syntomic power operation}.
\end{cons}

Now let $S$ be an $\Fb_p$-scheme. Either by pullback in $\SH$ or by pullback in $\MS$ and \'etale sheafification we obtain the total power operations in
\[
\P:(H\Fb_p^{\Ab^1})_S \rightarrow (H\Fb_p^{\Ab^1})_S^\mathrm{tot},
\]
or
\[
\P: (H\Fb_p^{\syn})_S \rightarrow (H\Fb^{\syn}_p)_S^{\mathrm{tot}}
\]

\begin{cor} \label{cor:total-power-ops} Let $X \rightarrow Y$ be projective, quasi-smooth morphism over $S$. the formula
\[
\P(f_!(x)) = f_!(\Td_P(\Lbf_{X/Y}) \cdot \P(x)).
\]
holds in both mod-$p$ syntomic and $\Ab^1$-motivic cohomology. 
\end{cor}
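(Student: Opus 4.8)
The plan is to obtain Corollary~\ref{cor:total-power-ops} as a direct instance of the Riemann--Roch formula Theorem~\ref{thm:RRInMS}, taking for $\varphi$ the total power operation $\P$; then the Todd class produced by that theorem is tautologically the class $\Td_P$ appearing in the statement.

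First I would check the hypotheses of Theorem~\ref{thm:RRInMS} in each of the two settings. In the $\Ab^1$-invariant case, set $E = (H\Fb_p^{\Ab^1})_S$, with its standard orientation, and $F = (H\Fb_p^{\Ab^1})_S^{\tot}$. The total power operation $\P \colon E \to F$ constructed above is a morphism of commutative algebras in $\h\SH_S$ --- over $\Fb_p$ this is exactly the Cartan formula, and it is preserved by pullback --- and under the enhancement of absolute $\Ab^1$-invariant motivic spectra into $\MS^\dbe_S$ it becomes a morphism in $\CAlg(\h\MS^?_S)$. In the syntomic case, set $E = (H\Fb_p^{\syn})_S$ and $F = (H\Fb_p^{\syn})_S^{\tot}$, with $\P$ the total syntomic power operation; since \'etale sheafification and pullback are symmetric monoidal, $\P$ is again a morphism in $\CAlg(\h\MS_S)$. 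In both cases $\P$ is not orientation-preserving, which is precisely the generality permitted by Theorem~\ref{thm:RRInMS}. It remains to fix an orientation on $F = (-)^{\tot}$: over $\Fb_p$ the bidegree $(2,1)$ classes on $\Pb^\infty$ in $(-)^{\tot}$ are supported on the degree-zero, unshifted summand, and I would take the orientation $f$ to be the standard first Chern class placed there; since multiplication by $f$ for the product $\mu^{\tot}$ acts as multiplication by $c_1$ on every summand, $(-)^{\tot}$ satisfies the projective bundle formula with respect to $f$, so the Gysin pushforwards for $F$, and hence the Todd class $\Td_\varphi = \Td_P$ of Theorem~\ref{thm:RRInMS}, are well-defined for $\varphi \colon E \to F$. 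For $E$ the orientation and projective bundle formula are the usual ones.

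With this in hand, Theorem~\ref{thm:RRInMS} applied to $\varphi = \P$ gives, for every projective quasi-smooth $f \colon X \to Y$ in $\Sm^?_S$ and every $x \in E^{\star,\star}(X)$,
\[
\P(f_!(x)) = f_!(\Td_P(\Lbf_{X/Y}) \cdot \P(x)),
\]
which is the claimed formula, simultaneously in $\Ab^1$-motivic and syntomic cohomology; its reformulation in terms of the total Wu class $w$, as in Theorem~\ref{thm:wu-intro}, is the content of Remark~\ref{rem:todd-vs-wu}.

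The only step I expect to require genuine care is verifying that $(-)^{\tot}$, with the leading-term orientation $f$, satisfies the projective bundle formula, which is what lets Theorem~\ref{thm:RRInMS} apply verbatim to the non-orientation-preserving map $\varphi = \P$; granting that, everything else is formal, using only the symmetric monoidality of the constructions and the already-established Riemann--Roch theorem.
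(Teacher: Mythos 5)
Your argument is correct and is essentially the argument the paper intends (the paper states the corollary without a proof environment precisely because it regards it as an immediate application of Theorem~\ref{thm:RRInMS} to the total power operation constructed just before it). The one point you flag as requiring care — that $(-)^{\tot}$ carries an orientation making the projective bundle formula hold, with $f=c_1$ placed in the $\P^0$-summand and $\mu^{\tot}$-multiplication by $f$ acting diagonally — is exactly the right thing to verify and is handled correctly; the rest (Cartan formula giving $\P \in \CAlg(\h\SH_S)$, étale sheafification for the syntomic variant, the embedding $\SH_S \hook \MS^{\dbe}_S$ for the $\Ab^1$-invariant variant) matches the paper's construction.
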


\begin{rem}\label{rem:todd-vs-wu} Let $\Ec$ be a vector bundle on $\Sm_S^?$. Following definitions from topology, the \emph{$j$-th Stiefel-Whtiney class} of $\Ec$ is the class in motivic or syntomic cohomology given by
\[
w_j(\Ec) := t(\Ec)^{-1}(\P^j(t(\Ec)).
\]
The \emph{total Stiefel-Whitney class} of $\Ec$ is then defined by
\[
w(\Ec) := \sum_{j \geq 0} w_j(\Ec).
\]
By taking alternating sums, these classes are extended to any perfect complex on $X$. Applying Lemma~\ref{lem:ThomClassFormula} then the formula from Corollary~\ref{cor:total-power-ops} reads as
\begin{equation}\label{eq:sw}
\P(f_!(x)) = f_!(w(\Lbf_{X/Y}) \cdot \P(x)).
\end{equation}
\end{rem}

\appendix 

\section{Approximation of syntomic cohomology of classifying stacks}

In this appendix, we prove some approximation results for syntomic cohomology of classifying stacks. One of the main differences between syntomic and motivic cohomology is that the former is not $\mathbb{A}^1$-invariant, even on affine schemes. Hence, the methods of Totaro \cite{totaro-classifying} and Morel-Voevodsky \cite{morel:1999} of approximating the Chow groups and motivic cohomology groups of a stack by a scheme does not work on the nose. Instead, for each $i, j \geq 0$, we expect to build a map $X \rightarrow BG$ such that the induced pullback map on syntomic cohomology $H^i_{\mathrm{syn}}(-;\mathbb{F}_p(j))$ is injective. The following definition is found in \cite[Definition 5.1]{antieau-bhatt-mathew}:

\begin{defn}\label{def:Hodge-d} Let $R$ be a base ring. A morphism of $R$-algebraic stacks $\mathcal{X} \rightarrow \mathcal{Y}$ is a Hodge $d$-equivalence if for all $j \geq 0$, the have
\[
\mathrm{cofib}(R\Gamma(\mathcal{Y}; \mathbf{L}^j_{-/R}) \rightarrow R\Gamma(\mathcal{X}; \mathbf{L}^j_{-/R}) ) \in \mathrm{D}(R)^{\geq d-j}.
\]
\end{defn}

In this paper, the mod-$p$ syntomic cohomology of a stack is defined via right Kan extension, i.e. 
\[
R\Gamma_{\mathrm{syn}}(\mathcal{X}; \mathbb{F}_p(j)) \simeq \lim_{\Spec(R) \rightarrow \mathcal{X}} R\Gamma_{\mathrm{syn}}(\Spec R; \mathbb{F}_p(j)). 
\]
This is a relatively easy procedure and the expected properties of syntomic cohomology continues to hold, primarily because syntomic cohomology is an an fpqc sheaf, whence its right Kan extension is of universal descent with respect to fpqc surjections (see, for example, \cite[Proposition 4.2.2]{eks-1}). For example, if $\mathcal{X}$ is an algebraic stack (quasi-compact for simplicity), then it admits an atlas (i.e. an \'{e}tale surjection) $\Spec(R) \rightarrow \mathcal{X}$ and syntomic cohomology of $\mathcal{X}$ is calculated by the totalization of the syntomic cohomology of the \v{C}ech nerve:
\begin{equation}\label{eq:cech}
R\Gamma_{\mathrm{syn}}(\mathcal{X}; \mathbb{F}_p(j)) \simeq \lim_{\Delta} R\Gamma_{\mathrm{syn}}((\Spec R)^{\times_{\mathcal{X}} \bullet}; \mathbb{F}_p(j)). 
\end{equation}

\begin{lem}\label{lem:cofib-conn} Let $k$ be a perfect field of characteristic $p > 0$. Let $\mathcal{X} \rightarrow \mathcal{Y}$ be a morphism of $k$-algebraic stacks which is a Hodge-$d$-equivalence. Then we have that
\[
\mathrm{cofib}(R\Gamma_{\mathrm{syn}}(\mathcal{Y}; \Fb_p(j)) \rightarrow R\Gamma_{\mathrm{syn}}(\mathcal{X}; \mathbb{F}_p(j)) \in \mathrm{D}(\Fb_p)^{\geq d}.
\]
\end{lem}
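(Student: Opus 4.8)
The plan is to reduce the statement about syntomic cohomology to the given statement about Hodge cohomology (the cotangent complex powers) by means of the fundamental fiber sequence defining syntomic cohomology. Recall that for a $p$-complete ring, mod-$p$ syntomic cohomology $\mathbb{F}_p(j)$ sits in a fiber sequence involving the conjugate/Hodge filtration on (derived) de Rham or prismatic cohomology; concretely, for $\mathbb{F}_p$-algebras one has the identification $\mathbb{F}_p(j)[2j] \simeq R\Gamma_{\et}(-; W\Omega^j_{\log})[-j+2j]$ but more usefully there is the presentation of $\mathbb{F}_p(j)$ as the fiber of $\mathrm{can} - \phi_j$ on a filtered piece of prismatic cohomology, whose graded pieces for the Nygaard/conjugate filtration are precisely shifts of $R\Gamma(-; \mathbf{L}^i_{-/k})$. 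The point is that the \emph{difference} between the syntomic cohomologies of $\mathcal{X}$ and $\mathcal{Y}$ is controlled, via this filtration and its graded pieces, by the differences between the Hodge cohomologies $R\Gamma(-;\mathbf{L}^i)$ for $i \leq j$ (and possibly some Frobenius-twisted variants, but those have the same underlying connectivity).

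First I would fix the precise presentation of mod-$p$ syntomic cohomology that is being used in the paper (the one from \cite{bhatt-lurie:apc} / \cite{BMS2}), namely that $\mathbb{F}_p(j)(R)$ is the fiber of a map between two objects each equipped with an exhaustive decreasing filtration whose $i$-th graded piece is (a Frobenius twist of) $\wedge^i \mathbf{L}_{R/k}[-i]$ for $0 \leq i \leq j$, together with a bounded tail. Both the source and target in $\mathrm{cofib}(R\Gamma_{\mathrm{syn}}(\mathcal{Y}) \to R\Gamma_{\mathrm{syn}}(\mathcal{X}))$ are computed as right Kan extensions (equivalently, via the \v{C}ech/atlas formula \eqref{eq:cech}), and right Kan extension along $\Spec(R) \to \mathcal{X}$ is a limit, hence preserves the fiber sequences and preserves connectivity bounds of the form $\mathrm{D}(\mathbb{F}_p)^{\geq n}$ (a limit of $n$-connective objects is $n$-connective, after accounting for $\lim^1$-type corrections which only improve or preserve the bound — more precisely, $\tau^{\leq n-1}$ commutes with limits so $\mathrm{Tot}$ of coconnective-in-the-relevant-range objects stays coconnective). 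Thus the whole computation reduces to the affine case: I would show that for each $R$ appearing in the atlas of $\mathcal{X}$ over the corresponding piece of $\mathcal{Y}$, the cofiber of $\mathbb{F}_p(j)$-syntomic cohomologies is in $\mathrm{D}(\mathbb{F}_p)^{\geq d}$, and then assemble.

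For the affine (or stacky, directly via the filtration) input: write down the filtration on $R\Gamma_{\mathrm{syn}}(-;\mathbb{F}_p(j))$ induced by the conjugate filtration, take the cofiber of $\mathcal{Y} \to \mathcal{X}$ at each stage, and observe that the $i$-th graded piece of this cofiber is $\mathrm{cofib}(R\Gamma(\mathcal{Y}; \mathbf{L}^i) \to R\Gamma(\mathcal{X};\mathbf{L}^i))[-i]$ (up to a Frobenius twist of coefficients, irrelevant for $\mathbb{F}_p$-connectivity), which by the Hodge-$d$-equivalence hypothesis lies in $\mathrm{D}(\mathbb{F}_p)^{\geq (d-i)+i} = \mathrm{D}(\mathbb{F}_p)^{\geq d}$. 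Since the class $\mathrm{D}(\mathbb{F}_p)^{\geq d}$ is closed under (finite and filtered) colimits, extensions, and the relevant shifts, the total cofiber is in $\mathrm{D}(\mathbb{F}_p)^{\geq d}$. The factor of $[-i]$ exactly cancels the $\geq d-i$ coming from Definition~\ref{def:Hodge-d} — this cancellation is the whole mechanism and is why the definition is phrased with the $j$-dependent shift.

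The main obstacle I anticipate is bookkeeping around the two-step nature of $\mathbb{F}_p(j)$ (fiber of $\mathrm{can}-\phi_j$, so a \emph{fiber} of a map of filtered objects, not a single filtered object) and making sure the Frobenius-divided map $\phi_j$ does not degrade connectivity: one must check that the relevant graded pieces, including the ones where the Frobenius twist acts, still have cotangent-complex graded pieces with the stated connectivity, which is where one invokes that $k$ is perfect (so that Frobenius twist is an equivalence on the base and the twisted cotangent complexes are still $\wedge^i \mathbf{L}$ up to a shift). A secondary, purely technical point is the commutation of the totalization/right Kan extension in \eqref{eq:cech} with these connectivity estimates, which is standard but should be stated: totalizations of cosimplicial objects that are uniformly in $\mathrm{D}(\mathbb{F}_p)^{\geq d}$ remain in $\mathrm{D}(\mathbb{F}_p)^{\geq d}$ because truncation functors commute with limits. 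Once these are in place, the proof is essentially a diagram chase through the defining filtration.
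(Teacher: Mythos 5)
Your argument is essentially the paper's proof: both invoke the filtration on mod-$p$ syntomic cohomology (the paper cites \cite[Lemma 4.16]{elmanto-morrow}; you phrase it via the conjugate/Nygaard filtration from \cite{BMS2,bhatt-lurie:apc}) whose graded pieces are shifts $[-i]$ (and $[-i-1]$ from the fiber-of-two-filtered-objects structure) of the $i$-th cotangent powers, so that the shift exactly cancels the $d-i$ in Definition~\ref{def:Hodge-d}. Your anticipated bookkeeping about the two-step fiber and the Frobenius twist is precisely what produces the two rows of graded pieces $C(i)[-i-1]$ and $C(i)[-i]$ in the paper's proof, so the mechanism and the route are the same.
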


\begin{proof} The result follows from the filtration on mod-$p$ syntomic cohomology in characteristic $p > 0$ in \cite[Lemma 4.16]{elmanto-morrow}. Indeed, write $C(j)$ for the cofibre of the map $\mathrm{cofib}(R\Gamma(\mathcal{Y}; \mathbb{L}^j_{-/R}) \rightarrow R\Gamma(\mathcal{X}; \mathbb{L}^j_{-/R}) )$. Then the filtration in \emph{loc. cit.} induces a finite filtration on the cofibre of interest  whose graded pieces are

\begin{align*}
&C(j)[-j-1], C(j-1)[-j],C(j-2)[-j+1],\dots, C(0)[-1],\\
& C(0)[0], C(1)[-1], C(2)[-2],\dots, C(j)[-j].
\end{align*}

By the assumption Hodge $d$-equivalence, we see that the graded pieces on the top row are concentrated in cohomological degrees $\geq d+1$, while the graded pieces on the bottom row are concentrate in cohomological degrees $\geq d$, from which we conclude the statement of the lemma.
\end{proof}

\begin{thm}\label{thm:approximate} Let $k$ be a perfect field $k$ of positive characteristics. For any finite group scheme $G$ and any $d \geq 0$, there exists smooth projective scheme $d$-fold $X$ and a map $X \rightarrow \mathrm{B}G$ such that for all $j \geq 0$, and all $i \leq d$ the map
\[
H^{i}_{\mathrm{syn}}(\mathrm{B}G; \mathbb{F}_p(j)) \rightarrow H^{i}_{\mathrm{syn}}(X;\mathbb{F}_p(j)),
\]
is injective.
\end{thm}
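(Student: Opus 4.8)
The plan is to build $X$ as a generic complete intersection inside a suitable scheme approximation $U$ of $\mathrm{B}G$, and to control the syntomic cohomology via Lemma~\ref{lem:cofib-conn}. First I would recall Totaro's construction: choose a faithful representation $V$ of $G$ over $k$, let $G$ act freely on an open subscheme $W \subset V^{\oplus N}$ whose complement $Z = V^{\oplus N} \setminus W$ has codimension $> d$ (this is possible for $N \gg 0$), and set $U := W/G$, which is a smooth quasi-projective $k$-scheme carrying a $G$-torsor $W \to U$ and hence a classifying map $U \to \mathrm{B}G$. The inclusion $[W/G] \hookrightarrow [V^{\oplus N}/G]$ is a Hodge $d$-equivalence because its complement has codimension $> d$ and the cotangent complexes $\mathbf{L}^j$ are bounded in a range controlled by codimension; likewise $[V^{\oplus N}/G] \to \mathrm{B}G$ is an $\mathbb{A}^1$-bundle, hence a Hodge $\infty$-equivalence on the stack level (the relative cotangent complex of an affine-space bundle contributes nothing to $\mathbf{L}^j$ beyond a shift that does not affect the connectivity estimate). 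Composing, $U \to \mathrm{B}G$ is a Hodge $d$-equivalence, so by Lemma~\ref{lem:cofib-conn} the map $H^i_{\mathrm{syn}}(\mathrm{B}G;\mathbb{F}_p(j)) \to H^i_{\mathrm{syn}}(U;\mathbb{F}_p(j))$ is an isomorphism for $i < d$ and injective for $i = d$.

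\textbf{Passing from $U$ to a smooth projective $X$.} The scheme $U$ is smooth quasi-projective but neither projective nor $d$-dimensional. To fix this, I would first replace $U$ by a smooth projective compactification, or more robustly, use a Bertini-type argument: embed $U$ in projective space, take $\overline{U}$ its closure, resolve to get a smooth projective $\overline{U}'$ containing a dense open isomorphic to (an open of) $U$ — here one should be slightly careful and instead work directly inside $[V^{\oplus N}/G]$, replacing affine space by its projectivization so that the ambient object is already proper. Concretely: projectivize to get a smooth projective variety $P$ with a map $P \to \mathrm{B}G$ that is still a Hodge $d$-equivalence (the complement of the free locus in $\mathbb{P}(V^{\oplus N})$ still has codimension $> d$, and $\mathbb{P}(V^{\oplus N})/G \to \mathrm{B}G$ is a projective bundle, hence again a Hodge $\infty$-equivalence on cotangent complexes up to the usual shifts). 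Then cut down dimension: intersect $P$ with $\dim P - d$ generic hypersurface sections of high degree. By a syntomic Lefschetz-type statement — which I would deduce from the filtration of \cite[Lemma~4.16]{elmanto-morrow} together with the classical Lefschetz hyperplane theorem for coherent/Hodge cohomology of smooth projective varieties — each such generic hyperplane section $X_1 \subset P$ satisfies that $H^i_{\mathrm{syn}}(P;\mathbb{F}_p(j)) \to H^i_{\mathrm{syn}}(X_1;\mathbb{F}_p(j))$ is injective for $i \le d$ (in fact iso for $i < d$). Iterating $\dim P - d$ times produces a smooth projective $d$-fold $X$ with $H^i_{\mathrm{syn}}(\mathrm{B}G;\mathbb{F}_p(j)) \hookrightarrow H^i_{\mathrm{syn}}(X;\mathbb{F}_p(j))$ for all $i \le d$ and all $j \ge 0$, and the composite $X \hookrightarrow P \to \mathrm{B}G$ is the required map.

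\textbf{Main obstacle.} The delicate point is the syntomic hard Lefschetz / hyperplane injectivity step, since syntomic cohomology is not $\mathbb{A}^1$-invariant and there is no immediately citable "weak Lefschetz for syntomic cohomology." My plan to handle this is to reduce it, via the Beilinson fiber-square / motivic filtration description of mod-$p$ syntomic cohomology in characteristic $p$ used in Lemma~\ref{lem:cofib-conn}, to weak Lefschetz statements for the logarithmic de Rham--Witt sheaves $W_r\Omega^\bullet_{\log}$ and their coherent ambient complexes $W_r\Omega^\bullet$; the latter follow from classical Lefschetz theorems for coherent cohomology of a smooth projective variety over $k$ together with the Illusie exact sequence $0 \to \mathbb{Z}/p^r(j)[-j] \to W_r\Omega^j \xrightarrow{1-F} W_r\Omega^j \to 0$ on the étale site, which sandwiches $H^i_{\mathrm{syn}}(-;\mathbb{Z}/p^r(j))$ between coherent cohomology groups whose Lefschetz behavior is classical. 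One must be careful with the range of degrees and the weight bookkeeping, but the connectivity estimates are uniform in $j$, which is exactly what the statement requires. Alternatively, and perhaps more cleanly, one can avoid hyperplane sections entirely by arranging the dimension count at the level of the representation $V$ from the start — choosing $N$ and the free locus so that $\dim(W/G)$ is already close to $d$ is not possible, but one can instead blow down or use that $\mathrm{B}G$-cohomology in degrees $\le d$ only sees a $d$-truncated piece, so a carefully chosen smooth projective $d$-dimensional subvariety of the Totaro approximation suffices; I would present the Lefschetz route as the main argument since it is the most transparent.
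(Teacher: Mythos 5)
\textbf{There is a genuine gap (in fact two), centered on the way you try to control the map \boldmath$P \to \mathrm{B}G$.}

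Your claim that $[\mathbb{P}(V^{\oplus N})/G] \to \mathrm{B}G$ is a ``Hodge $\infty$-equivalence'' is false, and the analogous claim for the affine bundle $[V^{\oplus N}/G]\to\mathrm{B}G$ is false for the same reason. A projective (or vector) bundle has a nontrivial relative cotangent complex, and the cohomology of the wedge powers $\mathbf L^j$ picks up the contributions of $\Omega^1_{\mathbb{P}(V)/k}$ (resp.\ $\Sym^{>0}V^\vee$); these live in low cohomological degrees, so the cofiber in Definition~\ref{def:Hodge-d} is not concentrated in degrees $\geq d - j$ for $d\gg 0$. The correct statement, and the one the paper uses, is not a connectivity estimate but the \emph{projective bundle formula} for syntomic cohomology: $H^{\star}_{\syn}([\mathbb{P}(V)/G];\Fb_p(\star))$ is a free $H^{\star}_{\syn}(\mathrm{B}G;\Fb_p(\star))$-module, so $H^{\star}_{\syn}(\mathrm{B}G)$ is a \emph{direct summand}. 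That gives the needed injectivity without any Hodge-connectivity statement about the projective bundle itself.

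The second gap is the syntomic Lefschetz step. You acknowledge it is not directly citable and sketch a reduction to coherent Lefschetz through the motivic filtration; even if this can be made to work, it is precisely the technical core of what needs proving, and as written it also requires that the generic hypersurface sections of your $P$ stay projective (note that $P$, as the free-locus quotient, is only quasi-projective, so ``projectivize'' does not hand you a smooth projective $P$). The paper bypasses both issues at once: it invokes Antieau--Bhatt--Mathew to produce directly a $G$-stable smooth \emph{$d$-dimensional complete intersection} $Z\subset\mathbb{P}(V)$ with free $G$-action, so that $Z/G$ is already smooth and projective, and then uses their Propositions~5.3 and~5.10 to conclude that $Z/G\to[\mathbb{P}(V)/G]$ is a Hodge $d$-equivalence (this is the Lefschetz content, encoded at the level of Hodge cohomology). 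Combined with Lemma~\ref{lem:cofib-conn} for the complete intersection and the projective bundle formula for $[\mathbb{P}(V)/G]\to\mathrm{B}G$, the injectivity falls out. Your alternative strategy (``a carefully chosen smooth projective $d$-dimensional subvariety of the Totaro approximation suffices'') is in fact the route the paper takes; the Lefschetz route you present as the main argument both has an erroneous step and requires new input.
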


\begin{proof} As in the proof of \cite[Theorem 1.2]{antieau-bhatt-mathew}, we can find a $k$-linear representation $V$, a $d$-dimensional complete intersection $Z \subset \mathbb{P}(V)$ such that $Z$ is $G$-stable, $G$ acts freely on $Z$ and $Z/G \simeq [Z/G]$ is smooth and projective. For such a $Z$, \cite[Proposition 5.3 and 5.10]{antieau-bhatt-mathew} implies that the induced map $[Z/G]\simeq Z/G \rightarrow [\mathbb{P}(V)/G]$ is a Hodge $d$-equivalence. Therefore, by Lemma~\ref{lem:cofib-conn}, the induced map on syntomic cohomology has cofibres in degrees $\geq d$. The claim then follows from the projective bundle formula in syntomic cohomology; we remark that this result for schemes is \cite[Theorem 9.1.1]{bhatt-lurie:apc} and the result follows by Kan extension for stacks which, in particular, shows that $H^{i}_{\mathrm{syn}}(\mathrm{B}G;\mathbb{F}_p(j))$ is a summand of $H^{i}_{\mathrm{syn}}([\mathbb{P}(V)/G];\mathbb{F}_p(j))$. Now, noting that $\Spec(k) \rightarrow \mathrm{B}G$ is an fpqc surjection, which pulls-back along $[\mathbb{P}(V)/G] \rightarrow \mathrm{B}G$ to the fpqc surjection $\mathbb{P}(V) \rightarrow [\mathbb{P}(V)/G]$ the projective bundle formula follows from the one for schemes by the formula~\eqref{eq:cech}.
\end{proof}

\section{Calculations in syntomic and motivic cohomology of certain classifying stacks} In this appendix, we study the action of the Milnor operations~\eqref{eq:milnor} on the motivic and syntomic cohomology of $B\mu_p^{\times s}$. Our discussion is based on certain well-known formulas in topology and we refer to \cite{quick-bp} for details where these formulas are derived in the topological setting.

Throughout this section, we fix an perfect field $k$ of characteristic $p > 0$. For an algebraic stack $\Xfr$ we define $\Fb_p(j)^{?}(\Xfr)$ where $? = \mot$ or $\syn$ via right Kan extension from schemes. We write 
\[
H^{\star,\star}:= H^{\star}_{?}(k; \Fb_p(\star)),
\]
the bigraded cohomology ring of $k$ where $? = \syn$ or $\mot$. As already recorded in Example~\ref{ex:CohOfBmup} we know how the cohomology of $B\mu_p^{\times s}$ takes the form:
\[
H^{\star}_{?}(B\mu_p^{\times s}; \Fb_p(\star)) \cong H^{\star,\star}[[u_1,v_1, \cdots, u_s, v_s]]/(u_i^2 = 0) \qquad |u_i| = (1,1), |v_i| = (2,1).
\]
Furthermore, we have that $\beta(u_i) = v_i$. 

\begin{rem}[Oddities at non-odd prime] We remark that the motivic and syntomic cohomology of $B\mu_p^{\times s}$ looks the same across all primes $p$. This contrasts against the situation in topology where the case of $p =2$ is the odd one out where we have the relation that $u_i^2 = v_i$. 
\end{rem}

We had defined 
\[
\Q_i = q_i \beta - \beta q_i,
\]
where $\beta$ is the Bockstein and 
\[
q_i = \P^{p^{i-1}} \cdots \P^p \P^1.
\]
The equations equally make sense as operations on syntomic cohomology. We note:

\begin{lem}\label{lem:milnor} We have the following equalities that hold for both motivic and syntomic cohomology:
\begin{enumerate}
\item $\Q_{n+1} = \P^{p^n}\Q_n - \Q_n\P^{p^n}$ for $n \geq 0$;
\item $\Q_n(xy) = \Q_n(x)y + (-1)^{|x||\Q_n|}x\Q_n(y)$.
\end{enumerate}
Here, $|-|$ refers to the topological/simplicial degree of a class/operation.
\end{lem}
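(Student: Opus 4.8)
The plan is to transfer both identities from the classical topological mod-$p$ Steenrod algebra. This is legitimate because the algebra of operations generated by $\beta$ and the $\P^i$ that acts on mod-$p$ motivic cohomology over $\Fb_p$ — and, after étale sheafification, on mod-$p$ syntomic cohomology, and, via right Kan extension, on the cohomology of stacks — satisfies exactly the topological Adem relations and Cartan formula (Corollary~\ref{cor:ModPSteenrod}(2)--(3), Corollary~\ref{cor:ms-extension}), with no $\rho$ or $\tau$ correction terms since we work at $\ell = p$, and because $H^{\star}_{?}(\Fb_p;\Fb_p(\star))$ is concentrated in bidegree $(0,0)$ by Geisser--Levine, so no extra scalars intervene. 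More robustly, at the spectrum level: every operation occurring in (1) and (2) is pulled back from $\SH_{\Oc}$, and via the equivalence $j^*$ of Corollary~\ref{cor:main} (together with Theorem~\ref{thm:MotivicSteenrodAlgebraOverZpcyc}) from $\SH_{\Kb}$ with $\Kb$ of characteristic zero; so it suffices to prove both statements over $\Kb$ and transfer them along $i^*$ to $\SH_{\Fb_p}$, respectively along the symmetric monoidal functor $L_{\et}$ for the syntomic case.

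For part (1), unwinding the definition~\eqref{eq:milnor} via the Adem relations (and $\beta^2=0$) yields the recursion $\Q_{n+1}=\P^{p^n}\Q_n-\Q_n\P^{p^n}$ by an entirely formal computation — exactly the one performed in the topological setting in \cite{quick-bp} — whose base case $n=0$ is $\Q_1=\P^1\beta-\beta\P^1$. Since our $\beta,\P^i$ obey the same relations, the same manipulation applies verbatim (over $\Kb$, or directly over $\Fb_p$).

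For part (2), note first that $|\Q_n|=2p^n-1$ is odd, so the asserted sign $(-1)^{|x||\Q_n|}$ is $(-1)^{|x|}$ and the claim is that $\Q_n$ is an odd graded derivation. This will follow from the spectrum-level identity $\mu^*(\Q_n)=\Q_n\times 1+1\times\Q_n$ among maps $H\Fb_p^{\Ab^1}\otimes H\Fb_p^{\Ab^1}\to H\Fb_p^{\Ab^1}$ — equivalently, $\Q_n$ is primitive for the coproduct of Lemma~\ref{lem:coprod-psi}; compare Remark~\ref{rem:spectrumCartan} — which upon pulling back along the diagonal unwinds into the stated Leibniz rule on cohomology classes. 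Over $\Kb$ this primitivity is the standard fact about Milnor primitives \cite{quick-bp}, which over a characteristic-zero field follows from Voevodsky's computation of the dual motivic Steenrod algebra; being an identity of maps of motivic spectra it is stable under pullback, hence descends to $\SH_{\Fb_p}$ and, after applying $L_{\et}$, to the syntomic setting. (Alternatively, over $\Fb_p$ one may induct on $n$ using part~(1): $\Q_0=\beta$ is an odd derivation by Corollary~\ref{cor:ModPSteenrod}(6), and $\Q_{n+1}=[\P^{p^n},\Q_n]$ is again one after combining the Cartan formula for $\P^{p^n}$ with the requisite commutation identities among the $\P^r$ and $\Q_n$ — but this last input is precisely the combinatorial content of \cite{quick-bp}, so nothing is gained.)

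The step I expect to be the main obstacle is this combinatorial heart of part~(2): the cancellation of the ``cross terms'' $\P^I(x)\P^J(y)$ that are not of Leibniz type, which does not reduce to the derivation property of $\beta$ and the Cartan formula for a single $\P^i$ and which is genuinely carried out in \cite{quick-bp}. On our side the only care needed is to confirm that this computation transfers without modification — i.e.\ that our Adem and Cartan relations are literally the topological ones — and that the relevant identities of \emph{spectra} (not merely of bigraded groups) are detected after pullback to $\SH_{\Kb}$, for which one invokes Corollary~\ref{cor:main} and the fact (\cite{HKO}, cf.\ Remark~\ref{rem:sum}) that $\underline{\maps}_{\SH_{\Kb}}(H\Fb_p^{\Ab^1}\otimes H\Fb_p^{\Ab^1},H\Fb_p^{\Ab^1})$ is a sum of Tate twists and shifts of $H\Fb_p^{\Ab^1}$.
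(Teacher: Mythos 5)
Your proposal is correct and follows essentially the same strategy as the paper: establish the identities in characteristic zero (where they are known, for $p=2$ only modulo $\rho$), then transfer to $\Fb_p$ via the $\SH_{\Oc}$-equivalence of Corollary~\ref{cor:main} and pullback along $i^*$, observing that $\rho$ and $\tau$ die over $\Fb_p$. The one substantive difference is the reference: the paper cites Haesemeyer--Weibel \cite[Lemma~13.11, Corollary~13.14]{haesemyer-weibel} for the formal Adem-relation manipulation (both for the commutator recursion and for the derivation property), where you cite \cite{quick-bp}; the paper reserves \cite{quick-bp} for the subsequent computations in the cohomology of $B\mu_p$. One small inaccuracy in your justification: $H^{\star}_{\syn}(\Fb_p;\Fb_p(\star))$ is \emph{not} concentrated in bidegree $(0,0)$ (the Artin--Schreier class $\epsilon$ lives in $H^1_{\syn}(\Fb_p;\Fb_p(0))$ and is used elsewhere in the paper); Geisser--Levine gives the concentration only for motivic cohomology. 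This does not damage your argument, because, as you also note, the syntomic operations and their relations are obtained from the motivic ones by applying the symmetric monoidal functor $L_{\et}$, so the identities are inherited rather than re-proved over the syntomic coefficient ring.
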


\begin{proof} We first give references for these equalities in the characteristic zero field $\Kb$. If $p$ is odd, then both equalities is \cite[Lemma 13.11]{haesemyer-weibel} (note that there should actually be a sign in the formulas because it is the case in topology and \emph{op. cit.} obtained these formulas from the topological Steenrod algebra); note that in \cite[Definition 13.10]{haesemyer-weibel} the $\Q_n$'s are defined inductively by the right hand side of (1) and the cited reference proves the equality with our definition of $\Q_n$. If $p = 2$, then we note that \cite[Example 13.7]{voevodsky:2003a} cautions that the commutator formula might not hold; it might also not be a derivation by \cite[Remark 13.4.1]{haesemyer-weibel}. However, the failure of both equalities are due to the presence of the element $\rho$ in motivic cohomology; such an element is zero if $-1$ is a sum of squares in the base field. We note that by examining \cite[Corollary 13.14, Remark 13.14.1]{haesemyer-weibel} both equalities hold in the case of $p =2$ modulo $\rho$. By the construction of our operations in characteristic $p > 0$, both equalities now hold for all primes $p$ since $\rho$ is zero in characteristic $p > 0$. The result for syntomic cohomology follows immediately. 
\end{proof}

We now study the effect of $\Q_n$ on the generators $v_i$ and $u_i$.

\begin{lem}\label{lem:Q_n-bmup} The following equalities hold:
\begin{enumerate}
\item $\Q_n(u_i) = v_i^{p^n}$
\item $\Q_n(v_i) = 0$.
\end{enumerate}
\end{lem}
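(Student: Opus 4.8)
The plan is to reduce the computation to the case of the characteristic-zero field $\Kb$, where the formulas are classical (see e.g. \cite{quick-bp}), and then transport them to characteristic $p$ using the fact that the power operations over $\Fb_p$ are pulled back from $\SH_{\Oc}$, together with Corollary~\ref{cor:main}. Since both $u_i$ and $v_i$ live in the motivic cohomology of a single factor $B\mu_p$, and $\Q_n$ is natural under pullback (Corollary~\ref{cor:ModPSteenrod}(1)), it suffices to treat the universal classes $u, v$ on $B\mu_p$ itself; the syntomic statement then follows from the motivic one because the operations on mod-$p$ syntomic cohomology are obtained by \'etale sheafification from the motivic operations (see \S\ref{sec:againstmgl}) and the calculation~\eqref{eq:syn} of $H^\star_\syn(B\mu_p;\Fb_p(\star))$ has the same shape as~\eqref{eq:CohOfBmup}. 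Therefore I would first state that it is enough to prove (1) and (2) for the generators $u,v\in H^\star_{\mot}(B\mu_p;\Fb_p(\star))$, and to do so we may work over $\Kb$.

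For part (2), I would argue by induction on $n$ using Lemma~\ref{lem:milnor}(1): since $v=\beta(u)$ is in the image of the Bockstein and $\beta^2=0$, we get $\beta(v)=0$; then $\Q_n(v)=0$ will follow once we know $\P^{p^{m}}(v)=0$ for all $m<n$ (so that each term in the inductive formula $\Q_{n+1}=\P^{p^n}\Q_n-\Q_n\P^{p^n}$ vanishes). But $v=c_1(\Lc)$ is the first Chern class of the universal line bundle pulled back along $B\mu_p\to B\Gb_m$, and the action of the motivic power operations on Chern classes is computed by the splitting principle and the Cartan formula exactly as in the proof of Theorem~\ref{thm:non-qs}: on a line bundle, $\P^j(c_1)$ is the degree-$p$ part of $c_1^{?}$, so $\P^1(c_1)=c_1^p$ and $\P^j(c_1)=0$ for $j>1$. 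In particular $\P^{p^m}(v)=0$ for $m\geq1$, and $\P^1(v)=v^p$; feeding this into the inductive formula kills all higher $\Q_n(v)$ and establishes (2).

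For part (1), I would again induct on $n$, the base case $n=0$ being $\Q_0(u)=\beta(u)=v=v^{p^0}$. For the inductive step write $\Q_{n+1}(u)=\P^{p^n}\Q_n(u)-\Q_n\P^{p^n}(u)$. The first term is $\P^{p^n}(v^{p^n})$; using the Cartan formula and $\P^j(v)=0$ for $j>1$, $\P^1(v)=v^p$, one computes $\P^{p^n}(v^{p^n})=(\P^1 v)^{p^n}=v^{p^{n+1}}$ (only the ``diagonal'' term $\prod \P^1(v)$ survives for degree reasons). The second term $\Q_n\P^{p^n}(u)$ vanishes: $\P^{p^n}(u)=0$ for $n\geq1$ since $u$ has bidegree $(1,1)$ and $\P^i$ raises the first degree by $2i(p-1)>1$ while the target weight would exceed what is available — concretely this is the instability relation Corollary~\ref{cor:ModPSteenrod}(5) (with $j-k = 0 < i$ and $k=1\le i$ for $i=p^n\ge p\ge 2$); and for $n=0$, $\P^1(u)=0$ for the same instability reason (here $i=1$, $j-k=0<1$, $k=1\le1$). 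Hence $\Q_{n+1}(u)=v^{p^{n+1}}$, completing the induction.

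The only genuinely delicate point is the reduction step: one must make sure that the classes $u,v$ and the operations $\Q_n$ in characteristic $p$ are literally the pullbacks of their characteristic-zero counterparts, so that identities proved over $\Kb$ descend. This is where Corollary~\ref{cor:main} and the construction in \S\ref{sec:construction} are used — the operations over $\Fb_p$ are pulled back from $\Oc$, whose generic fibre is $\Kb$, and the computation \cite[Theorem 6.10]{voevodsky:2003a} of $H^\star_{\mot}(\Xfr\times B\mu_p;\Fb_p(\star))$ is valid in all characteristics and compatible with these pullbacks; once this is in place everything else is the routine Cartan-formula bookkeeping sketched above. I do not expect any obstacle beyond carefully invoking these compatibilities.
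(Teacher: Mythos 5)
Your computations in parts (1) and (2) are correct and are essentially the content of \cite[Lemma~3.5]{quick-bp}, which is exactly what the paper cites. The induction on $n$, the use of the commutator formula from Lemma~\ref{lem:milnor}(1), the Cartan-formula analysis of $\P^{p^n}(v^{p^n})$ (only the diagonal term contributes, giving $v^{p^{n+1}}$), and the instability argument killing $\P^i(u)$ for $i\geq 1$ and $\P^j(v)$ for $j\geq 2$ are all exactly right.

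However, the framing around ``reducing to $\Kb$'' is not needed and, if taken literally, would be problematic: the classes $u,v\in H^\star_{\mot}(\mathrm{B}\mu_{p,\Fb_p};\Fb_p(\star))$ are not literally pullbacks of classes in $H^\star_{\mot}(\mathrm{B}\mu_{p,\Kb};\Fb_p(\star))$ --- there is no pullback map between cohomologies over two unrelated bases, and to compare them you would first have to compute $H^\star_{\mot}(\mathrm{B}\mu_{p,\Oc};\Fb_p(\star))$ and check that both fibres recover the right thing, which is an extra and nontrivial step (and in characteristic zero Voevodsky's description of $H^\star_{\mot}(\mathrm{B}\mu_p)$ acquires $\tau$- and $\rho$-terms that you would then have to track). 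The point the paper is making --- and which your own calculations actually implement --- is that no base change is needed: Corollary~\ref{cor:ModPSteenrod} already gives you $\P^0=\Id$, the Cartan formula, the instability relation, $\P^i(x)=x^p$ in bidegree $(2i,i)$, and that $\beta$ is a derivation, all directly over $\Fb_p$; together with Lemma~\ref{lem:milnor} these are the \emph{only} inputs your inductions use. So you can, and should, simply run the argument over $\Fb_p$ from the start and drop the appeal to $\Kb$ entirely. Also note a small slip in part (2): the inductive step at $n=0$ does not use $\P^{p^0}(v)=0$ (which is false, $\P^1(v)=v^p$), but rather $\Q_0(\P^1(v))=\beta(v^p)=pv^{p-1}\beta(v)=0$; for $n\geq 1$ you do get $\P^{p^n}(v)=0$ from instability.
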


\begin{proof} At all primes, including the prime $p =2$, the derivation in \cite[Lemma 3.5]{quick-bp} works and only uses basic properties of the Milnor operations. Note that Lemma~\ref{lem:milnor} guarantees the definition of $\Q_n$ used by Quick (via induction) agrees with ours. 
\end{proof}

In the main text, the following proposition was used at various places.

\begin{prop}\label{prop:qi} Let $\lambda \in H^{\star,\star}$ then
\[
\Q_n\Q_{n-1} \cdots \Q_0(\lambda u_1 \cdots u_m) \not= 0 \qquad m \geq n+1.
\]
\end{prop}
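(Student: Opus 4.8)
The plan is to reduce to a purely combinatorial computation in the polynomial-ish ring $H^{\star}_{?}(B\mu_p^{\times m}; \Fb_p(\star)) \cong H^{\star,\star}[[u_1,v_1,\dots,u_m,v_m]]/(u_i^2)$, using that the $\Q_i$ are odd derivations (Lemma~\ref{lem:milnor}(2)) which vanish on $H^{\star,\star}$ for degree reasons and act on generators by $\Q_i(u_j) = v_j^{p^i}$, $\Q_i(v_j) = 0$ (Lemma~\ref{lem:Q_n-bmup}). Since every $\Q_i$ kills the coefficient $\lambda$ and kills all $v_j$, and since $\Q_i(v_j^{p^k}) = 0$, iterating the derivation property on the product $u_1 \cdots u_m$ shows that $\Q_n \Q_{n-1} \cdots \Q_0(\lambda u_1 \cdots u_m)$ equals $\pm\lambda$ times a sum over ways of distributing the $n+1$ operations $\Q_0,\dots,\Q_n$ onto $n+1$ distinct indices $j_0,\dots,j_n \in \{1,\dots,m\}$ (each $\Q_i$ must hit a distinct still-present $u$-factor, because once $u_j$ is replaced by $v_j^{p^i}$ no further $\Q$ can act on that slot), leaving the remaining $u_\ell$'s untouched. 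Concretely, up to sign,
\[
\Q_n \cdots \Q_0(\lambda u_1 \cdots u_m) = \pm \lambda \sum_{\substack{j_0,\dots,j_n \\ \text{distinct}}} \Bigl(\prod_{t=0}^n v_{j_t}^{p^t}\Bigr) \cdot \prod_{\ell \notin \{j_0,\dots,j_n\}} u_\ell .
\]

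To see this is nonzero it suffices to exhibit a single surviving monomial. Specialize to the sub-term where $j_t = t+1$ for $t = 0,\dots,n$ (possible since $m \geq n+1$). One then has to check that the monomial $v_1^{p^0} v_2^{p^1} \cdots v_{n+1}^{p^n} \cdot u_{n+2} \cdots u_m$ (times $\lambda$, times a nonzero scalar $\pm 1$, and times the combinatorial count of the number of orderings producing it, which is a product of factorials and hence a nonzero element of $\Fb_p$ only after care — see below) does not cancel against any other term in the sum. The point is that the vectors of exponents $(p^{\sigma(0)},\dots,p^{\sigma(n)})$ attached to a fixed set of indices $\{j_0,\dots,j_n\}$, as $\sigma$ ranges over permutations, all give \emph{the same} monomial $\prod v_{j_t}^{p^{\sigma^{-1}(\cdot)}}$ — no, more precisely: for a fixed unordered set $S = \{j_0 < \dots < j_n\}$ the monomial $\prod_{j \in S} v_j^{p^{?}}$ depends only on which power of $v$ sits on which index, and distinct assignments of the multiset $\{p^0,\dots,p^n\}$ to the indices of $S$ give distinct monomials (the $p^t$ are distinct). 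Hence I should instead organize the sum by \emph{injections} $\phi\colon\{0,\dots,n\}\hook\{1,\dots,m\}$, and the coefficient of the monomial $\bigl(\prod_t v_{\phi(t)}^{p^t}\bigr)\prod_{\ell\notin\mathrm{im}\,\phi}u_\ell$ is a single explicit sign $\varepsilon(\phi)\in\{\pm1\}$ coming from reordering the derivation (no multiplicities collapse), so no cancellation can occur and the whole expression is a nonzero $\Fb_p$-linear combination of distinct basis monomials times $\lambda$.

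The one genuine subtlety — and the step I expect to be the main obstacle — is bookkeeping the signs $\varepsilon(\phi)$ and the order in which the $\Q_i$ are applied, so as to be sure that two different injections $\phi$ do not produce the same monomial with opposite signs. This is exactly the computation done in topology (cf.\ the references to \cite{quick-bp}), where one checks that $\Q_n\cdots\Q_0$ applied to a product of $1$-dimensional classes is, up to sign, a sum of \emph{distinct} products $v_{j_0}^{p^0}\cdots v_{j_n}^{p^n}$ times the complementary $u$'s. I would carry it out by induction on $n$: assuming $\Q_{n-1}\cdots\Q_0(\lambda u_1\cdots u_m)$ is (up to sign) $\lambda\sum_{\phi}\varepsilon(\phi)\bigl(\prod_{t<n}v_{\phi(t)}^{p^t}\bigr)\prod_{\ell\notin\mathrm{im}\,\phi}u_\ell$ with distinct surviving monomials, apply $\Q_n$ as a derivation; it annihilates the $v$-part and each already-fixed index, and on each remaining $u_\ell$ it produces $v_\ell^{p^n}$, extending $\phi$ by $n\mapsto\ell$, with a controlled sign. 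Since the $v$-powers $p^0,\dots,p^n$ are pairwise distinct, the extended monomials are again pairwise distinct, and there is no collapse; because $m\ge n+1$ there is at least one $\ell$ available, so the sum is nonempty. Finally $\lambda\ne 0$ and the monomials involved are part of an $H^{\star,\star}$-basis of $H^{\star}_{?}(B\mu_p^{\times m};\Fb_p(\star))$, so the total class is nonzero, as claimed. The same argument applies verbatim to both $?=\mot$ and $?=\syn$ since every input (the computation of the cohomology ring, the derivation property, and the action on generators) holds in both theories.
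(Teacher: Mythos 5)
Your argument is correct and follows the same route as the paper: reduce modulo the $H^{\star,\star}$-coefficient (using that the $\Q_i$ are odd derivations annihilating $H^{\star,\star}$ and $v_j$, and that $\Q_i(u_j)=v_j^{p^i}$) and then observe that $\Q_n\cdots\Q_0(u_1\cdots u_m)$ is a sum of \emph{distinct} basis monomials $\bigl(\prod_t v_{\phi(t)}^{p^t}\bigr)\prod_{\ell\notin\mathrm{im}\,\phi}u_\ell$ over injections $\phi$, with coefficients $\pm 1$. The paper simply cites \cite[Lemma~3.6]{quick-bp} for exactly this combinatorial nonvanishing (after the $H^{\star,\star}$-linearity reduction), whereas you have written out Quick's calculation; the inputs (Lemmas~\ref{lem:milnor} and~\ref{lem:Q_n-bmup}) are identical. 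One microscopic caveat: ``$\Q_i$ kills $\lambda$ for degree reasons'' is not quite a pure degree count in the syntomic case (where $H^{1,0}_\syn(\Fb_p;\Fb_p)\neq 0$, so $\Q_0(\lambda)$ lands in a nonzero group); one also needs $\beta(1)=0$, i.e.\ that the Bockstein of an integral class vanishes — but this is implicit in the paper's ``linear over $H^{\star,\star}$'' as well, so it is not a real gap.
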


\begin{proof} Since the operations are linear over the $H^{\star,\star}$, it sufices to prove that 
\[
\Q_n\Q_{n-1} \cdots \Q_0(u_1 \cdots u_m) \not=0.
\] But this is \cite[Lemma 3.6]{quick-bp}.
\end{proof}

\section{Deglise--Jin--Khan fundamental class in higher Chow groups}\label{sec:djk-higher}

Given a regular embedding $i \colon Z \hook X$ of codimension $r$, Deglise--Jin--Khan \cite{DJKFundamental} construct a fundamental class $\eta_i \in E^\mathrm{BM}(Z/X, -\Nc_{i})$ in the bivariant $E$-theory group of $i$, where $E \in \SH_X$. Here, we explicitly compute this class for the motivic cohomology spectrum $H\Zb^{\Ab^1}$ when $X$ is smooth over a Dedekind domain $A$. In this situation, the localization theorem of Levine \cite[Theorem~1.7]{levine:2001} identifies  $H\Zb^{\Ab^1,\mathrm{BM}}(Z/X, -\Nc_{i})\simeq H\Zb^{\Ab^1,\mathrm{BM}}(-r)(Z/X)$ with the cycle complex of Bloch $z^0(Z,\bullet) \in D(\Zb)$. The homology groups of this complex are the \textit{Higher chow groups} of $Z$, $\CH^r(Z,n)$, which are isomorphic to the motivic cohomology of $Z$ if $Z$ is either regular and equicharacteristic, or smooth over $A$. We refer to \cite{geisser:2005} for an introduction to motivic cohomology from the cycle-theoretic point of view.

We will repeat the construction of the fundamental class from \cite[\S 3.2]{DJKFundamental} for higher Chow groups; in what follows the base is $\Spec(\Zb)$. The construction of the fundamental class is based on the specialization to the normal cone map \cite[Definition~3.2.4]{DJKFundamental} $\sigma_i$, which is by definition the composition
\begin{equation}\label{eq:specializationtoN}
z^0(X,\bullet) \xto{\gamma_t} z^1(\Gb_m \times X, \bullet)[-1] \xto{\partial} z^0(\Nc_i, \bullet),
\end{equation}
where $\partial$ is the connecting map of the localization cofibre sequence 
\[
z^n(\Nc_i, \bullet) \to z^{n+1}(D_iX, \bullet) \to z^{n+1}(\Gb_m \times X, \bullet)
\]   
associated to the affine deformation to the normal cone space $D_iX$ \cite[\S 3.2.3]{DJKFundamental}. Next, we identify the other map in this composition.

\begin{figure}[h!]
\begin{center}
\includegraphics[scale=1.4]{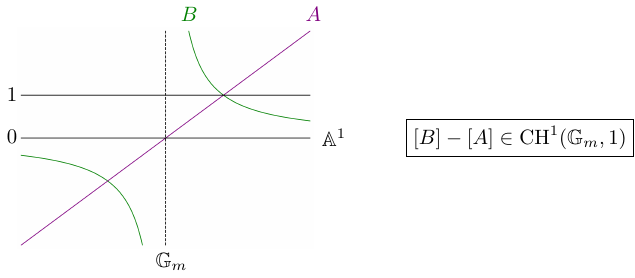}
\end{center}
\caption{
\textbf{Illustration of the class $\{t\} \in \CH^1(\Gb_m, 1) = H^1_{\Ab^1}(\Gb_m, \Zb(1))$.} The cycles $[A]$ and $[B]$ are the graphs of the two functions $\lambda \to \lambda$ and $\lambda \to \lambda^{-1}$ from $\Gb_m$ to $\Ab^1$, respectively. The two cycles are elements of $z^1(\Gb_m,1)$ because each of them meets the two faces, represented by the horizontal lines at $0$ and $1$, transversely. The difference $[B]-[A]$ is a cycle in the cycle complex, and therefore it gives rise to a class $\{t\} := [B]-[A] \in \CH^1(\Gb_m, 1)$.
}\label{fig1}
\end{figure}

\begin{lem}
The map $\gamma_t$ from Eq.~\eqref{eq:specializationtoN} is corresponds to multiplication by the class $\{t\} \colon \Zb[1] \to z^1(\Gb_m,\bullet)$ depicted in Fig.~\ref{fig1}.
\end{lem}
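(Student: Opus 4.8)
The plan is to unwind the definition of the map $\gamma_t \colon z^0(X,\bullet) \to z^1(\Gb_m \times X, \bullet)[-1]$ appearing in \cite[\S 3.2]{DJKFundamental} and to match it, cycle by cycle, with the operation of exterior multiplication by the explicit class $\{t\} = [B] - [A] \in \CH^1(\Gb_m,1)$ pictured in Figure~\ref{fig1}. The essential point is that $\gamma_t$ is, by construction in \emph{op.~cit.}, induced by the canonical section of $\Gb_m$ over the base — that is, the tautological invertible function $t$ on $\Gb_m$ regarded as a class in $H^1_{\Ab^1}(\Gb_m, \Zb(1))$ via the identification of weight-one motivic cohomology with units (the motivic Bott element / Milnor $K$-theory symbol $\{t\}$), followed by exterior product and pullback. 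So the task is purely to identify this ``universal unit class'' with the cycle $[B]-[A]$ at the level of Bloch's complex.

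First I would recall the standard identification $\CH^1(\Gb_m,1) \cong \Oc^\times(\Gb_m) = k[t,t^{-1}]^\times$, under which the symbol $\{t\}$ corresponds to the divisor-with-fixed-point data one writes down from the function $t$. Concretely, a unit $u$ on a smooth affine $k$-scheme $W$ gives a class in $z^1(W,1)$ by taking the closure of the graph of $u \colon W \to \Gb_m \subset \Ab^1$ inside $\Ab^1 \times W = \Delta^1 \times W$ (after the usual affine coordinate change sending $\Ab^1$ to $\Delta^1$ so that the faces $0$ and $1$ are the two boundary divisors); this graph meets both faces properly precisely because $u$ is invertible and $u - 1$ is invertible away from a proper closed subset, and its boundary in $z^1(W,0)$ computes $\mathrm{div}(u)$. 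For $W = \Gb_m$ and $u = t$ this produces exactly the cycle $[A]$ (graph of $\lambda \mapsto \lambda$), while the constant-difference trick — or equivalently the relation $\{t\} = \{t\} - \{t^{-1}\}$ up to the obvious sign coming from $\{t^{-1}\} = -\{t\}$ — yields the homologically equivalent representative $[B]-[A]$, which has the advantage of being an honest cycle in $z^1(\Gb_m,1)$ (killed by the differential) rather than merely a relative cycle. I would verify the three transversality/properness assertions stated in the figure caption directly: $[A]$ and $[B]$ are the graphs of $\lambda \mapsto \lambda$ and $\lambda \mapsto \lambda^{-1}$, each meets the faces $\{0\}$ and $\{1\}$ of $\Delta^1 \times \Gb_m$ in the expected dimension (the face $\{0\}$ is not met at all since the functions are nowhere zero on $\Gb_m$, and $\{1\}$ is met in the single point $\lambda = 1$), and $\partial([B]-[A]) = \mathrm{div}(t^{-1}) - \mathrm{div}(t) = 0$ in $z^1(\Gb_m,0)$.

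The second half is to check that multiplication by this class agrees with $\gamma_t$ as defined in \cite[\S 3.2.3]{DJKFundamental}. Here I would point to the fact that $\gamma_t$ is by definition the composite of pullback along $\Gb_m \times X \to \Gb_m$ with exterior product with the distinguished generator of $\widetilde{H}^{1}$ of $\Gb_m$ in weight one — and in the cycle-theoretic model of motivic cohomology over a regular base (e.g.\ \cite{geisser:2005}), exterior product of cycle complexes is computed on representatives by taking products of cycles, with pullback along flat maps being flat pullback of cycles as in \eqref{eq:PushPull}. Since the exterior product is associative and unital, it suffices to identify the single generator, which is what the previous paragraph does. The main obstacle I expect is bookkeeping rather than conceptual: one must be careful that the normalization of $\{t\}$ coming from the deformation-to-the-normal-cone construction of \cite{DJKFundamental} (which is phrased in $\SH$ and uses the purity/orientation data) is the \emph{same} generator — same sign, same trivialization of the Tate twist — as the one represented by $[B]-[A]$. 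This amounts to checking compatibility of two orientations on $\Tb = \Pb^1/\infty$: the one built into the six-functor fundamental class and the one coming from Bloch's complex via the identification of $\CH^1(\mathbb{G}_m,1)$ with units. I would settle this by reducing to the universal case $X = \Spec(\Zb)$ (where everything is rigid because $\CH^1(\Gb_m, 1)$ is free of rank one) and invoking that both constructions are normalized so that the fundamental class of a principal divisor $\{f=0\}$ recovers $\mathrm{div}(f)$ — pinning down the generator uniquely up to a sign that is then fixed by the explicit formula $\{t\} = [B]-[A]$ in the figure.
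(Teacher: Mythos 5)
Your overall strategy — identify $\gamma_t$ as exterior multiplication by a distinguished weight-one class of $\Gb_m$, then pin down that class as a specific cycle — is reasonable, but there are two genuine gaps, one technical and one structural.

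The technical gap: your middle paragraph claims that the graph $[A]$ of $\lambda \mapsto \lambda$ ``produces exactly the cycle'' representing $\{t\}$, and that $[B]-[A]$ is merely a ``homologically equivalent representative'' reached via the relation $\{t^{-1}\} = -\{t\}$. This is not correct as stated. The graph $[A]$ by itself has nonzero boundary in $z^1(\Gb_m,0)$ (it meets the face over $1$ in the single point $\lambda=1$), so it is an element of $z^1(\Gb_m,1)$ but not a cycle, and hence does not represent any class in $\CH^1(\Gb_m,1)$. The cycle $[B]-[A]$ is not a correction of $[A]$ by a coboundary; it is the minimal honest cycle one writes down. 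If one naively applied your heuristic that $[A]\leftrightarrow\{t\}$ and $[B]\leftrightarrow\{t^{-1}\}=-\{t\}$, one would conclude $[B]-[A]$ represents $-2\{t\}$, which would contradict the lemma. So the heuristic cannot be used to pin down the class, let alone the sign.

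The structural gap: your final paragraph resolves the normalization/sign ambiguity by saying it ``is then fixed by the explicit formula $\{t\} = [B]-[A]$ in the figure.'' That is circular — the content of the lemma is precisely that the class underlying $\gamma_t$ equals $[B]-[A]$; you cannot use the figure's definition of $\{t\}$ to fix the sign of the class you are trying to identify. What is missing is an independent characterization of $\gamma_t$. The paper supplies one by citing \cite[\S 3.2.2]{DJKFundamental}: $\gamma_t$ is multiplication by the unique class whose image under the boundary map of the localization sequence $z^0(\Spec\Zb,\bullet)\to z^1(\Ab^1,\bullet)\to z^1(\Gb_m,\bullet)$ is the fundamental class $[\Spec\Zb]$, and whose pullback along $1\colon\Spec\Zb\to\Gb_m$ is nullhomotopic. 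The paper's proof then verifies these two conditions directly for $[B]-[A]$: the restriction to $1\in\Gb_m$ vanishes on the nose (the two graphs coincide there), and the boundary computation uses that $[B]-\bar A$, the closure inside $\Ab^1$, meets the origin in a single point. You would need to import this characterization (or an equivalent one) and verify its two clauses; once you do, the sign is forced rather than assumed, and the circularity disappears. Absent that, the last sentence of your argument is where the proof really lives, and as written it assumes what it needs to prove.
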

\begin{proof}
By \cite[\S 3.2.2]{DJKFundamental}, we have to check two conditions:
\begin{enumerate}
\item $\{t\}$ is sent to the cycle $[\Spec(\Zb)] \in z^0(\Spec(\Zb),\bullet)$ by the connecting map $\partial'$ of the localization sequence 
\[
z^0(\Spec(\Zb), \bullet) \to z^1(\Ab^1, \bullet) \to z^1(\Gb_m, \bullet);
\]
\item $\{t\}$ is pulled back to a nullhomotopic map $\Zb[1] \to z^1(\Spec(\Zb),\bullet)$ along the map $1 \colon \Spec(\Zb) \to \Gb_m$.
\end{enumerate}
The validity of the second condition is evident, as $B-A$ pulls back to 0 as an algebraic cycle. The validity of the second condition follows from the fact that the algebraic cycle $B - \bar A \in z(\Ab^1,1)$ intersects the vertical axis at a single point---the origin. Above, $\bar A$ is the closure of the algebraic cycle $A \subset \Gb_m \times \Ab^1$ inside $\Ab^1 \times \Ab^1$. Thus, by the definition of $\partial'$, we have that $\partial'\{t\} = [\Zb]$ as desired.
\end{proof}

Now that we have identified the two maps in the definition of the specialization to the normal cone map Eq.~\eqref{eq:specializationtoN}, we are ready to identify the fundamental class. By definition, $\eta_i \in z^0(Z, \bullet)$ is the pullback of $\sigma_i([X]) \in z^0(\Nc_i, \bullet)$ along the zero-section $Z \hook \Nc_i$. We are now ready to prove the main result of this section.

\begin{prop}\label{prop:DJKFundamentalClassForHigherCH}
Let $i\colon Z \hook X$ be a regular embedding, where $X$ is a smooth scheme over a Dedekind domain and $Z$ is integral. Then, the fundamental class $\eta_i \in z^0(Z, \bullet)$ coincides with $[Z]$.
\end{prop}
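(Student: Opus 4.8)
The plan is to unwind the construction of the fundamental class $\eta_i$ step by step, reducing everything to an explicit cycle computation on the deformation to the normal cone. Recall that by the previous lemma the specialization-to-the-normal-cone map $\sigma_i \colon z^0(X,\bullet) \to z^0(\Nc_i, \bullet)$ is the composite $\partial \circ \gamma_{\{t\}}$, where $\gamma_{\{t\}}$ is multiplication by the explicit class $\{t\} = [B] - [A] \in z^1(\Gb_m, 1)$ depicted in Fig.~\ref{fig1}, and $\partial$ is the connecting homomorphism of the localization sequence attached to the affine deformation space $D_iX$. The fundamental class is then $\eta_i = 0^*\sigma_i([X])$, the restriction along the zero section $Z \hook \Nc_i$. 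So the first step is to track the cycle $[X] \in z^0(X,0)$ through $\gamma_{\{t\}}$: we get $[X] \times \{t\} \in z^1(\Gb_m \times X, 1)$, represented by the difference of the two graphs $[B_X] - [A_X]$ in $\Gb_m \times \Ab^1_X$ (pulled back from the $\Gb_m$ factor). This cycle meets the faces properly precisely because $\{t\}$ does, and flatness of the projection $\Gb_m \times X \to \Gb_m$ preserves this.

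Next I would compute $\partial$ of this class. The connecting map comes from the cofibre sequence $z^n(\Nc_i,\bullet) \to z^{n+1}(D_iX,\bullet) \to z^{n+1}(\Gb_m \times X,\bullet)$, so concretely one lifts $[B_X] - [A_X]$ to a cycle on $D_iX$ (taking closures of $A_X, B_X$ across the special fibre $\Nc_i$) and takes the boundary, which lands in $z^1$ of the special fibre $\Nc_i$ up to the codimension shift — i.e.\ in $z^0(\Nc_i,\bullet)$ after identifying with the degeneration. The key point, exactly as in the lemma computing $\partial'\{t\} = [\Zb]$, is that the closure $\bar B_X - \bar A_X$ inside the compactified deformation space meets the divisor at infinity (the $\Nc_i$ fibre) along the zero section with multiplicity one. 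This is a local computation: the deformation to the normal cone is, Zariski-locally near $Z$, the blowup description, and the graph of $t \mapsto t$ degenerates to the zero section of $\Nc_i$. Hence $\sigma_i([X])$ is (homologous to) the class of the zero section $[Z] \in z^0(\Nc_i, \bullet)$ itself, with multiplicity one since $Z$ is integral (and $X$ is regular, so $i$ is a genuine regular embedding and the normal cone is a vector bundle).

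Finally, restricting along the zero section $0 \colon Z \hook \Nc_i$ sends $[Z] \in z^0(\Nc_i,\bullet)$ to $[Z] \in z^0(Z,\bullet)$ — the zero section of a vector bundle pulls back to the whole base with multiplicity one — which is precisely the cycle $[Z]$ representing the fundamental class in higher Chow groups. Assembling these three steps gives $\eta_i = 0^*\partial\gamma_{\{t\}}([X]) = [Z]$, as claimed. I expect the main obstacle to be the middle step: carefully justifying that the boundary map $\partial$ sends the lifted cycle to the zero section \emph{with multiplicity exactly one}, i.e.\ controlling the intersection multiplicity of the closure of the graph with the special fibre of $D_iX$. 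This is where one must use the concrete local model of the deformation to the normal cone (e.g.\ $D_iX = \Bl_{Z\times\{0\}}(X\times\Ab^1) \setminus \Bl_{Z\times\{0\}}(X\times\{0\})$) and the transversality statement already proved in the lemma identifying $\gamma_t$, applied now in the relative setting over $X$ rather than over $\Spec(\Zb)$. Once the bookkeeping of faces, closures, and the codimension shift is set up compatibly with \cite[\S 3.2]{DJKFundamental}, the rest is formal.
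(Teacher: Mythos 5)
Your overall strategy — unwind $\eta_i = 0^*\,\partial\,\gamma_{\{t\}}([X])$ and compute each piece on the cycle level — is exactly the paper's strategy, but your key intermediate claim is incorrect and would not survive scrutiny. You assert that $\sigma_i([X])$ is ``the class of the zero section $[Z] \in z^0(\Nc_i,\bullet)$.'' This cannot be right on degree grounds: the map $\sigma_i$ lands in $z^0(\Nc_i,\bullet)$, the \emph{codimension-zero} cycle complex of $\Nc_i$, while the zero section $Z\hook \Nc_i$ is a codimension-$r$ subvariety and therefore represents a cycle in $z^r(\Nc_i,\bullet)$. The correct answer, as the paper computes, is $\sigma_i([X]) = [\Nc_i]$, the fundamental cycle of the \emph{whole} normal cone. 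Concretely: $\overline{A\times X}\subset \Delta^1\times D_iX$ is the divisor $\{s=t\}$ (where $s$ is the deformation parameter and $t$ the simplex coordinate), its pullback to the face $t=0$ is $\{s=0\}=\Nc_i$ in its entirety, while $\overline{B\times X}=\{st=1\}$ misses $\{s=0\}$ altogether; hence the boundary of the lift is $\mp[\Nc_i]$ and the preimage in $z^0(\Nc_i,\bullet)$ is $[\Nc_i]$. The geometric picture you invoke — that the graph of $A$ ``degenerates to the zero section'' — is a misreading of the warm-up lemma: there the intersection is a single point only because $X=Z=\Spec(\Zb)$ and the normal cone itself is a point, i.e.\ that single point \emph{is} the entire special fibre.

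This error then propagates to your final step. You claim ``the zero section of a vector bundle pulls back to the whole base with multiplicity one.'' Pulling back the cycle $[Z]$ (the zero section viewed as a codimension-$r$ cycle on $\Nc_i$) along the zero section $0\colon Z\hook\Nc_i$ is a self-intersection and produces the Euler class $c_r(\Nc_i)\in\CH^r(Z)$, not $[Z]$. What is true, and what actually closes the argument, is that pulling back the codimension-zero cycle $[\Nc_i]$ along the zero section gives $[Z]\in z^0(Z,\bullet)$. In short: the two mistakes (computing $\sigma_i([X])$ to be $[Z]$ rather than $[\Nc_i]$, and then misidentifying the pullback) happen to ``cancel'' and land you on the correct conclusion, but neither step as written is valid. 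Fixing the argument means replacing the middle claim with $\sigma_i([X])=[\Nc_i]$ and noting that $0^*[\Nc_i]=[Z]$; at that point your proof agrees with the paper's.
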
 
\begin{proof}
By the discussion of the paragraph preceding the statement, it suffices to show that $\sigma_i([X]) = [\Nc_i]$. To see this, we notice that the closure of the cycle $B \times X - A \times X$ inside $\Ab^1 \times D_iX$ pulls backs to 0 along the face at $1$, and to $-[\Nc_i] \in z^1(D_iX;\bullet)$. Thus, by the definition of the connecting map $\partial$ in Eq.~\ref{eq:specializationtoN}, we see that $\sigma_i([X]) = [\Nc_i]$.
\end{proof}

\clearpage

\bibliographystyle{alphamod}
\bibliography{references}

\newcommand{\etalchar}[1]{$^{#1}$}
\providecommand{\bysame}{\leavevmode\hbox to3em{\hrulefill}\thinspace}
\providecommand{\MR}{\relax\ifhmode\unskip\space\fi MR }
\providecommand{\MRhref}[2]{%
  \href{http://www.ams.org/mathscinet-getitem?mr=#1}{#2}
}
\providecommand{\href}[2]{#2}
\begin{thebibliography}{ELSOsr22}
\providecommand{\url}[1]{\href{#1}{{\def~{\textasciitilde}\tt #1}}}

\bibitem[ABM21]{antieau-bhatt-mathew}
B.~Antieau, B.~Bhatt, and A.~Mathew, \emph{Counterexamples to
  {H}ochschild-{K}ostant-{R}osenberg in characteristic {$p$}}, Forum Math.
  Sigma \textbf{9} (2021), pp.~Paper No. e49, 26,
  \url{https://doi.org/10.1017/fms.2021.20}

\bibitem[AH62]{atiyah-hirzebruch}
M.~F. Atiyah and F.~Hirzebruch, \emph{Analytic cycles on complex manifolds},
  Topology \textbf{1} (1962), pp.~25--45,
  \url{https://doi.org/10.1016/0040-9383(62)90094-0}

\bibitem[AHI24a]{AHI}
T.~Annala, M.~Hoyois, and R.~Iwasa, \emph{{Algebraic cobordism and a
  Conner-Floyd isomorphism for algebraic K-theory}}, Journal of the American
  Mathematical Society (2024), preprint
  \href{http://arxiv.org/abs/https://arxiv.org/abs/2303.02051}{{\sf
  arXiv:https://arxiv.org/abs/2303.02051}}, submitted

\bibitem[AHI24b]{AHI:atiyah}
T.~Annala, M.~Hoyois, and R.~Iwasa, \emph{Atiyah duality for motivic spectra},
  preprint \href{http://arxiv.org/abs/2403.01561v1}{{\sf 2403.01561v1}}

\bibitem[AI22]{annala-iwasa:MotSp}
T.~Annala and R.~Iwasa, \emph{{Motivic spectra and universality of
  $K$-theory}}, preprint \href{http://arxiv.org/abs/2204.03434}{{\sf
  arXiv:2204.03434}}

\bibitem[AMM22]{AMM:2022}
B.~Antieau, A.~Mathew, and M.~Morrow, \emph{The $\text{K}$-theory of perfectoid
  rings}, Documenta Mathematica \textbf{27} (2022), pp.~1923--1951

\bibitem[Ann20]{annala-cob}
T.~Annala, \emph{Bivariant derived algebraic cobordism}, Journal of Algebraic
  Geometry \textbf{30} (2020), no.~2, pp.~205--252

\bibitem[Ann22a]{annala-chern}
\bysame, \emph{Chern classes in precobordism theories}, Journal of the European
  Mathematical Society (2022), preprint
  \href{http://arxiv.org/abs/arXiv:1911.12493}{{\sf arXiv:1911.12493}}

\bibitem[Ann22b]{annala-thesis}
\bysame, \emph{{Derived Algebraic Cobordism}}, Ph.D. thesis, University of
  British Columbia, March 2022, available at
  \href{http://arxiv.org/abs/2203.12096}{{\sf arXiv:2203.12096}}

\bibitem[Ant16]{antieau-tate}
B.~Antieau, \emph{On the integral {T}ate conjecture for finite fields and
  representation theory}, Algebr. Geom. \textbf{3} (2016), no.~2, pp.~138--149,
   \url{https://doi.org/10.14231/AG-2016-007}

\bibitem[AS24]{annala-shin}
T.~Annala and T.~Shin, \emph{Motivic Steenrod problem away from the
  characteristic}, preprint \href{http://arxiv.org/abs/2407.07194v2}{{\sf
  2407.07194v2}}

\bibitem[AY23]{annala-yokura}
T.~Annala and S.~Yokura, \emph{Bivariant algebraic cobordism with bundles},
  Algebraic Geometry (2023), p.~461–488,
  \url{http://dx.doi.org/10.14231/AG-2023-015}

\bibitem[Ayo14]{ayoub-realisation}
J.~Ayoub, \emph{La r\'{e}alisation \'{e}tale et les op\'{e}rations de
  {G}rothendieck}, Ann. Sci. \'{E}c. Norm. Sup\'{e}r. (4) \textbf{47} (2014),
  no.~1, pp.~1--145,  \url{https://doi.org/10.24033/asens.2210}

\bibitem[BEH{\etalchar{+}}21]{BEHKSY:2021}
T.~Bachmann, E.~Elmanto, M.~Hoyois, A.~A. Khan, V.~Sosnilo, and M.~Yakerson,
  \emph{On the infinite loop spaces of algebraic cobordism and the motivic
  sphere}, Épijournal de Géométrie Algébrique \textbf{Volume 5} (2021),
  \url{http://dx.doi.org/10.46298/epiga.2021.volume5.6581}

\bibitem[BEM]{bem}
T.~Bachmann, E.~Elmanto, and M.~Morrow, \emph{The $\mathbb{A}^1$-invariant
  motivic cohomology of mixed characteristic schemes}, To appear

\bibitem[Ben25]{benoist}
O.~Benoist, \emph{Steenrod operations and algebraic classes}, Tunis. J. Math.
  \textbf{7} (2025), no.~1, pp.~53--89,
  \url{https://doi.org/10.2140/tunis.2025.7.53}

\bibitem[Ber86]{berthelot}
P.~Berthelot, \emph{G\'{e}om\'{e}trie rigide et cohomologie des
  vari\'{e}t\'{e}s alg\'{e}briques de caract\'{e}ristique {$p$}}, M\'{e}m. Soc.
  Math. France (N.S.) (1986), no.~23, pp.~3, 7--32, Introductions aux
  cohomologies $p$-adiques (Luminy, 1984)

\bibitem[BF97]{behrend-fantechi}
K.~Behrend and B.~Fantechi, \emph{The intrinsic normal cone}, Inventiones
  Mathematicae \textbf{128} (1997), no.~1, p.~45–88,
  \url{http://dx.doi.org/10.1007/s002220050136}

\bibitem[BH61]{borel-haefliger}
A.~Borel and A.~Haefliger, \emph{La classe d'homologie fondamentale d'un espace
  analytique}, Bull. Soc. Math. France \textbf{89} (1961), pp.~461--513,
  \url{http://www.numdam.org/item?id=BSMF_1961__89__461_0}

\bibitem[BH21a]{bachmann-hoyois}
T.~Bachmann and M.~Hoyois, \emph{Norms in motivic homotopy theory}, Astérisque
  \textbf{425} (2021)

\bibitem[BH21b]{bachmann-hoyois-et}
T.~Bachmann and M.~Hoyois, \emph{Remarks on \'{e}tale motivic stable homotopy
  theory}, 2021, \href{http://arxiv.org/abs/arXiv:2104.06002}{{\sf
  arXiv:2104.06002}}

\bibitem[Bha]{bhatt-gauges}
B.~Bhatt, \emph{Prismatic $F$-Gauges},
  \url{https://www.math.ias.edu/~bhatt/teaching/mat549f22/lectures.pdf}

\bibitem[BK25]{bouis-kundu}
T.~Bouis and A.~Kundu, \emph{Beilinson--Lichtenbaum phenomenon for motivic
  cohomology}, 2025,  \url{https://arxiv.org/abs/2506.09910},
  \href{http://arxiv.org/abs/2506.09910}{{\sf arXiv:2506.09910}}

\bibitem[BL22]{bhatt-lurie:apc}
B.~Bhatt and J.~Lurie, \emph{Absolute prismatic cohomology}, preprint
  \href{http://arxiv.org/abs/2201.06120}{{\sf arXiv:2201.06120}}

\bibitem[Blo86]{bloch:1986}
S.~Bloch, \emph{{Algebraic cycles and higher K-theory}}, Advances in
  Mathematics \textbf{61} (1986), no.~3, pp.~267--304

\bibitem[BMS19]{BMS2}
B.~Bhatt, M.~Morrow, and P.~Scholze, \emph{{Topological Hochschild homology and
  integral $p$-adic Hodge theory}}, Publications math{\'{e}}matiques de
  l'{IH}{\'{E}S} \textbf{129} (2019), no.~1, pp.~199--310

\bibitem[Bou24]{bouis-mixed}
T.~Bouis, \emph{Motivic cohomology of mixed characteristic schemes}, 2024,
  \href{http://arxiv.org/abs/arXiv:2412.06635}{{\sf arXiv:2412.06635}}

\bibitem[Bro03]{brosnan:2003}
P.~Brosnan, \emph{Steenrod operations in Chow theory}, Transactions of the
  American Mathematical Society \textbf{355} (2003), no.~5, p.~1869–1903,
  \url{http://dx.doi.org/10.1090/S0002-9947-03-03224-0}

\bibitem[CD16]{cisinski-deglise-etale}
D.-C. Cisinski and F.~D\'{e}glise, \emph{\'{E}tale motives}, Compos. Math.
  \textbf{152} (2016), no.~3, pp.~556--666,
  \url{https://doi.org/10.1112/S0010437X15007459}

\bibitem[CD19]{cisinski:2019}
D.-C. Cisinski and F.~D{\'{e}}glise, \emph{Triangulated Categories of Mixed
  Motives}, Springer International Publishing, 2019

\bibitem[CDH{\etalchar{+}}20]{GW-II}
B.~Calm\'es, E.~Dotto, Y.~Harpaz, F.~Hebestreit, M.~Land, K.~Moi, D.~Nardin,
  T.~Nikolaus, and W.~Steimle, \emph{Hermitian K-theory for stable
  $\infty$-categories II: cobordism categories and additivity}, 2020,
  \href{http://arxiv.org/abs/arXiv:2009.07223}{{\sf arXiv:2009.07223}}

\bibitem[CF25]{carmeli-feng}
S.~Carmeli and T.~Feng, \emph{Prismatic Steenrod operations and arithmetic
  duality on Brauer groups}, 2025,  \url{https://arxiv.org/abs/2507.13471},
  \href{http://arxiv.org/abs/2507.13471}{{\sf arXiv:2507.13471}}

\bibitem[CHN24]{calmes-harpaz-nardin}
B.~Calmes, Y.~Harpaz, and D.~Nardin, \emph{A motivic spectrum representing
  hermitian $K$-theory}, 2024

\bibitem[CTS10]{ct-szamuely}
J.-L. Colliot-Th\'{e}l\`ene and T.~Szamuely, \emph{Autour de la conjecture de
  {T}ate \`a coefficients {${\bf Z}_{\ell}$} pour les vari\'{e}t\'{e}s sur les
  corps finis}, pp.~83--98

\bibitem[D\'18]{deglise-orientations}
F.~D\'{e}glise, \emph{Orientation theory in arithmetic geometry}, pp.~239--347

\bibitem[D'A24]{daddezio}
M.~D'Addezio, \emph{Boundedness of the {$p$}-primary torsion of the {B}rauer
  group of an abelian variety}, Compos. Math. \textbf{160} (2024), no.~2,
  pp.~463--480,  \url{https://doi.org/10.1112/s0010437x23007558}

\bibitem[DFJK21]{dfjk}
F.~D\'{e}glise, J.~Fasel, F.~Jin, and A.~A. Khan, \emph{On the rational motivic
  homotopy category}, J. \'{E}c. polytech. Math. \textbf{8} (2021),
  pp.~533--583,  \url{https://doi.org/10.5802/jep.153}

\bibitem[DJK18]{DJKFundamental}
F.~D{\'e}glise, F.~Jin, and A.~A. Khan, \emph{Fundamental classes in motivic
  homotopy theory}, 2018, \href{http://arxiv.org/abs/1805.05920v2}{{\sf
  arXiv:1805.05920v2}}

\bibitem[EHK{\etalchar{+}}20]{EHKSY2}
E.~Elmanto, M.~Hoyois, A.~A. Khan, V.~Sosnilo, and M.~Yakerson, \emph{Framed
  transfers and motivic fundamental classes}, Journal of Topology \textbf{13}
  (2020), no.~2, pp.~460--500

\bibitem[Eil49]{eilenberg}
S.~Eilenberg, \emph{On the problems of topology}, Ann. of Math. (2) \textbf{50}
  (1949), pp.~247--260,  \url{https://doi.org/10.2307/1969448}

\bibitem[EKS25]{eks-1}
E.~Elmanto, D.~Kubrak, and V.~Sosnilo, \emph{On filtered algebraic $K$-theory
  of stacks I: characteristic zero}, 2025,
  \href{http://arxiv.org/abs/arXiv:2503.09928}{{\sf arXiv:2503.09928}}

\bibitem[ELSOsr22]{elso}
E.~Elmanto, M.~Levine, M.~Spitzweck, and P.~A. \O~stv\ae r, \emph{Algebraic
  cobordism and \'{e}tale cohomology}, Geom. Topol. \textbf{26} (2022), no.~2,
  pp.~477--586,  \url{https://doi.org/10.2140/gt.2022.26.477}

\bibitem[EM23]{elmanto-morrow}
E.~Elmanto and M.~Morrow, \emph{Motivic cohomology of equicharacteristic
  schemes}, 2023

\bibitem[Fen20]{feng-steenrod}
T.~Feng, \emph{\'{E}tale {S}teenrod operations and the {A}rtin-{T}ate pairing},
  Compos. Math. \textbf{156} (2020), no.~7, pp.~1476--1515,
  \url{https://doi.org/10.1112/s0010437x20007216}

\bibitem[FS25]{SpitzweckFrankland}
M.~Frankland and M.~Spitzweck, \emph{Towards the dual motivic Steenrod algebra
  in positive characteristic}, 2025

\bibitem[Fuj02]{fujiwara-gabber}
K.~Fujiwara, \emph{A proof of the absolute purity conjecture (after {G}abber)},
  Algebraic geometry 2000, {A}zumino ({H}otaka), Adv. Stud. Pure Math.,
  vol.~36, Math. Soc. Japan, Tokyo, 2002, pp.~153--183,
  \url{https://doi.org/10.2969/aspm/03610153}

\bibitem[Ful98]{fulton:1998}
W.~Fulton, \emph{Intersection Theory}, 2 ed., Springer New York, New York, 1998

\bibitem[Gei04]{geisser-dedekind}
T.~Geisser, \emph{Motivic cohomology over {D}edekind rings}, Math. Z.
  \textbf{248} (2004), no.~4, pp.~773--794,
  \url{https://doi.org/10.1007/s00209-004-0680-x}

\bibitem[Gei05]{geisser:2005}
T.~H. Geisser, \emph{Motivic Cohomology, K-Theory and Topological Cyclic
  Homology}, p.~193–234, Springer Berlin Heidelberg, 2005,
  \url{http://dx.doi.org/10.1007/978-3-540-27855-9_6}

\bibitem[GL00]{geisser-levine}
T.~Geisser and M.~Levine, \emph{The {$K$}-theory of fields in characteristic
  {$p$}}, Invent. Math. \textbf{139} (2000), no.~3, pp.~459--493,
  \url{https://doi.org/10.1007/s002220050014}

\bibitem[Gro85]{gros-deux}
M.~Gros, \emph{Sur la partie {$p$}-primaire du groupe de {C}how de codimension
  deux}, Comm. Algebra \textbf{13} (1985), no.~11, pp.~2407--2420,
  \url{https://doi.org/10.1080/00927878508823280}

\bibitem[GS88]{gros-suwa}
M.~Gros and N.~Suwa, \emph{La conjecture de {G}ersten pour les faisceaux de
  {H}odge-{W}itt logarithmique}, Duke Math. J. \textbf{57} (1988), no.~2,
  pp.~615--628,  \url{https://doi.org/10.1215/S0012-7094-88-05727-4}

\bibitem[HKO17]{HKO}
M.~Hoyois, S.~Kelly, and P.~A. Ostvaer, \emph{The motivic Steenrod algebra in
  positive characteristic}, Journal of the European Mathematical Society
  \textbf{19} (2017), no.~12, p.~3813–3849,
  \url{http://dx.doi.org/10.4171/JEMS/754}

\bibitem[Hoy13]{hoyois:2013}
M.~Hoyois, \emph{From algebraic cobordism to motivic cohomology}, Journal für
  die reine und angewandte Mathematik (Crelles Journal) \textbf{2015} (2013),
  no.~702, pp.~173--226

\bibitem[Hoy14]{hoyois:2014}
\bysame, \emph{A quadratic refinement of the Grothendieck–Lefschetz–Verdier
  trace formula}, Algebraic \& Geometric Topology \textbf{14} (2014), no.~6,
  p.~3603–3658,  \url{http://dx.doi.org/10.2140/agt.2014.14.3603}

\bibitem[HRT74]{hartshorne-rees-thomas}
R.~Hartshorne, E.~Rees, and E.~Thomas, \emph{Nonsmoothing of algebraic cycles
  on {G}rassmann varieties}, Bull. Amer. Math. Soc. \textbf{80} (1974),
  pp.~847--851,  \url{https://doi.org/10.1090/S0002-9904-1974-13537-8}

\bibitem[HW19]{haesemyer-weibel}
C.~Haesemeyer and C.~A. Weibel, \emph{The norm residue theorem in motivic
  cohomology}, pp.~xiii+299

\bibitem[Ill79]{illusie-drw}
L.~Illusie, \emph{Complexe de de {R}ham-{W}itt et cohomologie cristalline},
  Ann. Sci. \'{E}cole Norm. Sup. (4) \textbf{12} (1979), no.~4, pp.~501--661,
  \url{http://www.numdam.org/item?id=ASENS_1979_4_12_4_501_0}

\bibitem[Kat89]{kato-logarithmic}
K.~Kato, \emph{Logarithmic structures of {F}ontaine-{I}llusie}, Algebraic
  analysis, geometry, and number theory ({B}altimore, {MD}, 1988), Johns
  Hopkins Univ. Press, Baltimore, MD, 1989, pp.~191--224

\bibitem[Kha16]{KhanThesis}
A.~A. Khan, \emph{Motivic homotopy theory in derived algebraic geometry}, Ph.D.
  thesis, Universit{\"a}t Duisburg-Essen, 2016, available at
  \url{https://www.preschema.com/thesis/thesis.pdf}

\bibitem[Kha19a]{khan-mv}
A.~A. Khan, \emph{The {M}orel-{V}oevodsky localization theorem in spectral
  algebraic geometry}, Geom. Topol. \textbf{23} (2019), no.~7, pp.~3647--3685,
  \url{https://doi.org/10.2140/gt.2019.23.3647}

\bibitem[Kha19b]{khan:virtualI}
A.~A. Khan, \emph{Virtual fundamental classes of derived stacks I}, preprint
  \href{http://arxiv.org/abs/1909.01332}{{\sf arXiv:1909.01332}}

\bibitem[Kha22]{khan:2020}
\bysame, \emph{K-theory and G-theory of derived algebraic stacks}, Japanese
  Journal of Mathematics \textbf{17} (2022), no.~1, pp.~1--61

\bibitem[KR18]{khan-rydh}
A.~Khan and D.~Rydh, \emph{Virtual {C}artier divisors and blow-ups}, February
  2018, \href{http://arxiv.org/abs/arXiv:1802.05702}{{\sf
  arXiv:arXiv:1802.05702}}

\bibitem[KS83]{kato-saito}
K.~Kato and S.~Saito, \emph{Unramified class field theory of arithmetical
  surfaces}, Ann. of Math. (2) \textbf{118} (1983), no.~2, pp.~241--275,
  \url{https://doi.org/10.2307/2007029}

\bibitem[KS24]{kelly-saito}
S.~Kelly and S.~Saito, \emph{A procdh topology}, preprint
  \href{http://arxiv.org/abs/2401.02699v1}{{\sf 2401.02699v1}}

\bibitem[KV24]{kollar-voisin}
J.~Koll\'{a}r and C.~Voisin, \emph{Flat pushforwards of {C}hern classes and the
  smoothability of cycles below the middle dimension}, Ann. of Math. (2)
  \textbf{200} (2024), no.~2, pp.~771--797,
  \url{https://doi.org/10.4007/annals.2024.200.2.7}

\bibitem[Lev01]{levine:2001}
M.~Levine, \emph{Techniques of localization in the theory of algebraic cycles},
  J. Algebraic Geom. \textbf{10} (2001), no.~2, pp.~299--363

\bibitem[Lev14]{levine-comparison}
\bysame, \emph{A comparison of motivic and classical stable homotopy theories},
  J. Topol. \textbf{7} (2014), no.~2, pp.~327--362,
  \url{https://doi.org/10.1112/jtopol/jtt031}

\bibitem[Mil76]{milne:1976}
J.~S. Milne, \emph{Duality in the flat cohomology of a surface}, Annales
  scientifiques de l’École normale supérieure \textbf{9} (1976), no.~2,
  p.~171–201,  \url{http://dx.doi.org/10.24033/asens.1309}

\bibitem[Mil86]{milne-values}
\bysame, \emph{Values of zeta functions of varieties over finite fields}, Amer.
  J. Math. \textbf{108} (1986), no.~2, pp.~297--360,
  \url{https://doi.org/10.2307/2374676}

\bibitem[Mor19]{morrow-variational}
M.~Morrow, \emph{A variational {T}ate conjecture in crystalline cohomology}, J.
  Eur. Math. Soc. (JEMS) \textbf{21} (2019), no.~11, pp.~3467--3511,
  \url{https://doi.org/10.4171/JEMS/907}

\bibitem[MV99]{morel:1999}
F.~Morel and V.~Voevodsky, \emph{${A}^1$-homotopy theory of schemes},
  Publications math{\'{e}}matiques de l'{IH}{\'{E}}S \textbf{90} (1999), no.~1,
  pp.~45--143

\bibitem[Niz98]{niziol-crys}
W.~a. Nizio\l, \emph{Crystalline conjecture via {$K$}-theory}, Ann. Sci.
  \'{E}cole Norm. Sup. (4) \textbf{31} (1998), no.~5, pp.~659--681,
  \url{https://doi.org/10.1016/S0012-9593(98)80003-7}

\bibitem[NS18]{nikolaus-scholze}
T.~Nikolaus and P.~Scholze, \emph{On topological cyclic homology}, Acta
  Mathematica \textbf{221} (2018), no.~2, p.~203–409,
  \url{http://dx.doi.org/10.4310/ACTA.2018.v221.n2.a1}

\bibitem[Pri20]{primozic:2020}
E.~Primozic, \emph{Motivic Steenrod operations in characteristic $p$}, Forum of
  Mathematics, Sigma \textbf{8} (2020)

\bibitem[PY15]{pirutka-yagita}
A.~Pirutka and N.~Yagita, \emph{Note on the counterexamples for the integral
  {T}ate conjecture over finite fields}, Doc. Math. (2015), no.~Extra vol.:
  Alexander S. Merkurjev's sixtieth birthday, pp.~501--511

\bibitem[Qui11]{quick-etale}
G.~Quick, \emph{Torsion algebraic cycles and \'{e}tale cobordism}, Adv. Math.
  \textbf{227} (2011), no.~2, pp.~962--985,
  \url{https://doi.org/10.1016/j.aim.2011.02.020}

\bibitem[Qui19]{quick-bp}
\bysame, \emph{Examples of non-algebraic classes in the {B}rown-{P}eterson
  tower}, Math. Z. \textbf{293} (2019), no.~1-2, pp.~25--37,
  \url{https://doi.org/10.1007/s00209-018-2164-4}

\bibitem[Rio10]{riou-rr}
J.~Riou, \emph{Algebraic {$K$}-theory, {${\bf A}^1$}-homotopy and
  {R}iemann-{R}och theorems}, J. Topol. \textbf{3} (2010), no.~2, pp.~229--264,
   \url{https://doi.org/10.1112/jtopol/jtq005}

\bibitem[Sch98]{schoen}
C.~Schoen, \emph{An integral analog of the Tate conjecture for one-dimensional
  cycles on varieties over finite fields}, Math. Ann. \textbf{311} (1998),
  no.~3, pp.~493--500,  \url{https://doi.org/10.1007/s002080050197}

\bibitem[Spi10]{spitzweck2}
M.~Spitzweck, \emph{Relations between slices and quotients of the algebraic
  cobordism spectrum}, Homology Homotopy Appl. \textbf{12} (2010), no.~2,
  pp.~335--351,  \url{https://doi.org/10.4310/hha.2010.v12.n2.a11}

\bibitem[Spi12]{spitzweck1}
\bysame, \emph{Slices of motivic {L}andweber spectra}, J. K-Theory \textbf{9}
  (2012), no.~1, pp.~103--117,  \url{https://doi.org/10.1017/is010008019jkt128}

\bibitem[Spi18]{spitzweck:2018}
M.~Spitzweck, \emph{A commutative $\mathbb{P}^1$-spectrum representing motivic
  cohomology over dedekind domains}, Mémoires de la Société mathématique de
  France \textbf{157} (2018), p.~1–114,
  \url{http://dx.doi.org/10.24033/msmf.465}

\bibitem[Sri10]{srinivas}
V.~Srinivas, \emph{Algebraic cycles on singular varieties}, pp.~603--623

\bibitem[SS24]{scavia-suzuki-non-alg}
F.~Scavia and F.~Suzuki, \emph{Non-algebraic geometrically trivial cohomology
  classes over finite fields}, Adv. Math. \textbf{458} (2024), no.~part A,
  pp.~Paper No. 109964, 30,  \url{https://doi.org/10.1016/j.aim.2024.109964}

\bibitem[{Sta}18]{stacks}
T.~{Stacks Project Authors}, \emph{\textit{Stacks Project}},
  \url{https://stacks.math.columbia.edu}, 2018

\bibitem[SV05]{soule-voisin}
C.~Soul\'{e} and C.~Voisin, \emph{Torsion cohomology classes and algebraic
  cycles on complex projective manifolds}, Adv. Math. \textbf{198} (2005),
  no.~1, pp.~107--127,  \url{https://doi.org/10.1016/j.aim.2004.10.022}

\bibitem[Tan24]{tang-gysin}
L.~Tang, \emph{The $\mathbb{P}^1$-motivic Gysin map}, in preparation, 2024

\bibitem[Tho54]{thom-thesis}
R.~Thom, \emph{Quelques propri\'{e}t\'{e}s globales des vari\'{e}t\'{e}s
  diff\'{e}rentiables}, Comment. Math. Helv. \textbf{28} (1954), pp.~17--86,
  \url{https://doi.org/10.1007/BF02566923}

\bibitem[Tho84]{thomason-purity}
R.~W. Thomason, \emph{Absolute cohomological purity}, Bull. Soc. Math. France
  \textbf{112} (1984), no.~3, pp.~397--406,
  \url{http://www.numdam.org/item?id=BSMF_1984__112__397_0}

\bibitem[Tot97]{totaro-bordism}
B.~Totaro, \emph{Torsion algebraic cycles and complex cobordism}, J. Amer.
  Math. Soc. \textbf{10} (1997), no.~2, pp.~467--493,
  \url{https://doi.org/10.1090/S0894-0347-97-00232-4}

\bibitem[Tot16]{totaro-classifying}
\bysame, \emph{The motive of a classifying space}, Geom. Topol. \textbf{20}
  (2016), no.~4, pp.~2079--2133,  \url{https://doi.org/10.2140/gt.2016.20.2079}

\bibitem[Voe02a]{voevodsky-chow}
V.~Voevodsky, \emph{Motivic cohomology groups are isomorphic to higher {C}how
  groups in any characteristic}, Int. Math. Res. Not. (2002), no.~7,
  pp.~351--355,  \url{https://doi.org/10.1155/S107379280210403X}

\bibitem[Voe02b]{Voevodsky2002a}
\bysame, \emph{Open problems in the motivic stable homotopy theory. {I}},
  Motives, polylogarithms and {H}odge theory, {P}art {I} ({I}rvine, {CA},
  1998), Int. Press Lect. Ser., vol. 3, I, Int. Press, Somerville, MA, 2002,
  pp.~3--34

\bibitem[Voe02c]{Voevodsky2002}
\bysame, \emph{A possible new approach to the motivic spectral sequence for
  algebraic {$K$}-theory}, Recent progress in homotopy theory ({B}altimore,
  {MD}, 2000), Contemp. Math., vol. 293, Amer. Math. Soc., Providence, RI,
  2002, pp.~371--379,  \url{https://doi.org/10.1090/conm/293/04956}

\bibitem[Voe03a]{voevodsky:2003b}
V.~Voevodsky, \emph{Motivic cohomology with {Z}/2-coefficients}, Publications
  math{\'{e}}matiques de l'{IH}{\'{E}}S \textbf{98} (2003), no.~1, pp.~59--104

\bibitem[Voe03b]{voevodsky:2003a}
\bysame, \emph{Reduced power operations in motivic cohomology}, Publications
  math{\'{e}}matiques de l'{IH}{\'{E}}S \textbf{98} (2003), no.~1, pp.~1--57

\bibitem[Voe10]{voevodsky:2010}
V.~Voevodsky, \emph{Motivic Eilenberg--MacLane spaces}, Publications
  mathématiques de l’IHÉS \textbf{112} (2010), no.~1, pp.~1--99

\bibitem[Voe11]{voevodsky:2011}
V.~Voevodsky, \emph{{On motivic cohomology with Z/l-coefficients}}, Annals of
  Mathematics \textbf{174} (2011), no.~1, pp.~401--438

\bibitem[VSF00]{VSF}
V.~Voevodsky, A.~Suslin, and E.~M. Friedlander, \emph{{Cycles, Transfers, and
  Motivic Homology Theories}}, Princeton University Press, 2000

\end{thebibliography}

\end{document}